\newtheorem{thm}{Theorem}[section]
\newtheorem{lem}[thm]{Lemma}
\newtheorem{defi}[thm]{Definition}
\newtheorem{prop}[thm]{Proposition}
\newtheorem{exam}[thm]{Example}
\newtheorem{rmk}[thm]{Remark}
\newtheorem{workflow}[thm]{Workflow}
\newtheorem{cor}[thm]{Corollary}
\newtheorem{nota}[thm]{Notation}
\newcommand{\UZ}{\underline{\mathbb{Z}}}
\newcommand{\U}[1]{\underline{#1}}
\newcommand{\ZZ}{\mathbb{Z}}
\newcommand{\FF}{\mathbb{F}}
\newcommand{\Ov}[1]{\overline{#1}}
\newcommand{\ov}{\Ov{v}}
\newcommand{\ovr}{\Ov{r}}
\newcommand{\ot}{\Ov{t}}
\newcommand{\RR}{\mathbb{R}}
\newcommand{\JJJ}{\blacktriangle}
\renewcommand{\star}{\bigstar}
\newcommand{\D}{\mathscr{D}\!}
\newcommand{\DP}{\mathscr{D}P}
	\newcommand{\SfinG}{\mathcal{S}^{\mathrm{fin},G}_*}
\newcommand{\SG}{\mathcal{S}^{G}_*}
\DeclareMathOperator{\Ind}{Ind}
\DeclareMathOperator{\Sub}{Sub}
\DeclareMathOperator{\Cat}{Cat}
\DeclareMathOperator{\id}{id}
\newcommand{\ssdiff}[1]{\ssarrow{-1}{#1}\ssmove{1}{-#1}}
\newcommand{\ssreddiff}[1]{\ssarrow[color=red]{-1}{#1}\ssmove{1}{-#1}}
\newcommand{\ssblueline}[1]{\ssline[color=blue]{0}{#1}}
\newcommand{\ssgreenline}[1]{\ssline[color=green]{0}{#1}}
\newcommand{\sscurvedblueline}[1]{\ssline[color=blue, curve=-.1]{0}{#1}}
\def\JJ{\blacktriangledown}
\def\ssray#1#2#3#4#5#6#7{
\ifnum#3>#1
\else
\ifnum#4>#2
\else
   \ssmoveto{#3}{#4}\ssdrop{#5}
   \xval=#3
   \yval=#4
   \advance \xval by #6
   \advance \yval by #7
   \ssray{#1}{#2}{\xval}{\yval}{#5}{#6}{#7}
\fi
\fi
}
\def\sslineray#1#2#3#4#5#6#7#8{
\ifnum#3>#1
\else
\ifnum#4>#2
\else
   \ssmoveto{#3}{#4}\ssline{#5}{#6}\ssmove{-#5}{-#6}
   \xval=#3
   \yval=#4
   \advance \xval by #7
   \advance \yval by #8
   \sslineray{#1}{#2}{\xval}{\yval}{#5}{#6}{#7}{#8}
\fi
\fi
}
\def\sseightfanray#1#2#3#4#5#6{
\ifnum#3>#1\else \ifnum#4>#2\else
   \xval=#3
   \yval=#4
   \ssmoveto{#3}{#4}
   \advance \xval by 1
   \advance \yval by 7
   \ifnum\xval>#1\else \ifnum\yval>#2\else
         \ssline{1}{7}\ssmove{-1}{-7}
   \fi\fi
   \advance \xval by 2
   \advance \yval by -2
   \ifnum\xval>#1\else \ifnum\yval>#2\else
         \ssline{3}{5}\ssmove{-3}{-5}
   \fi\fi
   \advance \xval by 2
   \advance \yval by -2
   \ifnum\xval>#1\else \ifnum\yval>#2\else
         \ssline{5}{3}\ssmove{-5}{-3}
   \fi\fi
   \advance \xval by 2
   \advance \yval by -2
   \ifnum\xval>#1\else \ifnum\yval>#2\else
         \ssline{7}{1}\ssmove{-7}{-1}
   \fi\fi
   \advance \xval by -7
   \advance \yval by -1
   \advance \xval by #5
   \advance \yval by #6
   \sseightfanray{#1}{#2}{\xval}{\yval}{#5}{#6}
\fi
\fi
}
\def\sscoloreightfanray#1#2#3#4#5#6{
\ifnum#3>#1\else \ifnum#4>#2\else
   \xval=#3
   \yval=#4
   \ssmoveto{#3}{#4}
   \advance \xval by 1
   \advance \yval by 7
   \ifnum\xval>#1\else \ifnum\yval>#2\else
         \ssline{1}{7}\ssmove{-1}{-7}
   \fi\fi
   \advance \xval by 2
   \advance \yval by -2
   \ifnum\xval>#1\else \ifnum\yval>#2\else
         \ssline[color=blue]{3}{5}\ssmove{-3}{-5}
   \fi\fi
   \advance \xval by 2
   \advance \yval by -2
   \ifnum\xval>#1\else \ifnum\yval>#2\else
         \ssline[color=green]{5}{3}\ssmove{-5}{-3}
   \fi\fi
   \advance \xval by 2
   \advance \yval by -2
   \ifnum\xval>#1\else \ifnum\yval>#2\else
         \ssline[color=cyan]{7}{1}\ssmove{-7}{-1}
   \fi\fi
   \advance \xval by -7
   \advance \yval by -1
   \advance \xval by #5
   \advance \yval by #6
   \sscoloreightfanray{#1}{#2}{\xval}{\yval}{#5}{#6}
\fi
\fi
}
\def\sseightfandoubleray#1#2#3#4#5#6#7#8{
\ifnum#3>#1\else \ifnum#4>#2\else
   \sseightfanray{#1}{#2}{#3}{#4}{#5}{#6}
   \ssdoublexval=#3
   \ssdoubleyval=#4
   \advance \ssdoublexval by #7
   \advance \ssdoubleyval by #8
   \sseightfandoubleray{#1}{#2}{\ssdoublexval}{\ssdoubleyval}{#5}{#6}{#7}{#8}
\fi\fi
}
\def\sscoloreightfandoubleray#1#2#3#4#5#6#7#8{
\ifnum#3>#1\else \ifnum#4>#2\else
   \sscoloreightfanray{#1}{#2}{#3}{#4}{#5}{#6}
   \ssdoublexval=#3
   \ssdoubleyval=#4
   \advance \ssdoublexval by #7
   \advance \ssdoubleyval by #8
   \sscoloreightfandoubleray{#1}{#2}{\ssdoublexval}{\ssdoubleyval}
                            {#5}{#6}{#7}{#8}
\fi\fi
}
\def\ssantiray#1#2#3#4#5#6#7{
\ifnum#3<#1
\else
\ifnum#4<#2
\else
   \ssmoveto{#3}{#4}\ssdrop{#5}
   \xval=#3
   \yval=#4
   \advance \xval by #6
   \advance \yval by #7
   \ssantiray{#1}{#2}{\xval}{\yval}{#5}{#6}{#7}
\fi
\fi
}
\def\ssraydiff#1#2#3#4#5#6#7#8{
\difflim=#2
\advance \difflim by -#8
\ifnum#3>#1
\else
\ifnum#4>#2
\else
   \ssmoveto{#3}{#4}\ssdrop{#5}
   \ifnum#4>\difflim
   \else
       \ssdiff{#8}
   \fi
   \xval=#3
   \yval=#4
   \advance \xval by #6
   \advance \yval by #7
   \ssraydiff{#1}{#2}{\xval}{\yval}{#5}{#6}{#7}{#8}
\fi
\fi
}
\def\ssrayreddiff#1#2#3#4#5#6#7#8{
\difflim=#2
\advance \difflim by -#8
\ifnum#3>#1
\else
\ifnum#4>#2
\else
   \ssmoveto{#3}{#4}\ssdrop{#5}
   \ifnum#4>\difflim
   \else
       \ssreddiff{#8}
   \fi
   \xval=#3
   \yval=#4
   \advance \xval by #6
   \advance \yval by #7
   \ssrayreddiff{#1}{#2}{\xval}{\yval}{#5}{#6}{#7}{#8}
\fi
\fi
}
\def\ssdoubleray#1#2#3#4#5#6#7#8#9{
\ifnum#3>#1
\else
\ifnum#4>#2
\else
   \ssray{#1}{#2}{#3}{#4}{#5}{#6}{#7}
   \ssdoublexval=#3
   \ssdoubleyval=#4
   \advance \ssdoublexval by #8
   \advance \ssdoubleyval by #9
   \ssdoubleray{#1}{#2}{\ssdoublexval}{\ssdoubleyval}
               {#5}{#6}{#7}{#8}{#9}
\fi
\fi
}
\def\ssantidoubleray#1#2#3#4#5#6#7#8#9{
\ifnum#3<#1
\else
\ifnum#4<#2
\else
   \ssantiray{#1}{#2}{#3}{#4}{#5}{#6}{#7}
   \ssdoublexval=#3
   \ssdoubleyval=#4
   \advance \ssdoublexval by #8
   \advance \ssdoubleyval by #9
   \ssantidoubleray{#1}{#2}{\ssdoublexval}{\ssdoubleyval}{#5}{#6}{#7}{#8}{#9}
\fi
\fi
}
\def\ssrayblueline#1#2#3#4#5#6#7{
\ifnum#3>#1
\else
\ifnum#4>#2
\else
   \ssmoveto{#3}{#4}\ssdrop{#5}\ssblueline{-2}
   \xval=#3
   \yval=#4
   \advance \xval by #6
   \advance \yval by #7
   \ssrayblueline{#1}{#2}{\xval}{\yval}{#5}{#6}{#7}
\fi
\fi
}
\def\ssraygreenline#1#2#3#4#5#6#7{
\ifnum#3>#1
\else
\ifnum#4>#2
\else
   \ssmoveto{#3}{#4}\ssdrop{#5}\ssgreenline{-2}
   \xval=#3
   \yval=#4
   \advance \xval by #6
   \advance \yval by #7
   \ssraygreenline{#1}{#2}{\xval}{\yval}{#5}{#6}{#7}
\fi
\fi
}
\def\ssdoublerayblueline#1#2#3#4#5#6#7#8#9{
\ifnum#3>#1
\else
\ifnum#4>#2
\else
   \ssrayblueline{#1}{#2}{#3}{#4}{#5}{#6}{#7}
   \ssdoublexval=#3
   \ssdoubleyval=#4
   \advance \ssdoublexval by #8
   \advance \ssdoubleyval by #9
   \ssdoublerayblueline{#1}{#2}{\ssdoublexval}{\ssdoubleyval}
                       {#5}{#6}{#7}{#8}{#9}
\fi
\fi
}
\def\ssdoubleraygreenline#1#2#3#4#5#6#7#8#9{
\ifnum#3>#1
\else
\ifnum#4>#2
\else
   \ssraygreenline{#1}{#2}{#3}{#4}{#5}{#6}{#7}
   \ssdoublexval=#3
   \ssdoubleyval=#4
   \advance \ssdoublexval by #8
   \advance \ssdoubleyval by #9
   \ssdoubleraygreenline{#1}{#2}{\ssdoublexval}{\ssdoubleyval}
                       {#5}{#6}{#7}{#8}{#9}
\fi
\fi
}
\def\ssrayblueblueline#1#2#3#4#5#6#7{
\ifnum#3>#1
\else
\ifnum#4>#2
\else
   \ssmoveto{#3}{#4}\ssdrop{#5}\ssblueline{-4}
   \xval=#3
   \yval=#4
   \advance \xval by #6
   \advance \yval by #7
   \ssrayblueblueline{#1}{#2}{\xval}{\yval}{#5}{#6}{#7}
\fi
\fi
}
\def\ssraygreengreenline#1#2#3#4#5#6#7{
\ifnum#3>#1
\else
\ifnum#4>#2
\else
   \ssmoveto{#3}{#4}\ssdrop{#5}\ssgreenline{-4}
   \xval=#3
   \yval=#4
   \advance \xval by #6
   \advance \yval by #7
   \ssraygreengreenline{#1}{#2}{\xval}{\yval}{#5}{#6}{#7}
\fi
\fi
}
\def\ssdoublerayblueblueline#1#2#3#4#5#6#7#8#9{
\ifnum#3>#1
\else
\ifnum#4>#2
\else
   \ssrayblueblueline{#1}{#2}{#3}{#4}{#5}{#6}{#7}
   \ssdoublexval=#3
   \ssdoubleyval=#4
   \advance \ssdoublexval by #8
   \advance \ssdoubleyval by #9
   \ssdoublerayblueblueline{#1}{#2}{\ssdoublexval}{\ssdoubleyval}
                           {#5}{#6}{#7}{#8}{#9}
\fi
\fi
}
\def\ssdoubleraygreengreenline#1#2#3#4#5#6#7#8#9{
\ifnum#3>#1
\else
\ifnum#4>#2
\else
   \ssraygreengreenline{#1}{#2}{#3}{#4}{#5}{#6}{#7}
   \ssdoublexval=#3
   \ssdoubleyval=#4
   \advance \ssdoublexval by #8
   \advance \ssdoubleyval by #9
   \ssdoubleraygreengreenline{#1}{#2}{\ssdoublexval}{\ssdoubleyval}
                           {#5}{#6}{#7}{#8}{#9}
\fi
\fi
}
\def\ssreddiffray#1#2#3#4#5#6#7{
\difflim=#2
\advance \difflim by -#7
\ifnum#3>#1
\else
\ifnum#4>\difflim
\else
       \ssmoveto{#3}{#4}\ssreddiff{#7}
       \xval=#3
       \yval=#4
       \advance \xval by #5
       \advance \yval by #6
       \ssreddiffray{#1}{#2}{\xval}{\yval}{#5}{#6}{#7}
\fi
\fi
}
\def\reddiffdoubleray#1#2#3#4#5#6#7#8#9{
\difflim=#2
\advance \difflim by -#7
\ifnum#3 > #1
\else
\ifnum#4 > \difflim
\else
       \ssreddiffray{#1}{#2}{#3}{#4}{#5}{#6}{#7}
       \ssdoublexval=#3
       \ssdoubleyval=#4
       \advance \ssdoublexval by #8
       \advance \ssdoubleyval by #9
       \reddiffdoubleray{#1}{#2}{\ssdoublexval}{\ssdoubleyval}{#5}{#6}{#7}{#8}{#9}
\fi
\fi
}
\def\ssreddiffantiray#1#2#3#4#5#6#7{
\difflim=#1
\advance \difflim by 1
\ifnum#3< \difflim
\else
\ifnum#4<#2
\else
       \ssmoveto{#3}{#4}\ssreddiff{#7}
       \xval=#3
       \yval=#4
       \advance \xval by #5
       \advance \yval by #6
       \ssreddiffantiray{#1}{#2}{\xval}{\yval}{#5}{#6}{#7}
\fi
\fi
}
\def\reddiffantidoubleray#1#2#3#4#5#6#7#8#9{
\difflim=#2
\ifnum#3<#1
\else
\ifnum#4<\difflim
\else
       \ssreddiffantiray{#1}{#2}{#3}{#4}{#5}{#6}{#7}
       \ssdoublexval=#3
       \ssdoubleyval=#4
       \advance \ssdoublexval by #8
       \advance \ssdoubleyval by #9
       \reddiffantidoubleray{#1}{#2}{\ssdoublexval}{\ssdoubleyval}{#5}{#6}{#7}{#8}{#9}
\fi
\fi
}
\def\ssbluelineray#1#2#3#4#5#6#7{
\difflim=#2
\advance \difflim by -#7
\ifnum#3>#1
\else
\ifnum#4>\difflim
\else
       \ssmoveto{#3}{#4}\ssblueline{#7}
       \xval=#3
       \yval=#4
       \advance \xval by #5
       \advance \yval by #6
       \ssbluelineray{#1}{#2}{\xval}{\yval}{#5}{#6}{#7}
\fi
\fi
}
\def\sscurvedbluelineray#1#2#3#4#5#6#7{
\difflim=#2
\advance \difflim by -#7
\ifnum#3>#1
\else
\ifnum#4>\difflim
\else
       \ssmoveto{#3}{#4}\sscurvedblueline{#7}
       \xval=#3
       \yval=#4
       \advance \xval by #5
       \advance \yval by #6
       \sscurvedbluelineray{#1}{#2}{\xval}{\yval}{#5}{#6}{#7}
\fi
\fi
}
\def\ssbluelineantiray#1#2#3#4#5#6#7{
\ifnum#3<#1
\else
\ifnum#4<#2
\else
       \ssmoveto{#3}{#4}\ssblueline{#7}
       \xval=#3
       \yval=#4
       \advance \xval by #5
       \advance \yval by #6
       \ssbluelineantiray{#1}{#2}{\xval}{\yval}{#5}{#6}{#7}
\fi
\fi
}
\def\ssgreenlineantiray#1#2#3#4#5#6#7{
\ifnum#3<#1
\else
\ifnum#4<#2
\else
       \ssmoveto{#3}{#4}\ssgreenline{#7}
       \xval=#3
       \yval=#4
       \advance \xval by #5
       \advance \yval by #6
       \ssgreenlineantiray{#1}{#2}{\xval}{\yval}{#5}{#6}{#7}
\fi
\fi
}
\def\bluelinedoubleray#1#2#3#4#5#6#7#8#9{
\difflim=#2
\advance \difflim by -#7
\ifnum#3>#1
\else
\ifnum#4>\difflim
\else
       \ssbluelineray{#1}{#2}{#3}{#4}{#5}{#6}{#7}
       \ssdoublexval=#3
       \ssdoubleyval=#4
       \advance \ssdoublexval by #8
       \advance \ssdoubleyval by #9
       \bluelinedoubleray{#1}{#2}{\ssdoublexval}{\ssdoubleyval}{#5}{#6}{#7}{#8}{#9}
\fi
\fi
}
\def\bluelineantidoubleray#1#2#3#4#5#6#7#8#9{
\difflim=#2
\advance \difflim by -#7
\ifnum#3<#1
\else
\ifnum#4<\difflim
\else
       \ssbluelineantiray{#1}{#2}{#3}{#4}{#5}{#6}{#7}
       \ssdoublexval=#3
       \ssdoubleyval=#4
       \advance \ssdoublexval by #8
       \advance \ssdoubleyval by #9
       \bluelineantidoubleray{#1}{#2}{\ssdoublexval}{\ssdoubleyval}{#5}{#6}{#7}{#8}{#9}
\fi
\fi
}
\def\greenlineantidoubleray#1#2#3#4#5#6#7#8#9{
\difflim=#2
\advance \difflim by -#7
\ifnum#3<#1
\else
\ifnum#4<\difflim
\else
       \ssgreenlineantiray{#1}{#2}{#3}{#4}{#5}{#6}{#7}
       \ssdoublexval=#3
       \ssdoubleyval=#4
       \advance \ssdoublexval by #8
       \advance \ssdoubleyval by #9
       \greenlineantidoubleray{#1}{#2}{\ssdoublexval}{\ssdoubleyval}{#5}{#6}{#7}{#8}{#9}
\fi
\fi
}
\def\bluessray#1#2#3#4#5#6#7{
\ifnum#3>#1
\else
\ifnum#4>#2
\else
   \ssmoveto{#3}{#4}\ssdrop[color=blue]{#5}
   \xval=#3
   \yval=#4
   \advance \xval by #6
   \advance \yval by #7
   \bluessray{#1}{#2}{\xval}{\yval}{#5}{#6}{#7}
\fi
\fi
}
\def\violetssray#1#2#3#4#5#6#7{
\ifnum#3>#1
\else
\ifnum#4>#2
\else
   \ssmoveto{#3}{#4}\ssdrop[color=violet]{#5}
   \xval=#3
   \yval=#4
   \advance \xval by #6
   \advance \yval by #7
   \violetssray{#1}{#2}{\xval}{\yval}{#5}{#6}{#7}
\fi
\fi
}
\def\violetssantiray#1#2#3#4#5#6#7{
\ifnum#3<#1
\else
\ifnum#4<#2
\else
   \ssmoveto{#3}{#4}\ssdrop[color=violet]{#5}
   \xval=#3
   \yval=#4
   \advance \xval by #6
   \advance \yval by #7
   \violetssantiray{#1}{#2}{\xval}{\yval}{#5}{#6}{#7}
\fi
\fi
}
\def\bluessantiray#1#2#3#4#5#6#7{
\ifnum#3<#1
\else
\ifnum#4<#2
\else
   \ssmoveto{#3}{#4}\ssdrop[color=blue]{#5}
   \xval=#3
   \yval=#4
   \advance \xval by #6
   \advance \yval by #7
   \bluessantiray{#1}{#2}{\xval}{\yval}{#5}{#6}{#7}
\fi
\fi
}
\def\ssbluelinedoubleray#1#2#3#4#5#6#7#8#9{
\ifnum#3>#1
\else
\ifnum#4>#2
\else
   \ssbluelineray{#1}{#2}{#3}{#4}{#5}{#6}{#7}
   \ssdoublexval=#3
   \ssdoubleyval=#4
   \advance \ssdoublexval by #8
   \advance \ssdoubleyval by #9
   \ssbluelinedoubleray{#1}{#2}{\ssdoublexval}{\ssdoubleyval}
                       {#5}{#6}{#7}{#8}{#9}
\fi
\fi
}
\def\ssgreenlineray#1#2#3#4#5#6#7{
\difflim=#2
\advance \difflim by -#7
\ifnum#3>#1
\else
\ifnum#4>\difflim
\else
       \ssmoveto{#3}{#4}\ssgreenline{#7}
       \xval=#3
       \yval=#4
       \advance \xval by #5
       \advance \yval by #6
       \ssgreenlineray{#1}{#2}{\xval}{\yval}{#5}{#6}{#7}
\fi
\fi
}
\def\ssgreenlinedoubleray#1#2#3#4#5#6#7#8#9{
\ifnum#3>#1
\else
\ifnum#4>#2
\else
   \ssgreenlineray{#1}{#2}{#3}{#4}{#5}{#6}{#7}
   \ssdoublexval=#3
   \ssdoubleyval=#4
   \advance \ssdoublexval by #8
   \advance \ssdoubleyval by #9
   \ssgreenlinedoubleray{#1}{#2}{\ssdoublexval}{\ssdoubleyval}
                       {#5}{#6}{#7}{#8}{#9}
\fi
\fi
}
\newcommand{\cC}{\mathcal{C}}
\renewcommand{\cD}{\mathcal{D}}
\newcommand{\cF}{\mathcal{F}}
\newcommand{\cO}{\mathcal{O}}
\newcommand{\cP}{\mathcal{P}}
\newcommand{\cU}{\mathcal{U}}
\newcommand{\cT}{\mathcal{T}}
\newcommand{\tensor}{\otimes}
\newcommand{\sm}{\wedge}
\newcommand{\MUR}{MU_{\mathbb{R}}}
\newcommand{\BPR}{BP_{\mathbb{R}}}
\newcommand{\BPn}{{BP}^{(\!(C_{2^{n}})\!)}}
\newcommand{\BPG}{{BP}^{(\!(G)\!)}}
\newcommand{\MUn}{{MU}^{(\!(C_{2^{n}})\!)}}
\newcommand{\MUG}{{MU}^{(\!(G)\!)}}
\newcommand{\BPfour}{{BP}^{(\!(C_4)\!)}}
\newcommand{\EFi}{\tilde{E}\mathcal{F}[C_{2^i}]}
\newcommand{\EF}{\tilde{E}\mathcal{F}}
\newcommand{\HZ}{H\underline{\mathbb{Z}}}
\DeclareMathOperator{\SliceSS}{SliceSS}
\DeclareMathOperator{\HFPSS}{HFPSS}
\DeclareMathOperator{\TateSS}{TateSS}
\DeclareMathOperator{\Map}{Map}
\DeclareMathOperator{\Ho}{Ho}
\DeclareMathOperator{\Sp}{Sp}
\DeclareMathOperator{\colim}{colim}
\DeclareMathOperator{\hocolim}{hocolim}
\DeclareMathOperator{\ind}{Ind}
\DeclareMathOperator{\res}{Res}
\DeclareMathOperator{\Res}{Res}
\DeclareMathOperator{\Tr}{Tr}
\DeclareMathOperator{\im}{im}
\newcommand{\qedxymatrix}[1]{\begin{gathered}[b]
	\xymatrix{#1}\\[-\dp\strutbox]
	\end{gathered} \qedhere}
\crefname{figure}{Figure}{Figures}
\title[Norms of Real Bordism and the Segal conjecture]{The localized slice spectral sequence, norms of Real bordism, and the Segal conjecture}
\author[Meier]{Lennart Meier}
\address{Mathematical Institute, Utrecht University, Utrecht, 3584 CD, the Netherlands}
\email{f.l.m.meier@uu.nl}
\author[Shi]{XiaoLin Danny Shi}
\address{Department of Mathematics, 
University of Washington, 4110 E Stevens Way NE, Seattle, WA 98195}
\email{dannyshixl@gmail.com}
\author[Zeng]{Mingcong Zeng}
\address{Max Planck Institute for Mathematics, Vivatsgasse 7, 53111 Bonn, Germany}
\email{mingcongzeng@gmail.com}
\begin{document}
	\maketitle
	
	\begin{abstract}
	In this paper, we introduce the localized slice spectral sequence, a variant of the equivariant slice spectral sequence that computes geometric fixed points equipped with residue group actions. We prove convergence and recovery theorems for the localized slice spectral sequence and use it to analyze the norms of the Real bordism spectrum. As a consequence, we relate the Real bordism spectrum and its norms to a form of the $C_2$-Segal conjecture.  We compute the localized slice spectral sequence of the $C_4$-norm of $BP_\mathbb{R}$ in a range and show that the Hill--Hopkins--Ravenel slice differentials is in one-to-one correspondence with a family of Tate differentials for $N_1^2 H{\mathbb{F}}_2$.
	\end{abstract}

	\setcounter{tocdepth}{1}
	
	\tableofcontents

\section{Introduction}

The complex conjugation action on the complex bordism spectrum $MU$ defines a $C_2$-spectrum $\MUR$, the Real bordism spectrum of Landweber, Fujii, and Araki \cite{Landweber:MU, Fujii, Araki:BPR}.  Its norms 
\[\MUn:= N_{C_2}^{C_{2^n}}\MUR = N_{2}^{2^n}\MUR \]
have played a central role in the solution of the Kervaire invariant one problem \cite{HHR}. After localizing at $2$, the norm $\MUn$ splits as a wedge of suspensions of $\BPn:=N_{2}^{2^n}\BPR$, where $\BPR$ is the Real Brown--Peterson spectrum. 

The spectra $\MUn$ and $\BPn$ connect many fundamental objects and computations in non-equivariant stable homotopy theory to equivariant stable homotopy theory.  The fixed points of these norms are ring spectra, and their Hurewicz images detect families of elements in the stable homotopy groups of spheres \cite{HHR, Hill:eta, LSWX}.  The Lubin--Tate spectra at prime 2 with finite group actions can also be built from these norms and their quotients \cite{HahnShi, BHSZ}.  They produce higher height analogues of topological $K$-theory and play a fundamental role in chromatic homotopy theory.

To compute the equivariant homotopy groups of $\MUn$ and $\BPn$, Hill, Hopkins, and Ravenel introduced the equivariant slice spectral sequence \cite{HHR}.  However, due to the complexity of the equivariant computations, besides $\MUR$ and $\BPR$, we still know relatively little about the behavior of their norms.  For example, we are still far from a complete understanding of the equivariant homotopy groups of $\BPfour$. 

Our project arose from the desire to systematically understand the equivariant homotopy groups of $\MUn$ and $\BPn$.  The goal of this paper is two-fold: first, we establish our main computational tool, the \emph{localized slice spectral sequence}.  This is a variant of the slice spectral sequence that is easier for computations while at the same time recovers the original slice differentials.  Second, as an application of the localized slice spectral sequence, we focus on the $C_4$-norm $\BPfour$.  We compute its localized slice spectral sequence in a range and build a new connection to the Segal conjecture at $C_2$.  As a consequence, we establish correspondences between families of slice differentials for $\BPfour$ and families of differentials in the Tate spectral sequence for $N_1^2 H\FF_2$.

\subsection{Fixed points and geometric fixed points}\hfill\\

It is well-known in equivariant stable homotopy theory that a map between $G$-spectra is a weak equivalence if and only if for all subgroups $H \subset G$, it induces (non-equivariant) weak equivalences on all $H$-fixed points or $H$-geometric fixed points.  Despite this fact, fixed points and geometric fixed points behave very differently. 

The fixed points of a $G$-spectrum $X$ can be difficult to understand.  For a suspension spectrum, its fixed points can be described by using the tom Dieck splitting \cite[Section~V.11]{LMayS}, but such a splitting does not exist in general.  Nevertheless, by the Wirthm\"uller isomorphism, there are natural maps between fixed points of different subgroups of $G$.  The induced maps on their homotopy groups can be assembled into an algebraic object $\underline{\pi}_* X$, called a Mackey functor.  Organizing information in terms of Mackey functors is one of the most powerful ideas in equivariant stable homotopy theory, and this has produced new insights in both theory and computation (e.g. \cite{Guillou-May,HHR}). 

As an important example, the $C_2$-fixed points of the Real bordism spectrum $\MUR$ is computable but complicated \cite{HuKriz1, GreenleesMeier}.  For groups beyond $C_2$, we still don't know very much about the fixed points of the norms $\MUn$ aside from the computations in \cite{HHR, HHR:C4, Hill:eta, HSWX}.  Nevertheless, these fixed points contain very rich information about the stable homotopy groups of spheres (such as the Kervaire invariant elements) and chromatic homotopy theory \cite{HHR, LSWX, HahnShi, BHSZ}. 

On the other hand, the geometric fixed points are easier to understand.  The geometric fixed points functor $\Phi^H\colon \Sp_G \to \Sp$ is compatible with the suspension spectrum functor, commutes with all homotopy colimits, and is symmetric monoidal.  

For the Real bordism spectrum $\MUR$, a straightforward geometric argument, based on the fact that the fixed points of the $C_2$-Galois action on $\mathbb{C}$ is $\RR$, shows that the $C_2$-geometric fixed points of $MU_{\RR}$ and $\BPR$ are $MO$ (the unoriented bordism spectrum) and $H\FF_2$, respectively.  The geometric fixed points functor also behaves well with respect to the norm functor \cite[Proposition~2.57]{HHR}.  This renders the geometric fixed points of the norms $\MUn$ easy to understand. 

Although the homotopy groups of the geometric fixed points for various subgroups do not form a Mackey functor, there are reconstruction theorems which recovers a $G$-spectrum from structures on its geometric fixed points \cite{AbramKriz, Glasman, AyalaMazel-GeeRozenblyum}. 

At this point, it is natural to ask the following questions: 
\begin{enumerate}
\item How do the fixed points and the geometric fixed points of an equivariant spectrum interact with each other?
\item Computationally, how to recover the fixed points of equivariant spectra, such as norms of $\MUR$, through their geometric fixed points, which are significantly easier to compute? 
\end{enumerate}

In order to attack these questions, the first observation is that it is necessary to consider the $H$-geometric fixed points not only as a non-equivariant spectrum, but as a $W_G(H)$-equivariant spectrum, where $W_G(H)$ is the Weyl group.  In our examples of interest, $H$ will be a normal subgroup of $G$, so that $W_G(H) \cong G/H$.  When the $G$-spectrum is of the form $N_H^G X$, we prove the following theorem. 

\begin{thm}\label{thm:introthm1}
    Let $H \subset G$ be a normal subgroup and $X$ be an $H$-spectrum. Then we have an equivalence of $G/H$-spectra
 	\[
 	    \Phi^H N_H^G X \simeq N_e^{G/H} \Phi^H(X).
 	\]
 	If $X$ is an $H$-commutative ring spectrum, then this equivalence is an equivalence of $G/H$-commutative ring spectra.
\end{thm}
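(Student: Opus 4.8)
The plan is to reduce the statement to a computation of geometric fixed points of an indexed smash product and then to keep careful track of the residual $W_G(H) = G/H$-action; the commutative ring refinement will follow formally from the symmetric monoidality of the functors involved. Since neither $N_H^G$ nor the point-set model of $\Phi^H$ is homotopical on all objects, I would first replace $X$ by a cofibrant orthogonal $H$-spectrum and work with the monoidal model of geometric fixed points, which computes the derived functor on cofibrant objects and is symmetric monoidal there.

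First I would restrict to $H$ and apply the double coset formula for norms \cite{HHR}. Because $H$ is normal, this formula degenerates: $H\backslash G/H \cong G/H$ and ${}^g H = H$ for every $g$, so
\[
    \Res^G_H N_H^G X \;\simeq\; \bigwedge_{gH \in G/H} c_g^\ast X,
\]
the indexed smash product over $G/H$ of the conjugates $c_g^\ast X$ (each an $H$-spectrum via the automorphism $c_g$ of $H$). Now apply $\Phi^H$. Since $\Phi^H$ is symmetric monoidal it commutes with this finite indexed smash product, and since conjugation by $g$ carries $H$ to $H$ there are natural identifications $\Phi^H(c_g^\ast X) \simeq \Phi^H X$. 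Together these give an equivalence of spectra
\[
    \Phi^H N_H^G X \;\simeq\; \bigwedge_{gH \in G/H} \Phi^H X .
\]

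The crux is to show that the residual $G/H$-action on the left-hand side is the \emph{free} permutation action on the $|G/H|$ smash factors on the right, so that the right-hand side is literally $N_e^{G/H}\Phi^H X$. Here I would use the explicit point-set norm: the $G$-action on $N_H^G X = \bigwedge_{G/H} X$ permutes the smash factors and twists each factor internally, and applying the monoidal $\Phi^H$ to this indexed smash produces, on the nose, the indexed smash of the $\Phi^H X$'s with the induced action — the internal twists becoming exactly the coherent isomorphisms $\Phi^H(c_g^\ast X)\simeq \Phi^H X$, which, since they arise from an honest $G$-action before passage to fixed points, contribute no extra automorphism of $\Phi^H X$. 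What remains is precisely the permutation action of $G/H$ on itself, i.e.\ $N_e^{G/H}\Phi^H X$. (This is the relative version of the Hill--Hopkins--Ravenel identity $\Phi^G N_H^G X \simeq \Phi^H X$ recalled above: passing to $G$- rather than $H$-geometric fixed points further collapses the indexing $G/H$-set to a point.) I expect this bookkeeping of the residual action, and in particular the coherence of the conjugation identifications, to be the only genuinely delicate point; everything else is standard input from \cite{HHR} and point-set equivariant homotopy theory.

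Finally, for the multiplicative statement: if $X$ is an $H$-commutative ring spectrum then $N_H^G X$ is a $G$-commutative ring spectrum (the norm carries commutative rings to commutative rings), and $\Phi^H$, being symmetric monoidal, sends it to a $G/H$-commutative ring spectrum; dually $\Phi^H X$ is a commutative ring spectrum and $N_e^{G/H}\Phi^H X$ a $G/H$-commutative one. All the equivalences above are assembled from the structure maps exhibiting $N_H^G$ and $\Phi^H$ as symmetric monoidal and from the double coset formula, so the resulting equivalence $\Phi^H N_H^G X \simeq N_e^{G/H}\Phi^H X$ is one of $G/H$-commutative ring spectra.
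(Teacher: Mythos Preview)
Your approach is correct but takes a different route from the paper's. You argue via the double coset formula for $\Res^G_H N_H^G X$ and then push the symmetric monoidal $\Phi^H$ through the indexed smash, tracking the residual $G/H$-action by hand; as you rightly flag, the delicate point is checking that the conjugation identifications $\Phi^H(c_g^\ast X)\simeq \Phi^H X$ assemble coherently into exactly the free permutation action. The paper instead first reduces to suspension spectra, where the claim becomes the elementary $G/H$-equivariant homeomorphism $\Map_H^\ast(G,Z)^H \cong \Map^\ast(G/H,Z^H)$ (Lemma~2.5), in which the permutation action is visible on the nose; it then extends to all $H$-spectra using that both $\Phi^H N_H^G$ and $N_e^{G/H}\Phi^H$ are symmetric monoidal and commute with filtered homotopy colimits, so agreement on $S^{-V}\wedge \Sigma^\infty Z$ suffices. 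The paper's route thus sidesteps the coherence bookkeeping you worry about, at the cost of an extra reduction step; your route is more direct once that bookkeeping is made precise. For the commutative ring refinement, the paper does not rely on your symmetric-monoidality argument but instead passes through $N_\infty$-operads (Example~2.14): it shows that $a_\lambda^{-1}R$ retains a $p^\ast\cO$-algebra structure for $\cO$ a $G/H$-$E_\infty$-operad, whence $\Phi^H R$ is a $G/H$-commutative ring spectrum. Both arguments are valid.
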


This theorem is by no means difficult to prove, and in fact it only marks the starting point of our analysis. To understand how the $H$-fixed points and $H$-geometric fixed points interact with each other, we introduce our main computational tool: the localized slice spectral sequence.

\subsection{The localized slice spectral sequence}\hfill\\

Let $X$ be a $G$-spectrum and $H \subset G$ a normal subgroup.  As a $G/H$-spectrum, $\Phi^{H} X$ can be constructed as $(\tilde{E}\mathcal{F}[H] \wedge X)^H$, where $\tilde{E}\mathcal{F}[H]$ is the universal space of the family $\mathcal{F}[H]$ consisting of all subgroups that do not contain $H$.  In many cases, including $G$ cyclic, smashing with $\tilde{E}\mathcal{F}[H]$ is equivalent to inverting an Euler class $a_V \in \pi^{G}_{-V}S^0$ for $V$ a certain $G$-representation. In particular, the residue fixed points $(\Phi^H X)^{G/H}$ are equivalent to the fixed points $(a_{V}^{-1}X)^G$.

To define the localized slice spectral sequence, let $P^{\bullet}X$ be the regular slice tower of $X$ \cite{HHR}\cite{Ullman:Thesis}.  The \textit{$a_{V}$-localized slice spectral sequence} of $X$ is, by definition, the spectral sequence corresponding to the localized tower $\{a_{V}^{-1}P^{\bullet}X\}$.  It has $E_2$-page 
\[E_2^{s,t} = \underline{\pi}_{t-s} a_V^{-1} P^t_t X.\]
\begin{thm}\label{thm:introThm2}
    Let $X$ be a $C_{2^n}$-spectrum and $V$ be an actual $C_{2^n}$-representation.  Then the $a_V$-localized slice spectral sequence converges strongly to the homotopy groups $\underline{\pi}_{t-s}a_{V}^{-1}X$.
\end{thm}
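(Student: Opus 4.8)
The plan is to recognize the $a_V$-localized slice spectral sequence as the spectral sequence of a filtered spectrum and then verify Boardman's criteria for strong convergence. Applying the exact smashing localization $a_V^{-1}(-)$ to the slice tower $P^{\bullet}X$ yields the tower $\{a_V^{-1}P^t X\}_t$, whose $t$-th layer is $a_V^{-1}P^t_t X$; equivalently, one gets the decreasing filtration of $a_V^{-1}X$ whose $t$-th stage is the image of $a_V^{-1}P_{\geq t}X$, where $P_{\geq t}X := \operatorname{fib}(X \to P^{t-1}X)$ denotes the slice-$\geq t$ part of $X$. By Boardman's theorem, strong convergence to $\underline{\pi}_{\ast}a_V^{-1}X$ will follow once I show that this filtered spectrum is conditionally convergent and that the derived term $RE_\infty$ vanishes; I intend to obtain both from a single connectivity estimate. (The classical strong convergence of the unlocalized slice spectral sequence is the case $V = 0$, but it does not formally imply the localized statement, since localization need not preserve strong convergence of towers.)

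The exhaustiveness half is routine: $a_V^{-1}(-)$ is a filtered homotopy colimit and hence commutes with $\hocolim_{t\to-\infty}$, while the slice filtration of $X$ is exhaustive, i.e.\ $\hocolim_{t\to-\infty}P_{\geq t}X \simeq X$; therefore $\hocolim_{t\to-\infty}a_V^{-1}P_{\geq t}X \simeq a_V^{-1}X$, so the associated filtration on $\underline{\pi}_{\ast}a_V^{-1}X$ is exhaustive. The substantive half is completeness, $\operatorname{holim}_{t\to+\infty}a_V^{-1}P_{\geq t}X \simeq \ast$; here one cannot simply pull $a_V^{-1}(-)$ through the homotopy limit, since it is not limit-preserving and $a_V^{-1}S^0$ is not dualizable. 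The key lemma I would prove is that \emph{$a_V$-localization does not lower genuine connectivity}: for every $C_{2^n}$-spectrum $Y$ one has $\operatorname{conn}(a_V^{-1}Y)\ge\operatorname{conn}(Y)$ at the level of Mackey functors. This is exactly the point where it matters that $V$ is an \emph{actual} representation: writing $a_V^{-1}Y\simeq\hocolim_m\Sigma^{mV}Y$, each $\Sigma^{mV}Y=S^{mV}\wedge Y$ has connectivity at least $\operatorname{conn}(S^{mV})+\operatorname{conn}(Y)+1\ge\operatorname{conn}(Y)$, because $S^{mV}$ is the suspension spectrum of a based $C_{2^n}$-space and so is $(-1)$-connected, and the filtered homotopy colimit over $m$ preserves this bound. (For a virtual $V$ this estimate fails, and the localized slice spectral sequence can fail to converge.)

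Combining this lemma with the Hill--Hopkins--Ravenel connectivity estimate for the slice filtration---a slice-$\ge t$ spectrum has connectivity bounded below by roughly $t/2^n$, so $\operatorname{conn}(P_{\geq t}X)\to\infty$ as $t\to\infty$---gives $\operatorname{conn}(a_V^{-1}P_{\geq t}X)\to\infty$. Hence $\operatorname{holim}_{t\to+\infty}a_V^{-1}P_{\geq t}X\simeq\ast$, the filtration on each $\underline{\pi}_k a_V^{-1}X$ is finite from above, and $RE_\infty$ vanishes; together with exhaustiveness this is exactly Boardman's criterion, so the spectral sequence converges strongly to $\underline{\pi}_{t-s}a_V^{-1}X$. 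I expect the main obstacle to be precisely this completeness step: making the connectivity lemma rigorous in the genuine, Mackey-functor sense and for all subgroups of $C_{2^n}$ simultaneously, which is what legitimizes the interchange of localization with the homotopy limit that one is morally carrying out. A minor caveat: the exhaustiveness input $\hocolim_{t\to-\infty}P_{\geq t}X\simeq X$ is the statement that the slice filtration on $X$ is exhaustive, which holds in particular for the bounded-below spectra occurring in our applications.
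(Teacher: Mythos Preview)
Your approach is correct and essentially the same as the paper's: both hinge on the observation that smashing with $S^{mV}$ (for $V$ an actual representation) does not lower connectivity, combined with the HHR bound $\operatorname{conn}(P_{\geq t}X)\to\infty$, to obtain a vanishing line and hence strong convergence via Boardman. The paper packages this slightly differently---it works with the cotower $\{P^\bullet X\}$ rather than $\{P_{\geq t}X\}$, invokes the slice-connectivity statement $S^{\infty\lambda}\geq 0$ together with \cite[Propositions~4.26 and~4.40]{HHR} to get the vanishing line, and then separately verifies that the inverse limit of the localized tower is $a_V^{-1}X$---but the underlying idea is identical; your hedge on exhaustiveness is unnecessary, since $\hocolim_{t\to-\infty}P_{\geq t}X\simeq X$ holds for arbitrary $X$.
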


The localized slice spectral sequence serves as a bridge between the fixed points $X^G$ and the residue fixed points $(\Phi^H X)^{G/H}$.  More precisely, even though the localized slice spectral sequence only computes the geometric fixed points, its $E_2$-page is closely related to the original slice spectral sequence, which computes the fixed points.  From now on, we will denote the regular slice spectral sequence and the $a_V$-localized slice spectral sequence of $X$ by $\SliceSS(X)$ and $a_{V}^{-1}\SliceSS(X)$, respectively.
The following theorem directly follows from computations of the homotopy groups of $H\UZ$ \cite[Section~3]{HHR:C4}.

\begin{thm}\label{intro:iso}
    Let $X$ be a $(-1)$-connected $C_{2^n}$-spectrum whose slices are wedges of the form $C_{2^n+} \wedge_{C_{2^k}} \Sigma^{i\rho_k} H\UZ$, and $\lambda$ be the 2-dimensional real $C_{2^n}$-representation that is rotation by $\frac{\pi}{2^{n-1}}$. Then the localizing map
    \[
        \SliceSS(X) \longrightarrow a_{\lambda}^{-1}\SliceSS(X)
    \]
    induces an isomorphism on the $E_2$-page for classes whose filtration is greater than $0$.  On the $0$-line, this map is surjective, with kernel consisting of elements in the image of the transfer $Tr_{e}^{C_{2^n}}$.
\end{thm}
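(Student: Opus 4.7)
The plan is to analyze the localization map slice by slice. Since both the slice tower and $a_{\lambda}^{-1}(-)$ commute with wedges, the map of $E_2$-pages decomposes as a direct sum of maps, one for each wedge summand $C_{2^n+} \sm_{C_{2^k}} \Sigma^{i\rho_k} H\UZ$ of the slices $P_t^t X$. Using the compatibility of localization with induction,
\[
a_\lambda^{-1}\bigl(C_{2^n+} \sm_{C_{2^k}} Y\bigr) \simeq C_{2^n+} \sm_{C_{2^k}} (\res_{C_{2^k}}^{C_{2^n}} a_\lambda)^{-1} Y,
\]
one reduces to understanding, on each summand, the effect of inverting $\res_{C_{2^k}}^{C_{2^n}} a_\lambda$ on the $C_{2^k}$-Mackey functor $\underline{\pi}_\star \Sigma^{i\rho_k} H\UZ \cong \underline{\pi}_{\star - i\rho_k} H\UZ$, and then inducing back up.

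First I would handle the case $k = 0$. Here $\res_e^{C_{2^n}} \lambda$ is a trivial $2$-dimensional representation, so $\res_e^{C_{2^n}} a_\lambda$ is nullhomotopic and the localization annihilates the entire summand. The Mackey functor $\underline{\pi}_*(C_{2^n+} \sm \Sigma^i H\UZ)$ is concentrated in degree $i$, corresponding to the $s = 0$ line at slice filtration $t = i$; as an induced Mackey functor from the trivial subgroup, every element at the top level lies in the image of $\Tr_e^{C_{2^n}}$. These $k = 0$ summands therefore contribute exactly to the kernel on the $0$-line described in the statement.

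Next I would treat the case $k \geq 1$. The restriction $\res_{C_{2^k}}^{C_{2^n}} \lambda$ is a nontrivial $2$-dimensional rotation representation of $C_{2^k}$, namely $2\sigma$ when $k = 1$ and the standard primitive rotation $\lambda_{C_{2^k}}$ when $k \geq 2$. The relevant $E_2$-entries at slice filtration $t = i \cdot 2^k$ are inductions of the Mackey functors $\underline{\pi}_{-s - i\bar\rho_k} H\UZ$ with $s \geq 0$, where $\bar\rho_k = \rho_k - 2^k$. Invoking the explicit description of $\underline{\pi}_\star H\UZ$ from \cite[Section~3]{HHR:C4}, all such degrees lie in the portion of the Mackey functor where multiplication by any nontrivial Euler class acts invertibly (the ``negative cone'' and its boundary along the $0$-line). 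Consequently the localization map is an isomorphism on these summands for every $s \geq 0$.

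Combining the two cases, the map is an isomorphism on every positive filtration (where only $k \geq 1$ summands contribute) and on the $0$-line it is surjective with kernel the sum of $k = 0$ contributions, i.e. the image of $\Tr_e^{C_{2^n}}$. The main obstacle I anticipate is the $k \geq 1$ case: one must verify the invertibility of $\res_{C_{2^k}}^{C_{2^n}} a_\lambda$ on the specified Mackey-functor degrees by tracking how the restricted Euler class factors into Euler classes of irreducible subrepresentations of $C_{2^k}$ and confirming, using \cite[Section~3]{HHR:C4}, that each factor acts invertibly on the relevant region.
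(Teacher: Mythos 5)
Your decomposition into slice summands and reduction to stabilizer subgroups is essentially the paper's strategy; the paper carries out the same analysis via the cellular chain complexes of the representation spheres appearing in the slice cells, citing the same reference \cite[Section~3]{HHR:C4}. Your $k = 0$ case is correct. However, the treatment of the $k \geq 1$ summands has a gap precisely where the theorem has content, namely on the $0$-line.

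Concretely, at filtration $s = 0$ the relevant degree is $-i\bar\rho_k = i(2^k - 1) - i\cdot(\text{nontrivial part of } \rho_k)$, which has strictly positive fixed part $i(2^k - 1)$ for $i \geq 1$. This is \emph{not} in the negative cone of $\underline{\pi}_\star H\UZ$: the $C_{2^k}/C_{2^k}$-level there is a copy of $\ZZ$ generated by orientation ($u$-) classes, and multiplication by the restricted Euler class is not invertible. After inverting $\res_{C_{2^k}}^{C_{2^n}} a_\lambda$, the relation $2a_\sigma = 0$ (equivalently, the gold relation) forces this $\ZZ$ to collapse to $\ZZ/2^k$ -- the simplest instance is the $0$-slice $H\UZ$ of $\BPG$ itself (a $k = n$ summand), where $\pi^{G}_0(a_\lambda^{-1}H\UZ) \cong \ZZ/2^n$. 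The kernel $2^k\ZZ$ is exactly the image of $\Tr_e^{C_{2^k}}$, i.e.\ of $\Tr_e^{C_{2^n}}$ on this induced Mackey functor. So the kernel on the $0$-line is the sum of the $k = 0$ contributions \emph{together with} these transfer subgroups inside every $k \geq 1$ summand, not just the former. Your claim of an isomorphism at $s = 0$ for $k \geq 1$ would in fact contradict the theorem, which claims only surjectivity there. Restricting your argument to $s > 0$ and supplying the above analysis at $s = 0$ would repair the proof.
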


By the slice theorem \cite[Theorem~6.1]{HHR}, the $C_{2^n}$-norms of $\MUR$ and $\BPR$ both satisfy the conditions of Theorem~\ref{intro:iso}.

An upshot of Theorem~\ref{intro:iso} is that despite the fact that the fixed points are harder to compute than the geometric fixed points, if we already know the differentials in the localized slice spectral sequence, then we can use the isomorphism on the $E_2$-page given by Theorem~\ref{intro:iso} to recover differentials in the original slice spectral sequence.  This allows us to approach the computation of the fixed points $X^G$ from the residue fixed points $(\Phi^{H}X)^{G/H}$. 

A subtlety that arises from the localized slice spectral sequence is its compatibility with multiplicative structures.  More precisely, let $R$ be a connective $G$-commutative ring spectrum.  Ullman \cite{Ullman:Thesis} has shown that the slice tower of $R$ is multiplicative.  Therefore, the corresponding slice spectral sequence has all the desired multiplicative properties such as the Leibniz rule, the Frobenius relation \cite[Definition~2.3]{HHR:C4}, and most importantly, the norm \cite[Corollary~4.8]{HHR:C4}.  On the other hand, the localization $a_{V}^{-1}R$ can never be a $G$-commutative ring spectrum because its underlying spectrum is contractible. 

To establish multiplicative properties for the localizations, we apply the theory of $N_{\infty}$-operads from \cite{BlumbergHill}.  More precisely, in Section \ref{sec:MultLoc}, we establish a criterion generalizing the results of \cite{HillHopkins} and \cite{Boehme}.  As a consequence, we obtain the following theorem, which shows that $a_V$-localization preserves algebra structures over a certain $N_{\infty}$-operad $\mathcal{O}$ that depends on the class $a_{V}$.

\begin{thm}\label{thm:introMult}
Let $V$ be a $G$-representation. Assume that $\ind_K^H\Res_K^GV$ is a summand of a multiple of $\Res_H^GV$ for every $K\subset H \subset G$ such that $H/K$ is an admissible $H$-set. Then localization at $a_V$ preserves $\cO$-algebras.     
\end{thm}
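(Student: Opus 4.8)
The plan is to reduce the preservation of $\cO$-algebra structures under $a_V$-localization to a statement purely about which norm maps survive, and then to verify those norm maps via the representation-theoretic hypothesis on $V$. I would begin by recalling the framework of Blumberg--Hill: an $N_\infty$-operad $\cO$ is determined (up to the relevant equivalence) by its family of \emph{admissible sets}, and an $\cO$-algebra $R$ is exactly an $E_\infty$-ring in the underlying category together with multiplicative norm maps $N_K^H \Res_K^G R \to \Res_H^G R$ for every admissible $H$-set $H/K$. Localizing an $\cO$-algebra $R$ at $a_V$ means forming $a_V^{-1}R = \colim(R \xrightarrow{a_V} \Sigma^V R \xrightarrow{a_V} \cdots)$; this is automatically an $E_\infty$-ring since $a_V$ is a non-equivariant... no, $a_V$ is an equivariant self-map, but localization at a homotopy element always preserves $E_\infty$-structures on the underlying category $\Sp_G$. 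So the content is entirely in checking that the norm maps extend over the localization.

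The key step is the following: for an admissible $H$-set $H/K$, the norm map $N_K^H \Res_K^G R \to \Res_H^G R$ should factor through $a_{\Res_H^G V}^{-1}$ after applying $\Res_K^G$-localization on the source, i.e. one needs $N_K^H$ of the $a$-localization of $\Res_K^G R$ to again be an $a_{\Res_H^G V}^{-1}$-local object, or at least to admit a compatible map. The mechanism, following Hill--Hopkins and B\"ohme, is that $N_K^H(a_{\Res_K^G V}^{-1} M) \simeq a_{\ind_K^H \Res_K^G V}^{-1} N_K^H M$ up to the usual diagonal/norm formula for Euler classes: $N_K^H a_W = a_{\ind_K^H W}$ in $\pi^H_\star$. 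Therefore the localized norm target involves inverting $a_{\ind_K^H \Res_K^G V}$, and for the norm map into $\Res_H^G R$ to be compatible with inverting $a_{\Res_H^G V}$, it suffices that inverting $a_{\Res_H^G V}$ already inverts $a_{\ind_K^H \Res_K^G V}$ — which is exactly guaranteed by the hypothesis that $\ind_K^H \Res_K^G V$ is a summand of a multiple of $\Res_H^G V$, since $a_{U \oplus U'} = a_U a_{U'}$ and $a_{nW} = a_W^n$, so $a_{\Res_H^G V}^m = a_{\ind_K^H \Res_K^G V} \cdot a_{U'}$ for the complementary summand $U'$, making $a_{\ind_K^H \Res_K^G V}$ invertible after inverting $a_{\Res_H^G V}$.

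Concretely I would organize the argument as: (1) state the localization $a_V^{-1}(-)$ as a Bousfield localization / smashing localization on $\Sp_G$, identify the local objects, and note $E_\infty$-structures pass through; (2) prove the key Euler-class norm identity $N_K^H a_{\Res_K^G V} \doteq a_{\Res_H^G V}$-locally-invertible, deducing that $\Res_H^G$ of an $a_V$-local object, after applying $N_K^H$ at a subgroup, lands in the appropriate local category; (3) assemble these to show the norm maps of $R$ induce norm maps on $a_V^{-1}R$, using that $\Res$ commutes with the relevant colimits and that geometric-fixed-point-type localizations are smashing so they commute with $N_K^H$ in the needed sense; (4) package this as: the forgetful functor from $a_V^{-1}$-local $\cO$-algebras to $\cO$-algebras is fully faithful with essential image the $a_V$-local ones, hence $a_V^{-1}(-)$ restricts to a functor $\cO\text{-alg} \to \cO\text{-alg}$. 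The main obstacle I anticipate is step (2)-(3): making precise the interaction between the norm functor $N_K^H$ — which is only lax (indeed only oplax?) symmetric monoidal in a subtle way and does \emph{not} commute with arbitrary colimits — and the filtered colimit defining $a_V^{-1}$. One must use that $N_K^H$ does preserve the specific filtered colimits appearing here (sequential colimits along the Euler class self-maps, which after applying $N_K^H$ become colimits along $a_{\ind_K^H W}$ up to cofinality and the diagonal formula), and this is precisely where the results of Hill--Hopkins and B\"ohme that the theorem claims to generalize get invoked and extended; getting the hypothesis on $V$ to be exactly what is needed, no more and no less, is the delicate point.
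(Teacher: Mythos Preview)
Your proposal has the right computational core --- the identity $N_K^H a_W = a_{\ind_K^H W}$ and the observation that the summand hypothesis makes $a_{\ind_K^H\Res_K^G V}$ a divisor of a power of $a_{\Res_H^G V}$ are exactly what the paper uses --- but the paper takes a cleaner route to package this. Rather than directly constructing the extended norm maps on the localization (your steps (3)--(4)), the paper invokes a black-box criterion of Guti\'errez--White \cite[Corollary~7.10]{GW18}: localization at $x$ preserves $\cO$-algebras if and only if $N_K^H\Res_K^G$ preserves $x$-equivalences for every admissible $H/K$. This reduces the operadic question to a purely module-theoretic one about the sphere. The paper then shows (Lemma~\ref{lem:NormResSphere}) that $N_K^H\Res_K^G(x^{-1}\mathbb{S}) \simeq (N_K^H\Res_K^G x)^{-1}\mathbb{S}_H$, whence the criterion is equivalent to $N_K^H\Res_K^G(x)$ dividing a power of $\Res_H^G(x)$. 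Specializing to $x = a_V$ gives the theorem immediately.

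What the paper's approach buys is that the ``main obstacle'' you correctly identify --- the delicate interaction of $N_K^H$ with the filtered colimit defining the localization, and the operadic bookkeeping of extending norms coherently --- is absorbed entirely into the cited result of \cite{GW18}. Your approach would work, but you would essentially be reproving a special case of that criterion inside the argument. One small correction: your parenthetical worry that $N_K^H$ might be ``only oplax'' is misplaced --- the norm is strong symmetric monoidal and does commute with filtered homotopy colimits (as the paper notes, citing \cite[Propositions~A.53, B.89]{HHR}); the subtlety is not there but in assembling the norms into a coherent $\cO$-algebra structure, which is what the Guti\'errez--White machinery handles.
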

Therefore, the homotopy of the $a_{V}$-localization of an equivariant commutative ring spectrum such as $\MUn$ forms an incomplete Tambara functor \cite{Blumberg-Hill:incomplete}, and the norm maps essential to our computation are still available.  In Section \ref{sec-norm}, we draw consequences of the behavior of norms in the localized slice spectral sequence.

Aside from the localized slice spectral sequence $a_\lambda^{-1}\SliceSS(X)$, the $G/H$-slice spectral sequence of $\Phi^H X$ also computes the residue fixed points $(\Phi^{H}X)^{G/H}$.  Even though both spectral sequences compute the same homotopy groups, their behaviors can be very different.  Surprisingly, we have the following theorem, which shows that after a modification of filtrations, there is map between the two spectral sequences. 

\begin{thm}\label{thm:introSSmap}
    Let $X$ be a $C_{2^n}$-spectrum, then there is a canonical map of spectral sequences
    \[
    a_{\lambda}^{-1}\SliceSS^{C_{2^n}} (X) \rightarrow P_{C_{2^n}/C_2}^*\D \SliceSS^{C_{2^n}/C_2}(\Phi^{C_2}X)
    \]
    that converges to an isomorphism in homotopy groups. Here $\D$ is the doubling operation defined in \cref{sec:compspec}, which slows down a tower by a factor of $2$, and $P_{C_{2^n}/C_2}^*$ is the pullback functor from \cite{Hill:Slice}, which is recalled in \cref{sec:GeometricFixedPoints}.
\end{thm}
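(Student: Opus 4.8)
The plan is to construct the map of spectral sequences at the level of towers and then identify its abutment. First I would recall that the $a_\lambda$-localized slice tower $\{a_\lambda^{-1}P^\bullet X\}$ has the property that smashing with $\tilde E\mathcal{F}[C_2]$ and taking $C_2$-fixed points is already ``baked in'': by the discussion preceding Theorem~\ref{thm:introThm2}, inverting $a_\lambda$ on a $C_{2^n}$-spectrum is equivalent to smashing with $\tilde E\mathcal{F}[C_2]$ (since $\lambda$ restricts to a faithful $C_2$-representation, $a_\lambda$ detects the family $\mathcal{F}[C_2]$), so that $(a_\lambda^{-1}P^\bullet X)^{C_2} = \Phi^{C_2}(P^\bullet X)$ as a tower of $C_{2^n}/C_2$-spectra. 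Thus the $a_\lambda$-localized slice tower of $X$, viewed through its $C_2$-geometric fixed points, is precisely the tower $\{\Phi^{C_2}P^\bullet X\}$ of $C_{2^n}/C_2$-spectra. The claim then reduces to comparing this tower with the (reindexed, via $\D$ and pulled back via $P^*_{C_{2^n}/C_2}$) slice tower of $\Phi^{C_2}X$.

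The key input is that $\Phi^{C_2}$ is a monoidal, colimit-preserving functor, so it sends the slice cells $C_{2^n+}\wedge_{C_{2^k}}S^{m\rho_k}$ (with $k\geq 1$) to $C_{2^n}/C_2{}_+ \wedge_{C_{2^k}/C_2} S^{m\rho_{k-1}}$, doubling the slice dimension — this is where the factor-of-$2$ slowdown $\D$ comes from, and where the pullback $P^*_{C_{2^n}/C_2}$ enters, since $\Phi^{C_2}$ of an induced-from-$C_{2^k}$ cell with $k\geq 1$ lands in the image of inflation from $C_{2^{n-1}}$-spectra in the appropriate sense. I would make this precise by showing that $\{\Phi^{C_2}P^\bullet X\}$ receives a canonical map from, or maps canonically to, the reindexed slice tower of $\Phi^{C_2}X$: concretely, the universal property of the slice tower gives a comparison map because $\Phi^{C_2}P^{\geq t}X$ is $\D$-slice-$\geq$ something and $\Phi^{C_2}P_{<t}X$ is correspondingly coconnective, after applying $\D$ and $P^*_{C_{2^n}/C_2}$. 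The map of spectral sequences is then induced on associated graded.

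For the statement about abutment, I would argue that both towers converge (the localized one by Theorem~\ref{thm:introThm2}, the slice tower of $\Phi^{C_2}X$ by the general convergence of the slice spectral sequence for bounded-below spectra, noting $\D$ and $P^*$ do not affect convergence) and that the map on abutments is the identity on $\underline{\pi}_* (\Phi^{C_2}X)^{C_{2^n}/C_2} \cong \underline{\pi}_* a_\lambda^{-1}X$ — the last isomorphism being exactly the content that $(a_\lambda^{-1}X)^{C_{2^n}} \simeq (\Phi^{C_2}X)^{C_{2^n}/C_2}$ from the preceding subsection. Here one must check that the reindexing maps $\D$ and $P^*_{C_{2^n}/C_2}$ are compatible with the identification of the limits, which is formal since these operations only relabel filtration degrees and change the ambient group without altering the underlying inverse system of spectra.

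The main obstacle I anticipate is the bookkeeping in the middle step: matching the slice filtration degree of $\Phi^{C_2}P^t_t X$ with the (doubled) slice filtration degree of the corresponding layer of the slice tower of $\Phi^{C_2}X$, and verifying that $\Phi^{C_2}$ applied to the slice tower is \emph{compatible} with the slice tower of $\Phi^{C_2}X$ after reindexing — i.e.\ that one really has a map of towers and not merely a level-wise comparison. This requires knowing how $\Phi^{C_2}$ interacts with slice-connectivity, in particular that $\Phi^{C_2}$ of a slice-$\geq n$ spectrum is $\D$-slice-$\geq n$ in the residual category; this is plausible from the structure of slice cells but needs the explicit cellular description (available here since we are ultimately interested in $X = \BPn$ and its relatives, whose slices are known), together with the behavior of $\Phi^{C_2}$ on $H\UZ$ as recorded in \cite[Section~3]{HHR:C4}. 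Once that compatibility is in hand, the rest is formal.
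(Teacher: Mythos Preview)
Your overall strategy is sound and essentially dual to the paper's, but there is one unnecessary restriction and one slip worth flagging.

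First, the slip: when $\Phi^{C_2}$ is applied to a slice cell $G_+\wedge_{C_{2^k}}S^{m\rho_{2^k}}$ with $k\geq 1$, the resulting cell $(G/C_2)_+\wedge_{C_{2^{k-1}}}S^{m\rho_{2^{k-1}}}$ has slice dimension $m\cdot 2^{k-1}$, which is \emph{half} the original dimension $m\cdot 2^k$, not double. This halving is precisely what forces the reindexing $\D$ on the target tower. You have the right picture, just the wrong word.

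Second, and more importantly, you do not need to know the slices of $X$ to run the argument; your hedge that the cellular description is ``available here since we are ultimately interested in $X=\BPn$'' is unnecessary. Slice-connectivity is \emph{defined} by generation under slice cells, so the fact that $\Phi^{C_2}$ halves (or kills) every slice cell already shows that $\Phi^{C_2}$ takes any $\geq n$ object to a $\geq \lceil n/2\rceil$ object, for arbitrary $X$.

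The paper organizes the argument in the adjoint direction: rather than showing $\Phi^{C_2}$ halves slice connectivity on the source, it shows directly that $P^*_{G/C_2}P^iY \leq 2i$ in $G$-spectra. This is tested against non-induced regular cells $S^{k\rho_G}$ (reducing to these by induction on the group), and the key computational input is the equivalence $\widetilde{E}G\wedge S^{k\rho_G}\simeq P^*_{G/C_2}S^{k\rho_{G/C_2}}$ together with full faithfulness of $P^*_{G/C_2}$. This buys a direct construction of the map $P^\bullet X \to P^*_{G/C_2}\D P^\bullet Y$ via the universal property of the slice \emph{co}tower, and the factorization through $a_\lambda^{-1}P^\bullet X$ is then automatic because the target is $a_\lambda$-local. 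Your route via $\Phi^{C_2}$ of the source tower would work too, but the paper's formulation avoids having to say anything about the coconnectivity of $\Phi^{C_2}P^{<t}X$, which is the trickier half of what you outlined.
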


In the second half of the paper, as an application of all the tools that we have developed, we will use the localized slice spectral sequence to analyze the norms of $MU_{\RR}$.

\subsection{Norms of Real bordism and the Segal conjecture}\hfill\\

The Segal conjecture is a deep result in equivariant homotopy theory. In its original formulation, it was proven by Lin \cite{Lin} for the group $C_2$ and by Carlsson \cite{Carlsson:Segal} for all finite groups, building on the works of May--McClure \cite{MayMcClure} and Adams--Gunawardena--Miller \cite{AdamsGunawardenaMiller}.  When the group is $C_2$, the most general formulation can be found in Lun{\o}e-Nielsen--Rognes \cite{LNR} and Nikolaus--Scholze \cite{NikolausScholze}: for every bounded below spectrum $X$, the Tate diagonal map $X \to (N_1^2X)^{tC_2}$ is a $2$-adic equivalence.

We are interested in the case when $X = H\FF_2$, the mod $2$ Eilenberg--Mac Lane spectrum. This case is intriguing for at least two reasons: first, Nikolaus--Scholze \cite{NikolausScholze} show that the general formulation follows formally from this case.  Second, even though the Segal conjecture implies the equivalence $H\FF_2 \simeq (N_1^2 H\FF_2)^{tC_2}$, this is still a mystery from a computational perspective. 

More precisely, the Tate spectral sequence computing $(N_1^2 H\FF_2)^{tC_2}$ has $E_2$-page $\hat{H}^*(C_2;\mathcal{A}_*)$, the Tate cohomology of the dual Steenrod algebra $\mathcal{A}_*$ with the conjugate $C_2$-action.  This cohomology is highly nontrivial and we currently don't even have a closed formula \cite{Bruner:Tate}.  However, because of the equivalence $H\FF_2 \simeq (N_1^2 H\FF_2)^{tC_2}$ given by the Segal conjecture, every element besides $1 \in \FF_2 \cong \hat{H}^0(C_2;(\mathcal{A}_*)_0)$ must either support or receive a differential. 

Understanding equivariant equivalences from a computational perspective can be extremely useful.  For example, in the case of $\BPR$ and its norms, it is relatively straightforward to establish the equivalence $\Phi^{C_{2^n}}\BPn \simeq\Phi^{C_2}\BPR \simeq H\FF_2$.  By working backwards, Hill--Hopkins--Ravenel used this equivalence to prove a family of differentials in the slice spectral sequence of $\BPn$ \cite[Theorem~9.9]{HHR}, from which their Periodicity Theorem and eventually the nonexistence of the Kervaire invariant elements followed. 

By \cref{thm:introthm1}, we have a $C_2$-equivalence 
\[
\Phi^{C_2}\BPfour \simeq N_1^2 H\FF_2.
\]
For the left hand side, we can use the localized slice spectral sequence to compute the $C_2$-fixed points of $\Phi^{C_2}\BPfour$.  We demonstrate this computation in a range (\cref{thm:htpy}).  Note that we can actually compute much further than the range we have shown, but the point is to give the readers a taste of the computations involved and to draw comparisons to the slice spectral sequence computations in \cite{HHR, HHR:C4, HSWX}. 

After demonstrating these computations, we use \cref{thm:introSSmap} to establish a map between the slice spectral sequence of $\BPfour$ and the Tate spectral sequence of $N_1^2H\FF_2$.  We prove that this map establishes a correspondence between families of differentials in the two spectral sequences.  

\begin{thm}\label{thm:introTateSS}
    After the $E_2$-page, the Hill--Hopkins--Ravenel slice differentials \cite[Theorem~9.9]{HHR} are in one-to-one correspondence to a family of differentials on the first diagonal of slope $(-1)$ in the Tate spectral sequence of $N_1^2 H\FF_2$.  This completely determines all the differentials in the Tate spectral sequence that originate from the first diagonal of slope $(-1)$. 
\end{thm}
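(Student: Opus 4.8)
The plan is to leverage \cref{thm:introSSmap}, which provides a filtration-shifted map of spectral sequences
\[
a_{\lambda}^{-1}\SliceSS^{C_4}(\BPfour) \longrightarrow P_{C_4/C_2}^*\D\SliceSS^{C_2}(\Phi^{C_2}\BPfour)
\]
converging to an isomorphism on homotopy groups, together with the identification $\Phi^{C_2}\BPfour \simeq N_1^2 H\FF_2$ coming from \cref{thm:introthm1}, and the comparison (via the standard norm cofiber sequence / isotropy separation) between $\SliceSS^{C_2}(\Phi^{C_2}\BPfour)$ as a $C_2$-slice spectral sequence, its homotopy fixed point version, and the Tate spectral sequence for $N_1^2H\FF_2$. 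First I would combine these with \cref{intro:iso}: since $\BPfour$ satisfies the hypotheses of that theorem (its slices are wedges of $C_{4+}\wedge_{C_{2^k}}\Sigma^{i\rho_k}H\UZ$ by the slice theorem), the localizing map $\SliceSS^{C_4}(\BPfour)\to a_{\lambda}^{-1}\SliceSS^{C_4}(\BPfour)$ is an isomorphism on $E_2$ in positive filtration and surjective on the $0$-line with kernel the image of $\Tr_e^{C_4}$. Thus a differential $d_r$ in $\SliceSS^{C_4}(\BPfour)$ whose source has positive filtration — which is the case for every HHR differential in \cite[Theorem~9.9]{HHR}, as these emanate from powers of the classes $\overline{r}_i$ in strictly positive slice filtration — is detected faithfully in $a_{\lambda}^{-1}\SliceSS^{C_4}(\BPfour)$.

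Next I would transport each such localized differential across the map of \cref{thm:introSSmap} into $P_{C_4/C_2}^*\D\SliceSS^{C_2}(N_1^2H\FF_2)$. Here the doubling operation $\D$ and the pullback $P_{C_4/C_2}^*$ are bookkeeping: $\D$ rescales the slice filtration by $2$ so that a slice $d_r$ becomes a $d_{2r-1}$ (or thereabouts — the precise shift is what the doubling functor is engineered to produce), and $P_{C_4/C_2}^*$ re-indexes a $C_2$-spectral sequence as a $C_4$-one without changing differentials. Because the map converges to an isomorphism, a differential upstairs forces a matching differential downstairs on the corresponding classes, and conversely any differential downstairs that has a nonzero preimage must come from upstairs. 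I would then pass from the $C_2$-slice spectral sequence of $N_1^2H\FF_2$ to its Tate spectral sequence. The key input is that $N_1^2 H\FF_2$ is bounded below, so isotropy separation identifies, in the relevant region, the $C_2$-slice (equivalently, homotopy fixed point) spectral sequence with the Tate spectral sequence after inverting the relevant Euler class $a_\sigma$; under this identification the image of $a_\lambda$ on the $C_4$ side corresponds precisely to the periodicity generator of Tate cohomology, so the HHR differentials land on the first negative diagonal — the diagonal of slope $(-1)$ — of $\widehat{H}^*(C_2;\mathcal{A}_*)$.

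For the ``one-to-one'' and ``completely determines'' clauses I would argue in both directions. Injectivity of the correspondence: distinct HHR differentials are distinguished by their source's $RO(C_4)$-degree and filtration, and since all the maps in play are the identity on differentials up to the explicit reindexing by $\D$ and $P_{C_4/C_2}^*$, they remain distinct in the Tate spectral sequence. Surjectivity onto the slope-$(-1)$ diagonal differentials: here I would use the Segal-conjecture equivalence $H\FF_2\simeq (N_1^2H\FF_2)^{tC_2}$, which forces every class on that diagonal other than the unit to support or receive a differential; combined with a vanishing-line / sparseness input in the localized slice spectral sequence (only the HHR differentials are available as sources in the relevant filtrations, a consequence of the slice $E_2$-page computation in \cite{HHR:C4} and \cref{thm:htpy}), this pins down every differential originating on that diagonal as the image of an HHR differential. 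The main obstacle I anticipate is precisely the matching of filtrations and the verification that no \emph{other} differentials (e.g.\ ones created by the localization, or ``crossing'' differentials from higher diagonals landing on the first diagonal) interfere — in other words, checking that the correspondence is not merely a map but a bijection requires controlling the whole slope-$(-1)$ diagonal, and this is where the explicit range computation of $a_\lambda^{-1}\SliceSS(\BPfour)$ and a careful Euler-class bookkeeping for the doubling functor do the real work.
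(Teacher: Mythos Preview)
Your overall framework is right --- the comparison square built from \cref{thm:introSSmap} and the identification $\Phi^{C_2}\BPfour\simeq N_1^2H\FF_2$ is exactly what the paper uses. But the step where you say ``because the map converges to an isomorphism, a differential upstairs forces a matching differential downstairs on the corresponding classes'' is not a valid inference, and this is where the real content of the proof lies. A map of spectral sequences converging to an isomorphism does \emph{not} preserve the length of differentials: a class that is killed by a $d_r$ in the source might map to a class killed by a shorter differential in the target, and you have given no mechanism to rule this out. You yourself flag this as ``the main obstacle'', but you do not supply the idea that resolves it.

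The paper's argument (see \cref{thm:TateDiff}) proceeds differently. First, one checks by a direct combinatorial computation in $\hat H^0(C_2;\mathcal A_*)$ that the classes $N(\xi_k)=\xi_k\zeta_k$ are nonzero (\cref{prop:power}), so the putative target $N(\xi_k)a_\sigma^{2^{k+1}-1}$ is actually present on the Tate $E_2$-page. Since nothing lies below $u_\sigma^{2^k}$, any differential killing this target has length at most $2^{k+2}-3$. The crucial step is then to exclude a \emph{shorter} differential $d_l(x)=N(\xi_k)a_\sigma^{2^{k+1}-1}$ with $l<2^{k+2}-3$: one observes that such an $x$ has high enough filtration that $xa_\sigma^{-1}$ still lives in the homotopy fixed point spectral sequence, whence $N(\xi_k)a_\sigma^{2^{k+1}-2}$ would vanish in $\pi^{C_2}_\star N_1^2H\FF_2$. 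This contradicts \cref{prop-cycle}, which shows (by comparison with $a_\sigma^{-1}\SliceSS(\BPfour)$, where the HHR differentials are the \emph{only} differentials) that $N(\ot_k)a_\sigma^i$ is a nonzero permanent cycle for $0\le i<2^{k+1}-1$. Your proposal contains neither the Tate-cohomology nontriviality check nor the $a_\sigma$-division trick linking Tate and HFPSS nor the input from \cref{prop-cycle}; the ``vanishing-line / sparseness'' allusion is not a substitute for these. (A minor point: the HHR differentials originate on $u_{2\sigma}^{2^k}$ in filtration~$0$, not from ``powers of $\overline r_i$ in strictly positive filtration'' --- though \cref{intro:iso} still applies since $u_{2\sigma}^{2^k}$ is not a transfer from the trivial group.)
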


In the future, we wish to reverse the flow of information: to prove slice differentials from spectral sequences associated to $N_1^2H\FF_2$.  Computations along this line appear in \cite{BHLSZ} and will be refined in an upcoming article by the same authors.  There are various methods to study the norms of $H\FF_2$ and their modules, such as the modified Adams spectral sequence \cite{Rav:Cyclic, BBLNR} and the descent spectral sequence \cite{HahnWilsonSegal}.  These methods allow one to understand modules over norms of $H\FF_2$ and $\BPR$ from different perspectives.  

It is worth noting that in another direction, one can prove the $C_2$-Segal conjecture by showing that $N_2^4 MU_{\RR}$ is cofree and using \cref{thm:introthm1}. This approach is taken by Carrick in \cite{Carrick}.

\cref{thm:introTateSS} has an unexpected consequence. Let $R$ be an arbitrary non-equivariant $(-1)$-connected homotopy ring spectrum with $\pi_0 R\cong \ZZ$ (or a localization thereof not containing $\frac12$). We can use the (stable) EHP spectral sequence and the Tate spectral sequence of $N_1^2 H\FF_2$ to bound the length of differentials on powers of the Tate generator in the Tate spectral sequence of $N_1^2 R$. 

\begin{thm}
    Let $v \in \hat{H}^{2}(C_2;\pi_0N_1^2 R)$ be the generator of the Tate cohomology, and $l_k$ be the length of differential that $v^{2^k}$ supports in the Tate spectral sequence of $N_1^2 R$. Then 
        \[
        \rho(2^{k+1}) \leq l_k \leq 2^{k+2}-1,
        \]
    where $\rho(n)$ is the Radon-Hurwitz number.
\end{thm}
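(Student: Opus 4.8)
The plan is to combine two spectral-sequence comparisons: the Tate spectral sequence for $N_1^2 R$, which behaves like the one for $N_1^2 H\FF_2$ as long as $\pi_0 R$ is $\ZZ$ (or an appropriate localization), and the stable EHP spectral sequence, which controls when a sphere class of the form $\eta^{?}$ desuspends and hence governs the length of the relevant Tate differentials through the Radon--Hurwitz (``vector fields on spheres'') combinatorics. More precisely, by \cref{thm:introTateSS} we already understand, for $R = H\FF_2$ (and hence $R=\ZZ_2$), exactly the differentials on powers of the Tate generator $v$: the differential supported by $v^{2^k}$ corresponds to a Hill--Hopkins--Ravenel slice differential, and its length records a desuspension fact in the EHP tower. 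The key point is that for a general $(-1)$-connected ring spectrum $R$ with $\pi_0 R \cong \ZZ$, the subalgebra of $\pi_* N_1^2 R$ detected in low Tate filtration, and in particular the behavior on $v^{2^k}$, only sees $\pi_0$ and the image of the Hurewicz map from the sphere, so the differentials are controlled entirely by facts about the stable stems that the EHP spectral sequence encodes.

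The first step is to set up the Tate spectral sequence $\hat{H}^*(C_2; \pi_* N_1^2 R) \Rightarrow \pi_*(N_1^2 R)^{tC_2}$ and identify the class $v \in \hat{H}^2(C_2; \pi_0 N_1^2 R)$ as the image of the corresponding generator for $\ZZ$ under the unit $\ZZ \to R$; since $\pi_0 R \cong \ZZ$ (or the stated localization, not containing $\tfrac12$), this unit is an iso on $\pi_0$ and the Tate cohomology of $\pi_0$ is unchanged, so $v$ and all its powers are genuinely present. The second step is the \emph{upper bound} $l_k \le 2^{k+2}-1$: here I would argue that after the appropriate differential, the contribution of $v^{2^k}$ must die because $(N_1^2 R)^{tC_2}$ is bounded below (Tate vanishing / Segal-conjecture-type input applied to $R$ rather than $H\FF_2$ — but one only needs that the Tate construction is bounded in the relevant range, which follows from $R$ being connective with $\pi_0 = \ZZ$), and a filtration-counting argument using the periodicity of $\hat{H}^*(C_2;-)$ and the sparsity of classes forces the target of the differential off $v^{2^k}$ to lie within $2^{k+2}-1$ filtrations; the number $2^{k+2}-1$ is exactly the gap to the next ``available'' landing spot, mirroring the $H\FF_2$ computation in \cref{thm:introTateSS}. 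The third, and more delicate, step is the \emph{lower bound} $l_k \ge \rho(2^{k+1})$: this is where the stable EHP spectral sequence enters. The differential on $v^{2^k}$ cannot be shorter than a certain length because a shorter differential would produce, via the comparison map of \cref{thm:introSSmap}-type machinery specialized to $N_1^2 R$, a nonzero class in a stable stem that is known to vanish — equivalently, it would assert a desuspension in the EHP tower that contradicts the vector-fields-on-spheres theorem of Adams, whose obstruction is precisely the Radon--Hurwitz number $\rho(2^{k+1})$.

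In more detail for the lower bound: the EHP spectral sequence relates $\pi_*(S^n)$, $\pi_*(S^{2n+1})$, and $\pi_*(\Omega S^{n+1})$, and the first few EHP differentials are governed by the Whitehead product and by $\eta$; the statement that the identity on $S^{m-1}$ has Hopf invariant issues — equivalently that $\RR^m$ admits at most $\rho(m)-1$ linearly independent vector fields — is detected by the nonvanishing of a specific composite in the stable stems. Translating through the correspondence of \cref{thm:introTateSS} (which identifies Tate differentials on $v^{2^k}$ with slice/EHP-type differentials), a differential on $v^{2^k}$ of length $< \rho(2^{k+1})$ would imply that a certain element $2^{k+1}$-desuspends, i.e. that $\RR^{2^{k+1}}$ admits enough vector fields to contradict Radon--Hurwitz. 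Hence $l_k \ge \rho(2^{k+1})$.

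The main obstacle I anticipate is making the lower-bound argument fully rigorous: one must (a) establish that the comparison between the Tate spectral sequence for $N_1^2 R$ and the EHP spectral sequence is tight enough in the relevant filtration range that ``a short differential'' really does force an impossible desuspension — rather than merely being consistent with one — and (b) handle the generality of $R$, ensuring that the only input used is $\pi_0 R \cong \ZZ$ (or its $\tfrac12$-free localization) and the image of the unit from the sphere, so that no accidental extra classes in $\pi_{>0} R$ can shorten or alter the differential. I expect (a) to require a careful analysis of the multiplicative structure (using that $a_V$-localization preserves the relevant $N_\infty$-algebra structure, \cref{thm:introMult}, so that $v$-power differentials are controlled by a Leibniz/Frobenius argument) and of exactly which EHP differentials survive; the connection to Radon--Hurwitz will come down to invoking Adams' vector fields theorem as a black box once the desuspension statement is isolated. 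The upper bound, by contrast, should be a relatively routine filtration count once the boundedness of $(N_1^2 R)^{tC_2}$ is in hand.
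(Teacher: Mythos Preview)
Your proposal misses the central idea, which is a simple sandwich argument. The paper considers the ring maps
\[
S^0 \longrightarrow R \longrightarrow H\FF_2
\]
(the second being Postnikov truncation followed by reduction mod $2$), applies $N_1^2$, and passes to Tate spectral sequences. The class $v$ is defined in $\TateSS(S^0)$ and maps through to $u_\sigma^2 a_\sigma^{-2}$ in $\TateSS(N_1^2 H\FF_2)$. Both bounds then follow immediately from naturality of differentials under these two maps.

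For the \emph{upper bound}, the map $\TateSS(N_1^2 R) \to \TateSS(N_1^2 H\FF_2)$ sends $v^{2^k}$ to a class which, by the already-established differentials (\cref{thm:introTateSS} in the introduction, or \cref{thm:TateDiff} in the body), supports a differential of the stated length. Naturality forces $v^{2^k}$ to support a differential no longer than this in the source. Your proposed argument via ``Tate vanishing / Segal-conjecture-type input'' and filtration counting is neither needed nor clearly correct: you have no boundedness statement for $(N_1^2 R)^{tC_2}$ in general, and no sparsity argument is available since $\pi_* N_1^2 R$ is uncontrolled above degree $0$.

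For the \emph{lower bound}, the map $\TateSS(S^0) \to \TateSS(N_1^2 R)$ sends $v^{2^k}$ to $v^{2^k}$. In the source, positive powers of $v$ lie in the homotopy orbit part, which is the Atiyah--Hirzebruch spectral sequence for $\RR P^\infty$; this is the stabilization of the EHP spectral sequence, and there $v^{2^k}$ supports a $d_{\rho(2^{k+1})}$ hitting an image-of-$J$ class. Since $v^{2^k}$ is a $d_s$-cycle in $\TateSS(S^0)$ for every $s < \rho(2^{k+1})$, naturality forces the same in $\TateSS(N_1^2 R)$. Your mechanism is different: you try to argue that a short differential in $\TateSS(N_1^2 R)$ would, via the slice comparison of \cref{thm:introSSmap}, produce an impossible desuspension. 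But that comparison map is specific to $\BPfour$ and $N_1^2 H\FF_2$; there is no slice spectral sequence in play for a general $R$, so your route does not apply. Moreover, your acknowledged obstacle (b) --- that extra classes in $\pi_{>0} R$ might shorten the differential --- is precisely what the naturality-from-$S^0$ argument dispatches with no work: any such shortening would have to already occur in $\TateSS(S^0)$, where it does not.

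In short, you have identified the right external inputs (the $H\FF_2$ Tate differentials and the Radon--Hurwitz/EHP story) but not the glue. The glue is two maps of spectral sequences, not an intrinsic analysis of $N_1^2 R$.
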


\subsection{Outline of paper}
In \cref{sec:2}, we recall a few basics of equivariant homotopy theory. In particular, we discuss the interplay between the norm functor, the geometric fixed points functor, and the pull back functor. We prove \cref{thm:introthm1}. We also investigate the multiplicative structure of localizations and give a criterion for a localization at an element to preserve multiplicative structures, thus proving \cref{thm:introMult}. 

In \cref{sec:sliceSSBackground}, we recall the spectra $\MUG$ and $\BPG$ and their slice spectral sequences. We then introduce the main computational tool for this paper, the localized slice spectral sequence. We prove \cref{thm:introThm2}, the strong convergence of the localized slice spectral sequence (\cref{thm:invertingAclassE2page}). We also discuss exotic extensions and norms.

\cref{sec:4,section:sliceSScomputation} are dedicated to the computation of the localized slice spectral sequence of $a_{\lambda}^{-1}\BPfour$. In \cref{sec:4}, we give an outline of the computation and list our main results (\cref{thm-main} and \cref{thm:htpy}). The detailed computations are in \cref{section:sliceSScomputation}. While computing differentials, we make full use of the Mackey functor structure of the spectral sequence. Certain differentials are proven using exotic extensions and norms by methods established in \cref{sec:sliceSSBackground}. 

In \cref{sec-tate}, we turn our attention to the Tate spectral sequence of $N_1^2 H\FF_2$. We use the computation of the localized slice spectral sequence of $\BPfour$ to prove families of differentials and compute the Tate spectral sequence in a range. In particular, \cref{thm:introTateSS} is proven as \cref{thm:TateDiff}, which describes the first infinite family of differentials in the Tate spectral sequence. 

\subsection*{Acknowledgments}
The authors would like to thank Bob Bruner for sharing his computation on the Tate generators of the dual Steenrod algebra, and J.D. Quigley for sharing his computation of the Adams spectral sequence of $N_1^2H\FF_2$. The authors would furthermore like to thank Agn\`es Beaudry, Christian Carrick, Gijs Heuts, Mike Hill, Tyler Lawson, Guchuan Li, Viet-Cuong Pham, Doug Ravenel, John Rognes and Jonathan Rubin for helpful conversations.  Finally, we would like to thank the anonymous referee for the many detailed suggestions.  The second author was supported by National Science Foundation grant DMS-2104844.

	\subsection*{Conventions}
\begin{enumerate}
	\item Given a finite group $G$, all representations will be finite-dimensional and orthogonal. Per default actions will be from the left.
	\item We denote by $\rho_G$ the real regular representation of a finite group $G$ and we abbreviate $\rho_{C_2}$ to $\rho_2$.
	\item We will often use the abbrevation $\BPfour$ for $N_2^4\BPR$ and more generally $\BPG$ for $N_{C_2}^G\BPR$.
    \item All spectral sequences use the Adams grading.
    \item We use the regular slice filtration and its corresponding tower and spectral sequence defined in \cite{Ullman:Thesis} throughout the paper, often omitting ``regular".
\end{enumerate} 

	\section{Equivariant stable homotopy theory} \label{sec:2}
	\subsection{A few basics}
	We work in the category of genuine $G$-spectra for a finite group $G$, and our particular model will be the category of orthogonal $G$-spectra $\Sp_G$. For us these will be simply $G$-objects in orthogonal spectra as in \cite{Schwede:ESHT}, which will often be just called \emph{$G$-spectra}. This category is equivalent to the categories of orthogonal $G$-spectra considered in \cite{MandellMay} and \cite{HHR}. In particular, we are able to evaluate a $G$-spectrum at an arbitrary $G$-representation to obtain a $G$-space. We refer to the three cited sources for general background on $G$-equivariant stable homotopy theory, of which we will recall some for the convenience of the reader.
	
	For each $G$-representation $V$, we denote by $S^V$ its one-point compactification. Denoting further by $\rho_G$ the regular representation, we obtain for each subgroup $H\subset G$ and each $G$-spectrum its homotopy groups
	\[\pi^H_n(X) = \colim_{k} [S^{k\rho_G+n},X(k\rho_G)]^H.\]
	These assemble into a Mackey functor $\U{\pi}_n(X)$. A map of $G$-spectra is an \emph{equivalence} if it induces an isomorphism on all $\U{\pi}_n$. Inverting the equivalences of $G$-spectra in the $1$-categorical sense yields the genuine equivariant stable homotopy category $\Ho(\Sp_G)$ and inverting them in the $\infty$-categorical sense the $\infty$-category of $G$-spectra $\Sp_G^{\infty}$. These constructions are well-behaved as there is a stable model structure on $\Sp_G$ with the weak equivalences we just described \cite[Theorem III.4.2]{MandellMay}. The fibrant objects are precisely the $\Omega$-$G$-spectra. In the main body of the paper we will implicitly work in $\Ho(\Sp_G)$ or $\Sp_G^{\infty}$; in particular, commutative squares are meant to be only commutative up to (possibly specified) homotopy. 
	
	By \cite[Proposition V.3.4]{MandellMay}, the categorical fixed point construction $\Sp_G \to \Sp$ is a right Quillen functor. We call the right derived functor $(-)^G\colon \Sp_G^{\infty} \to \Sp^{\infty}$ the \emph{(genuine) fixed points}. We can define fixed point functors for subgroups $H\subset G$ by applying first the restriction functor $\Sp_G \to \Sp_H$ and then the $H$-fixed point functor. One easily shows that $\pi_nX^H \cong \pi_n^HX$. Thus, a map is an equivalence if it is an equivalence on all fixed points.
	
	Note that if $H\subset G$ is normal, the categorical fixed points carry a residual $G/H$-action. The resulting functor $\Sp_G \to \Sp_{G/H}$ is a right Quillen functor as well \cite[p.\ 81]{MandellMay} and thus $H$-fixed points actually define a functor $\Sp_G^{\infty} \to \Sp_{G/H}^{\infty}$. 
	The left adjoint of this is the inflation functor $p^*$ associated to the projection $p\colon G \to G/H$.
	
	As $\pi_n^H$ translates filtered homotopy colimits into colimits, we see that fixed points $\Sp_G^{\infty} \to \Sp^{\infty}$ preserve filtered homotopy colimits. As they preserve homotopy limits as well (as they are induced by a Quillen right adjoint) and are a functor between stable $\infty$-categories, they preserve all finite homotopy colimits \cite[Proposition 1.1.4.1]{HigherAlgebra} and hence all homotopy colimits \cite[Proposition 4.4.2.7]{HTT}. By the associativity of fixed points, the same is true for $(-)^H\colon \Sp^{\infty}_G \to \Sp^{\infty}_{G/H}$ for a normal subgroup $H\subset G$.
	
	\subsection{Geometric fixed points and pullbacks}\label{sec:GeometricFixedPoints}
	To define other versions of fixed points, we need the notion of a universal space for a given family $\cF$ of subgroups of $G$, i.e.\ a collection of subgroups closed under taking subgroups and conjugation. For every such family there exists a \emph{universal space}, i.e.\ a $G$-CW complex $E\cF$, which is up to $G$-homotopy equivalence characterized by its fixed points:
	\[(E\cF)^H \simeq \begin{cases} \ast & \text{ if } H\in \cF\\
	\varnothing & \text{ if } H\notin \cF
	\end{cases} \]
	Examples of such families include the case $\cF = \{e\}$ of just the trivial group, where we denote $E\cF$ by $EG$, and the case $\cF = \cP$ of all proper subgroups. To each family, we can associate furthermore the cofiber $\EF$ of $E\cF_+ \to S^0$, which is again characterized by its fixed points
	\[(\EF)^H \simeq \begin{cases} \ast & \text{ if } H\in \cF\\
	S^0 & \text{ if } H\notin \cF
	\end{cases} \]
	For each family $\cF$ and every $G$-spectrum we have an associated isotropy separation diagram, whose rows are parts of cofiber sequences:
	\[
	\xymatrix{
		X\sm E\cF_+ \ar[r]\ar[d]^{\simeq} & X\ar[d] \ar[r] & X \sm \EF \ar[d] \\
		X \sm E\cF_+ \ar[r] & X^{E\cF_+} \ar[r] & X^{E\cF_+} \sm \EF
	}
	\]
	Upon taking fixed points, we can identify some of the entries with well-known constructions. If $E\cF = EG$, then $(X^{E\cF_+})^G$ is the spectrum of \emph{homotopy fixed points} $X^{hG}$ and $(X \sm E\cF_+)^G$ is (by the Adams isomorphism) the spectrum of \emph{homotopy orbits} $X_{hG}$. Moreover, one calls in this case $(X^{E\cF_+} \sm \EF)^G$ the \emph{Tate construction} and denotes it by $X^{tG}$. If $\cF = \cP$, then $(X\sm \widetilde{E}\cP)^G$ is called the \emph{geometric fixed points} and denoted by $\Phi^GX$. One can show that $\Phi^G(\Sigma^{\infty}X) \simeq \Sigma^{\infty}X^G$ for pointed $G$-spaces $X$.
	
	Let $H\subset G$ be normal. As mentioned above, $H$-fixed points define a functor $\Sp^{\infty}_G \to \Sp^{\infty}_{G/H}$. We want to define a similar version for geometric fixed points. Let $\cF[H]$ be the family of all subgroups of $G$ not containing $H$. We consider the functor
	\[\Phi^H\colon \Sp^{\infty}_G \to \Sp^{\infty}_{G/H}, \qquad X \mapsto (\EF[H] \sm X)^H.\]
	This agrees with our previous definition when $H= G$ since $\cF[G] = \cP$. Another important special case is $G=C_{2^n}$ and $H = C_2$; then $\EF[H] = \widetilde{E}G$.
	
	As the geometric fixed points functor $\Phi^H\colon \Sp^{\infty}_G \to \Sp^{\infty}_{G/H}$ is the composition of smashing with a space and taking fixed points, it preserves all homotopy colimits as well.
	
	This property implies that $\Phi^H$ must possess a right adjoint, which was constructed in \cite[Definition 4.1]{Hill:Slice} as the \emph{pullback functor}
	\[P^*_{G/H}\colon \Sp^{\infty}_{G/H} \to  \Sp^{\infty}_G, \qquad X \mapsto \EF[H] \sm p^*X,\]
	where $p^*$ is the functor induced by the projection $p\colon G \to G/H$, as defined in the previous subsection. For the adjointness see \cite[Proposition 4.4]{Hill:Slice}, at least on the level of homotopy categories. Several pleasant properties of $P^*_{G/H}$ are shown in \cite[Section 4.1]{Hill:Slice}, in particular that $P^*_{G/H}$ defines a fully-faithful embedding of $G/H$-spectra into $G$-spectra (with image agreeing with that of $- \sm \EF[H])$. Equivalently, the unit map $\mathrm{id} \to \Phi^HP^*_{G/H}$ is an equivalence. This also implies that $P^*_{G/H}$ is (strong) symmetric monoidal (since the image of $P^*_{G/H}$ is closed under $\sm$). Moreover, it follows that $P^*_{G/H}\Phi^H$ is equivalent to $- \sm \EF[H]$.
	
	We furthermore note:
	\begin{lem}\label{lem:homotopypullback}
		For every $G/H$-spectrum $X$, every $H\subset K \subset G$ and every $V \in RO(G/H)$, there is a canonical isomorphism
		\[\pi_V^K(P^*_{G/H}X) \cong \pi_V^{K/H}(X).\]
		Here we view $V$ also as an element of $RO(G)$ by pullback along $G\to G/H$.
	\end{lem}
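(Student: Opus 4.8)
The plan is to reduce the statement to the defining properties of the pullback functor $P^*_{G/H}$ together with the elementary compatibility of fixed points with smashing against $\EF[H]$. Recall that by definition $P^*_{G/H}X = \EF[H] \sm p^*X$, where $p\colon G \to G/H$ is the projection. The key input, already recorded in the excerpt, is that $\Phi^H$ is left adjoint to $P^*_{G/H}$ and that the unit $\mathrm{id} \to \Phi^H P^*_{G/H}$ is an equivalence; equivalently, the image of $P^*_{G/H}$ consists exactly of those $G$-spectra that are $\EF[H]$-local (i.e.\ on which $\sm\EF[H]$ acts as an equivalence). So $P^*_{G/H}X$ is canonically identified with its own smash product with $\EF[H]$.

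First I would observe that for $H \subset K \subset G$, the $K$-fixed points of $P^*_{G/H}X$ can be computed after first applying $\Res^G_K$. Since $\EF[H]$ restricted to $K$ is the $K$-space $\widetilde{E}\mathcal F[H]_K$ with $\mathcal F[H]_K$ the family of subgroups of $K$ not containing $H$ — because $H$ is normal in $G$ hence in $K$, and a subgroup of $K$ contains $H$ iff it does as a subgroup of $G$ — we have $\Res^G_K(\EF[H] \sm p^*X) \simeq \widetilde{E}\mathcal F[H]_K \sm \Res^G_K p^* X$. Moreover $\Res^G_K p^* X \simeq q^* \Res^{G/H}_{K/H} X$ where $q\colon K \to K/H$ is the projection, since inflation commutes with restriction along the evident square of groups. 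Therefore $\Res^G_K P^*_{G/H} X \simeq P^*_{K/H}(\Res^{G/H}_{K/H} X)$, i.e.\ the pullback functors are compatible with restriction. This reduces the claim to the case $K = G$: it suffices to produce a natural isomorphism $\pi^G_V(P^*_{G/H}X) \cong \pi^{G/H}_V(X)$ for $V \in RO(G/H)$.

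Next, for $K = G$ I would compute directly: $\pi^G_V(P^*_{G/H}X) = [\,S^V, P^*_{G/H}X\,]^G = [\,S^V, \EF[H]\sm p^*X\,]^G$. Since $S^V$ for $V$ pulled back from $G/H$ is itself in the image of $p^*$ (as $p^* S^V = S^{p^*V}$ and we identify $V$ with $p^*V$), and since $P^*_{G/H}$ is fully faithful, the adjunction $(\Phi^H \dashv P^*_{G/H})$ together with $\Phi^H p^* S^V \simeq S^V$ (the unit being an equivalence on the image of $p^*$, which contains $p^*$ of everything after smashing with $\EF[H]$ — more carefully, $\Phi^H(\EF[H]\sm p^*S^V) = \Phi^H P^*_{G/H} S^V \simeq S^V$ in $\Sp_{G/H}$) gives
\[
[\,S^V, \EF[H]\sm p^*X\,]^G \;\cong\; [\,\Phi^H(S^V), X\,]^{G/H}.
\]
Here I am using that $[\,S^V, P^*_{G/H}X\,]^G \cong [\,\Phi^H S^V, X\,]^{G/H}$ directly from the adjunction, applied to the object $S^V \in \Sp_G$ — no fully-faithfulness needed, just the adjunction — and then identifying $\Phi^H S^V = \Phi^H \Sigma^\infty S^V \simeq \Sigma^\infty (S^V)^H = S^{V}$, using that $H$ acts trivially on a representation sphere pulled back from $G/H$ so $(S^V)^H = S^V$ as a $G/H$-space. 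Hence $\pi^G_V(P^*_{G/H}X) \cong [\,S^V, X\,]^{G/H} = \pi^{G/H}_V(X)$. Combining with the restriction-compatibility from the previous step yields the general $\pi^K_V$ statement, and naturality in $X$ is inherited from naturality of the adjunction and of restriction.

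The main obstacle — really the only point requiring care — is bookkeeping around the two a priori different ways of taking $K$-fixed points and making sure the identifications $\Res^G_K \EF[H] \simeq \widetilde{E}\mathcal F[H]_K$ and $\Res^G_K p^* \simeq q^* \Res^{G/H}_{K/H}$ are natural and compatible, i.e.\ that the isomorphism does not depend on choices; this is where the hypothesis that $H$ is normal in $G$ (not merely in $K$) is used, guaranteeing $K/H$ makes sense and sits in the expected pullback square of groups. Everything else is a formal consequence of the adjunction $\Phi^H \dashv P^*_{G/H}$ and the suspension-spectrum compatibility $\Phi^H\Sigma^\infty \simeq \Sigma^\infty(-)^H$ recalled earlier in the section.
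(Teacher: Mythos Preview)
Your proof is correct. It differs from the paper's in organization: the paper works entirely at the level of $G$-maps by using the identification $\pi_V^K(P^*_{G/H}X) \cong [\Sigma^V G/K_+, P^*_{G/H}X]^G$, applies the adjunction $\Phi^H \dashv P^*_{G/H}$ once, and then computes $\Phi^H(\Sigma^V G/K_+) \simeq \Sigma^V(G/H)/(K/H)_+$ directly from the suspension-spectrum compatibility of $\Phi^H$. You instead first establish the restriction-compatibility $\Res^G_K P^*_{G/H} \simeq P^*_{K/H}\Res^{G/H}_{K/H}$ to reduce to $K=G$, and then apply the adjunction to $S^V$. The paper's route is shorter (one adjunction step, no separate reduction), while yours makes explicit a compatibility of pullback with restriction that is of independent interest; the underlying content is the same, namely the adjunction together with $\Phi^H\Sigma^{\infty}(-) \simeq \Sigma^{\infty}(-)^H$.
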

\begin{proof}
	Essentially by definition, $\pi_V^K(P^*_{G/H}X) \cong [\Sigma^VG/K_+, P^*_{G/H}X]^G$. By the containment $H\subset K$, all points in $G/K$ are $H$-fixed and moreover $V^H =V$. Hence we get $\Phi^H \Sigma^VG/K_+ \simeq \Sigma^V(G/H)/(K/H)_+$. By the adjointness of $\Phi^H$ and $P^*_{G/H}$ we thus obtain the result.
\end{proof}

\subsection{Universal properties of $G$-spectra} 

In \cite[Corollary C.7]{GepnerMeier}, Gepner and the first-named author established a universal property for symmetric monoidal colimit-preserving functors out of $\Sp_G^{\infty}$. We will need a variant of this for functors just preserving \emph{filtered} colimits.

Localizing the $1$-category of pointed finite $G$-CW-complexes at $G$-homotopy equivalences yields an $\infty$-category $\SfinG$. This $\infty$-category is essentially small. For every essentially small $\infty$-category $\cC$, we can freely adjoin filtered colimits to obtain an $\infty$-category $\Ind(\cC)$ \cite[Section 5.3]{HTT}. The inclusion $\SfinG \to \SG$ into the $\infty$-category of pointed $G$-spaces induces a functor $\Ind(\SfinG) \to \SG$. Since $\SfinG$ consists of compact objects inside $\SG$ and generates $\SG$ under filtered colimits, the functor is an equivalence. 

Let us explain to obtain $G$-spectra and finite $G$-spectra as stabilization of $\SG$ and $\SfinG$ respectively. Let $\cU$ be a complete $G$-universe and denote by $\Sub_{\cU}$ the poset of finite-dimensional sub-representations. Following \cite[Appendix C]{GepnerMeier}, we can consider functors $\cT$ and $\cT^{\mathrm{fin}}$ from $\Sub_{\cU}$ to $\Cat_\infty^{\omega}$ (resp.\ $\Cat_{\infty}$), sending each $V\in \Sub_{\cU}$ to $\SG$ (resp.\ $\SfinG$) and each inclusion $V\subset W$ to smashing with $S^{W-V}$. Here, $\Cat_\infty^{\omega}$ is the $\infty$-category of compactly generated $\infty$-categories with compact object preserving left adjoints as morphisms, and $W-V$ is the orthogonal complement of $V$ in $W$. As explained in \cite[Appendix C]{GepnerMeier}, $\colim_{\Sub_{\cU}} \cT$ carries a canonical symmetric monoidal structure, which is as a symmetric monoidal $\infty$-category canonically equivalent to $\Sp^G_{\infty}$. Denote $\colim_{\Sub_{\cU}}\cT^{\mathrm{fin}}$ by $\Sp^{G, \mathrm{fin}}_{\infty}$. General properties of colimits in $\Cat_{\infty}^{\omega}$ (\cite[Proposition 5.5.7.10]{HTT}) imply that the functor $\Sp^{G, \mathrm{fin}}_{\infty} \to \Sp^G_{\infty}$ extends to an equivalence $\Ind(\Sp^{G, \mathrm{fin}}_{\infty}) \simeq \Sp^G_{\infty}$. This yields directly: 

\begin{lem}Let $\cD$ be an $\infty$-category with filtered colimits. The space of functors $\Sp_{\infty}^G \to \cD$ preserving filtered colimits is equivalent to that of functors $\Sp^{\mathrm{fin}, G}_{\infty} \to \cD$.
\end{lem}

\begin{rmk}
With our convention that $G$ is always finite, we could simplify the colimit $\colim_{\Sub_{\cU}} \cT$ to the colimit of the directed system 
\[\SG \xrightarrow{S^{\rho_G}} \SG \xrightarrow{S^{\rho_G}} \SG \xrightarrow{S^{\rho_G}} \cdots \] 
and similarly for $\SfinG$. For possible future applications, we chose however to present the proofs in this section in a way that applies to all compact Lie groups. 
\end{rmk}

We want to discuss a universal property of $\Sp^{\mathrm{fin}, G}_{\infty}$ using symmetric monoidal structures. For this, we need the following result of Robalo. Recall here that an object $X$ in a symmetric monoidal $\infty$-category is \emph{symmetric} if the cyclic permutation of $X \tensor X \tensor X$ is homotopic to the identity

\begin{prop}
Let $\cC$ be a small symmetric monoidal $\infty$-category and $X \in \cC$ symmetric. Then $\cC[X^{-1}] := \colim \cC \xrightarrow{X \tensor} \cC \xrightarrow{X \tensor}\cdots$ has a symmetric monoidal structure such that $\cC \to \cC[X^{-1}]$ refines to a symmetric monoidal functor, which is initial among all those that send $X$ to an invertible object. 
\end{prop}
\begin{proof}
The proof is the same as that of \cite[Corollary 2.22]{RobaloBridge}; all necessary previous results are actually proven for small $\infty$-categories and not just for presentable ones. 
\end{proof}

\begin{cor}
Let $\cD$ be a symmetric monoidal $\infty$-category. Then taking the suspension spectrum defines an equivalence between the space of symmetric monoidal functors $\Sp^{\mathrm{fin}, G}_{\infty} \to \cD$ and  the space of symmetric monoidal functors $\SfinG \to \cD$ sending $S^V$ for any $G$-representation to an invertible object.
\end{cor}
\begin{proof}This can be deduced from the previous proposition as in \cite[Corollary C.7]{GepnerMeier}
\end{proof}

\begin{cor}\label{cor:symmonfiltered}
Let $\cD$ be a symmetric monoidal $\infty$-category with filtered colimits. Then any symmetric monoidal functor $F\colon \Sp^G_{\infty} \to \cD$ which preserves filtered colimits is uniquely (up to equivalence) determined by its restriction $F\Sigma^{\infty}\colon \SfinG \to \cD$ (as a symmetric monoidal functor). 
\end{cor}

\begin{rmk}
The idea behind the preceding corollary is that we can write every $G$-spectrum canonically as a filtered homotopy colimit of $S^{-V} \sm \Sigma^{\infty} X$. We chose the above treatment to give a precise meaning to how canonical this colimit actually is. 
\end{rmk}
	
		\subsection{Norms and pullbacks}
	In this section, we will identify certain localizations of norm functors with pullbacks of norms from quotient groups. In the case of $\BPG$ this is a central ingredient of this paper.
	
	First, we will recall the norm construction. For a group $G$, let $\mathcal{B}G$ denote the category with one object and having $G$ as morphisms. Given an arbitrary symmetric monoidal category $(\cC, \otimes, \mathbbm{1})$, there is for a subgroup $H\subset G$ a norm functor
	\[\cC^{\mathcal{B}H} \to \cC^{\mathcal{B}G}, \qquad X \mapsto X^{\tensor_H G}\]
	from $H$-objects to $G$-objects, where the $G$-action is induced by the right $G$-action on $G$. In the case of spaces or sets, one can identify $X^{\times_H G}$ with $\Map_H(G, X)$ and for based spaces or sets, one can likewise identify $X^{\sm_H G}$ with $\Map^{\ast}_H(G, X)$. In the case of orthogonal spectra, one can by \cite[Proposition B.105]{HHR} left derive the functor $(-)^{\sm_H G}$ to obtain a functor $N_H^G$. (Often, $N_H^G$ is also used for the corresponding underived functor, but the derived functor will be more important for us.) The functor $N_H^G$ commutes with filtered (homotopy) colimits by \cite[Propositions A.53, B.89]{HHR}. Note moreover that $N_H^G \Sigma^{\infty}X \simeq \Sigma^{\infty}\Map_H^*(G,X)$ (if $X$ is cofibrant or at least well-pointed) as $\Sigma^{\infty}$ is symmetric monoidal.
	
	\begin{lem}\label{lem:normfixedpoints}
		Let $G$ be a finite group, $K,H\subset G$ be two subgroups and $X$ be a (based) topological $H$-space. Let $ H\backslash G/K = \{Hg_1K, \dots, Hg_lK\}$. Then there are natural (based) homeomorphisms
		\[\Map_H(G, X)^K \cong X^{g_1Kg_1^{-1}\cap H} \times \cdots \times X^{g_lKg_l^{-1}\cap H} \]
		and
		\begin{equation}\label{eq:MapHgK}\Map_H^{\ast}(G, X)^K \cong X^{g_1Kg_1^{-1}\cap H} \sm \cdots \sm X^{g_lKg_l^{-1}\cap H},
		\end{equation}
		where the $K$-action on the mapping spaces is induced by the right $K$-action on $G$. In particular, if $H=K$ is normal, we obtain a natural $G/H$-equivariant homeomorphism
		\[\Map^*_H(G,X)^H \cong \Map^*(G/H,X^H).\]
	\end{lem}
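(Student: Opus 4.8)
The plan is to compute the $K$-fixed points of the mapping space $\Map_H(G,X)$ directly from the definition, by decomposing $G$ as a left $H$- and right $K$-set. A map $f\colon G\to X$ is $H$-equivariant means $f(hg) = h f(g)$; it is $K$-fixed under the action $(k\cdot f)(g) = f(gk)$ means $f(gk) = f(g)$ for all $k\in K$. Thus a $K$-fixed $H$-equivariant map is precisely a function on $G$ that is constant on each right $K$-coset and transforms correctly under the left $H$-action, i.e.\ a function on the double coset set $H\backslash G / K$ with appropriate equivariance. First I would fix double-coset representatives $g_1,\dots,g_l$, so that $G = \coprod_i H g_i K$, and observe that an $H$-equivariant, $K$-fixed map $f$ is determined by its values $f(g_i)$. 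The constraint on $f(g_i)$ is exactly the following: whenever $h g_i k = g_i$ for $h\in H$, $k\in K$, i.e.\ $h = g_i k^{-1} g_i^{-1} \in g_i K g_i^{-1} \cap H$, we must have $h f(g_i) = f(h g_i) = f(g_i k) = f(g_i)$, so $f(g_i)$ lies in the fixed subspace $X^{g_i K g_i^{-1}\cap H}$. Conversely any choice of such points assembles to a well-defined $H$-equivariant $K$-fixed map. This gives the homeomorphism $\Map_H(G,X)^K \cong \prod_i X^{g_i K g_i^{-1}\cap H}$, and the based version \eqref{eq:MapHgK} follows identically after noting that $\Map^*_H(G,X)^K$ is the based analogue and the smash product replaces the product because the basepoint condition ties the factors together at the basepoint of the source.

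For the ``in particular'' clause, I would specialize to $H = K$ normal in $G$. Then $g_i K g_i^{-1}\cap H = g_i H g_i^{-1} \cap H = H$ (using normality, $g_i H g_i^{-1} = H$), so every factor is just $X^H$, and the double coset set $H\backslash G/H$ becomes the ordinary quotient $G/H$ since $HgH = gH$ when $H$ is normal. Hence $\Map^*_H(G,X)^H \cong \prod_{gH\in G/H} X^H = \Map^*(G/H, X^H)$. The remaining point is to check that this identification is $G/H$-equivariant, not merely a homeomorphism of spaces: the residual $G/H$-action on the left-hand side comes from the left $G$-action on the source $G$ of the mapping space (which descends to $G/H$ since we have passed to $H$-fixed points), and under the identification of a map with its tuple of values on coset representatives, left translation by $\bar g\in G/H$ permutes the cosets $gH \mapsto (\bar g)^{-1} gH$ (or $\bar g\, gH$, depending on the handedness convention fixed in the Conventions section), which is exactly the $G/H$-action on $\Map^*(G/H, X^H)$. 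One must be slightly careful to use left actions throughout as stipulated in the paper's conventions, and to track which side the $K$-action acts on, but this is bookkeeping.

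The main obstacle, such as it is, is purely organizational rather than mathematical: keeping the left $H$-action and the right $K$-action on $G$ straight, choosing double-coset representatives consistently, and verifying that the stabilizer subgroup appearing is genuinely $g_i K g_i^{-1}\cap H$ and not its conjugate or inverse-conjugate. There is no homotopy theory here at all --- the statement is about honest topological $H$-spaces and point-set mapping spaces --- so no cofibrancy or derived-functor subtleties intervene; the norm functor $N_H^G$ and its derived version are irrelevant to this lemma and enter only in later applications. I would therefore present this as an elementary verification, perhaps remarking that the unbased case is the familiar formula for fixed points of a coinduced $G$-set and the based case is its pointed refinement.
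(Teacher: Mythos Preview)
Your approach is correct and essentially the same as the paper's: both proceed by the $H$-$K$-biset decomposition $G=\coprod_i Hg_iK$ and read off the fixed points double coset by double coset. Your write-up is considerably more detailed than the paper's two-sentence proof, which simply invokes the decomposition.

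Two small imprecisions worth cleaning up. First, your explanation of the based case (``the basepoint condition ties the factors together at the basepoint of the source'') is off: $G$ is an unbased set, and $\Map^*_H(G,X)$ is the paper's notation for the pointed norm $X^{\sm_H G}$, not a space of based maps. The correct reason smash replaces product is that the norm in based spaces is a smash of $[G\!:\!H]$ copies of $X$; since $K$ preserves each double-coset block and acts by based maps on each smash factor, one checks directly that a non-basepoint $K$-fixed class must have every coordinate $K$-fixed, yielding $\bigwedge_i X^{g_iKg_i^{-1}\cap H}$. Second, the residual $G/H$-action on $\Map^*_H(G,X)^H$ comes from the \emph{right} $G$-action on $G$ (the left $H$-action is already absorbed in the $H$-equivariance), not the left action as you wrote; you hedge on this, but it is worth stating correctly.
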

	\begin{proof}
		The first two statements follow from the $H$-$K$-equivariant decomposition of $G$ into $\coprod_{i=1}^{l} Hg_iK$. For the last one observe that if $H =K$ is normal, $H\backslash G/K = G/H$ and $G/H$ permutes the factors of the decomposition in \eqref{eq:MapHgK}.
	\end{proof}
	
	To put the following theorem and its corollary into context, recall from \cite[Proposition B.213]{HHR} that $\Phi^GN_K^GX \simeq \Phi^KX$. We show more generally that $\Phi^HN_K^GX \simeq N_e^{G/H}\Phi^KX$ if $K\subset H\subset G$ and $H$ is normal. Before we do so in \cref{cor:geomfixed}, we provide a version that gives an equivalence on the level of $G$-spectra, i.e.\ before taking fixed points. 
 	\begin{thm}\label{prop-main}
 		Let $H \subset G$ be a normal subgroup and $X$ be an $H$-spectrum. Then we have an equivalence of $G$-spectra
 		\[
 		\EF[H] \wedge N_H^G X \simeq P^*_{G/H}(N_e^{G/H} \Phi^H(X)).
 		\]
 		\end{thm}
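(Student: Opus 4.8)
The plan is to rewrite the left-hand side so that the norm $N_H^G$ slides past the smash with $\EF[H]$, leaving behind a factor that is visibly a pullback from a point. One may assume throughout that $X$ is cofibrant, so that all the functors involved agree with their point-set models; in particular $N_H^G$ is strong symmetric monoidal and commutes with filtered homotopy colimits, and $P^*_{G/H}$, $\Phi^H$ have the properties recalled in \cref{sec:GeometricFixedPoints}. The one genuinely geometric ingredient is the identification
\[
\EF[H]\;\simeq\;N_H^G\,\widetilde{E}\cP_H
\]
of based $G$-spaces (hence of $G$-spectra after applying $\Sigma^{\infty}$), where $\widetilde{E}\cP_H$ is the universal space for the family of proper subgroups of $H$. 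To see this, apply \cref{lem:normfixedpoints}: for $K\subseteq G$ one gets $(N_H^G\widetilde{E}\cP_H)^K\cong\bigwedge_i(\widetilde{E}\cP_H)^{g_iKg_i^{-1}\cap H}$, the smash running over the double cosets $Hg_iK$ of $H\backslash G/K$. As $H$ is normal, $g_iKg_i^{-1}\cap H=H$ for all $i$ precisely when $H\subseteq K$, in which case every factor is $S^0$; otherwise at least one factor is a fixed point of $\widetilde{E}\cP_H$ by a proper subgroup and so is contractible, forcing the smash to be contractible. Hence $(N_H^G\widetilde{E}\cP_H)^K\simeq S^0$ when $H\subseteq K$ and is contractible otherwise, which is exactly the defining property of $\EF[H]$, and norming the structure map $S^0\to\widetilde{E}\cP_H$ realizes the equivalence. (Alternatively, $\widetilde{E}\cP_H\simeq S^{\infty\bar\rho_H}$ for the reduced regular representation $\bar\rho_H=\rho_H-1$, and $N_H^G S^{\bar\rho_H}=S^{\ind_H^G\bar\rho_H}$ with $\ind_H^G\bar\rho_H=\rho_G-p^*\rho_{G/H}$, whose infinite sphere is $\EF[H]$.)

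Granting this, the argument is a chain of formal identifications. First, strong monoidality of $N_H^G$ gives $\EF[H]\sm N_H^G X\simeq N_H^G\widetilde{E}\cP_H\sm N_H^G X\simeq N_H^G(\widetilde{E}\cP_H\sm X)$. Next, $\widetilde{E}\cP_H\sm X$ lies in the essential image of $\sm\widetilde{E}\cP_H$, which by \cref{sec:GeometricFixedPoints} coincides with the essential image of the pullback functor $P^*_{H/H}=\widetilde{E}\cP_H\sm p_e^*(-)$ associated to $p_e\colon H\to e$; since $\Phi^H P^*_{H/H}\simeq\mathrm{id}$ and $\Phi^H(\widetilde{E}\cP_H\sm X)\simeq\Phi^H X$ (using $\widetilde{E}\cP_H\sm\widetilde{E}\cP_H\simeq\widetilde{E}\cP_H$), the unit of the adjunction gives $\widetilde{E}\cP_H\sm X\simeq P^*_{H/H}\Phi^H X=\widetilde{E}\cP_H\sm p_e^*\Phi^H X$. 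Applying $N_H^G$, invoking monoidality once more, and re-inserting the identification of the first paragraph yields $N_H^G(\widetilde{E}\cP_H\sm X)\simeq\EF[H]\sm N_H^G p_e^*\Phi^H X$. Finally, one uses the ``norm commutes with inflation'' identity $N_H^G p_e^*\simeq p^*N_e^{G/H}$ of functors $\Sp\to\Sp_G$: on a based space $A$ we have $N_H^G p_e^*A=\Map^*_H(G,A)=A^{\sm(H\backslash G)}$, and $H\backslash G$ with its residual $G$-action is a free transitive $G/H$-set because $H$ is normal, so this is $p^*(A^{\sm(G/H)})=p^*N_e^{G/H}A$; the passage to spectra is routine via compatibility with $\Sigma^{\infty}$ and filtered homotopy colimits, the only point being that $N_H^G p_e^*$ and $p^*N_e^{G/H}$ introduce the same representation-sphere twist $p^*\rho_{G/H}$ on each desuspension. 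Chaining these equivalences gives $\EF[H]\sm N_H^G X\simeq\EF[H]\sm p^*N_e^{G/H}\Phi^H X=P^*_{G/H}(N_e^{G/H}\Phi^H X)$.

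The only step carrying real content is the identification $\EF[H]\simeq N_H^G\widetilde{E}\cP_H$; given it, the rest is formal manipulation with the strong monoidality of the norm and the standard package surrounding the pullback functor, which is why the theorem is not difficult. The step most likely to demand care in a complete write-up is the last one, $N_H^G p_e^*\simeq p^*N_e^{G/H}$, since norms interact nontrivially with desuspensions and one must verify that the two sides acquire matching twists; but this is a standard fact and poses no real obstacle.
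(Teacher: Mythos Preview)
Your proof is correct and takes a genuinely different route from the paper's. The paper first establishes $\Phi^H N_H^G X \simeq N_e^{G/H}\Phi^H X$ (by checking on suspension spectra via \cref{lem:normfixedpoints} and extending by monoidality and filtered colimits), then uses the adjunction $\Phi^H \dashv P^*_{G/H}$ to produce a comparison map $\EF[H]\sm N_H^G X \to P^*_{G/H}(N_e^{G/H}\Phi^H X)$, and finally verifies this map is an equivalence by a separate reduction to suspension spectra, desuspensions, and filtered colimits. Your approach instead hinges on the single geometric identification $\EF[H]\simeq N_H^G\widetilde{E}\cP_H$ (which the paper does not isolate), after which everything is a direct chain of equivalences: monoidality of the norm pulls $\EF[H]$ inside, the standard $\widetilde{E}\cP_H\sm X \simeq \widetilde{E}\cP_H\sm p_e^*\Phi^H X$ handles the $H$-local part, and the norm--inflation compatibility $N_H^G p_e^* \simeq p^* N_e^{G/H}$ finishes. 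Your argument is more streamlined in that it never constructs a map and then separately checks it is an equivalence; on the other hand, the paper's organization has the virtue of making $\Phi^H N_H^G \simeq N_e^{G/H}\Phi^H$ explicit as an intermediate result (their \cref{cor:geomfixed}), and your norm--inflation step, while standard, packages the same desuspension bookkeeping that the paper spells out. Both proofs ultimately rest on the same fixed-point calculation from \cref{lem:normfixedpoints}.
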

	\begin{proof}
	We have 
	\begin{equation}\label{eq:PhiN}\Phi^HN_H^G X \simeq N_e^{G/H}\Phi^HX\end{equation}
	for all $H$-spectra $X$. Indeed: If $X$ is a suspension spectrum, this reduces to the space-level statement $\Map_H^{\ast}(G, X)^H \simeq \Map^{\ast}(G/H, X^H)$, which is part of \cref{lem:normfixedpoints}. Both sides of \eqref{eq:PhiN} are symmetric monoidal and commute with filtered homotopy colimits. Thus \cref{cor:symmonfiltered} implies the claim. 
	
	Applying $P^*_{G/H}$ to \eqref{eq:PhiN}, it suffices to check that $P^*_{G/H}\Phi^HN_H^GX$ is equivalent to $\EF[H] \wedge N_H^GX$. But the equivalence of $P^*_{G/H}\Phi^H$ with $\EF[H]\sm -$ was already noted above. \end{proof}	

	\begin{cor}\label{cor-pullback}
		Let $K \subset H \subset G$ be subgroups and assume that $H \subset G$ is normal. Let moreover $X$ be a $K$-spectrum. Then there is an equivalence of $G$-spectra
		\[
		\EF[H] \wedge N_K^G X \simeq P^{*}_{G/H}(N_e^{G/H} \Phi^K(X)).
		\]
	\end{cor}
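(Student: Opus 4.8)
The plan is to deduce \cref{cor-pullback} from \cref{prop-main} by factoring the norm through the intermediate normal subgroup $H$. First I would use the transitivity of the norm construction, $N_K^G \simeq N_H^G \circ N_K^H$, which holds on the level of derived functors because the underived norms $(-)^{\sm_K G}$, $(-)^{\sm_K H}$, $(-)^{\sm_H G}$ compose strictly and are derived compatibly via the formalism of \cite[Propositions~B.89, B.105]{HHR}. Given this, \cref{prop-main} applied to the normal subgroup $H\subset G$ and the \emph{$H$-spectrum} $N_K^H X$ yields directly
\[
\EF[H] \wedge N_K^G X \simeq \EF[H] \wedge N_H^G\!\bigl(N_K^H X\bigr) \simeq P^*_{G/H}\!\bigl(N_e^{G/H}\Phi^H(N_K^H X)\bigr).
\]

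It then remains only to identify $\Phi^H(N_K^H X)$ with $\Phi^K X$. This is exactly the statement recalled above the theorem, namely \cite[Proposition~B.213]{HHR} ($\Phi^G N_K^G X \simeq \Phi^K X$), applied with the ambient group taken to be $H$ rather than $G$; it gives a natural equivalence $\Phi^H N_K^H X \simeq \Phi^K X$ of (non-equivariant) spectra. Substituting this into the chain above produces
\[
\EF[H] \wedge N_K^G X \simeq P^*_{G/H}\!\bigl(N_e^{G/H}\Phi^K(X)\bigr),
\]
which is the claim.

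The argument is essentially formal once \cref{prop-main} is available, so I do not expect a genuine obstacle; the one point deserving a sentence of care is the compatibility of the norm with composition at the level of homotopy categories, ensuring that $\EF[H] \wedge N_H^G(N_K^H X)$ really computes $\EF[H] \wedge N_K^G X$. If one prefers to avoid invoking transitivity of the derived norm directly, one can instead run the same bootstrap as in the proof of \cref{prop-main}: both sides of the desired equivalence are symmetric monoidal in $X$ and commute with filtered homotopy colimits, they agree on suspension spectra by \cref{lem:normfixedpoints} (applied with the pair $K\subset H$ and then $H\subset G$), and hence agree on all of $\Sp_K^{\infty}$ by the usual generation argument.
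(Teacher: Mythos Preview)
Your proof is correct and follows essentially the same approach as the paper: apply \cref{prop-main} to the $H$-spectrum $N_K^H X$, using the transitivity $N_K^G \simeq N_H^G N_K^H$ and the identification $\Phi^H N_K^H X \simeq \Phi^K X$. The paper's proof is a two-line version of exactly this argument.
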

	\begin{proof}
		This follows from \cref{prop-main} by applying it to $N_K^HX$. Here, we use $N_K^G X \simeq N_H^G N_K^H X$ and  $\Phi^{H}N_K^H X \simeq \Phi^{K}X$.
	\end{proof}
	
	Taking $H$-fixed points we obtain a strengthened form of \cref{thm:introthm1}:
	
	\begin{cor}\label{cor:geomfixed}
	    	Let $K \subset H \subset G$ be subgroups and assume that $H \subset G$ is normal. Let moreover $X$ be a $K$-spectrum. Then there is an equivalence of $G/H$-spectra
	    	\[\Phi^HN_K^GX \simeq N_e^{G/H} \Phi^K(X). \]
	\end{cor}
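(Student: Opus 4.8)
The plan is to derive \cref{cor:geomfixed} as an immediate consequence of \cref{cor-pullback} by applying the $H$-fixed point functor and exploiting the fully-faithfulness of the pullback functor $P^*_{G/H}$. First I would invoke \cref{cor-pullback} to obtain an equivalence of $G$-spectra
\[
\EF[H] \wedge N_K^G X \simeq P^{*}_{G/H}(N_e^{G/H} \Phi^K(X)).
\]
Next I would apply the functor $(-)^H \colon \Sp_G^{\infty} \to \Sp_{G/H}^{\infty}$ to both sides. On the left, by definition $(\EF[H]\wedge N_K^GX)^H = \Phi^HN_K^GX$, which is exactly the $G/H$-spectrum we want to identify. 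On the right, I would use that the unit map $\mathrm{id} \to \Phi^HP^*_{G/H}$ is an equivalence, as recalled from \cite[Section 4.1]{Hill:Slice} in the subsection on geometric fixed points and pullbacks; since $\Phi^H$ applied to $P^*_{G/H}(-)$ on a $G$-spectrum is computed by first smashing with $\EF[H]$ and then taking $H$-fixed points, and $\Phi^H = (\EF[H]\wedge -)^H$, the composite $(P^*_{G/H}(-))^H$ of course just recovers $(-)^H$-data; more precisely $\Phi^H(P^*_{G/H}Y) \simeq Y$ naturally in $Y \in \Sp_{G/H}^{\infty}$. Applying this with $Y = N_e^{G/H}\Phi^K(X)$ yields
\[
\Phi^HN_K^GX \simeq N_e^{G/H}\Phi^K(X),
\]
which is the assertion.

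There is essentially no obstacle here: the theorem is a formal corollary, and the only thing to be careful about is bookkeeping. One should confirm that taking $H$-fixed points of the $G$-spectrum $\EF[H]\wedge N_K^GX$ indeed gives the geometric fixed points $\Phi^HN_K^GX$ as a $G/H$-spectrum — this is precisely the definition $\Phi^H X = (\EF[H]\wedge X)^H$ given in \cref{sec:GeometricFixedPoints}, together with the fact that smashing with $\EF[H]$ is idempotent, so $\Phi^H(\EF[H]\wedge N_K^GX) \simeq \Phi^HN_K^GX$ and the latter already has $\EF[H]$ built in. One should also note that the multiplicative refinement (that the equivalence is one of $G/H$-commutative ring spectra when $X$ is an $H$-commutative — here $K$-commutative — ring spectrum) follows because all the functors involved, namely $N_K^G$, $\wedge \EF[H]$, $P^*_{G/H}$, $N_e^{G/H}$, $\Phi^K$, and $(-)^H$, are (lax) symmetric monoidal, so the equivalence of \cref{cor-pullback} and the unit equivalence for $P^*_{G/H}$ are compatible with the ring structures; hence the same applies after taking $H$-fixed points.

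Alternatively, and perhaps more transparently, I could give a direct proof paralleling that of \cref{prop-main}: the equivalence \eqref{eq:PhiN}, namely $\Phi^HN_H^GX' \simeq N_e^{G/H}\Phi^HX'$ for an $H$-spectrum $X'$, combined with $\Phi^HN_K^GX \simeq \Phi^H N_H^G N_K^H X$ (using $N_K^G \simeq N_H^G N_K^H$) and $\Phi^H N_K^H X \simeq \Phi^K X$ (from \cite[Proposition B.213]{HHR}), gives
\[
\Phi^HN_K^GX \simeq \Phi^HN_H^G(N_K^HX) \simeq N_e^{G/H}\Phi^H(N_K^HX) \simeq N_e^{G/H}\Phi^K(X).
\]
This route avoids any reference to $P^*_{G/H}$ altogether. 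I expect the referee-facing version to simply cite \cref{cor-pullback} and apply $(-)^H$, since all the hard work — establishing \eqref{eq:PhiN} and the pullback formalism — has already been done, leaving this as a one-line deduction.
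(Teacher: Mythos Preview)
Your proposal is correct and takes essentially the same approach as the paper: the paper's proof is literally the one sentence ``Taking $H$-fixed points we obtain a strengthened form of \cref{thm:introthm1},'' applied to \cref{cor-pullback}, which is exactly your first route. Your alternative direct argument via \eqref{eq:PhiN}, $N_K^G \simeq N_H^GN_K^H$, and $\Phi^HN_K^H \simeq \Phi^K$ is also valid and simply unwinds how \cref{cor-pullback} was obtained from \cref{prop-main}.
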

	
 	\begin{rmk}
 	An alternative proof of this result is possible using \cite[Theorem 2.7]{Yuan:Frobenius}.
 	\end{rmk}

As we will recall below, there is a $C_2$-spectrum $\BPR$ with geometric fixed points $H\FF_2$. For $G = C_4$ and $H=C_2$, we can express $\EF[H]$ as $S^{\infty \lambda}$, where $\lambda$ is the $2$-dimension representation of $C_4$ corresponding to rotation by an angle of $\frac{\pi}2$. Denoting the norm $N_{C_2}^{C_4}\BPR$ by $\BPfour$, we obtain our main example for \cref{prop-main}.
\begin{cor}\label{cor:main}
	There is an equivalence
	\[\BPfour \sm S^{\infty \lambda} \simeq P^*_{C_4/C_2} N_1^2(H\FF_2).\]
\end{cor}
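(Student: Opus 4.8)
The plan is to obtain \cref{cor:main} as a direct specialization of \cref{prop-main}. I would take $G = C_4$, $H = C_2$, and $X = \BPR$, regarded as a $C_2$-spectrum via complex conjugation. Since $C_2 \subset C_4$ is normal, \cref{prop-main} applies and yields an equivalence of $C_4$-spectra
\[
\EF[C_2] \sm N_{C_2}^{C_4}\BPR \simeq P^*_{C_4/C_2}\!\left(N_e^{C_4/C_2}\Phi^{C_2}\BPR\right).
\]
Note that one genuinely needs the $C_4$-spectrum-level form of \cref{prop-main} here, rather than the $H$-fixed-point version in \cref{cor:geomfixed}, since the target of \cref{cor:main} is a $C_4$-spectrum.

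Next I would identify the three ingredients appearing in this equivalence. First, $\EF[C_2] \simeq S^{\infty\lambda}$: the family $\cF[C_2]$ of subgroups of $C_4$ not containing $C_2$ consists of the trivial subgroup alone, whereas $\lambda^{\{e\}} = \lambda \neq 0$ while $\lambda^{C_2} = \lambda^{C_4} = 0$; hence $S^{\infty\lambda}$ has exactly the fixed points that characterize $\EF[C_2]$ up to equivalence (contractible at $\{e\}$, an $S^0$ at $C_2$ and at $C_4$). This is the identification already recorded in the paragraph preceding the statement. Second, $C_4/C_2 \cong C_2$, so $N_e^{C_4/C_2}$ is $N_1^2$. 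Third, $\Phi^{C_2}\BPR \simeq H\FF_2$, the computation of the $C_2$-geometric fixed points of the Real Brown--Peterson spectrum recalled above. Substituting these, and writing $\BPfour$ for $N_{C_2}^{C_4}\BPR$, the displayed equivalence becomes
\[
\BPfour \sm S^{\infty\lambda} \simeq P^*_{C_4/C_2} N_1^2(H\FF_2),
\]
which is the assertion.

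I do not expect a real obstacle here: once \cref{prop-main} is in hand, the corollary is bookkeeping with the relevant group and spectrum plus two standard inputs. The only steps that warrant any care are the identification $\EF[C_2] \simeq S^{\infty\lambda}$, which is a fixed-points-by-fixed-points comparison, and the input $\Phi^{C_2}\BPR \simeq H\FF_2$, which is a recalled fact rather than something argued here. In other words, essentially all the content sits in \cref{prop-main}, and this statement is its first concrete payoff.
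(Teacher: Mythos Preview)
Your proposal is correct and matches the paper's approach exactly: the corollary is obtained by specializing \cref{prop-main} to $G=C_4$, $H=C_2$, $X=\BPR$, together with the identifications $\EF[C_2]\simeq S^{\infty\lambda}$ and $\Phi^{C_2}\BPR\simeq H\FF_2$ recorded in the paragraph immediately preceding the statement.
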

	
	We end this section with a different kind of compatibility of norms and pullbacks. 
	
	\begin{prop}\label{prop:normpullback}
	    Let $K\subset H \subset G$ be subgroups such that $K$ is normal in $G$. Then there is a natural equivalence $N_H^GP_{H/K}^* \simeq P^*_{G/K}N_{H/K}^{G/K}$ of functors $\Sp^{\infty}_{H/K} \to \Sp^{\infty}_G$.
	\end{prop}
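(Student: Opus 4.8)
The plan is to use that the derived norm $N_H^G$ is symmetric monoidal: writing $p\colon H\to H/K$ and $q\colon G\to G/K$ for the quotient maps and $\EF[K]$ for the cofiber attached to the family of subgroups of $H$ (resp.\ $G$) not containing $K$, symmetric monoidality gives
\[
N_H^GP^*_{H/K}X=N_H^G\bigl(\EF[K]\wedge p^*X\bigr)\simeq\bigl(N_H^G\EF[K]\bigr)\wedge\bigl(N_H^Gp^*X\bigr).
\]
Thus it suffices to establish two natural equivalences: $N_H^G\EF[K]\simeq\EF[K]$ in $\Sp^{\infty}_G$, and $N_H^Gp^*X\simeq q^*N_{H/K}^{G/K}X$; granting these, the right-hand side becomes $\EF[K]\wedge q^*N_{H/K}^{G/K}X=P^*_{G/K}N_{H/K}^{G/K}X$. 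Both equivalences will use that $K$ is normal in all of $G$, not merely in $H$.

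For the compatibility of the norm with inflation, $N_H^Gp^*\simeq q^*N_{H/K}^{G/K}$, I would reduce to generators: $N_H^G$ preserves filtered homotopy colimits \cite[Propositions A.53, B.89]{HHR}, as do $q^*$, $p^*$ and $\EF[K]\wedge(-)$, and $\Sp^{\infty}_{H/K}$ is generated under filtered homotopy colimits by the objects $S^{-V}\wedge\Sigma^{\infty}Z$. For such an $X$ one uses that $p^*$ and $\Sigma^{\infty}$ are symmetric monoidal with $p^*\Sigma^{\infty}Z\simeq\Sigma^{\infty}p^*Z$ and $p^*S^V\simeq S^{p^*V}$, that $N_H^GS^W\simeq S^{\ind_H^GW}$ for $W\in RO(H)$, and that $N_H^G\Sigma^{\infty}_HA\simeq\Sigma^{\infty}_G\Map_H^*(G,A)$ for well-pointed $A$; these reduce the claim to the representation identity $\ind_H^G(p^*V)\cong q^*(\ind_{H/K}^{G/K}V)$ in $RO(G)$ and the space-level statement that $\Map_H^*(G,p^*Z)$, with its residual $G/K$-structure, is $\Map_{H/K}^*(G/K,Z)$. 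The representation identity is ``induction commutes with inflation'': $\RR[G]\otimes_{\RR[H]}p^*V$ has $K$ acting trivially, because $g^{-1}Kg=K\subset H$ for every $g\in G$ as $K\trianglelefteq G$, and the resulting $G/K$-representation is $\RR[G/K]\otimes_{\RR[H/K]}V$. The space-level statement holds for the same reason: every $k\in K$ fixes each coset $gH$ (since $g^{-1}kg\in K\subset H$) and acts on the corresponding smash factor through $g^{-1}kg\in K$, which is trivial on $p^*Z$; so $\Map_H^*(G,p^*Z)$ is inflated from $G/K$, and comparing fixed points via \cref{lem:normfixedpoints} identifies the underlying $G/K$-space with $\Map_{H/K}^*(G/K,Z)=N_{H/K}^{G/K}Z$.

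For the first equivalence, write $\EF[K]=\Sigma^{\infty}_H\widetilde{E}\mathcal{F}_H[K]$ for the underlying based $H$-space. Then $N_H^G\EF[K]\simeq\Sigma^{\infty}_G\Map_H^*(G,\widetilde{E}\mathcal{F}_H[K])$, and by \cref{lem:normfixedpoints}, for $L\subset G$ with $H\backslash G/L=\{Hg_1L,\dots,Hg_\ell L\}$,
\[
\Map_H^*(G,\widetilde{E}\mathcal{F}_H[K])^L\cong\widetilde{E}\mathcal{F}_H[K]^{g_1Lg_1^{-1}\cap H}\wedge\cdots\wedge\widetilde{E}\mathcal{F}_H[K]^{g_\ell Lg_\ell^{-1}\cap H}.
\]
Each factor is $S^0$ when $K\subset g_iLg_i^{-1}\cap H$ and contractible otherwise, and since $K\trianglelefteq G$ the condition $K\subset g_iLg_i^{-1}\cap H$ is equivalent to $K\subset L$ regardless of $i$. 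Hence the $L$-fixed points are $S^0$ if $K\subset L$ and contractible otherwise — exactly the characterization of $\widetilde{E}\mathcal{F}_G[K]$ — so Whitehead's theorem gives a $G$-equivalence $\Map_H^*(G,\widetilde{E}\mathcal{F}_H[K])\simeq\widetilde{E}\mathcal{F}_G[K]$, whence $N_H^G\EF[K]\simeq\EF[K]$. Combining the two equivalences yields the proposition; naturality is evident at each step.

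The step I expect to demand the most care is the bookkeeping in the generator reduction for $N_H^Gp^*\simeq q^*N_{H/K}^{G/K}$ — namely verifying that the equivalences constructed on the objects $S^{-V}\wedge\Sigma^{\infty}Z$ are compatible with the desuspension maps and the filtered-colimit presentations, so that they assemble into a genuine natural transformation of functors. A cleaner alternative, if available, would be to establish $N_H^Gp^*\simeq q^*N_{H/K}^{G/K}$ directly at the functor level — via a mate/Beck--Chevalley argument for the evident square of quotient maps, or along the lines of \cite[Theorem 2.7]{Yuan:Frobenius} — after which the proposition follows from monoidality of $N_H^G$ and the identity $\EF[K]=P^*_{H/K}(S^0)$, the remaining input being the fixed-point computation of $N_H^G\EF[K]$ above.
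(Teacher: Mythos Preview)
Your proof is correct and follows essentially the same approach as the paper: reduce via filtered colimits and symmetric monoidality to suspension spectra, then use \cref{lem:normfixedpoints} to identify $\Map_H^*(G,\widetilde{E}\mathcal{F}_H[K])$ with $\widetilde{E}\mathcal{F}_G[K]$ and $\Map_H^*(G,p^*Z)$ with $q^*\Map_{H/K}^*(G/K,Z)$. The paper is terser --- it treats both factors at once on suspension spectra and defers the desuspension bookkeeping to ``as in the proof of \cref{prop-main}'' --- whereas you separate the argument cleanly into the two ingredients $N_H^G\EF[K]\simeq\EF[K]$ and $N_H^Gp^*\simeq q^*N_{H/K}^{G/K}$; but the substance is identical.
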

	\begin{proof}
	    Since both $N_H^GP_{H/K}^*$ and $P^*_{G/K}N_{H/K}^{G/K}$ commute with filtered colimits and are symmetric monoidal, it suffices (as in the proof of \cref{prop-main}) to provide a natural equivalence of their restriction to suspension spectra. We compute
	    \[N_H^GP_{H/K}^*\Sigma^{\infty}X \simeq  N_H^G \Sigma^{\infty}\EF[H] \sm X \simeq \Sigma^{\infty} \Map_H^*(G,\EF_H[K]) \sm \Map_H^*(G, X)\]
	    and 
	    \[P^*_{G/K}N_{H/K}^{G/K}\Sigma^{\infty} X \simeq P^*_{G/K} \Sigma^{\infty}\Map_{H/K}^*(G/K, X) \simeq \Sigma^{\infty} \EF_G[K] \sm \Map_H^*(G, X),\]
	    where we used a subscript at $\EF$ to indicate whether it is a $G$-space or an $H$-space. Using \cref{lem:normfixedpoints}, one can check that $\Map_H^*(G,\EF_H[K])^L \simeq S^0$ if $L\subset K$ and is contractible otherwise; thus indeed $\Map_H^*(G,\EF_H[K]) \simeq \EF_G[K]$. 
	\end{proof}

	\subsection{Multiplicative structures of localizations}\label{sec:MultLoc}
	
	In many cases, smashing with $\EF[H]$ is equivalent to localizing at a certain element in $\pi_{\bigstar}^G\mathbb{S}$ (for example if $G$ is cyclic). The goal of this section is to investigate which kind of multiplicative structure localization at such an element preserves. More specifically let us fix an $N_{\infty}$-operad $\cO$, i.e.\ an operad $\cO$ in (unbased) $G$-spaces such that each $\cO(n)$ is a universal space for a family $\cF_n$ of graph subgroups of $G\times \Sigma_n$, containing all $H\times \{e\}$ for subgroups $H\subset G$. This notion was introduced in \cite{BlumbergHill}. In the maximal case, we speak of a \emph{$G$-$E_{\infty}$-operad} and by \cite[Theorem A.6]{BlumbergHill} every algebra over such an operad can be strictified to a commutative $G$-spectrum. In the minimal case, we speak of a \emph{(naive) $E_{\infty}$-operad}.
	
	Essentially, the different versions of $N_{\infty}$-operads encode which norms we see in the homotopy groups of an $\cO$-algebra. To be more precise, call an $H$-set $T$ \emph{admissible} if the graph of the $H$-action on $T$ lies in $\cF_{|T|}$. By \cite[Remark 5.15]{GradedTambara} an $\cO$-algebra $R$ admits norms $N_K^H\colon \pi_V^{K}R \to \pi_{\ind_K^HV}^HR$ if $H/K$ is admissible, and the groups $\pi_{\bigstar}^HR$ assemble into an $RO(G)$-graded incomplete Tambara functor.
	
	As already observed in \cite{McClure}, localizations only need to preserve naive $E_{\infty}$-structures, but not $G$-$E_{\infty}$-structures. Later, \cite{HillHopkins} gave a criterion when localizations indeed preserve $G$-$E_{\infty}$-structures and this was extended in \cite{Boehme} to $N_{\infty}$-algebras, albeit only for localizations of elements in degree $0$. In this section, we will extend this work to elements in non-trivial degree and follow the proof strategy of \cite[Proposition 2.30]{Boehme}.
	
	Let us first recall what localizing at some $x\in \pi_{V}^G\mathbb{S}$ means. We say that a $G$-spectrum $E$ is \emph{$x$-local} if $x$ acts invertibly on $E$ or, equivalently, on $\pi_{\bigstar}^GE$. Given a $G$-spectrum $E$, we construct its \emph{$x$-localization} as
	\[x^{-1}E = \hocolim \left(E \xrightarrow{x} \Sigma^{-V}E \xrightarrow{x} \Sigma^{-2V} E \xrightarrow{x} \cdots\right).\]
	Note that $x^{-1}E \simeq E \wedge x^{-1}\mathbb{S}$.
	\begin{exam}\label{exam:main}\rm
		Given a $G$-representation $V$, let $a_V\colon S^0 \to S^V$ be the Euler class. Then $a_V^{-1}\mathbb{S} \simeq S^{\infty V}$ and hence in general $a_V^{-1}E \simeq S^{\infty V} \sm E$. In particular, we can reformulate \cref{cor:main} as
		\[a_{\lambda}^{-1}\BPfour \simeq P^*_{C_4/C_2} N_1^2(H\FF_2).\]
	\end{exam}
	
	A map $f\colon E \to F$ is an \emph{$x$-local equivalence} if $f\wedge x^{-1}\mathbb{S}$ is an equivalence; by abuse of notation, we call for $H\subset G$ a map of $H$-spectra an $x$-equivalence if it is a $\Res_H^G(x)$-equivalence.
	
	\begin{defi}\label{def:localizationO}
		Localization at $x$ \emph{preserves $\cO$-algebras} if for every $\cO$-algebra $R$, we can lift the morphism $R \to x^{-1}R$ in $\Ho(\Sp^G)$ (up to isomorphism) to a morphism in $\Ho(\cO-\mathrm{Alg})$.
	\end{defi}
	
	We will use the following specialization of a criterion of \cite[Corollary 7.10]{GW18}:
	\begin{prop}\label{prop:GW}
		Localization at $x$ preserves $\cO$-algebras if and only if \[N_K^H\Res_K^G\colon \Sp^G_{\infty} \to \Sp^H_{\infty}\]
		preserves $x$-equivalences for every $K \subset H \subset G$ such that $H/K$ is admissible as an $H$-set.
	\end{prop}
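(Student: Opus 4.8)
The plan is to deduce \cref{prop:GW} from the general criterion of \cite[Corollary~7.10]{GW18}, an ``if and only if'' describing when a left Bousfield localization of a symmetric monoidal model category lifts to algebras over a (colored) operad. Applied to the operad $\cO$ acting on $\Sp_G$ and to the localization $E \mapsto x^{-1}E \simeq E \sm x^{-1}\mathbb{S}$, that criterion says: localization at $x$ preserves $\cO$-algebras if and only if (i) the localization is \emph{monoidal}, i.e.\ $A\sm(-)$ preserves $x$-equivalences for every $A$, and (ii) for each $n\ge 0$ the derived $\cO$-symmetric power functor $X \mapsto \cO(n)_+ \sm_{\Sigma_n} X^{\sm n}$ on $\Sp_G^{\infty}$ preserves $x$-equivalences. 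Condition (i) is automatic in our situation: since $x^{-1}\mathbb{S}$ is idempotent under $\sm$, one has $x^{-1}(A\sm B) \simeq (x^{-1}A)\sm(x^{-1}B)$, so smashing with any fixed object preserves $x$-equivalences. Thus the whole task is to show that condition (ii) is equivalent to the norm condition in the statement.

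To do this I would analyze $\cO(n)_+ \sm_{\Sigma_n} X^{\sm n}$ for fixed $n$ via a $(G\times\Sigma_n)$-CW structure on $\cO(n)$, which by hypothesis is a universal space for the family $\cF_n$ of graph subgroups. Its cells are of the form $(G\times\Sigma_n)/\Gamma$ with $\Gamma$ a graph subgroup in $\cF_n$, i.e.\ the graph $\Gamma_\phi$ of a homomorphism $\phi\colon H\to\Sigma_n$ for some $H\subset G$ --- equivalently, an admissible $H$-set $T$ with underlying set $\{1,\dots,n\}$, say $T\cong\coprod_i H/K_i$. The standard identification of indexed smash powers (as in \cite[Appendix~B]{HHR} and the analysis of $N_\infty$-operads in \cite{BlumbergHill}) then gives, for a $G$-spectrum $X$, a natural equivalence of $G$-spectra
\[
\bigl((G\times\Sigma_n)/\Gamma_\phi\bigr)_+ \sm_{\Sigma_n} X^{\sm n}
\;\simeq\;
G_+ \sm_H \Bigl( \bigwedge_i N_{K_i}^H \Res_{K_i}^G X \Bigr).
\]
Filtering $\cO(n)_+ \sm_{\Sigma_n} X^{\sm n}$ by skeleta and using that $x$-equivalences are closed under cofiber sequences, wedges and filtered homotopy colimits, condition (ii) becomes equivalent to the statement that $G_+\sm_H\bigl(\bigwedge_i N_{K_i}^H\Res_{K_i}^G(-)\bigr)$ preserves $x$-equivalences for every admissible $H$-set $\coprod_i H/K_i$.

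Next I would strip off the induction and the iterated smash. For the induction: the projection formula gives $x^{-1}(G_+\sm_H Z) \simeq G_+\sm_H\bigl((\Res_H^G x)^{-1}Z\bigr)$, and $G_+\sm_H(-)$ is conservative (its restriction back to $H$ contains $Z$ as a wedge summand, by the double-coset formula), so $G_+\sm_H(-)$ carries a $\Res_H^G(x)$-equivalence of $H$-spectra to an $x$-equivalence and conversely. For the iterated smash: idempotence of $(\Res_H^G x)^{-1}\mathbb{S}$ shows that a finite smash product of $\Res_H^G(x)$-equivalences is again one, and since admissibility of an $H$-set passes to its orbits (for an $N_\infty$-operad), the iterated-norm condition collapses to the single-norm condition. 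Hence condition (ii) is equivalent to: $N_K^H\Res_K^G\colon\Sp_G^{\infty}\to\Sp_H^{\infty}$ preserves $x$-equivalences for every $K\subset H\subset G$ with $H/K$ admissible as an $H$-set. This, together with the fact that \cite[Corollary~7.10]{GW18} is an ``if and only if'', is exactly \cref{prop:GW}.

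The step I expect to be the main obstacle is the displayed equivalence and its propagation through the skeletal filtration: one must identify $\bigl((G\times\Sigma_n)/\Gamma_\phi\bigr)_+ \sm_{\Sigma_n} X^{\sm n}$ with $G_+\sm_H$ of an indexed smash of norms of restrictions of $X$, keeping careful track of basepoints and of how the $\Sigma_n$-action on $X^{\sm n}$ interacts with the double cosets of $G\times\{e\}$ and $\Gamma_\phi$ in $G\times\Sigma_n$, and must also see that each such cell-functor is genuinely extracted from (not merely built out of) the functors $\cO(n)_+\sm_{\Sigma_n}(-)^{\sm n}$ --- which is where one invokes the isotropy-separation structure of $\cO(n)$, in the spirit of \cite{Boehme}. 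Once this bookkeeping is in place, everything else is a formal manipulation of $x$-local equivalences via the idempotence of $x^{-1}\mathbb{S}$ and the projection formula.
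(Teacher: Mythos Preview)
The paper does not supply a proof of \cref{prop:GW}; it simply states the proposition as ``a specialization of a criterion of \cite[Corollary~7.10]{GW18}'' and moves on. Your proposal therefore goes well beyond the paper by actually unpacking how that specialization works.

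Your outline is essentially the standard and correct derivation. The monoidality check is indeed automatic from idempotence of $x^{-1}\mathbb{S}$; the cell-by-cell analysis of $\cO(n)_+\sm_{\Sigma_n}X^{\sm n}$ via a $(G\times\Sigma_n)$-CW structure on the universal space $\cO(n)$ is exactly the method used in \cite{BlumbergHill} and \cite{Boehme}; the identification of the cell-functor with an induced smash of norms is the indexed-smash-product formalism of \cite[Appendix~A]{HHR}; and the reduction from induced smashes of norms to single norms via the projection formula, conservativity of induction, and closure of admissible sets under passage to orbits (an indexing-system axiom) is correct.

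Two points deserve a bit more care if you want a self-contained proof rather than a sketch. First, for the ``only if'' direction you need to \emph{extract} each $N_K^H\Res_K^G$ from the symmetric-power functors, not merely observe that they appear as building blocks; this is done by choosing $n=[H:K]$ and looking at a bottom cell of $\cO(n)$ of isotropy $\Gamma_\phi$ with $\phi$ corresponding to $H/K$, then using that this cell splits off after applying $(-)_+\sm_{\Sigma_n}X^{\sm n}$ (or arguing with cofibers of skeleta). Second, you should confirm that \cite[Corollary~7.10]{GW18} is genuinely stated as an ``if and only if'' in the form you use; depending on the version, the reverse implication may already be phrased in terms of norms rather than extended powers, which would shorten your argument considerably.
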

	
	To reformulate this criterion, we need the following lemma.
	\begin{lem}\label{lem:NormResSphere}
		There is an equivalence $N_K^H\Res_K^G(x^{-1}\mathbb{S}) \simeq (N_K^H\Res_K^G(x))^{-1}(\mathbb{S}_H)$ for $\mathbb{S}_H$ the $H$-equivariant sphere spectrum.
	\end{lem}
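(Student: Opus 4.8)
The plan is to treat the two functors one at a time. Restriction $\Res_K^G$ is symmetric monoidal and preserves all homotopy colimits (it admits both a left and a right adjoint), so applying it to the defining telescope
\[
x^{-1}\mathbb{S} = \hocolim\bigl(\mathbb{S}\xrightarrow{x}\Sigma^{-V}\mathbb{S}\xrightarrow{x}\Sigma^{-2V}\mathbb{S}\xrightarrow{x}\cdots\bigr)
\]
immediately yields $\Res_K^G(x^{-1}\mathbb{S})\simeq(\Res_K^G x)^{-1}\mathbb{S}_K$. Writing $W=\Res_K^G V$ and $y=\Res_K^G x\in\pi_W^K\mathbb{S}_K$, it therefore remains to prove $N_K^H(y^{-1}\mathbb{S}_K)\simeq(N_K^H y)^{-1}\mathbb{S}_H$, and I would do this by applying $N_K^H$ directly to the telescope defining $y^{-1}\mathbb{S}_K$.

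The two properties of the norm that I would invoke are that $N_K^H$ is strong symmetric monoidal (as used repeatedly above, e.g.\ in the proofs of \cref{prop-main,prop:normpullback}) and that it commutes with filtered homotopy colimits by \cite[Propositions~A.53, B.89]{HHR}. The latter lets one pull $N_K^H$ inside the colimit:
\[
N_K^H(y^{-1}\mathbb{S}_K)\simeq\hocolim\bigl(N_K^H\mathbb{S}_K\xrightarrow{N_K^H y}N_K^H\Sigma^{-W}\mathbb{S}_K\xrightarrow{N_K^H y}N_K^H\Sigma^{-2W}\mathbb{S}_K\to\cdots\bigr).
\]
The former identifies the terms: $N_K^H\mathbb{S}_K=\mathbb{S}_H$ because the norm preserves the unit, and $N_K^H\Sigma^{-jW}\mathbb{S}_K\simeq\Sigma^{-j\ind_K^H W}\mathbb{S}_H$, using the standard identification $N_K^H S^W\simeq S^{\ind_K^H W}$ for an actual $K$-representation $W$ (hence $N_K^H S^{jW}\simeq S^{j\ind_K^H W}$) together with the fact that a strong monoidal functor commutes with the smash-inverse $S^{-W}=(S^W)^{-1}$. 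It also identifies the transition maps: the map $\Sigma^{-jW}\mathbb{S}_K\xrightarrow{y}\Sigma^{-(j+1)W}\mathbb{S}_K$ is, under the canonical identifications, $\mathrm{id}_{\Sigma^{-(j+1)W}\mathbb{S}_K}\wedge\bigl(y\colon S^0\to S^{-W}\bigr)$, so $N_K^H$ carries it to $\mathrm{id}\wedge N_K^H\bigl(y\colon S^0\to S^{-W}\bigr)$, and $N_K^H\bigl(y\colon S^0\to S^{-W}\bigr)\colon\mathbb{S}_H\to S^{-\ind_K^H W}$ is precisely the map representing the class $N_K^H y\in\pi_{\ind_K^H W}^H\mathbb{S}_H$. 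Hence the displayed telescope is the one defining $(N_K^H y)^{-1}\mathbb{S}_H$, and the lemma follows.

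The only step that requires genuine care is this last identification of the transition maps: one has to keep track of the monoidal coherence isomorphisms relating $S^{-jW}$, its smash powers $(S^{-W})^{\wedge j}$, and the two descriptions of $y$ (as a map $S^W\to\mathbb{S}_K$ and as a map $S^0\to S^{-W}$), and check that $N_K^H$ intertwines them, so that $N_K^H$ applied to the self-map given by multiplication by $y$ is the self-map given by multiplication by $N_K^H y$. Everything else is purely formal. As a consistency check, for an Euler class $a_V$ of an actual $G$-representation $V$ one has $a_V^{-1}\mathbb{S}\simeq S^{\infty V}$ by \cref{exam:main}, $N_K^H\Res_K^G a_V=a_{\ind_K^H\Res_K^G V}$ since the norm of an Euler class is the Euler class of the induced representation, and $N_K^H\Res_K^G S^{\infty V}\simeq S^{\infty\,\ind_K^H\Res_K^G V}$, making the equivalence transparent in that case; the telescope argument above is the uniform version valid for an arbitrary $x\in\pi_V^G\mathbb{S}$.
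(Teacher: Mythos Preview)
Your proof is correct and follows essentially the same approach as the paper: apply $N_K^H\Res_K^G$ to the defining telescope, use that both functors are symmetric monoidal and commute with filtered homotopy colimits, and identify the resulting diagram with the telescope for $(N_K^H\Res_K^G x)^{-1}\mathbb{S}_H$. The paper treats the composite $N_K^H\Res_K^G$ in one step rather than handling the two functors separately, and omits the coherence bookkeeping you spell out, but the argument is the same.
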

	
	\begin{proof} Applying $N_K^H\Res_K^G$ to
		\[\mathbb{S} \xrightarrow{x} \Sigma^{-V}\mathbb{S} \xrightarrow{x} \Sigma^{-2V} \mathbb{S} \xrightarrow{x} \cdots,\]
		 we obtain precisely
		\[\mathbb{S}_H \xrightarrow{N_K^H\Res_K^G(x)} \Sigma^{-\ind_K^H\Res_K^GV}\mathbb{S}_H \xrightarrow{N_K^H\Res_K^G(x)} \Sigma^{-2\ind_K^H\Res_K^GV} \mathbb{S}_H \xrightarrow{N_K^H\Res_K^G(x)} \cdots\]
		Here we have used that the norm of a representation sphere is computed by induction. As both $N_K^H$ and $\Res_K^G$ preserve filtered homotopy colimits, the result follows. 
	\end{proof}
	
	\begin{prop}\label{prop-Ninf}
		Localization at $x$ preserves $\cO$-algebras if and only if $N_K^H\Res_K^G(x)$ divides a power of $\Res_H^G(x)$ for every $K \subset H \subset G$ such that $H/K$ is admissible as an $H$-set.
	\end{prop}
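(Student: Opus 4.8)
The plan is to combine the criterion of \cref{prop:GW} with the computation in \cref{lem:NormResSphere}, following the strategy of \cite[Proposition 2.30]{Boehme} but now allowing $x$ to sit in an arbitrary $RO(G)$-degree. By \cref{prop:GW} it suffices, for each pair $K\subset H\subset G$ with $H/K$ admissible, to decide when the functor $F := N_K^H\Res_K^G\colon \Sp_G^\infty \to \Sp_H^\infty$ sends $x$-equivalences to $\Res_H^G(x)$-equivalences. The key structural facts about $F$ are that it is strong symmetric monoidal, being the composite of a restriction functor and a norm, and that it preserves filtered homotopy colimits. Write $b := \Res_H^G(x)$ and $y := F(x) = N_K^H\Res_K^G(x)$, both classes in $\pi_\star^H\mathbb{S}_H$.

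The first step is to reduce the question to a single map: $F$ preserves $x$-equivalences if and only if it sends the localization map $\eta\colon \mathbb{S}\to x^{-1}\mathbb{S}$ to a $b$-equivalence. One direction is immediate, since $x^{-1}\mathbb{S}$ is already $x$-local and hence $\eta$ is itself an $x$-equivalence. For the converse, let $f\colon E\to E'$ be an $x$-equivalence and apply $F$ to the naturality square of $\eta$ at $f$. The lower map is $F(x^{-1}f)$, an equivalence because $x^{-1}f \simeq f\wedge x^{-1}\mathbb{S}$ is; each vertical map, by strong symmetric monoidality, has the form $\mathrm{id}_{F(-)}\wedge F(\eta)$, hence is a $b$-equivalence, using that $b$-equivalences are closed under smashing with an arbitrary object. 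Since $b$-equivalences are exactly the preimage of the equivalences under the exact functor $-\wedge b^{-1}\mathbb{S}$, they satisfy two-out-of-three, so the top map $F(f)$ is a $b$-equivalence.

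The second step identifies $F(\eta)$. By the proof of \cref{lem:NormResSphere}, applying $F$ to the telescope defining $x^{-1}\mathbb{S}$ produces the telescope defining $y^{-1}\mathbb{S}_H$, so $F(\eta)$ is the localization map $\eta_y\colon \mathbb{S}_H\to y^{-1}\mathbb{S}_H$. Thus $F$ preserves $x$-equivalences precisely when $\eta_y$ is a $b$-equivalence, i.e.\ when $\eta_y\wedge b^{-1}\mathbb{S}$, which is the $y$-localization map of $b^{-1}\mathbb{S}_H$, is an equivalence — equivalently, when $y$ already acts invertibly on $b^{-1}\mathbb{S}_H$. On homotopy this says that the image of $y$ is a unit in $\pi_\star^L\mathbb{S}_L[(\Res_L^H b)^{-1}]$ for every $L\subset H$; by the standard localization fact for graded-commutative rings, invertibility of $y$ in $\pi_\star^H\mathbb{S}_H[b^{-1}]$ amounts to the existence of $n\geq 1$ and $c\in\pi_\star^H\mathbb{S}_H$ with $b^n = y\cdot c$, and applying $\Res_L^H$ to this single relation forces invertibility at every $L$. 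Hence $F$ preserves $x$-equivalences if and only if $y = N_K^H\Res_K^G(x)$ divides a power of $b=\Res_H^G(x)$; combining this over all admissible pairs $K\subset H\subset G$ with \cref{prop:GW} yields the proposition.

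I expect the main obstacle to be the first step: because $N_K^H$ is not exact, one cannot argue directly that it preserves $x$-acyclic objects, and the reduction to the single map $\eta$ — powered entirely by strong symmetric monoidality together with the closure properties of $b$-equivalences — is the part of the argument that genuinely requires care. By contrast, handling the $RO(G)$-degree of $x$, which is the new feature compared to \cite{Boehme}, is confined to \cref{lem:NormResSphere} and the norm-of-a-representation-sphere formula, and the final ring-theoretic step is insensitive to it.
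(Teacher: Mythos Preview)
Your proof is correct and follows essentially the same approach as the paper: both combine \cref{prop:GW} with \cref{lem:NormResSphere} and the strong symmetric monoidality of $N_K^H\Res_K^G$. The only organizational difference is that you first isolate the intermediate criterion ``$F(\eta)$ is a $b$-equivalence'' and then reduce both directions to it via a naturality-square two-out-of-three argument, whereas the paper treats the two directions separately --- in the ``if'' direction simply observing that $F(f)\wedge y^{-1}\mathbb{S}_H$ is an equivalence and then tensoring further to $b^{-1}\mathbb{S}_H$ using the factorization $\mathbb{S}_H\to y^{-1}\mathbb{S}_H\to b^{-1}\mathbb{S}_H$. These are the same idea in slightly different packaging.
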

	\begin{proof}
		Let $K \subset H \subset G$ be subgroups such that $H/K$ is admissible as an $H$-set. By \cref{prop:GW}, we have to show that $N_K^H\Res_K^G(x)$ divides a power of $\Res_H^G(x)$ if and only if
		\[N_K^H\Res_K^G\colon \Sp^G_{\infty}\to \Sp^H_{\infty}\]
		preserves $x$-equivalences.
		
		Assume first that $N_K^H\Res_K^G$ preserves $x$-equivalences. By the preceding lemma, we see in particular that $\mathbb{S}_H \to  (N_K^H\Res_K^G(x))^{-1}\mathbb{S}_H$ is an $x$-equivalence, i.e.\ $N_K^H\Res_K^G(x)$ becomes a unit after inverting $\Res_H^G(x)$ and just must divide a power of it.
		
		Assume now that $N_K^H\Res_K^G(x)$ divides a power of $\Res_H^G(x)$. Then the map $\mathbb{S}_H \to \Res_H^G(x)^{-1}\mathbb{S}_H$ factors over the standard map $\mathbb{S}_H \to (N_K^H\Res_K^G(x))^{-1}\mathbb{S}_H$.
		
		Let now $f\colon E \to F$ be an $x$-equivalence of $G$-spectra, i.e.\ we assume that $f \wedge x^{-1}\mathbb{S}$ is an equivalence. As $N_K^H$ and $\Res_H^G$ are symmetric monoidal, we see that $N_K^H\Res_H^G(f \wedge x^{-1}\mathbb{S})$ is equivalent to $N_K^H\Res_H^G(f) \wedge (N_K^H\Res_K^G(x))^{-1}\mathbb{S}_H$, which is thus an equivalence. Tensoring with $\Res_H^G(x)^{-1}\mathbb{S}_H$ over $(N_K^H\Res_K^G(x))^{-1}\mathbb{S}_H$ yields the result.
	\end{proof}
	
	We specialize now to the case that $x$ is the Euler class $a_V\colon S^0 \to S^V$. In this case we have $N_K^G\Res_H^Ga_V = a_{\ind_K^G\Res_H^G V}$. Thus to see which multiplicative structure localization at $a_V$ preserves, we only have to understand divisibility relations between Euler classes. In particular, we obtain the following corollary:
	\begin{cor}
		Let $V$ be a $G$-representation. Assume that $\ind_K^H\Res_K^GV$ is a summand of a multiple of $\Res_H^GV$ for every $K\subset H \subset G$ such that $H/K$ is an admissible $H$-set. Then localizing at $a_V$ preserves $\cO$-algebras.
	\end{cor}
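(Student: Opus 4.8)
The plan is to deduce the statement directly from \cref{prop-Ninf}; essentially all the work has already been done there, and what remains is to translate the representation-theoretic hypothesis into the divisibility condition of that proposition. Concretely, I would fix a pair $K \subset H \subset G$ with $H/K$ admissible as an $H$-set and verify that $N_K^H\Res_K^G(a_V)$ divides a power of $\Res_H^G(a_V)$; by \cref{prop-Ninf} this is exactly the criterion for localization at $a_V$ to preserve $\cO$-algebras, and establishing it also proves \cref{thm:introMult}.

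First I would identify both sides with Euler classes of the evident representations. Since restriction is symmetric monoidal and carries the defining cofiber sequence $S^0 \xrightarrow{a_V} S^V$ to the one defining $a_{\Res_H^G V}$, we get $\Res_H^G(a_V) = a_{\Res_H^G V}$. Similarly, using that norms of representation spheres are computed by induction (as in the proof of \cref{lem:NormResSphere}), $N_K^H\Res_K^G(a_V) = a_{\ind_K^H\Res_K^G V}$, which is the identity recorded just above the statement. Hence the condition to be checked reads: $a_{\ind_K^H\Res_K^G V}$ divides a power of $a_{\Res_H^G V}$ in the $H$-equivariant stable stems.

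Finally I would feed in the hypothesis: there are a positive integer $m$ and an $H$-representation $W$ with $\ind_K^H\Res_K^G V \oplus W \cong m\cdot\Res_H^G V$. Multiplicativity of Euler classes under direct sum, $a_{U\oplus U'} = a_U a_{U'}$, then gives $a_{\ind_K^H\Res_K^G V}\cdot a_W = a_{m\cdot\Res_H^G V} = (a_{\Res_H^G V})^m$, so $a_{\ind_K^H\Res_K^G V}$ divides $(a_{\Res_H^G V})^m$. Since $K\subset H\subset G$ was an arbitrary admissible pair, \cref{prop-Ninf} gives the conclusion. I do not anticipate a genuine obstacle: the real content lies in \cref{prop-Ninf} together with the criterion of \cite{GW18} that feeds it, and the only points needing (routine) care are the identities $\Res_H^G(a_V) = a_{\Res_H^G V}$ and $N_K^H\Res_K^G(a_V) = a_{\ind_K^H\Res_K^G V}$ together with the relation $a_{U\oplus U'} = a_U a_{U'}$, all of which are standard.
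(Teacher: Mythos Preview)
The proposal is correct and follows essentially the same route as the paper: both deduce the corollary directly from \cref{prop-Ninf} by identifying $N_K^H\Res_K^G(a_V) = a_{\ind_K^H\Res_K^G V}$ and $\Res_H^G(a_V) = a_{\Res_H^G V}$, then using multiplicativity of Euler classes under direct sum to turn the summand hypothesis into the required divisibility. Your write-up is in fact more explicit than the paper, which records only the norm-restriction identity for Euler classes and then states the corollary without further comment.
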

	
	\begin{rmk}\label{rmk-DividingEulerClasses}\rm
	While this corollary is everything we need, one can be more precise. For a $H$-representation $V$, let $\cF_V^{\mathrm{fix}}$ be the family of subgroups $K\subset H$ such that $V^K \neq 0$. Thus, $a_V^{-1}\mathbb{S}_H \simeq S^{\infty V} \simeq \widetilde{E\cF_V^{\mathrm{fix}}}$. In general, $a_W$ divides a power of $a_V$ if and only if $a_W^{-1}a_V^{-1}\mathbb{S}_H \simeq a_V^{-1}\mathbb{S}_H$, i.e.\ if $\cF_W^{\mathrm{fix}} \subset \cF_V^{\mathrm{fix}}$. (This is a weaker condition than $W$ being contained in a multiple of $V$: for example, take $G = C_8$ and $W$ and $V$ be the two-dimensional real representation corresponding to rotation by $\frac18\cdot 2\pi$ and $\frac38\cdot 2\pi$, respectively, which both have trivial fixed point family.)
	
	Specializing \cref{prop-Ninf} thus yields: For a $G$-representation $V$, localization at $a_V$ preserves $\cO$-algebras if and only if $\cF_{\ind_K^H\Res_K^GV}^{\mathrm{fix}} \subset \cF^{\mathrm{fix}}_{\Res_H^GV}$ for all $K\subset H \subset G$ such that $H/K$ is an admissible $H$-set. 
	\end{rmk}
	
	\begin{exam}\label{exam-MU}\rm
		Let $G = C_{2^n}$ and $\lambda = \lambda^n$ be the two-dimensional representation of $C_{2^n}$ given by rotation by an angle of $\frac{2\pi}{2^n}$. We observe that $\Res_{C_{2^k}}^{C_2^{n}}\lambda^n = \lambda^k$ and $\ind_{C_{2^k}}^{C_2^{m}}\lambda^k = 2^{m-k}\lambda^m$ unless $k=1$. Thus localizing at $a_{\lambda}$ preserves $\cO$-algebras if the following holds: $H/K$ is $H$-admissible if and only if $K\neq e$. In particular, we see that for any commutative $C_{2^n}$-spectrum $R$, the localization $a_{\lambda}^{-1}R$ admits norms from $\pi_{\ast}^{C_{2^k}}$ to $\pi_{\ast}^{C_{2^n}}$ for $0 < k < n$, but will not admit norms from $\pi_{\ast}^{e}$ unless the target is zero. The example we care most about is $a_{\lambda}^{-1} MU^{(\!(C_{2^n})\!)}$.
		
		These considerations have consequences for the multiplicative behaviour of the pullback functor $P^*_{C_{2^n}/C_2}$. Let $R$ be an algebra over a $C_{2^n}/C_2$-$E_{\infty}$-operad $\cO$ in $C_{2^n}/C_2$-spectra. Denoting the projection $C_{2^n} \to C_{2^n}/C_2$ by $p$, we see that $p^*\cO$ is an $N_{\infty}$-operad for which $\Gamma \subset C_{2^n} \times \Sigma_n$ is in $\cF_n$ if and only if $\Gamma/(C_2 \times e)$ is a graph subgroup of $(C_{2^n}/C_2) \times \Sigma_n$. This means that $H/K$ is $H$-admissible if and only if $K \neq e$. Note further that 
		\[P_{C_{2^n}/C_2}^*R = p^*R\sm \widetilde{E}\cF[C_2] \simeq p^*R[a_{\lambda}^{-1}]\] 
		since $\lambda^K = 0$ unless $K$ is trivial. Using the paragraph above we see that $P_{C_{2^n}/C_2}^*R$ retains the structure of a $p^*\cO$-algebra.
		
		Likewise we can apply our considerations to the geometric fixed point functor. With $p^*\cO$ as above, we see that for a $G$-commutative ring spectrum $R$, the localization $a_{\lambda}^{-1}R$ retains an action of $p^*\cO$ and thus $\Phi^{C_2}R \simeq (a_{\lambda}^{-1}R)^{C_2}$ has the structure of a $\cO$-algebra. Thus $\Phi^{C_2}R$ is equivalent to a $G/C_2$-commutative ring spectrum. 
	\end{exam}
	
\section{The slice spectral sequence and the localized slice spectral sequence}\label{sec:sliceSSBackground}

\subsection{The slice spectral sequence of $\MUn$ and $\BPn$} \label{subsec:sliceSS}
Our main computational tool in this paper is a modification of the equivariant slice spectral sequence of Hill--Hopkins--Ravenel.  In this subsection, we list some important facts about the slice filtration for norms of $\MUR$ and $\BPR$, which we will need for the rest of the paper.  For a detailed construction of the slice spectral sequence and its properties, see \cite[Section 4]{HHR} and \cite{HHR:C4}. 

Let $G = C_{2^n}$ be the cyclic group of order $2^n$, with generator $\gamma$.  The spectrum $\MUG$ is defined as
$$\MUG := N_{C_2}^{G} \MUR.$$
The underlying spectrum of $\MUG$ is the smash product of $2^{n-1}$-copies of $MU$.

Hill, Hopkins, and Ravenel \cite[Section 5]{HHR} constructed elements
$$\ovr_i \in \pi_{i\rho_2}^{C_2} \MUG$$
such that
$$\pi_{*\rho_2}^{C_2} \MUG \cong \mathbb{Z}[G \cdot \ovr_1, G \cdot \ovr_2, \ldots],$$
Here, $G \cdot x$ denotes the set $\{x, \gamma x, \gamma^2 x, \ldots, \gamma^{2^{n-1}}x\}$, and the Weyl action is given by
$$\gamma \cdot \gamma^j \ovr_i = \left\{\begin{array}{ll}\gamma^{j+1} \ovr_i & 0 \leq j \leq 2^{n-1}-2 \\ (-1)^i \ovr_i & j = 2^{n-1}-1. \end{array} \right.  $$

Adjoint to each map
$$\ovr_i: S^{i\rho_2} \longrightarrow i_{C_2}^*\MUG$$
is an associative algebra map from the free associative algebra
$$S^0[\ovr_i] = \bigvee_{j\geq 0} (S^{i\rho_2})^{\sm j} \longrightarrow i_{C_2}^*\MUG.$$
Applying the norm and using the norm-restriction adjunction, this gives a $G$-equivariant associative algebra map
$$S^0[G \cdot \ovr_i] = N_{C_2}^G S^0[\ovr_i] \longrightarrow \MUG.$$
Smashing these maps together produces an associative algebra map
$$A:= S^0[G \cdot \ovr_1, G \cdot \ovr_2, \ldots] = \bigwedge_{i =1}^\infty S^0[G \cdot \ovr_i] \longrightarrow \MUG.$$
Note that by construction, $A$ is a wedge of representation spheres, indexed by monomials in the $\ovr_i$s.  By the Slice Theorem \cite[Theorem~6.1]{HHR}, the slice filtration of $\MUG$ is the filtration associated with the powers of the augmentation ideal of $A$.  The slice associated graded for $\MUG$ is the graded spectrum
$$S^0[G \cdot \ovr_1, G \cdot \ovr_2, \ldots]\wedge \HZ,$$
where the degree of a summand corresponding to a monomial in the $\ovr_i$ generators and their conjugates is the underlying degree.

As a consequence of the slice theorem, the slice spectral sequence for the $RO(G)$-graded homotopy groups of $\MUG$ has $E_2$-term the $RO(G)$-graded homology of $S^0[G \cdot \ovr_1, G \cdot \ovr_2, \ldots]$ with coefficients in the constant Mackey functor $\underline{\mathbb{Z}}$.  To compute this, note that $S^0[G \cdot \ovr_1, G \cdot \ovr_2, \ldots]$ can be decomposed into a wedge sum of slice cells of the form
$$G_+ \wedge_{H_p} S^{\frac{|p|}{|H_p|} \rho_{H_p}},$$
where $p$ ranges over a set of representatives for the orbits of monomials in the $\gamma^j \ovr_i$ generators, and $H_p \subset G$ is the stabilizer of $p$ (mod 2).  Therefore, the $E_2$-page of the integer graded slice spectral sequence can be computed completely by writing down explicit equivariant chain complexes for the representation spheres $S^{\frac{|p|}{|H_p|} \rho_{H_p}}$.

The exact same story holds for norms of $\BPR$ as well. By \cite[Theorems 2.25, 2.33]{HuKriz1}, the classical Quillen idempotent $MU \longrightarrow MU$ lifts to a multiplicative idempotent $\MUR \to \MUR$ with image $\BPR$, resulting in particular in a multiplicative $C_2$-equivariant map
$$\MUR \longrightarrow \BPR.$$
Taking the norm $N_{C_2}^G(-)$ of this map produces a multiplicative $G$-equivariant map
$$\MUG \longrightarrow \BPG =: N_{C_2}^G \BPR.$$
The exact same technique as the one used in \cite[Section 5]{HHR} shows that there are generators
$$\ot_i \in \pi_{(2^i-1)\rho_2}^{C_2} \BPG$$
such that
$$\pi_{*\rho_2}^{C_2} \BPG \cong \mathbb{Z}_{(2)}[G \cdot \ot_1, G \cdot \ot_2, \ldots]. $$

Throughout the paper, the generators $\ot_i$ are chosen to be the coefficients of the canonical isomorphism from the formal group law of the first $\BPR$ component to the formal group law of the second $\BPR$-component.  In the case when $G = C_4$, it is the canonical isomorphism from the formal group law $F_L$ to $F_R$, where $F_L$ is induced by the map
\[\BPR \simeq \BPR \wedge S^0 \longrightarrow \BPR \wedge \BPR,\]
and $F_R$ is induced by the map 
\[\BPR \simeq S^0 \wedge \BPR \longrightarrow \BPR \wedge \BPR.\]

\begin{rmk}\rm
Our specific choice of the formal group law and the generators $\ot_i$ is because we would like to control their geometric fixed points (See \cref{prop:ti}).  Nevertheless, we would like to remark that the proofs and formulas in both \cite{HHR} and \cite{BHSZ} work for any choice of formal group law and the corresponding $\bar{t}_i$ generators we get for $\pi_{*\rho_2}^{C_2}\BPG$, as long as the conditions in \cite[Proposition~5.45]{HHR} are satisfied. 
\end{rmk}

Just like $\MUG$, we can build an equivariant refinement
$$S^0[G \cdot \ot_1, G \cdot \ot_2, \ldots] \longrightarrow \BPG$$
from which the Slice Theorem implies that the slice associated graded for $\BPG$ is the graded spectrum
$S^0[G \cdot \ot_1, G \cdot \ot_2, \ldots] \wedge \HZ_{(2)}.$

\begin{rmk}\rm
The regular slice towers of $\MUG$ and $\BPG$ are isomorphic to their slice towers in \cite{HHR}, since all the HHR-slices of them are regular slices. For a proof of the slice theorem in terms of the regular slice filtration, see \cite[Chapter~12.4]{HHR:ESHT}
\end{rmk}

Since the slice filtration is an equivariant filtration, the slice spectral sequence is a spectral sequence of $RO(G)$-graded Mackey functors.  Moreover, the slice spectral sequences for $\MUG$ and $\BPG$ are multiplicative spectral sequences and the natural maps between them are multiplicative as well (see \cite[Section 4.7]{HHR}), and the slice spectral sequence for $\BPG$ is a spectral sequence of modules over the spectral sequence of $\MUG$ in Mackey functors.

\subsection{The localized spectral sequence}
In this subsection, we introduce a variant of the slice spectral sequence which we call the localized slice spectral sequence.  This will be our main computational tool to compute $a_\lambda^{-1} \BPfour$ in the later sections.

Let $\lambda_{2^{n-i}}$ denote the 2-dimensional real $C_{2^n}$-representation corresponding to rotation by $\left(\frac{\pi}{2^{n-i}}\right)$ and $\sigma$ denote the real sign representation of $C_{2^n}$.  Given a $C_{2^n}$-spectrum $X$, we have an equivalence
$$\EFi \wedge X \simeq S^{\infty \lambda_{2^{n-i}}} \wedge X \simeq a_{\lambda_{2^{n-i}}}^{-1} X$$
for all $1 \leq i \leq n$.  For example, there are equivalences
\begin{eqnarray*}
\widetilde{E}\mathcal{F}[C_{2^n}] \wedge X &\simeq& a_{\lambda_1}^{-1}X = a_{2\sigma}^{-1} X = a_\sigma^{-1} X, \\
\widetilde{E}\mathcal{F}[C_{2^{n-1}}] \wedge X &\simeq& a_{\lambda_2}^{-1}X,\\
\widetilde{E}\mathcal{F}[C_{2^{n-2}}] \wedge X &\simeq& a_{\lambda_4}^{-1}X.
\end{eqnarray*}

The following theorem shows that one can compute the homotopy groups of $\EFi \wedge X = a_{\lambda_{2^{n-i}}}^{-1}X$ by smashing the slice tower of $X$ with $\EFi$.  The resulting localized slice spectral sequence will converge to the homotopy groups of $a_{\lambda_{2^{n-i}}}^{-1}X$.

\begin{thm}\label{thm:invertingAclassE2page}
Let $X$ be a $C_{2^n}$-spectrum, and let $\{P^\bullet\}$ denote the (regular) slice tower for $X$.  Consider the tower
$$\{Q^\bullet\} :=\{\EFi \wedge P^\bullet \}$$
obtained by smashing $\{P^\bullet\}$ with $\EFi$.  The spectral sequence associated to $\{Q^\bullet\}$ converges strongly to the homotopy groups of $\EFi \wedge X$.
\end{thm}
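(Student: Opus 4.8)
The plan is to realize the $a_{\lambda_{2^{n-i}}}^{-1}$-localized slice spectral sequence as the spectral sequence of a tower obtained by applying an exact functor to the slice tower, and then to verify Boardman's strong-convergence criterion for that tower. Since $\EFi \wedge (-) \simeq a_{\lambda_{2^{n-i}}}^{-1}(-)$ is smashing with the fixed, $(-1)$-connected spectrum $\EFi = S^{\infty\lambda_{2^{n-i}}}$, it is exact (preserves cofiber sequences), commutes with all homotopy colimits, and does not lower connectivity. Applying it to the slice tower $\{P^\bullet\}$ of $X$ therefore yields the tower $\{Q^\bullet\} = \{\EFi \wedge P^\bullet\}$, whose layer in slice degree $m$ is $\EFi \wedge P^m_m X$; the associated spectral sequence is exactly the localized slice spectral sequence with $E_2$-page $\underline{\pi}_{t-s}\, a_{\lambda_{2^{n-i}}}^{-1}P^t_t X$, so it suffices to prove that this spectral sequence converges strongly to $\underline{\pi}_\star(\EFi \wedge X)$.

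The first real step is to identify the homotopy limit of $\{Q^\bullet\}$. For each $m$ there is a cofiber sequence $P_{m+1}^{\infty}X \to X \to P^m X$, where $P_{m+1}^{\infty}X$ — the fiber of $X \to P^m X$ — is the slice-$(\ge m{+}1)$-connective cover; smashing with $\EFi$ gives $\EFi \wedge P_{m+1}^{\infty}X \to \EFi \wedge X \to Q^m$. By the connectivity estimates for the slice filtration \cite[Section~4]{HHR}, $P_{m+1}^{\infty}X$ is roughly $(m/2^{n})$-connected, and because $\EFi \wedge (-)$ does not lower connectivity the same is true of $\EFi \wedge P_{m+1}^{\infty}X$. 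Hence, in each fixed degree the inverse system $\{\underline{\pi}_\star \EFi \wedge P_{m+1}^{\infty}X\}_m$ is eventually zero, so $\operatorname{holim}_m \EFi \wedge P_{m+1}^{\infty}X \simeq \ast$; this gives $\operatorname{holim}_m Q^m \simeq \EFi \wedge X$ together with the vanishing of $\varprojlim^1_m \underline{\pi}_\star Q^m$. (The very same estimate shows that the slice tower of $X$ is complete, $\operatorname{holim}_m P^m X \simeq X$, for any $X$, since the slice-connective covers become arbitrarily highly connected.)

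Finally I would feed this into Boardman's machinery. The $\varprojlim$ and $\varprojlim^1$ computations above show that the filtration induced on $\underline{\pi}_\star(\EFi \wedge X)$ is Hausdorff and complete. Exhaustiveness of the filtration and the vanishing of the $RE_\infty$-obstruction — the potential ``loss of information at infinite filtration'' — are inherited from the corresponding facts for the slice spectral sequence of $X$, which converges strongly by \cite{Ullman:Thesis}: the exact, colimit-preserving functor $\EFi \wedge (-)$ carries the cofiber sequences and Milnor exact sequences that witness strong convergence of $\SliceSS(X)$ to the corresponding ones for $\{Q^\bullet\}$, with the uniform connectivity bound on $\EFi \wedge P_{m+1}^{\infty}X$ now controlling the \emph{targets} of the tower maps. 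I expect this last bookkeeping — checking that localization creates no new high-filtration phenomena, rather than any single conceptual point — to be the main obstacle; the rest is transport of structure along an exact, colimit- and connectivity-preserving functor.
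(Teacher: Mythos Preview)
Your argument is essentially correct and shares its key ingredient with the paper: the connectivity estimate coming from the fact that $\EFi = S^{\infty\lambda_{2^{n-i}}}$ is $(-1)$-connected (equivalently, slice $\geq 0$). Your identification of the homotopy limit of $\{Q^\bullet\}$ with $\EFi \wedge X$ via the connectivity of $\EFi \wedge P_{m+1}^{\infty}X$ is exactly what the paper does.

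Where the two differ is in the handling of strong convergence. Your ``inheritance'' framing for the vanishing of $RE_\infty$ is not correct as stated: $RE_\infty$ and the Milnor exact sequences are built from \emph{inverse} limits of the $E_r$-terms, and $\EFi \wedge (-)$, being only colimit-preserving, does not commute with these. So strong convergence of $\SliceSS(X)$ does not transport formally to $\{Q^\bullet\}$. The paper instead argues directly: since $S^{\infty\lambda} \geq 0$ in the slice sense, \cite[Proposition~4.26]{HHR} gives $S^{\infty\lambda} \wedge P^n_nX \geq n$, and then \cite[Proposition~4.40]{HHR} yields a vanishing line on the $E_2$-page of the localized spectral sequence. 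Together with the contractibility of the colimit of the tower (which \emph{is} inherited, since smashing commutes with colimits), this vanishing line gives strong convergence to the limit via Boardman's criterion.

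This is a minor reorganization rather than a new idea: the same connectivity bound you invoke for $\EFi \wedge P_{m+1}^{\infty}X$ gives, via the cofiber sequence defining the layers, exactly this vanishing line on $E_2$. So your sketch would go through once you replace the inheritance claim for $RE_\infty$ by the direct vanishing-line argument.
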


\begin{proof}
Let $\lambda:= \lambda_{2^{n-i}}$.  Consider the tower
$$\begin{tikzcd}
S^{\infty \lambda} \wedge X \ar[r] &\varprojlim (S^{\infty \lambda} \wedge P^\bullet X) \ar[d] \\
&\vdots \ar[d] \\
&S^{\infty \lambda} \wedge P^n X \ar[d] & \ar[l] S^{\infty \lambda} \wedge P^n_n X\\
&S^{\infty \lambda} \wedge P^{n-1} X \ar[d] & \ar[l] S^{\infty \lambda} \wedge P^{n-1}_{n-1} X\\
&\vdots
\end{tikzcd}$$
We will first show that the spectral sequence converges to the limit, $\varprojlim(S^{\infty \lambda}\wedge P^\bullet X)$.  Since smash products commute with colimits, we have the equivalence
$$\varinjlim(S^{\infty \lambda}\wedge P^\bullet X) \simeq *$$
and so the colimit of the tower is contractible.  The slices $P_n^n X$ satisfy $P_n^n X \geq n$ for all $n$.  Furthermore, since $S^{\infty \lambda} \geq 0$, we also have
$$S^{\infty \lambda} \wedge P_n^n X \geq n$$
by \cite[Proposition~4.26]{HHR}.\footnote{The proof of this result and of the part of \cite[Proposition~4.40]{HHR} we need are still valid for the regular slice filtration instead of the slice filtration as used in \cite{HHR}.}  Applying Proposition~4.40 in \cite{HHR} to $S^{\infty \lambda} \wedge P_n^n X$ shows that the homotopy groups
$$\underline{\pi}_k (S^{\infty \lambda} \wedge P_{n}^n X) = 0 \,\,\, \text{if } \left\{\begin{array}{l}n \geq 0 \text{ and } k < \lfloor\frac{n}{|G|} \rfloor, \\
n < 0 \text{ and } k < n. \end{array} \right.$$
This gives a vanishing line on the $E_2$-page of the spectral sequence.  It follows that the spectral sequence converges strongly to the homotopy groups of the limit, $\underline{\pi}_k \varprojlim (S^{\infty \lambda} \wedge P^\bullet X)$ \cite[Section~5-6]{Board:SS}.

To finish our proof, it suffices to show that the map
$$S^{\infty \lambda} \wedge X \longrightarrow \varprojlim (S^{\infty \lambda} \wedge P^\bullet X)$$
is an equivalence.

Consider the cofiber sequence
$$P_{n+1}X \longrightarrow X \longrightarrow P^n X$$
used in the definition of the slice tower.  In the cofiber sequence, $P_{n+1} X \geq n+1$ and $P^n X \leq n$.  Smashing this cofiber sequence with $S^{\infty \lambda}$ produces a new cofiber sequence
$$S^{\infty \lambda} \wedge P_{n+1}X \longrightarrow S^{\infty \lambda} \wedge X \longrightarrow S^{\infty \lambda} \wedge  P^n X.$$
Since $S^{\infty \lambda} \geq 0$, \cite[Proposition~4.26]{HHR} implies that
$$S^{\infty \lambda} \wedge P_{n+1} X \geq  n+1. $$
Applying \cite[Proposition~4.40]{HHR} to $S^{\infty \lambda} \wedge P_{n+1} X$ shows that
$$\underline{\pi}_k (S^{\infty \lambda} \wedge P_{n+1} X) = 0 \,\,\, \text{if } \left\{\begin{array}{l}n+1 \geq 0 \text{ and } k < \lfloor\frac{n+1}{|G|} \rfloor, \\
n+1 < 0 \text{ and } k < n+1. \end{array} \right.$$

The cofiber sequence above induces the following long exact sequence in homotopy groups:
$$\underline{\pi}_k(S^{\infty \lambda} \wedge P_{n+1}X) \longrightarrow \underline{\pi}_k(S^{\infty \lambda} \wedge X) \longrightarrow \underline{\pi}_k(S^{\infty \lambda} \wedge  P^n X) \longrightarrow \underline{\pi}_{k-1}(S^{\infty \lambda} \wedge P_{n+1}X) \longrightarrow \cdots $$
It follows from this long exact sequence and the discussion above that
$$\underline{\pi}_k(S^{\infty \lambda} \wedge X) \cong \underline{\pi}_k(S^{\infty \lambda} \wedge  P^n X)\,\,\, \text{if } \left\{\begin{array}{l}n+1 \geq 0 \text{ and } k < \lfloor\frac{n+1}{|G|} \rfloor, \\
n+1 < 0 \text{ and } k < n+1. \end{array} \right. $$
This means that for any $k$, the $k$th homotopy groups of $S^{\infty \lambda} \wedge X$ and $S^{\infty \lambda} \wedge  P^n X$ will be isomorphic when $n$ is large enough.  In particular, the map $S^{\infty \lambda} \wedge P^{n+1} X \to S^{\infty \lambda} \wedge P^{n}X$ will induce an isomorphism on $\underline{\pi}_k$.  It is then immediate that the system $\underline{\pi}_k(S^{\infty \lambda} \wedge P^\bullet X)$ satisfies the Mittag--Leffler condition and therefore 
\[\underline{\pi}_k \varprojlim (S^{\infty \lambda} \wedge  P^{\bullet} X) \cong \varprojlim \underline{\pi}_k (S^{\infty \lambda} \wedge  P^{\bullet} X) \cong \underline{\pi}_k (S^{\infty \lambda} \wedge  P^n X)\]
for $n$ large. 

Another way to observe this is by using the localized slice spectral sequence.  As we have shown, the spectral sequence associated to the tower $\{Q^\bullet\} :=\{S^{\infty \lambda} \wedge P^\bullet \}$ converges to the homotopy groups of $\varprojlim (S^{\infty \lambda} \wedge P^\bullet X)$.  It takes the form
$$E_2^{s,n} = \underline{\pi}_{n-s} (S^{\infty \lambda} \wedge P_n^n X) \Longrightarrow \underline{\pi}_{n-s} \varprojlim (S^{\infty \lambda} \wedge P^\bullet X).$$
By \cite[Proposition~4.40]{HHR}, the homotopy groups
$$\underline{\pi}_{n-s} (S^{\infty \lambda} \wedge P_n^n X)$$
do not contribute to $\underline{\pi}_k \varprojlim (S^{\infty \lambda} \wedge P^\bullet X)$ when $n \geq 0$ and $k < \lfloor\frac{n}{|G|} \rfloor$, or when $n < 0$ and $k < n$ (see Figure~\ref{fig:SliceSSVanishing}).  Therefore,
$$\underline{\pi}_k \varprojlim (S^{\infty \lambda} \wedge P^\bullet X) \cong \underline{\pi}_k(S^{\infty \lambda} \wedge  P^n X)\,\,\, \text{if } \left\{\begin{array}{l}n \geq 0 \text{ and } k < \lfloor\frac{n}{|G|} \rfloor, \\
n < 0 \text{ and } k < n. \end{array} \right.$$

\begin{figure}
\begin{center}
\makebox[\textwidth]{\includegraphics[trim={0cm 0cm 0cm 2cm},clip,scale = 0.6, ]{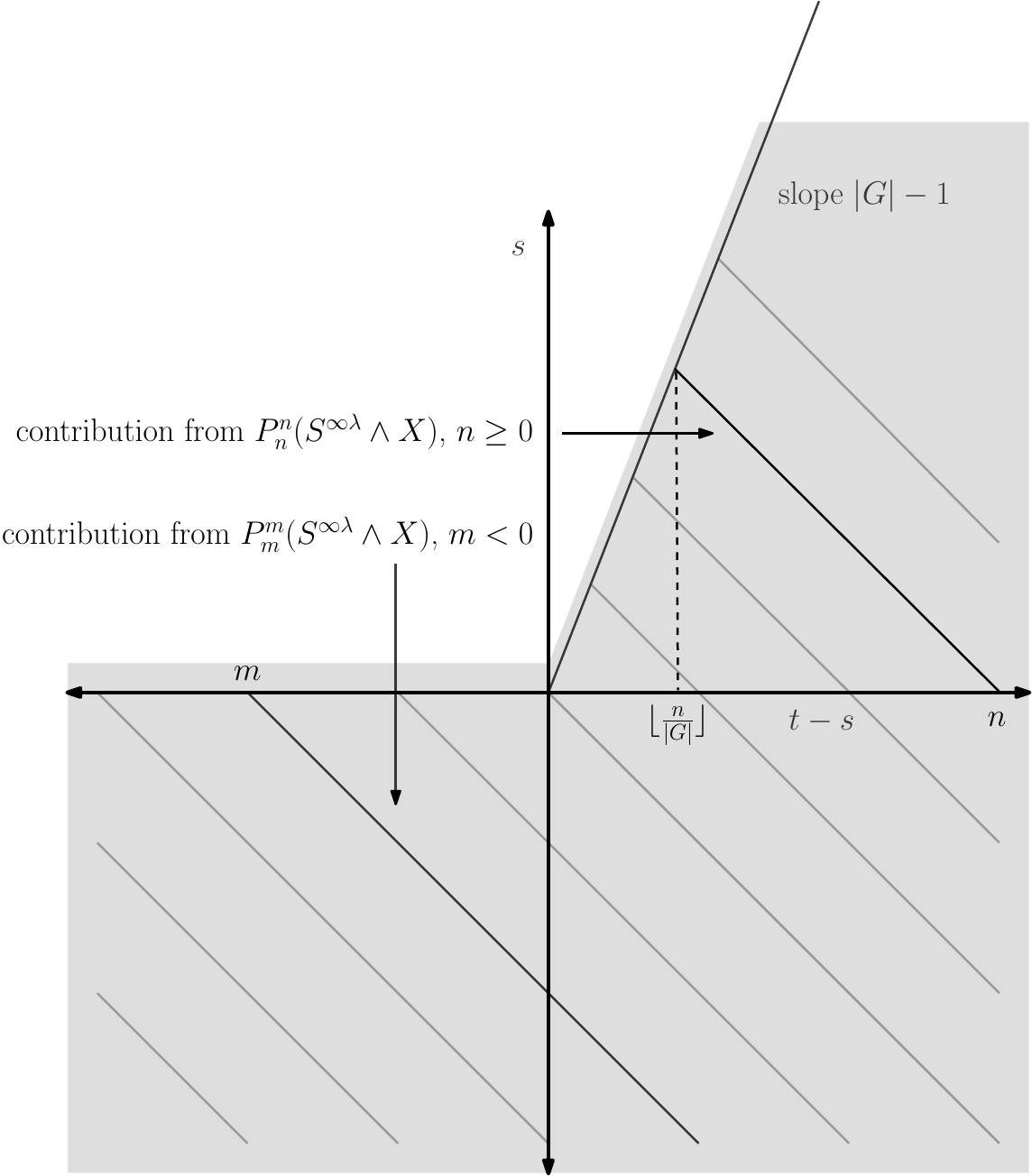}}
\caption{Spectral sequence associated to the tower $\{\EFi \wedge P^\bullet \}$.}
\label{fig:SliceSSVanishing}
\end{center}
\end{figure}

For any $k$, consider the diagram
$$\begin{tikzcd}
\underline{\pi}_k (S^{\infty \lambda} \wedge X) \ar[r] \ar[rd, "\cong", swap]& \underline{\pi}_k\varprojlim (S^{\infty \lambda} \wedge P^\bullet X) \ar[d, "\cong"] \\
& \underline{\pi}_k (S^{\infty \lambda} \wedge P^n X)
\end{tikzcd}$$
We have proven that when $n$ is large enough ($n > k$), the vertical arrow and the diagonal arrow are isomorphisms.  Therefore, the horizontal arrow induces an isomorphism
$$\underline{\pi}_k (S^{\infty \lambda} \wedge X) \cong \underline{\pi}_k \varprojlim (S^{\infty \lambda} \wedge P^\bullet X)$$
for all $k$.  It follows that $S^{\infty \lambda} \wedge X \simeq \varprojlim (S^{\infty \lambda} \wedge P^\bullet X)$, as desired.
\end{proof}

From the discussion in \cite[Section I.4]{Ullman:Thesis} it follows that the localized slice spectral sequences of $\MUG$ and $\BPG$ (and more generally of $G$-ring spectra) are multiplicative spectral sequences.

\subsection{Exotic transfers} \label{sec-transfer}
If the transfer of a given class in the slice spectral sequence is zero, it might still support a non-trivial \emph{exotic transfer} in a higher filtration. Understanding these is both crucial for understanding the Mackey functor structure of the spectral sequence and helpful to deduce differentials and extensions inside the spectral sequence. While the concept of exotic transfers is pretty transparent for permanent cycles, it is slightly more subtle for exotic transfers just happening on finite pages. Following the lead of \cite{BBHS} (in the case of the Picard spectral sequence), we will give a precise definition of this phenomenon and show how it behaves with respect to differentials. It turns out that it is no more difficult to treat a more general setting, which specializes to several different known spectral sequences and allows also for more general operations than just transfers. 
	
In this subsection, we will first state a general definition of exotic $w$-operations and prove some general results.  Then, we will specialize to the case of cyclic 2-groups and prove a variant of \cite[Theorem 4.4]{HHR:C4} that also work for exotic transfers and restrictions on finite pages. 
	
We consider a tower
\[\cdots \to X^{i+1} \to X^i \to X^{i-1} \to \cdots \]
of $G$-spectra. Recall that to this we can associate a spectral sequence as follows: Let $X^m_n = \mathrm{fib}(X^m \to X^{n-1})$. For $V$ a virtual $G$-representation of dimension $t$, we set $E_2^{s,V} = \U{\pi}_{V-s}(X^t_t)$ and more generally 
\[E_r^{s,V} = \im(\U{\pi}_{V-s}X^{t+(r-2)}_t \to \U{\pi}_{V-s} X^t_{t-(r-2)}).\] 
The differentials $d_r\colon E_r^{s,V} \to E_r^{s+r, V+r-1}$ are defined as the restrictions of the boundary maps $\delta\colon \U{\pi}_{V-s} X^t_{t-(r-2)} \to \U{\pi}_{V-s-1} X^{t+r-1}_{t+1}$ (coming from the cofiber sequence $X_{t+1}^{t+(r-1)} \to X_{t-(r-2)}^{t+(r-1)} \to X_{t-(r-2)}^{t}$). See e.g.\ \cite[Section 1.2.2]{HigherAlgebra} for some details in the setting of an ascending filtration. Our setting specializes in particular to the following spectral sequences:
\begin{enumerate}
	\item Given a spectrum $Z$ with a $G$-action, set $X^i = (\tau_{\leq i}Z)^{EG_+}$. We recover the homotopy fixed point spectral sequence. 
	\item Given a spectrum $Z$ with a $G$-action, set $X^i = (\tau_{\leq i}Z \sm \tilde{E}G)^{EG_+}$. We recover the Tate spectral sequence. 
	\item Given a $G$-spectrum $Z$, set $X^i = P^iZ$, the slice tower. We obtain the slice spectral sequence. 
	\item Given a $C_{2^n}$-spectrum $Z$ and $1\leq j \leq n$, set $X^i = \EF[C_{2^j}] \wedge P^iZ$. We obtain the localized slice spectral sequence. This will be the main example of relevance for us. 
\end{enumerate}
We fix an arbitrary map $\Sigma^{\infty} G/K \to \Sigma^{\infty} G/H$ and denote the resulting operation $\pi^H_n \to \pi^K_{n}$ by $w$. The most important case for us will be $H\subset K$ and $w = \Tr_H^K$. But equally well $w$ could be a restriction map, multiplication by a fixed element such as $2$, or any combination of these. 

For notational simplicity, we will restrict for our treatment of exotic $w$-operations to integer degrees. By suspending by a representation sphere, one can easily translate our definitions and results to the $RO(G)$-grading.

\begin{defi}\rm
	Let $x\in E_r^{s, t}(G/H)$, and let $0 \leq p \leq r-2$ and $0\leq q\leq p$. We may lift the corresponding element in $\pi^H_{t-s}X^t_{t-r+2}$ to an element $\widetilde{x}\in \pi^H_{t-s}X^{t+p}_{t+p-q-(r-2)}$ since by definition, we can actually lift it to an element in $\pi^H_{t-s}X^{t+(r-2)}_t$. If $w(\widetilde{x})\in\pi^K_{t-s}X^{t+p}_{t+p-q-(r-2)}$ lies in $E_{r+q}^{s+p, t+p}(G/K)$, we call it a \emph{$w$-operation of $x$ of filtration jump $p$ and page jump $q$}. 
	
	If $p>0$, we speak of an \emph{exotic $w$-operation}, which, depending on $w$, might be an \emph{exotic transfer}, \emph{exotic restriction} etc.\footnote{If $w$ is multiplication by an integer, then the existence of exotic $w$-operations corresponds essentially to hidden extensions. The basic issue of dependence on choices is already present in this more classical case.}  If the page jump is zero, we omit the mention of it.
	

\end{defi}
This definition can be illustrated with the following diagram: 
\[
\xymatrix{
{\widetilde{x}} \ar@{|->}[ddd] \ar@{|->}[rr] && x  \\
&\pi_{t-s}^H(X^{t+p}_{t+p-q-(r-2)}) \ar[r] \ar[d]^w& \pi_{t-s}^H(X^t_{t-(r-2)}) \ar[d]^w
\\
& \pi_{t-s}^K(X^{t+p}_{t+p -q- (r-2)}) \ar[r] \ar[r] &\pi_{t-s}^K(X^t_{t-(r-2)}) \\
 w(\widetilde{x})
}
\]


\begin{rmk}\label{rem:bigremark}\rm 
\begin{enumerate}
\item The $w$-operations of filtration jump $0$ are just the algebraic $w$-operations on the $E_r$-page as inherited from the $E_2$-page. This is why we call the $w$-operations of higher filtration jump \emph{exotic}.
\item The most classical case of exotic $w$-operations is the limiting case when $r= \infty$. If $x\in E_{\infty}^{s,t}(G/H)$ and $X$ denotes $\lim_t X^t$, we can actually lift $x$ to $\widetilde{x} \in \pi_{t-s}^HX$ (which is further than to $\pi_{t-s}^HX^{t+p}$ as required by the previous definition). If $w(\widetilde{x}) \neq 0$, it must be detected in some $E_{\infty}^{s+p, t+p}(G/K)$ and the resulting element is an example of a $w$-operation of filtration jump $p$ on $x$. In the case when $x$ is just on a finite page, we can suitably truncate the original spectral sequence to force $x$ to be a permanent cycle that survives to the $E_\infty$-page. We will do this in the proof of \cref{lem:UniquenessOfExoticW}.
\item \label{item:Non-uniquness} Even in the classical situation of the last item, exotic $w$-operations are in general not unique; in other words, $w(\widetilde{x})$ will depend on the choice of lift $\widetilde{x}$. With notation as in the last bullet point, suppose for example  that there exists $z\in E_{\infty}^{s+i, t+i}(G/H)$ for $0<i<p$ such that $z$ supports a non-exotic $w$-operation. If we lift $z$ to $\widetilde{z}\in \pi_{t-s}^HX$, then $w(\widetilde{x}+\widetilde{z})$ will be detected by $w(z)\in E_{\infty}^{s+i, t+i}(G/K)$, while $\widetilde{x}+\widetilde{z}$ lifts $x$. In the extreme case, $x$ might even be zero. In \cref{lem:UniquenessOfExoticW}, we will prove a criterion that ensures the uniqueness of exotic $w$-operations.  This criterion is often fulfilled in practice.  
    
    \item A $w$-operation $z=w(\widetilde{x})$ of filtration jump $p$ and page jump $0$ defines a $w$-operation of filtration jump $p$ and page jump $q$ if $d_r(z) = \cdots = d_{r+q-1}(z) = 0$ by just mapping $z \in \pi_{t-s}^KX^{t+p}_{t+p-(r-2)}$ down to $\pi_{t-s}^KX^{t+p}_{t+p-q-(r-2)}$. All $w$-operations of page jump $q$ are of this form.
    \item With $x$ and $\widetilde{x}$ fixed, a $w$-operation of filtration jump $p$ can only exist if all $w$-operations of lower filtration jump vanish. Indeed, if the image of $w(\widetilde{x})$ in $\pi^K_{t-s}X^{t+p}_{t+p-q-(r-2)}$ lies in $E_{r+q}^{s+p, t+p}(G/K)$, it is in the image of $\pi^K_{t-s}X^{t+p+q+(r-2)}_{t+p}$. The map from this group to $\pi^K_{t-s}X^{t+p-1}_{t+p-q-(r-2)-1}$ factors through $\pi^K_{t-s}X^{t+p-1}_{t+p} =0$.
\end{enumerate}
\end{rmk}

The following lemma holds by definition. 
\begin{lem}
    Let $x\in E_r^{s,t}(G/H)$ be a $d_r$-cycle and denote by $\overline{x}$ its image in $E_{r+1}^{s,t}(G/H)$. Let $z\in E_{r+q}^{s+p,t+p}$ be a $w$-operation on $x$ of filtration jump $p$ and page jump $q\geq 1$. Then $z$ is a $w$-operation on $\overline{x}$ of filtration jump $p$ and page jump $q-1$. 
\end{lem}

The following is the uniqueness result for exotic $w$-operations that we will use. 

\begin{lem}\label{lem:UniquenessOfExoticW}
    Let $x\in E_r^{s, t}(G/H)$ and $0 < p \leq r-2$.  Suppose every class in $E_2^{s+k,t+k}(G/H)$ for $0<k<p$ is either hit by a differential of length at most $r+k-1$ or supports a differential of length at most $p-k+1$. Denoting by $I$ the image of all $(r+p)$-cycles in $w(E_2^{s+p,t+p}(G/H))$ in $E_{r+p}^{s+p,t+p}(G/K)$, then there is at most one class in $E_{r+p}^{s+p,t+p}(G/K)/I$ that is a $w$-operation of $x$ of filtration jump $p$ and page jump $p$. 
\end{lem}
\begin{proof}
   Consider the towers $\overline{X}^{\bullet}$ and  $\widetilde{X}^{\bullet}$ with 
   \[\overline{X}^i = \begin{cases} X^{t+p} & \text{ if } i\geq t+p \\
   X^i & \text{ else }\end{cases}\]
   and
   \[\widetilde{X}^i =\begin{cases} X_{t-(r-2)}^{t+p} & \text{ if }i\geq t+p \\
   X_{t-(r-2)}^i &\text{ if }t-(r-2)\leq i \leq t+p \\
   0 & \text{ else }\end{cases}. \]
      The maps $X^{\bullet} \to \overline{X}^{\bullet} \leftarrow \widetilde{X}^{\bullet}$ induce maps of spectral sequences $E_{\bullet}^{\bullet, \bullet} \to \overline{E}_{\bullet}^{\bullet, \bullet} \leftarrow \widetilde{E}_{\bullet}^{\bullet,\bullet}$. The first induces isomorphisms $E_2^{s',t'} \to \overline{E}_2^{s',t'}$ for $t' \leq t+p$ and the second isomorphisms  $\widetilde{E}_2^{s',t'} \to \overline{E}_2^{s',t'}$ for $t' \geq t-(r-2)$. Via the maps of spectral sequences, differentials in the original spectral sequence enforce corresponding differentials in the $\widetilde{E}$-spectral sequence in the range $t-(r-2)\leq t' \leq t+p$. In particular, $E_r^{s,t}$ injects into $\widetilde{E}_r^{s,t}$. Note moreover that the $\widetilde{E}$-spectral sequence converges to $\underline{\pi}_*X^{t+p}_{t-(r-2)}$. 
      
      Our assumptions imply that $\widetilde{E}_{\infty}^{s+k, t+k}(G/H) = 0$ for $0<k<p$ and moreover $\widetilde{E}_r^{s,t} = \widetilde{E}_{\infty}^{s,t}$. Thus, we can lift the image of $x$ in $\widetilde{E}_r^{s,t}$ uniquely to $\pi_{t-s}^HX^{t+p}_{t-(r-2)}$ modulo $\widetilde{E}_{\infty}^{s+p,t+p}$. The latter term is a quotient of $\widetilde{E}_2^{s+p,t+p} = E_2^{s+p,t+p}$.  
      
     In summary, we have shown that we can lift $x$ uniquely to $\widetilde{x} \in \pi_{t-s}^HX^{t+p}_{t-(r-2)}$ modulo the image from $\pi_{t-s}^HX^{t+p}_{t+p}$. Thus, $w(\widetilde{x}) \in  \pi_{t-s}^KX^{t+p}_{t-(r-2)}$ is indeed well-defined modulo the image of $w(\pi_{t-s}^HX^{t+p}_{t+p}) = w(E_2^{s+p, t+p})$. 
\end{proof}

\begin{rmk}\rm
One can probably formulate a sharper criterion for the uniqueness of exotic $w$-operations, without requiring that all classes between $x$ and its target vanish. The essential point is to require that there are no interleaving $w$-operations such as classes in  $E_r^{s+k, t+k}$ with $0<k<p$ that admit nonzero $w$-operations of filtration jump smaller than $p-k$. Moreover, one would have to enlarge $I$ to include exotic $w$-operations as well. We refrain from making this precise.
\end{rmk}

\begin{prop}\label{prop:w-op}
	Let $x\in E_r^{s, t}(G/H)$ and $z$ a class with $d_r(z) =x$. Suppose $d_{r+q}(w(z))$ is zero for $q<p$. Then $d_{r+p}(w(z))$ is a $w$-operation of $x$ of filtration jump $p$ and page jump $p$. 
	
\end{prop}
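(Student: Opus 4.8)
The plan is a diagram chase resting on the naturality of $w$ and the explicit ``long connecting map'' description of the differentials. Since $w$ is induced by a fixed stable map $\Sigma^{\infty}G/K\to\Sigma^{\infty}G/H$, it is natural in the $G$-spectrum and hence commutes with every structure map of the tower $X^{\bullet}$, with every map among the layers $X^m_n$, and with every boundary homomorphism $\delta$ entering the construction of the differentials. The other ingredient is the familiar description of $d_r$: if $z\in E_r^{s-r,t-r+1}(G/H)$ is the image of a lift $\widetilde z\in\U{\pi}_{t-s+1}X^{t-1}_{t-r+1}$, and $\delta\colon X^{t-1}_{t-r+1}\to\Sigma X^{t+r-2}_t$ denotes the boundary map of the cofiber sequence obtained (via the octahedral axiom) from the composite $X^{t+r-2}\to X^{t-1}\to X^{t-r}$, then $\delta(\widetilde z)\in\U{\pi}_{t-s}X^{t+r-2}_t$ represents $d_r(z)=x$. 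The first step is therefore to fix this lift: put $\widetilde x:=\delta(\widetilde z)$, which is a lift of $x$ of exactly the shape appearing in the definition of a $w$-operation of $x$.

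The second step applies $w$. Then $w(\widetilde z)\in\U{\pi}_{t-s+1}X^{t-1}_{t-r+1}$ is a lift of $w(z)\in E_r^{s-r,t-r+1}(G/K)$, and naturality of $\delta$ yields $w(\widetilde x)=w(\delta(\widetilde z))=\delta(w(\widetilde z))$. The point is that the images of this one element $w(\widetilde x)$ down the tower compute both sides of the desired identity at the same time: on the one hand, pushing $w(\widetilde x)=\delta(w(\widetilde z))$ one layer down at a time, the successive obstructions are $d_r(w(z)),d_{r+1}(w(z)),\dots$ (for instance the image of $w(\widetilde x)$ in $\U{\pi}_{t-s}X^{t}_{t-r+2}$ represents $d_r(w(z))=w(d_r(z))=w(x)$, the filtration-jump-$0$ operation); on the other hand, the image of $w(\widetilde x)$ in $\U{\pi}_{t-s}X^{t+q}_{t+q-r+2}$ is precisely the candidate $w$-operation of $x$ of filtration jump $q$ computed from $\widetilde x$. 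The hypothesis that $d_{r+q}(w(z))=0$ for $q<p$ says exactly that these images vanish for $q<p$, so $w(\widetilde x)$ may be pushed down to a class in $\U{\pi}_{t-s}X^{t+p}_{t+p-r+2}$ which is at once $d_{r+p}(w(z))$ and the $w$-operation of $x$ of filtration jump $p$ associated to $\widetilde x$.

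It then remains to confirm the side condition of the definition, namely that this common class lands in $E_r^{s+p,t+p}(G/K)$; but this holds because, being the target of the differential $d_{r+p}$, it survives to page $r+p$ of the spectral sequence and is in particular represented on page $r$, which is what the definition demands. Hence $d_{r+p}(w(z))$ is a bona fide $w$-operation of $x$ of filtration jump $p$. One can repackage the whole argument inside the modified tower $\widetilde X^{\bullet}$ of the preceding Remark, where $x$ is a permanent cycle with representative $\widetilde x$, the $w$-operations of $x$ are by construction the detecting data of $w(\widetilde x)$ in the associated spectral sequence, and the comparison map $\widetilde X^{\bullet}\to X^{\bullet}$ carries $d_{r+p}(w(z))$ to the part of $w(\widetilde x)$ detected in filtration $t+p$.

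The step I expect to take the most care is the second one: one must check, coherently across the pages $E_r,\dots,E_{r+p}$, that the boundary maps computing the higher differentials $d_{r+q}(w(z))$ really do agree with the single map $\delta$ applied to the chosen lift $w(\widetilde z)$, and that ``$d_{r+q}(w(z))=0$ for $q<p$'' is exactly the vanishing that lets $w(\widetilde x)$ be pushed down to the layer at filtration $t+p$. The subtle point is that $w(\widetilde z)$ is only an $r$-page lift of $w(z)$ whereas a naive computation of $d_{r+p}(w(z))$ wants an $(r+p)$-page lift of $w(z)$; this is reconciled by the standard fact that the higher differentials on a surviving class are computed as successive obstructions to pushing $\delta(w(\widetilde z))$ through the tower, rather than by re-lifting $w(\widetilde z)$ itself --- a feature that is made entirely transparent by passing to the modified tower.
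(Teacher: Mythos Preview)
Your argument is correct and is essentially the same diagram chase the paper carries out: the paper chooses the lift $\widetilde{z}\in\pi^H_{t-s+1}X^{t-1}_{t-r+1}$, sets $\widetilde{x}=\delta(\widetilde{z})$, and then compares the two routes from $w(\widetilde{z})$ through a commutative square whose corners are $\pi^K_{t-s+1}X^{t-1}_{t-r+1}$, $\pi^K_{t-s}X^{t+r-2}_t$, $\pi^K_{t-s+1}X^{t-r+1}_{t-2r+3-p}$ and $\pi^K_{t-s}X^{t+p}_{t-r+2}$ (with the middle stops $E_r^{s-r,t-r+1}$ and $E_r^{s+p,t+p}$ inserted on the two sides). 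Your prose unpacks exactly this square, and your ``subtle point'' about $w(\widetilde{z})$ being only an $r$-page lift is precisely what the passage through the lower-left corner $\pi^K_{t-s+1}X^{t-r+1}_{t-2r+3-p}$ handles.
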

\begin{proof}
	We choose a lift of $z \in \pi^H_{t-s+1}X^{t-r+1}_{t-2r+3}$ to $\widetilde{z} \in \pi^H_{t-s+1}X^{t-1}_{t-r+1}$. As $\delta(\widetilde{z})$ in the diagram below is a lift of $x$, contemplating the fate of $w(\widetilde{z})$ passing along the two different travel paths from the upper left corner to the lower right corner proves the proposition. 
	\[
	\qedxymatrix{
\pi^K_{t-s+1}X^{t-1}_{t-r+1} \ar[r]^{\delta} \ar[d] & \pi^K_{t-s} X^{t+r-2}_{t} \ar[d] \\
E^{s-r, t-r+1}_r\subseteq \pi^K_{t-s+1}X^{t-r+1}_{t-2r+3}\ar[d] & \pi^K_{t-s}X^{t+p}_{t+p-r+2} \supseteq E^{s+p, t+p}_{r} \ar[d] \\
E^{s-r, t-r+1}_{r+p}\subseteq\pi^K_{t-s+1}X^{t-r+1}_{t-2r+3-p} \ar[r]^-{\delta} & \pi^K_{t-s}X^{t+p}_{t-r+2} \supseteq E^{s+p, t+p}_{r+p}
}\]
\end{proof}

While our definition and results so far are very general (and our proofs would also apply to other settings than equivariant homotopy theory), we will now formulate a result that is specific to cyclic $2$-groups. For the following proposition, both the statement and the proof are variants of \cite[Theorem 4.4]{HHR:C4}, but also work for exotic transfers and restrictions on finite pages and circumvent a mistake in \cite[Lemma 4.5]{HHR:C4}.\footnote{With notation as in the cited lemma, a counterexample is the following: Fix an object $A$. Take $A_{i,j}$ to be  $\Sigma^{-1}A$, zero or $A$, depending on whether $i+j$ is smaller, equal or larger than $2$. The $a_{i,j}$ and $b_{i,j}$ are $\id$ if possible, with the exception of $a_{2,1}$ being an arbitrary self-equivalence of $A$, which is not equivalent to $\pm\id$. Take further $W = A$ and $f_3 = \id$. Then $f_1$ exists (and can be taken to be $\id$), but $f_1$ and $f_2$ cannot simultaneously exist. Strictly speaking, the cited lemma is ambiguous on whether it claims that $f_1$ and $f_2$ exist \emph{simultaneously} if $f_1$ exists, but this seems to be the way that it is later used in \cite[Theorem 4.4]{HHR:C4}.}

\begin{prop}\label{prop-transfer}
Let $G$ be a cyclic 2-group, $H\subset G$ an index $2$ subgroup, and $V \in RO(G)$. 
    \begin{enumerate}[(i)]
    \item Let $y\in E_{r+1}^{s,V}(G/G)$ with $a_{\sigma}y = 0 \in E_{r+1}^{s+1, V+1-{\sigma}}(G/G)$. Then $y$ is an (exotic) transfer of filtration jump (at most) $r-1$.\footnote{{The ``at most'' is actually unnecessary here, as the proof shows that $y$ is an exotic transfer of filtration jump $r-1$. We write it for emphasis though since $y$ might be very well also an exotic transfer of smaller filtration jump. This is related to the non-uniqueness described in \cref{item:Non-uniquness} of \cref{rem:bigremark}. Thus the statement is best used in conjunction with a uniqueness result like \cref{lem:UniquenessOfExoticW}.}}
    \item Let $z \in E_{r+1}^{s,V}(G/H)$ with $\Tr(z) = 0 \in E_{r+1}^{s,V}(G/G)$. Then $z$ is an (exotic restriction) from $E_{r+1}^{s-(r-1), V-(r-1) +(1-\sigma)}$ of filtration jump (at most) $r-1$.
    \end{enumerate}
\end{prop}

\begin{proof}
    For the first part, by shifting the tower and applying suspension if necessary, we can fix the bidegree of $y$ to be $(r-1,r-1)$. The term $E_{r+1}^{r-1,r-1}(G/G)$ injects into $\pi_0^GX_0^{r-1}$. Smashing the long exact sequence associated with the cofiber sequence $G/H_+ \to S^0 \xrightarrow{a_{\sigma}} S^{\sigma}$ with $X_0^{r-1}$ and taking homotopy groups, we get the long exact sequence 
    \[\pi_1^G X_0^{r-1} \xrightarrow{a_\sigma} \pi_{1-\sigma}^G X_0^{r-1} \xrightarrow{\Res} \pi_0^H X_0^{r-1} \xrightarrow{\Tr} \pi_0^G X_0^{r-1} \xrightarrow{a_\sigma} \pi_{-\sigma}^G X_0^{r-1}.\]
    From this long exact sequence, we see that $a_{\sigma}y = 0$ implies $y = \Tr(\widetilde{w})$ with $\widetilde{w} \in \pi_0^HX_0^{r-1}$. By definition, this defines an element $w\in E_{r+1}^{0,0}(G/H)$ such that $y$ is an exotic transfer of $w$ of filtration jump $r-1$.
    
    For the second part, we can fix the bidegree of $z$ to be $(r-1,r-1)$ by shifting the tower and applying suspension if necessary to view $z$ as an element in $\pi_0^HX_0^{r-1}$. Using the long exact sequence induced by $G/H_+ \to S^0 \xrightarrow{a_{\sigma}} S^{\sigma}$ again, we see that $z$ is the restriction of some $\widetilde{v}\in \pi_{1-\sigma}^GX^{r-1}_0$. By definition, this defines an element $v\in E_{r+1}^{0,1-{\sigma}}(G/G)$ such that $z$ is an exotic restriction of $v$ of filtration jump $r-1$. 
\end{proof}

Let us give an example of a possible workflow working with exotic transfers, which we will apply in \cref{prop-d13}.
\begin{workflow}\label{workflow:ExoticTransfers}\rm
Let $G$ be a cyclic $2$-group and $H\subset G$ of index $2$. Let $y \in E_r^{s+r-1,t+r-1}(G/G)$ and $r'>r$. We assume the following:
\begin{enumerate}
    \item \label{w1} $a_{\sigma}y$ is nonzero and is hit by a $d_r$-differential;
    \item \label{w2} $y$ persists to a nonzero class in the $E_{r'+r-1}$-page, which we denote by the same name; 
    \item \label{w3} every class in $E_2^{s+k, t+k}(G/H)$ for $0<k<r-1$ is either hit by a differential of length at most $r+k-1$ or supports a differential of length at most $r-k$;
    \item \label{w4} $y\in E_{2r-1}^{s+r-1,t+r-1}$ is not the image of a $(2r-1)$-cycle in $E_2$ which is the transfer of a class in $E_2^{s+r-1, t+r-1}$. \\
    
\end{enumerate} 
By (1), $a_{\sigma}y$ vanishes on $E_{r+1}$. Thus, by \cref{prop-transfer}, there exists $x\in E_{r+1}^{s, t}(G/H)$ such that $y \in E_{r+1}^{s+r-1,t+r-1}(G/G)$ is an exotic transfer of $x$ of filtration jump $r-1$. Applying \cref{lem:UniquenessOfExoticW} in conjunction with (3) and (4), we see that $x$ cannot be zero (as zero is the unique exotic transfer of zero under our assumptions); in case that there is only one non-zero element in the relevant bidegree, this already uniquely determines $x$. Suppose now further that:
\begin{enumerate}\setcounter{enumi}{4}
    \item $x = d_{r'}(a)$;
    \item $d_{r'+q}(\Tr_H^Ga) = 0$ for $0\leq q < r-1$. 
\end{enumerate}Then \cref{prop:w-op} implies that $d_{r'+r-1}(\Tr_H^G(a))$ is an exotic transfer of $x$ in the same degree as $y\in E_{r'+r-1}^{s,t}$ and thus must be $y$ by \cref{lem:UniquenessOfExoticW} again.
\end{workflow}

\subsection{The behaviour of norms} \label{sec-norm}

This section is about the behaviour of norms in the (regular) slice spectral sequence and its localized variant. We will formulate a generalization of \cite[Chapter~I.5]{Ullman:Thesis} and then discuss how it applies to Ullman's original setting (the regular slice spectral sequence), to the localized slice spectral sequence and the homotopy fixed point spectral sequence. 

We will first work in an abstract setting: 
 Let $(X^i)$ be a tower of $G$-spectra and $E_*^{*,*}$ be the associated spectral sequence as in the preceding subsection. Set $X^{\infty} = \lim_i X^i$ and $X_n = X^{\infty}_n$. 
 
 Let $H\subset G$ be a subgroup of index $h$. We assume that we have maps $N_H^GX_n\to X_{hn}$ and $N_H^GX_n^n \to X_{hn}^{hn}$ that are (up to homotopy) compatible with the maps $X_n \to X_{n-1}$ and $X_n \to X^n_n$. (Here we leave the restriction maps implicit.) We call this a \emph{norm structure}. It induces norm maps $N_H^G\colon E_2^{s, V+s} \to E_2^{hs, \ind_H^GV +hs}$. 

\begin{prop}\label{prop-norm}
	Let $x \in E_2(G/H)$ be an element representing zero in $E_{r+2}(G/H)$. Then $N_H^G(x)$ represents zero in $E_{rh+2}(G/G)$. 
\end{prop}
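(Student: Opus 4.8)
The plan is to argue directly with lifts along the tower, using the multiplicativity of the norm structure to turn an $r$-fold boundary relation at $H$ into an $(rh)$-fold boundary relation at $G$. First I would normalize notation: by translating, assume $x \in E_2^{0,0}(G/H)$, so that $x$ is represented by a class $\bar x \in \U{\pi}_0^H X_0^0$. Saying that $x$ represents zero in $E_{r+2}(G/H)$ means precisely that, after lifting $\bar x$ to some $\widetilde x \in \U{\pi}_0^H X^0$ (possible since all potential differentials out of $x$ through page $r+1$ vanish, or equivalently: $x$ is a permanent cycle through that range and is moreover hit), the class $\widetilde x$ maps to zero in $\U{\pi}_0^H X^0_{-r}$; concretely, $\widetilde x$ comes from $\U{\pi}_0^H X^1$, i.e.\ $x$ is the target of a $d_{r+1}$ — no, more carefully: $x$ dies by $E_{r+2}$ iff its image in $\U{\pi}_0^H X^0_{-r}$ (the group whose elements survive to $E_{r+2}$ modulo images) is zero, which by the long exact sequences of the tower means the lift $\widetilde x \in \U{\pi}_0^H X^0$ can be chosen to lift further to $\U{\pi}_1^H X^{-r}_{\,?}$ — the cleanest formulation is: there exists $y \in \U{\pi}_1^H X^0_{-r-1}$... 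Let me instead phrase it via the standard fact that "$x$ bounds by page $r+1$" is equivalent to: the image of $\bar x$ under $\U{\pi}_0^H X_0^0 \leftarrow \U{\pi}_0^H X^0 \to \U{\pi}_0^H X^0_{-r-1}$ can be lifted along $X^{0} \to X^0_{-r-1}$...

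Rather than belabor this, here is the structure I would actually write. The condition that $x$ represents zero in $E_{r+2}(G/H)$ unwinds, via the exact couple, to: there is a lift $\widetilde x \in \U{\pi}_0^H X^0$ of $\bar x$ that maps to $0$ in $\U{\pi}_0^H X^0_{-r}$. Applying $N_H^G$ and using the norm structure maps $N_H^G X^j \to X^{hj}$ (and their compatibility with the tower, together with $N_H^G X^0_{-r} \to X^0_{-hr}$, which follows from the given compatibility by repeatedly forming fibers), we obtain a class $N_H^G(\widetilde x) \in \U{\pi}_0^G X^0$ lifting a representative of $N_H^G(\bar x)$ and mapping to $0$ in $\U{\pi}_0^G X^0_{-hr}$. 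By the unwinding in the other direction, this says exactly that $N_H^G(x)$ represents zero in $E_{hr+2}(G/G)$. The key technical point making this go through is that the norm structure was hypothesized to be compatible not just with $X_n \to X_{n-1}$ but with $X_n \to X^n_n$ as well, which lets us produce the needed maps $N_H^G X^m_n \to X^{hm}_{hn}$ on fibers by a diagram chase; I would state this as a preliminary lemma.

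The main obstacle — and the step I would be most careful about — is the passage from "$N_H^G$ is compatible with the two structure maps of the tower" to "$N_H^G$ induces maps on all the relative terms $X^m_n$ compatibly." The norm functor is not exact (it does not preserve cofiber sequences), so one cannot simply apply $N_H^G$ to the fiber sequence $X^m_n \to X^m \to X^{n-1}$ and expect $N_H^G X^m_n$ to be the fiber of $N_H^G X^m \to N_H^G X^{n-1}$. Instead I would use the hypothesized maps: we are \emph{given} $N_H^G X_n \to X_{hn}$ and $N_H^G X_n^n \to X_{hn}^{hn}$, and from these, using the octahedral axiom and the fact that $X^m_n$ is built as an iterated (co)fiber of the $X^j_j$'s for $n \le j \le m$ (this is where connectivity/finiteness of the filtration in the relevant range, or at least the tower structure, is used), one manufactures the required compatible maps $N_H^G X^m_n \to X^{hm}_{hn}$. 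Then the whole argument is a diagram chase in the commutative (up to homotopy) ladder of long exact sequences; once the maps exist, tracking $\widetilde x$ through them is routine. I would present it by reducing to bidegree $(0,0)$, stating the lemma on induced maps of relative terms, and then giving the two-line chase.
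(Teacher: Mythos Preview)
Your overall strategy---unwind the condition ``represents zero in $E_{r+2}$'' in terms of the tower, apply the norm, and rewind---is exactly right and is what Ullman does. The gap is in the unwinding step, and once that is fixed the obstacle you flagged disappears entirely.

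The correct characterization (normalizing to $(s,t)=(0,0)$) is: $x\in\pi_0^H X^0_0$ represents zero in $E_{r+2}$ if and only if there exists $\widetilde x\in\pi_0^H X_0$ lifting $x$ along $X_0\to X^0_0$ whose image in $\pi_0^H X_{-r}$ under $X_0\to X_{-r}$ vanishes. (Here $X_n=X^{\infty}_n$, not the tower term $X^n$; you seem to conflate these.) The point is that both fiber sequences $X_0\to X_{-r}$ and $X^0_0\to X^0_{-r}$ have fiber $\Omega X^{-1}_{-r}$, so the boundary map $\pi_1 X^{-1}_{-r}\to\pi_0 X^0_0$ factors through $\pi_0 X_0$, producing the lift. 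With this formulation the norm step is immediate: the hypothesized compatible maps $N_H^G X_n\to X_{hn}$ send $\widetilde x$ to an element $N(\widetilde x)\in\pi_0^G X_0$ which lifts $N(x)$ (using compatibility with $X_0\to X^0_0$) and has zero image in $\pi_0^G X_{-rh}$ (using compatibility with $X_0\to X_{-r}$, since $N$ of a null map is null). No maps on the relative terms $X^m_n$ are needed.

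Your version instead asks for the image of $\widetilde x$ in $\pi_0^H X^0_{-r}$ to vanish. That condition is equivalent as a characterization, but it does not transport under the norm without the auxiliary map $N_H^G X^0_{-r}\to X^0_{-rh}$ you try to build. And that construction is genuinely problematic: writing $X^0_{-r}=\mathrm{cofib}(X_1\to X_{-r})$ and using the given maps $N_H^G X_1\to X_h$, $N_H^G X_{-r}\to X_{-rh}$, the universal property of the cofiber only produces a map $\mathrm{cofib}(N_H^G X_1\to N_H^G X_{-r})\to N_H^G X^0_{-r}$, i.e.\ a map \emph{into} $N_H^G X^0_{-r}$ rather than out of it. Since $N_H^G$ does not preserve cofiber sequences, there is no reason for this to split. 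So your workaround likely fails, but luckily it is unnecessary: phrase everything through the $X_n$'s and the proof is a two-line diagram chase.
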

\begin{proof}
	The proof is the same as that of \cite[Proposition I.5.17]{Ullman:Thesis}.
\end{proof}

\begin{exam}\label{exam:regslice}\rm
Our first example of this setting is the regular slice tower of \cite{Ullman:Thesis}, which coincides with the slice tower of \cite{HHR} for norms of $\MUR$ and $\BPR$ -- thus there should be no danger of confusion if we use the same notation $P^iX$ for the regular slice tower. 

Ullman constructs in \cite[Corollaries I.5.10 and I.5.11]{Ullman:Thesis} for every $H$-spectrum $X$ natural compatible maps $N_H^GP_n X \to P_{nh}N_H^GX$ and $N_H^GP_n^nX \to P_{nh}^{nh}N_H^GX$. Moreover the square
\[
\xymatrix{N_H^GP_nX \ar[r]\ar[d] & P_{hn}N_H^GX \ar[d] \\
N_H^GP_{n-1}X \ar[r] & P_{hn-h} N_H^GX
}
\]
commutes, as $N_H^GP_nX$ is $\geq hn$ by \cite[Corollary I.5.8]{Ullman:Thesis} and both maps into $N_H^GP_nX \to P_{hn-h}N_H^GX$ are compatible with the respective maps to $N_H^GX$.

Let $R$ be a $G$-spectrum with a map $N_H^G\res_H^G R \rightarrow R$. The composite $N_H^G P_n \res_H^G R \to P_{nh}N_H^G\res_H^GR \to P_{nh}R$ and its analogue for $P_n^n$ define a norm structure on the regular slice tower of $R$. This applies in particular if $R$ is a $G$-commutative ring spectrum. 
\end{exam}

\begin{exam}\label{exam:locslice}\rm
Let $R$ be a $G$-commutative ring spectrum with $G = C_{2^n}$. We will define a norm structure on the tower $X^i = a_{\lambda}^{-1}P^iX$ defining the localized regular slice spectral sequence. Using the observations above for the regular slice spectral sequence, it suffices to produce natural maps $N_H^G\res_H^G a_{\lambda}^{-1} P_nR \to a_{\lambda}^{-1}N_H^G\res_H^G P_{hn}R$ and similarly for $P_n^n$. As $N_H^G$ and $\res_H^G$ are monoidal, by \cref{lem:NormResSphere} it thus suffices to provide a natural map 
 \[a_{\lambda}^{-1}\mathbb{S}_G \simeq a_{\lambda}^{-1}N_H^G \mathbb{S}_H \to N_H^G \res_H a_{\lambda^{-1}}\mathbb{S}_G \simeq a_{\ind_H^G \res_H^G \lambda}^{-1} \mathbb{S}_G\]
As observed before, $\ind_H^G \res_H^G \lambda$ is a multiple of $\lambda$ if $H\neq e$ and contains a trivial summand if $H = e$. This produces the norm structure if $H\neq e$. 
In contrast for $H =e$, all norms would have to be zero.

We remark that we have not used the full strength of our considerations in \cref{sec:MultLoc} here, but we expect that these will be necessary for deeper considerations about norms.
\end{exam}

\begin{exam}\label{exam:hfpss}\rm
Lastly we define a norm structure on the homotopy fixed point spectral sequence. Observe first that there is for $H$-spectra $X$ a natural map 
\[N_H^GX^{EH_+} \to (N_H^GX^{EH_+})^{EG_+} \xleftarrow{\simeq} (N_H^G X)^{EG_+},\] 
where the latter map is an equivalence as $\res_e^G N_H^G X \to \res_e^G N_H^G X^{EH_+}$ is an equivalence. 

Recall that the tower defining the homotopy fixed point spectral sequence for a spectrum $R$ is defined by $X^n = (\tau_{\leq n}R)^{EG_+}$. We observe that we have natural equivalences $X_n \simeq (P_nR)^{EG_+}$ and $X_n^n \simeq  (P^n_nR)^{EG_+}$ for $(P^nR)_n$ the regular slice tower. Combining these equivalences with the natural map from the last paragraph, the norm structure from \cref{exam:regslice} induces a norm structure on the homotopy fixed point spectral sequence. 
\end{exam}

We will use the following proposition without further comment.
\begin{prop}
	Both in the regular slice spectral sequence and in the localized regular slice spectral sequence of a $G$-commutative ring spectrum, the norms are multiplicative: $N_H^G(xy) = N_H^G(x) N_H^G(y)$. 
\end{prop}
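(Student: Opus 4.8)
The plan is to realize the norm maps of these spectral sequences as the structural (``exponential'') norm maps of an $RO(G)$-graded, possibly incomplete, Tambara functor, and then to read off multiplicativity from the Tambara axioms. For the regular slice spectral sequence of a $G$-commutative ring spectrum $R$, the $E_2$-page is $\U{\pi}_\bigstar$ of the slice associated graded $P^\bullet_\bullet R = \bigvee_n P^n_n R$, which by Ullman's work is a commutative monoid in $\ZZ$-graded $G$-spectra; hence $\U{\pi}_\bigstar\!\bigl(P^\bullet_\bullet R\bigr)$ is an $RO(G)$-graded Tambara functor, and I would check that the norm $N_H^G\colon E_2^{s,V+s}(G/H)\to E_2^{hs,\ind_H^G V+hs}(G/G)$ of \cref{exam:regslice} is exactly its structural norm, so that $N_H^G(xy)=N_H^G(x)N_H^G(y)$ on $E_2$ is the Tambara axiom. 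For the localized slice spectral sequence I would run the same argument with $P^\bullet_\bullet R$ replaced by $a_\lambda^{-1}P^\bullet_\bullet R$: by \cref{sec:MultLoc} and \cref{exam-MU} this retains a $p^*\cO$-algebra structure for which $H/K$ is admissible exactly when $K\neq e$, so $\U{\pi}_\bigstar\!\bigl(a_\lambda^{-1}P^\bullet_\bullet R\bigr)$ is an $RO(G)$-graded incomplete Tambara functor and the norms $N_H^G$ with $H\neq e$ of \cref{exam:locslice} are again structural.

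To make the identification of the two norms precise — and to keep everything at the tower level, in line with \cref{exam:regslice,exam:locslice} and \cref{prop-norm} — I would unwind the definition of the spectral-sequence norm on representatives. A class $x\in E_2(G/H)$ is represented by an $H$-map $f_x\colon S^{V-s}\to X^t_t$ (with $X^\bullet$ the appropriate tower), its norm $N_H^G(x)$ by the composite $S^{\ind_H^G(V-s)}\xrightarrow{N_H^G f_x}N_H^G X^t_t\xrightarrow{\nu_t}X^{ht}_{ht}$ with $\nu$ the norm-structure map, and the product $xy$ by $\mu\circ(f_x\wedge f_y)$ with $\mu$ the multiplication on the (localized) slice associated graded. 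Since $N_H^G$ is symmetric monoidal, $N_H^G(f_x\wedge f_y)$ is carried to $N_H^G f_x\wedge N_H^G f_y$ under $N_H^G(A\wedge B)\simeq N_H^G A\wedge N_H^G B$, and the identity $N_H^G(xy)=N_H^G(x)N_H^G(y)$ reduces, on representatives, to the homotopy-commutativity of the square
\[
\begin{tikzcd}
N_H^G X^{t_1}_{t_1}\wedge N_H^G X^{t_2}_{t_2} \ar[r]\ar[d] & N_H^G\!\bigl(X^{t_1+t_2}_{t_1+t_2}\bigr) \ar[d] \\
X^{ht_1}_{ht_1}\wedge X^{ht_2}_{ht_2} \ar[r] & X^{h(t_1+t_2)}_{h(t_1+t_2)}
\end{tikzcd}
\]
whose horizontal maps are multiplications and whose vertical maps are $\nu_{t_1}\wedge\nu_{t_2}$ and $\nu_{t_1+t_2}$ — that is, to the assertion that the norm-structure maps $\nu_n\colon N_H^G X^n_n\to X^{hn}_{hn}$ are maps of graded ring spectra up to homotopy. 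For the regular slice tower this is built into Ullman's construction, since $\nu_n$ is the composite of the lax symmetric monoidal structure map of $P^\bullet_\bullet$ (applied after $N_H^G$) with the map induced by the ring map $N_H^G\res_H^G R\to R$ \cite[Chapter~I.5]{Ullman:Thesis}, and both are multiplicative. For the localized tower, the construction in \cref{exam:locslice} factors $\nu_n$ through the map of localized spheres $a_\lambda^{-1}\mathbb{S}_G\to a_{\ind_H^G\res_H^G\lambda}^{-1}\mathbb{S}_G$, which is an equivalence — hence a ring map — because $\ind_H^G\res_H^G\lambda$ is a nonzero multiple of $\lambda$ for $H\neq e$, so the same reasoning applies. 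Propagating the identity from $E_2$ to an arbitrary page is then formal: the norm on $E_r$ is induced from the $E_2$-norm on the relevant subquotients via the tower maps, and all of the constructions above are natural.

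The step I expect to be the main obstacle is the commuting square above: one must verify that Ullman's norm maps on the regular slice tower are genuinely \emph{multiplicative} rather than merely natural, which requires tracking how his construction interacts with the monoidal refinement of the slice filtration; and, in the localized case, that the reduction in \cref{exam:locslice} to the map of localized spheres can be performed compatibly with the ring structures, while keeping in mind that only the norms with $H\neq e$ are defined. Once that square is established, the remainder is bookkeeping.
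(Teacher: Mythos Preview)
Your reduction is essentially correct: multiplicativity of the norm on the $E_2$-page comes down to the commutativity of a square expressing that the norm-structure maps are compatible with multiplication. The paper arrives at the same kind of square. But the proof of that commutativity is exactly the step you flag as ``the main obstacle,'' and here the paper gives an argument you do not: it works not with slice layers $P^n_nR$ but with the coconnective covers $P_nR$, and uses a connectivity/uniqueness trick. Namely, the relevant square is
\[
\xymatrix{
N_H^G(P_mX \sm P_nY) \ar[r]\ar[d] & N_H^G(P_{m+n}X\sm Y) \ar[d] \\
P_{hm}N_H^GX \sm P_{hn}N_H^GY \ar[r] & P_{hm+hn} N_H^G(X\sm Y)
}
\]
and both composites are maps $N_H^G(P_mX\sm P_nY)\to P_{hm+hn}N_H^G(X\sm Y)$ lying over the same map to $N_H^G(X\sm Y)$. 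By \cite[Corollaries I.4.2 and I.5.8]{Ullman:Thesis} one has $N_H^G(P_mX\sm P_nY)\geq h(m+n)$, so the universal property of $P_{hm+hn}$ forces the two composites to agree up to homotopy. This sidesteps entirely the need to ``track how Ullman's construction interacts with the monoidal refinement.''

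Two concrete points: first, your square is phrased with $X^n_n = P^n_nR$, which does not enjoy the same universal property, so the uniqueness argument does not apply directly there; passing to $P_nR$ is what makes it work. Second, your Tambara-functor framing is fine as motivation, but it packages the same content as the square and does not by itself supply the missing commutativity — the Tambara axiom \emph{is} the commutativity you must prove. The localized case then follows formally, since the localized tower is obtained by smashing with $a_\lambda^{-1}\mathbb{S}_G$ and the relevant connectivity estimates survive; no separate argument about the map of localized spheres is required once the unlocalized square is known.
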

\begin{proof}This follows from the commutativity of 
\[	\xymatrix{N_H^G(P_mX\wedge P_nY) \ar[r]\ar[d] & N_H^G(P_{m+n}X\sm Y) \ar[d] \\
P_{hm}N_H^GX \sm P_{hn}N_H^GY \ar[r] & P_{hm+hn} N_H^G(X\sm Y)
}\]
for $G$-spectra $X$ and $Y$. This in turn follows as there is up to homotopy just one map \[N_H^G(P_mX\wedge P_nY) \to P_{hm+hn} N_H^G(X\sm Y)\]
compatible with the maps to $N_H^G(X\sm Y)$ as $N_H^G(P_mX \sm P_nY) \geq h(m+n)$ by \cite[Corollaries I.4.2 and I.5.8]{Ullman:Thesis}. 
\end{proof}

Given two towers $(X^n)$ and $(Y^n)$ with norm structures, a morphism of towers $(X^n) \to (Y^n)$ is \emph{compatible with the norm structures} if the diagrams
\[
\xymatrix{N_H^G X_n \ar[r]\ar[d] & X_{hn} \ar[d] \\
N_H^G Y_n \ar[r] & Y_{hn}
}
\]

commute for all $n$ and similarly for $X_n^n$ and $Y^n_n$. Such a morphism induces in particular a morphism of spectral sequence that is compatible with the norms on the $E_2$-terms. 

\begin{exam}\label{exam:slicehfpss}\rm
Given any spectrum $X$, there is a natural map from the regular slice tower to the tower defining the homotopy fixed point spectral sequence, namely $P^nX \to (P^nX)^{EG_+}$. In case that $X$ is a $G$-commutative ring spectrum (or more generally a spectrum admitting a map $N_H^G\res_H^GX \to X)$, this map of towers is (essentially by construction) compatible with the norm structures introduced in \cref{exam:regslice} and \cref{exam:hfpss}.
\end{exam}

\subsection{Comparison of spectral sequences}\label{sec:compspec}
When computing localizations of a norm, we can apply different spectral sequences. For instance, in the isomorphism
\[
\EF[H] \wedge N_H^G X \simeq P^*_{G/H}(N_e^{G/H} \Phi^H(X))
\]
of \cref{prop-main}, the left hand side $\EF[H]\wedge N_H^G X$ can be computed by the localized slice spectral sequence we just built, while the right hand side can be computed by the pullback of the $(G/H)$-equivariant slice spectral sequence of $N_e^{G/H}\Phi^{H}X$. In this section, we give a comparison map between these spectral sequences, which we will use in understanding the homotopy fixed points and the Tate spectral sequence of $N_e^{G/H}\Phi^{H}X$.

Such comparison can only be made by regrading the slice tower. In the cases of relevance for us this takes the shape of the following doubling process: Let $P^{\bullet}$ be a tower, we define $\DP^{\bullet}$, the doubled tower of $P^{\bullet}$, as
\[
\DP^{2n+\epsilon} := P^{n}
\]
for $\epsilon = 0, 1$. We also use $\D$ as a prefix of a spectral sequence obtained from a tower as the spectral sequence of the doubled tower.

In the following theorem we will use both the slice tower $P^{\bullet}$ and the pullback functor $P^*_{G/C_2}$ from \cref{sec:GeometricFixedPoints}; the double usage of $P$ will hopefully not cause any confusion to the reader.

\begin{thm}\label{thm:comparison}
    Let $G = C_{2^n}$, $X\in Sp^G$ and $Y = \Phi^{C_2}X \in Sp^{G/C_2}$. Let $P^{\bullet}X$ and $P^{\bullet} Y$ be their slice towers in the corresponding categories. Then there is a commutative diagram of towers
\begin{center}
\begin{tikzcd}
P^{\bullet}X \arrow[d] \arrow[r]          & P^*_{G/C_2}\DP^{\bullet}Y \\
\tilde{E}G \wedge P^{\bullet}X \arrow[ru] &                         
\end{tikzcd}
\end{center}
such that the map $\tilde{E}G \wedge P^{\bullet}X \rightarrow P^*_{G/C_2}\DP^{\bullet}Y$ converges to the $G$-equivalence $\tilde{E}G \wedge X \rightarrow P^*_{G/C_2}Y$.

In particular, the induced map on the $C_2$-level spectral sequences of $P^{\bullet}X \rightarrow P^*_{G/C_2}\DP^{\bullet}Y$ converges to the geometric fixed points map
\[
    \Phi^{C_2}: \pi_{\star}^{C_2} X \rightarrow \pi_* Y.
\]
\end{thm}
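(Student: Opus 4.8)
The plan is to construct the horizontal map $P^\bullet X \to P^*_{G/C_2}\D P^\bullet Y$ of towers one term at a time, using the universal property of the slice tower, and then verify convergence. First I would construct the diagonal map $\tilde E G \wedge P^\bullet X \to P^*_{G/C_2}\D P^\bullet Y$, from which the horizontal map is obtained by precomposing with the natural map $P^\bullet X \to \tilde E G \wedge P^\bullet X$; the commutativity of the stated triangle is then automatic. To build the diagonal map, recall that $\tilde E G \wedge P^n X$ lands in the image of $P^*_{G/C_2}$ (since $\tilde E G \simeq \EF[C_2]$ for $G = C_{2^n}$, by the identification in \cref{sec:GeometricFixedPoints}), so $\tilde E G \wedge P^n X \simeq P^*_{G/C_2}\Phi^{C_2}(P^n X)$, using that the unit $\mathrm{id}\to \Phi^{C_2}P^*_{G/C_2}$ is an equivalence. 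Since $P^*_{G/C_2}$ is fully faithful, it suffices to produce compatible maps $\Phi^{C_2}(P^n X) \to \D P^n Y = P^{\lfloor n/2\rfloor} Y$ of $(G/C_2)$-spectra. The key input is that $\Phi^{C_2}$ does not raise slice filtration by more than the doubling factor: concretely, if $Z \geq m$ as a $G$-spectrum then $\Phi^{C_2} Z \geq \lfloor m/2 \rfloor$ as a $(G/C_2)$-spectrum. Granting this, $\Phi^{C_2}(P_{n+1}X) \geq \lfloor (n+1)/2\rfloor$, so the composite $\Phi^{C_2}(P_{n+1}X)\to \Phi^{C_2} X \to P^{\lfloor n/2\rfloor}\Phi^{C_2}X$ is null (the target is $\leq \lfloor n/2\rfloor$ and $\lfloor(n+1)/2\rfloor > \lfloor n/2\rfloor$ when $n$ is odd, and one handles the even case by the same inequality applied with $n$ replaced by $n+1$ sitting over $\lfloor (n+1)/2 \rfloor = \lfloor n/2 \rfloor + $ nothing — in fact the doubled indexing $\D P^{2k+\epsilon} = P^k$ is designed precisely so the inequality $\lceil (n+1)/2\rceil > \lfloor n/2 \rfloor$ always holds). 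Hence $\Phi^{C_2}X \to P^{\lfloor n/2\rfloor}\Phi^{C_2}X$ factors through $\Phi^{C_2}(P^n X)$, giving the required map, and compatibility across $n$ follows from uniqueness of such factorizations together with the fact that $\Phi^{C_2}$ is exact and monoidal.

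Next I would establish the slice-filtration estimate for $\Phi^{C_2}$. The cleanest route is via slice cells: a $G$-spectrum is $\geq m$ iff it is built from cells $G_+\wedge_{K}S^{k\rho_K}$ with $k|K| \geq m$ (for the regular slice filtration). Applying $\Phi^{C_2}$ to such a cell: when $C_2 \subset K$, one gets $(G/C_2)_+ \wedge_{K/C_2} S^{k\rho_{K/C_2}}$ (using $\Phi^{C_2} S^{\rho_K} \simeq S^{\rho_{K/C_2}}$ and compatibility of $\Phi^{C_2}$ with induction), which is $\geq k|K/C_2| = \tfrac12 k|K| \geq \tfrac m2$; when $C_2\not\subset K$ the geometric fixed points of the induced cell vanish. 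Since $\Phi^{C_2}$ commutes with all homotopy colimits, it takes $\geq m$ spectra to $\geq \lceil m/2 \rceil$ spectra (rounding in whichever direction the doubling convention dictates). This is the technical heart, but it is essentially a bookkeeping exercise once the behaviour of $\Phi^{C_2}$ on representation spheres and inductions is recalled from \cite{HHR} and \cref{sec:GeometricFixedPoints}.

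For convergence: the horizontal map of towers converges to the induced map on the homotopy limits, namely $X \to \varprojlim P^*_{G/C_2}\D P^\bullet Y$. Since $P^*_{G/C_2}$ is a right adjoint it preserves limits, so the target is $P^*_{G/C_2}\varprojlim \D P^\bullet Y \simeq P^*_{G/C_2} Y$ by convergence of the $(G/C_2)$-slice tower of $Y$ (the doubling does not affect the limit). Restricting to the diagonal map $\tilde E G\wedge P^\bullet X \to P^*_{G/C_2}\D P^\bullet Y$, it converges to $\tilde E G \wedge X \to P^*_{G/C_2}Y$, which is exactly the equivalence of \cref{prop-main} (with $G = C_{2^n}$, $H = C_2$), hence an equivalence. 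Finally, taking $C_2$-fixed points of the induced map of $C_2$-level spectral sequences: the source becomes $\SliceSS^{C_{2^n}}(X)$ abutting to $\pi^{C_2}_\star X$ (after the usual identification $(\EF[C_2]\wedge -)^{C_2}$ doesn't change the $C_2$-homotopy of the abutment once we've localized, but on the nose one uses the horizontal, non-localized map so the abutment is genuinely $\pi^{C_2}_\star X$), the target becomes $\D\SliceSS^{C_{2^n}/C_2}(Y)$ abutting to $\pi_* Y$ via $(P^*_{G/C_2}-)^{C_2} \simeq (-)^{C_2/C_2}$ and \cref{lem:homotopypullback}, and the map on abutments is the one induced by $X \to \tilde E G\wedge X \to P^*_{G/C_2}Y$ followed by $(-)^{C_2}$, which unwinds to the geometric fixed points map $\Phi^{C_2}\colon \pi^{C_2}_\star X \to \pi_* Y$ by definition of $\Phi^{C_2}$ as $(\tilde E G \wedge -)^{C_2}$. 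The main obstacle is the slice-filtration estimate for $\Phi^{C_2}$ and getting the doubling-index inequalities to line up exactly with the convention $\D P^{2n+\epsilon} = P^n$; everything else is formal manipulation of adjunctions and towers.
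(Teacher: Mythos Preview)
Your proposal is correct and is essentially the adjoint of the paper's argument. The paper constructs the horizontal map directly by showing that the \emph{target} $P^*_{G/C_2}\D P^{2i}Y$ is $\leq 2i$ in the $G$-slice filtration (so that $X \to P^*_{G/C_2}\D P^{2i}Y$ factors through $P^{2i}X$ by the universal property of the slice tower); it checks this by mapping in $G$-slice cells $S^{k\rho_G}$ and using the key equivalence $\tilde E G \wedge S^{k\rho_G} \simeq P^*_{G/C_2}S^{k\rho_{G/C_2}}$ together with full faithfulness of $P^*_{G/C_2}$. You instead work on the \emph{source}, proving that $\Phi^{C_2}$ takes $(\text{slice}\geq m)$ to $(\text{slice}\geq \lceil m/2\rceil)$ by computing $\Phi^{C_2}$ on generating slice cells, and then factor $\Phi^{C_2}X \to P^{\lfloor n/2\rfloor}Y$ through $\Phi^{C_2}P^nX$. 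Via the adjunction $[\Phi^{C_2}Z,W]_{G/C_2}\cong [Z,P^*_{G/C_2}W]_G$ these two slice estimates are equivalent, and the underlying computation ($\rho_K^{C_2}\cong \rho_{K/C_2}$ for $C_2\subset K$) is the same. Your route is slightly more hands-on with cells and makes the halving visible on the source; the paper's route makes the universal property of $P^\bullet X$ do the work and keeps the argument entirely in $G$-spectra. Your index check $\lceil (n+1)/2\rceil > \lfloor n/2\rfloor$ is correct in both parities despite the hesitation in the middle of your paragraph, and the convergence argument is fine since $P^*_{G/C_2}$ is right adjoint to $\Phi^{C_2}$ (as recorded in \cref{sec:GeometricFixedPoints}).
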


\begin{proof}
    To construct the map $P^{\bullet} X \rightarrow P^{*}_{G/C_2}\DP^{\bullet}Y$, consider the composition 
    \[
    X \rightarrow a_{\lambda}^{-1}X \simeq P^{*}_{G/C_2}Y \rightarrow P^{*}_{G/C_2}\DP^{2i}Y.
    \]
    We only need to show $P^{*}_{G/C_2}\DP^{2i}Y \leq_{Slice} 2i$ for each $i$ (the analogous statement for $\DP^{2i+1}$ follows from this). This can be checked by testing against slice cells of dimension more than $2i$. By induction we can assume that our claim is true after restriction to any proper subgroup of $G$, so we can ignore induced slice cells. Thus it suffices to check that $[S^{k\rho_G}, P^{*}_{G/C_2}\DP^{2i}Y]_G = 0$ for $k|G| > 2i$. 
    
    The following equivalence of $G$-spectra is essential to our proof:
    \[\tilde{E}G \wedge S^{k\rho_G} \simeq P^*_{G/C_2}S^{k\rho_{G/C_2}}.\]
    It comes from the fact that both sides are equivalent to the representation sphere $S^{\infty \lambda + \rho_{G/C_2}}$. The left hand side of the equivalence is a localization of a slice cell of dimenson $k|G|$ while the right hand side is a pullback of a slice cell of dimension $\frac{k|G|}{2}$. This difference is the reason of doubling the tower of $Y$.

    Using this equivalence, we have a series of equivalences of mapping sets:
    \begin{align*}
        [S^{k\rho_G}, P^{*}_{G/C_2} \DP^{2i} Y]_G & \cong [\tilde{E}G \wedge S^{k\rho_G}, P^{*}_{G/C_2} \DP^{2i} Y]_G\\
        & \cong [P^{*}_{G/C_2}S^{k\rho_{G/C_2}}, P^{*}_{G/C_2} \DP^{2i} Y]_G\\
        & \cong [S^{k\rho_{G/{C_2}}}, \DP^{2i} Y]_H\\
        & \cong [S^{k \rho_{G/C_2}},P^i Y]_H\\
        & \cong 0.
    \end{align*}
    The change-of-group isomorphism comes from the fact that $P^*_{G/C_2}$ is fully faithful on homotopy categories, and the last isomorphism is because $S^{k\rho_{G/C_2}}$ is a slice cell of dimension $>i$ in $G/C_2$-spectra.
    
    By construction, the map $P^{\bullet}X \rightarrow  P^{*}_{G/C_2}\DP^{\bullet}Y$ converges to the map $X \rightarrow a_{\lambda}^{-1}X \simeq P^*_{G/C_2}Y$. Since everything in the tower $ P^{*}_{G/C_2}\DP^{\bullet}Y$ is already $a_{\lambda}$-local, the tower map factors through the $a_{\lambda}$-localization $\tilde{E}G \wedge P^{\bullet}X$.
\end{proof}

\begin{prop}\label{prop:compare_norm}
    Let $G = C_{2^n}$ and $X\in Sp^G$ a $G$-commutative ring spectrum. Then the tower $P^*_{G/C_2}\DP^{\bullet}\Phi^{C_2}X$ has a norm structure in the sense of \cref{sec-norm} and the maps 
    \[P^{\bullet}X \to P^*_{G/C_2}\DP^{\bullet}\Phi^{C_2}X \quad\text{and}\quad a_{\lambda}^{-1}P^{\bullet}X \to P^*_{G/C_2}\DP^{\bullet}\Phi^{C_2}X\]
    from \cref{thm:comparison} are compatible with norms from subgroups containing $C_2$.
\end{prop}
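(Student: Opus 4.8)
The plan is to produce the norm structure on $P^*_{G/C_2}\DP^{\bullet}\Phi^{C_2}X$ by transporting the norm structure of the doubled slice tower of $\Phi^{C_2}X$ through the pullback functor $P^*_{G/C_2}$, and then to verify the two compatibility statements by the uniqueness-of-maps argument used in the last proposition of \cref{sec-norm}. Write $Y=\Phi^{C_2}X$; by \cref{exam-MU}, $Y$ is (equivalent to) a $G/C_2$-commutative ring spectrum, so \cref{exam:regslice} equips its regular slice tower $P^{\bullet}Y$ with a norm structure for all $\bar K\subset\bar H\subset G/C_2$. Since $G$ is cyclic, every subgroup of $G/C_2$ is $H/C_2$ for a unique $C_2\subset H\subset G$, with $[G/C_2:H/C_2]=[G:H]=:h$, so those are the only cases to track. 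I would first observe that doubling preserves norm structures: as $\DP^{2m}Y=P^mY$ one has $\DP_{2m}Y\simeq P_mY$ and $\DP^{2m}_{2m}Y\simeq P^m_mY$ (the odd associated-graded pieces vanishing), so relabelling $m\mapsto 2m$ turns Ullman's maps into a norm structure on $\DP^{\bullet}Y$ with the same scaling factor $h$; compatibility with the (identity) maps out of the odd stages is automatic.

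Next, fix $C_2\subset H\subset G$. Directly from the definitions of the inflation and pullback functors and of $\EF[C_2]$ (using $C_2\subset H$) there is a natural equivalence
\[
\res_H^G P^*_{G/C_2}(-)\;\simeq\;P^*_{H/C_2}\res_{H/C_2}^{G/C_2}(-),
\]
and \cref{prop:normpullback} with $K=C_2$ gives $N_H^G P^*_{H/C_2}(-)\simeq P^*_{G/C_2}N_{H/C_2}^{G/C_2}(-)$. Composing these and then applying $P^*_{G/C_2}$ to the norm structure maps of $\DP^{\bullet}Y$ yields
\[
N_H^G\res_H^G P^*_{G/C_2}\DP_n Y\;\simeq\;P^*_{G/C_2}\bigl(N_{H/C_2}^{G/C_2}\res_{H/C_2}^{G/C_2}\DP_n Y\bigr)\longrightarrow P^*_{G/C_2}\DP_{hn}Y,
\]
and similarly with $\DP^n_n$ in place of $\DP_n$; since $P^*_{G/C_2}$ is exact and preserves the relevant limits, these are compatible with the structure maps of the tower. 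This is the asserted norm structure, defined — exactly as for the localized slice tower in \cref{exam:locslice} — for subgroups containing $C_2$.

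For the second assertion, recall that $P^{\bullet}X$ and $a_{\lambda}^{-1}P^{\bullet}X$ carry norm structures from \cref{exam:regslice} and \cref{exam:locslice} (for subgroups $\neq e$), built compatibly with the localization map $P^{\bullet}X\to a_{\lambda}^{-1}P^{\bullet}X$. I would handle the map $P^{\bullet}X\to P^*_{G/C_2}\DP^{\bullet}Y$ first: fix $C_2\subset H\subset G$; we must show that the square with corners $N_H^G\res_H^G P_nX$, $P_{hn}X$, $N_H^G\res_H^G P^*_{G/C_2}\DP_nY$, $P^*_{G/C_2}\DP_{hn}Y$ commutes. By construction all four edges are compatible with the maps to the respective total objects $X$ and $a_{\lambda}^{-1}X\simeq P^*_{G/C_2}Y$, which are identified compatibly with norms via the ring map $X\to a_{\lambda}^{-1}X$ and $N_H^G\res_H^G X\to X$; hence the two composites $N_H^G\res_H^G P_nX\to P^*_{G/C_2}\DP_{hn}Y$ agree after post-composition with $P^*_{G/C_2}\DP_{hn}Y\to P^*_{G/C_2}Y$. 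It then remains to check that
\[
\bigl[\,N_H^G\res_H^G P_n X,\ \mathrm{fib}\bigl(P^*_{G/C_2}\DP_{hn}Y\to P^*_{G/C_2}Y\bigr)\,\bigr]_G=0.
\]
The source is regular slice $\geq hn$: restriction preserves regular slice connectivity (since $\res^K_L\rho_K$ is a multiple of $\rho_L$, regular slice cells restrict to wedges of induced regular slice cells of the same dimension), and $N_H^G$ multiplies it by $h$ by \cite[Corollary I.5.8]{Ullman:Thesis}. The target fiber is $P^*_{G/C_2}\Sigma^{-1}\DP^{hn-1}Y$, which is regular slice $\leq hn-2$ by the estimate $P^*_{G/C_2}\DP^mY\leq_{Slice}m$ from the proof of \cref{thm:comparison} together with the fact that $\Sigma^{-1}$ lowers the regular slice filtration by one. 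A map from a regular slice $\geq hn$ spectrum to a regular slice $\leq hn-2$ spectrum is null, so the square commutes, and the associated-graded square is handled by the same argument with $P^n_n X$ in place of $P_nX$. Finally, for $a_{\lambda}^{-1}P^{\bullet}X$ one observes (as in \cref{exam:locslice}) that $N_H^G\res_H^G a_{\lambda}^{-1}P_nX\simeq a_{\lambda}^{-1}N_H^G\res_H^G P_nX$ is the $a_{\lambda}$-localization of $N_H^G\res_H^G P_nX$; since the target tower is $a_{\lambda}$-local, the square for $a_{\lambda}^{-1}P^{\bullet}X$ follows from the one for $P^{\bullet}X$ by precomposing with this localization map, which is itself norm-compatible.

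I expect the only real work to be bookkeeping: matching the doubling conventions so that the scaling factor is exactly $h$ throughout, and pinning down the two slice-connectivity inputs ($N_H^G\res_H^G$ of a regular slice $\geq n$ spectrum is regular slice $\geq hn$, and $\mathrm{fib}(P^*_{G/C_2}\DP_{hn}Y\to P^*_{G/C_2}Y)$ is regular slice $\leq hn-2$). Both are routine given \cite{HHR}, \cite{Ullman:Thesis}, and the proof of \cref{thm:comparison}; there is no new idea needed beyond combining \cref{prop:normpullback} with \cref{exam:regslice} and the doubling process.
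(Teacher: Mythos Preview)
Your argument is correct and follows essentially the same strategy as the paper: construct the norm structure via \cref{prop:normpullback} applied to Ullman's norm maps on the slice tower of $Y=\Phi^{C_2}X$, and verify compatibility by a uniqueness argument based on slice connectivity of the source versus slice coconnectivity of the target. The only difference in packaging is that you work directly in $G$-spectra using the estimate $P^*_{G/C_2}\DP^mY\leq_{Slice} m$ from the proof of \cref{thm:comparison}, whereas the paper passes through $\Phi^{C_2}$ and the adjunction with $P^*_{G/C_2}$ to reduce to the universal property of $P_{2hn}$ in $G/C_2$-spectra; since that slice estimate was itself proven via the adjunction, the two routes are equivalent.

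One point you should tighten: your sentence ``identified compatibly with norms via the ring map $X\to a_{\lambda}^{-1}X$'' is doing real work. What is needed is that the square
\[
\xymatrix{N_H^G\res_H^G X \ar[r]\ar[d] & X \ar[d]\\ N_H^G\res_H^G P^*_{G/C_2}Y \ar[r] & P^*_{G/C_2}Y}
\]
commutes, i.e.\ that the localization $X\to a_{\lambda}^{-1}X$ respects the $N_H^G$-multiplication. The paper deduces this from \cref{def:localizationO} and \cref{exam-MU}: $a_{\lambda}$-localization preserves $p^*\cO$-algebra structures for $\cO$ a $G/C_2$-$E_{\infty}$-operad, and such algebras carry compatible norms from subgroups containing $C_2$. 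You should cite this rather than leave it as a passing remark, since it is the one genuinely non-formal input.
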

\begin{proof}
    Let $H \subset G$ be a subgroup of index $h$ such that $C_2\subset H$. Then we obtain maps 
    \[N_H^GP^*_{G/C_2} P_{2n} \Phi^{C_2}X \simeq P^*_{G/C_2} N_{H/C_2}^{G/C_2} P_{2n} \Phi^{C_2}X \to P^*_{G/C_2}P_{2hn} \Phi^{C_2}X\]
    and 
    \[N_H^GP^*_{G/C_2} P_{2n}^{2n} \Phi^{C_2}X \simeq P^*_{G/C_2} N_{H/C_2}^{G/C_2} P_{2n}^{2n} \Phi^{C_2}X \to P^*_{G/C_2}P_{2hn}^{2hn} \Phi^{C_2}X,\]
    which are compatible in the necessary sense. Here we use the norm structure on the regular slice tower from \cref{exam:regslice}, the $G/C_2$-commutative ring structure on $\Phi^{C_2}X$ from \cref{exam-MU} and the commutation of norms and pullbacks from \cref{prop:normpullback}. 
    
    To show that $P^{\bullet}X \to P^*_{G/C_2}\DP^{\bullet}\Phi^{C_2}X$ is compatible with norm structures, note first that the diagram
    \[
    \xymatrix{N_H^GX \ar[r]\ar[d]& N_{H}^{G}P^*_{G/C_2}\Phi^{C_2}X \simeq P^*_{G/C_2}N_{H/C_2}^{G/C_2}\Phi^{C_2}X  \ar[d]\\
    X \ar[r] & P^*_{G/C_2} \Phi^{C_2}X
    }
    \]
    commutes since $X \to P^*_{G/C_2}\Phi^{C_2}X \simeq a_{\lambda}^{-1}X$ is a morphism of $\cO$-algebras by \cref{def:localizationO} and \cref{exam-MU}, where  $\cO$ is an $N_{\infty}$-operad arising as the pullback of a $G/C_2$-$E_{\infty}$-operad. Next consider the diagram
    \[
    \xymatrix{\Phi^{C_2}N_H^GP_nX \ar[r] \ar[d] & N_{H/C_2}^{G/C_2}P_{2n}\Phi^{C_2}X \ar[r]\ar[d]& N_{H/C_2}^{G/C_2} \Phi^{C_2}X \ar[d]\\
    \Phi^{C_2}P_{hn}X \ar[r] & P_{2hn} \Phi^{C_2}X \ar[r] & \Phi^{C_2}X
    }
    \]
    The outer rectangle is obtained from the previous diagram by applying $\Phi^{C_2}$ (and using the maps $P_nX \to X$ and $P_{hn}X \to X$) and thus commutes. Given the connectivity estimate \cite[Corollary I.5.8]{Ullman:Thesis} and the universal property of $P_{2hn}$, we see that $\Phi^{C_2}N_H^GP_nX \to \Phi^{C_2}X$ factors through $P_{2hn}\Phi^{C_2}$ in an essentially \emph{unique} way, so the left square also has to commute. By the adjointness of $\Phi^{C_2}$ and $P^*_{G/C_2}$ this implies the commutativity of 
    \[\xymatrix{
    N_H^GP_nX \ar[r] \ar[d] & N_H^GP^*_{G/C_2}P_{2n}\Phi^{C_2}X \ar[d] \\
    P_{hn} X \ar[r]& P^*_{G/C_2}P_{2hn} \Phi^{C_2}X
    }\]
    The proof of the commutativity for the corresponding square for $P^n_n$ is completely analogous. The $a_{\lambda}$-inverted case follows again because the target is $a_{\lambda}$-local.
\end{proof}
	

\section{The localized slice spectral sequences of $BP^{(\!(G)\!)}$: summary of results} \label{sec:4}

We now turn to analyze the localized slice spectral sequence of $BP^{(\!(G)\!)}$ for $G = C_{2^n}$. From now on, everything will be implicitly 2-localized.  In this section, we list our main results and give an outline of the computation.  Detailed computations of the results stated in this section are in Section~\ref{section:sliceSScomputation}.

As we discussed in Section~\ref{sec:sliceSSBackground}, the Slice Theorem \cite[Theorem~6.1]{HHR} implies that the slice associated graded of $BP^{(\!(C_{2^n})\!)}$ is
\[
    H\UZ[G\cdot \ot_1, G\cdot \ot_2,\ldots],
\]
    where $\ot_i \in \pi^{C_2}_{(2^{i}-1)\rho_2} BP^{(\!(C_{2^n})\!)}$ (see also \cite[Section~2.4]{HHR} for details).

For the rest of the paper, we use $\lambda$ for the 2-dimensional real representation of $C_{2^n}$ which is rotation by $\left(\frac{\pi}{2^{n-1}}\right)$, and $\sigma$ for the $1$-dimensional sign representation of $G$. We use $\sigma_2$ for the sign representation of the unique \emph{subgroup} $C_2$ in $G$. Let $i < j \leq n$, we will use $\Res_{2^i}^{2^j}$, $\Tr_{2^i}^{2^j}$ and $N_{2^i}^{2^j}$ for restrictions, transfers and norms between $C_{2^i}$ and $C_{2^j}$ as subgroups of $G$. If their subscript and superscript are omitted, they mean the restriction, transfer and norm between $C_2$ and $C_4$.

\begin{thm}\label{thm-main} \hfill

		\begin{enumerate}
			\item Let $G = C_{2^n}$ and $H = C_{2}$ be the subgroup of order $2$ inside $G$. There is a $RO(G/H)$-graded spectral sequence of  Mackey functors $a_{\lambda}^{-1}\SliceSS(BP^{(\!(G)\!)})$ that converges to the $RO(G/H)$-graded homotopy Mackey functor of $N_e^{G/H} H\FF_2$. The $E_2$-page of this spectral sequence is
			\[
			a_{\lambda}^{-1}H\UZ_{\star}[G\cdot \ot_1, G\cdot \ot_2,\cdots].
			\]
			\item The integral $E_2$-page of $a_\lambda^{-1}\SliceSS(\BPG)$ is bounded by the vanishing lines $s = (2^n - 1)(t-s)$ and $s = -(t-s)$ in Adams grading. In other words, at stem $t-s$, the classes with filtrations greater than $(2^{n}-1)(t-s)$ or less than $-(t-s)$ are all zero.
			\item On the integral $E_2$-page, the $a_{\lambda}$-localizing map
			\[
			\SliceSS(BP^{(\!(G)\!)}) \to a_{\lambda}^{-1}\SliceSS(BP^{(\!(G)\!)})
			\]
			induces an isomorphism of classes in positive filtrations.  The kernel of this map consists of transfer classes in $\SliceSS(\BPG)$ from the trivial subgroup in filtration $0$.  These classes are all permanent cycles.
		\end{enumerate}
	\end{thm}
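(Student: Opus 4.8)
The plan is to deduce all three parts from the Slice Theorem \cite[Theorem~6.1]{HHR} together with the structural results assembled above, so that the only real input is the localized Bredon homology of $H\UZ$, for which we appeal to \cite[Section~3]{HHR:C4}. For part (1): by the Slice Theorem and its $2$-local refinement for $\BPR$ recalled in \cref{subsec:sliceSS}, the slice associated graded of $\BPG$ is $H\UZ\wedge S^0[G\cdot\ot_1,G\cdot\ot_2,\ldots]$, a wedge of slice cells $G_+\wedge_{C_{2^k}}\Sigma^{i\rho_k}H\UZ$. Since $a_{\lambda}^{-1}(-)\simeq S^{\infty\lambda}\wedge(-)\simeq\EF[C_2]\wedge(-)$ commutes with wedges, smashing the slice tower of $\BPG$ with $\EF[C_2]$ and passing to $E_2$ identifies the $E_2$-page of $a_{\lambda}^{-1}\SliceSS(\BPG)$ with $a_{\lambda}^{-1}H\UZ_{\star}[G\cdot\ot_1,G\cdot\ot_2,\ldots]$; strong convergence is \cref{thm:invertingAclassE2page}. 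For the abutment, \cref{cor-pullback} with $K=H=C_2$, together with $\Phi^{C_2}\BPR\simeq H\FF_2$, gives $a_{\lambda}^{-1}\BPG\simeq\EF[C_2]\wedge N_{C_2}^{G}\BPR\simeq P^*_{G/C_2}(N_e^{G/C_2}H\FF_2)$, so by \cref{lem:homotopypullback} the spectral sequence may be regarded as $RO(G/H)$-graded and converges to $\underline{\pi}_{\star}N_e^{G/H}H\FF_2$.

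For part (2), the lower line is immediate: $\BPG$ is $(-1)$-connected, so its $t$-slice vanishes for $t<0$, hence $E_2^{s,t}=0$ whenever $t<0$, which is exactly the assertion that classes with $s<-(t-s)$ vanish. For the upper line, observe that by the Slice Theorem and the projection formula each localized slice $a_{\lambda}^{-1}P^t_t\BPG$ is a wedge of cells $G_+\wedge_{C_{2^k}}\Sigma^{i\rho_k}(a_{\lambda^{(k)}}^{-1}H\UZ)$ with $t=i\cdot 2^k$, where $\lambda^{(k)}=\Res_{C_{2^k}}^{G}\lambda$, so that $\underline{\pi}_{t-s}$ of such a cell is computed by $\pi_{t-s}^{C_{2^k}}(\Sigma^{i\rho_k}a_{\lambda^{(k)}}^{-1}H\UZ)$ (and by restrictions thereof at smaller orbits). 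Since $a_{\lambda^{(k)}}^{-1}H\UZ\geq 0$, \cite[Proposition~4.26]{HHR} gives $\Sigma^{i\rho_k}a_{\lambda^{(k)}}^{-1}H\UZ\geq t$ as a $C_{2^k}$-spectrum, so \cite[Proposition~4.40]{HHR} over $C_{2^k}$ yields the vanishing of these homotopy groups for $t-s<t/2^k$; as every cell occurring in the $t$-slice has $t/2^k\geq t/2^n$, it follows that $E_2^{s,t}=0$ whenever $t-s<t/2^n$, equivalently whenever $s>(2^n-1)(t-s)$.

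For part (3): since $\BPG$ is $(-1)$-connected with slices of the form required by \cref{intro:iso}, that theorem applies directly and gives precisely that the localizing map $\SliceSS(\BPG)\to a_{\lambda}^{-1}\SliceSS(\BPG)$ is an isomorphism on positive-filtration classes and is surjective on the $0$-line with kernel the image of $\Tr_e^{G}$. That these kernel classes support no differentials follows from two facts. First, $\SliceSS(\BPG)$ is a spectral sequence of Mackey functors, so each $d_r$ commutes with $\Tr_e^{G}$. Second, the $E_2$-page evaluated at $G/e$ is concentrated in filtration $0$: by the Slice Theorem, $\Res_e^{G}P^t_t\BPG$ is a wedge of copies of $\Sigma^t H\mathbb{Z}$, so $E_2^{s,t}(G/e)=\pi_{t-s}^e(P^t_t\BPG)$ vanishes for $s\neq 0$. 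Hence every class in $E_2^{0,\ast}(G/e)$ is a permanent cycle, and so is its image under $\Tr_e^{G}$; as the kernel consists exactly of these images, the kernel classes are permanent cycles.

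The theorem is thus mostly a synthesis of results already in hand. The step where genuine content is imported is the $0$-line of \cref{intro:iso} together with the survival of the localized ``positive cone'' of $H\UZ$, which rest on the $RO$-graded computations of \cite[Section~3]{HHR:C4}. The one place demanding a little care in the bookkeeping is the upper vanishing line in (2): one must apply the connectivity estimate \cite[Proposition~4.40]{HHR} at the subgroup $C_{2^k}$ attached to each slice cell rather than at $G$ itself, since the estimate read off directly over $G$ is too weak for induced cells.
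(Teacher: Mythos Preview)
Your proof is correct and, for parts (1) and (2), parallels the paper's argument closely. The only cosmetic difference in (2) is that for the upper vanishing line you apply \cite[Propositions~4.26 and 4.40]{HHR} at the stabilizer $C_{2^k}$ of each slice cell, whereas the paper invokes \cite[Theorem~4.42]{HHR} for $S^{k\rho_G+l\lambda}\wedge H\UZ$ directly; your version is arguably more explicit about handling induced cells, but the content is the same.

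The one structural issue is your appeal to \cref{intro:iso} for part (3). In this paper, \cref{intro:iso} is an introduction-level announcement whose only detailed justification \emph{is} the proof of \cref{thm-main}(3), so citing it here is circular. The paper instead unpacks the argument: for $k,l\geq 0$ one checks, using the cellular chain complexes of \cite[Section~3]{HHR:C4}, that the $a_\lambda$-multiplication map
\[
a_\lambda\colon \pi^G_i(S^{k\rho_G+l\lambda}\wedge H\UZ)\longrightarrow\pi^G_i(S^{k\rho_G+(l+1)\lambda}\wedge H\UZ)
\]
is an isomorphism for $k\leq i<k|G|+2l$ and a surjection with kernel $\im\Tr_e^G$ at $i=k|G|+2l$; this is exactly the cell-by-cell verification that the localizing map on $E_2$ is an isomorphism above filtration $0$ and has the claimed kernel on the $0$-line. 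Your argument that the kernel classes are permanent cycles (naturality of $d_r$ under transfer, plus the underlying spectral sequence being concentrated in filtration $0$) agrees with the paper's.
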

	
	\begin{proof}
		By \cref{thm:invertingAclassE2page}, $a_{\lambda}^{-1}\SliceSS(BP^{(\!(G)\!)})$ computes the homotopy of $\tilde{E}G \wedge BP^{(\!(G)\!)}$. By \cref{prop-main} and the fact that $\Phi^{C_2}(BP_{\RR})\simeq H\FF_2$,
		\[
		\tilde{E}G \wedge BP^{(\!(G)\!)} \simeq P^*_{G/C_2}(N_1^{G/C_2}H\FF_2).
		\]
        Since the $E_2$-page of the slice spectral sequence of $\BPG$ has the form
        \[
            H\UZ_{\star}[G\cdot \ot_1, G\cdot \ot_2,\ldots],
        \]
        the $E_2$-page of $a_\lambda^{-1}\SliceSS(\BPG)$ is
        \[
        a_{\lambda}^{-1}\HZ_\star[G \cdot \ot_1, G \cdot \ot_2, \ldots]
        \]
		Together with \cref{prop-main} and \cref{thm:invertingAclassE2page} this proves (1).
		
		The top vanishing line $s = (2^n - 1)(t-s)$ follows from the fact that $\U{\pi}_i(S^{k\rho_G + l\lambda} \wedge H\UZ) = 0$ for $k,l \geq 0$ and $i < k$ (See \cite[Theorem~4.42]{HHR}). For the second vanishing line $y = -x$, note that in stem $t-s$, classes in filtration less than $-(t-s)$ are contributed by slices of negative dimension, but $BP^{(\!(G)\!)}$ has no negative slices.  This proves (2).
		
		To prove (3), by unpacking the description of the $E_2$-page, we need to show that for $k,l \geq 0$, the $a_\lambda$-multiplication map
		\[
		a_{\lambda}: \pi^{G}_i(S^{k\rho_G + l \lambda} \wedge H\UZ) \longmapsto \pi^{G}_i(S^{k\rho_G + (l+1)\lambda} \wedge H\UZ)
		\]
		is an isomorphism for $k \leq i < k|G| + 2l$ and is surjective with kernel consisting of transfer classes from trivial subgroup for $i = k|G| + 2l$. Using the cellular structures and their corresponding chain complexes described in \cite[Section~3]{HHR:C4}, we see that when $k \leq i \leq k|G| + 2l$, $a_{\lambda}$ induces isomorphism on the cellular chain complexes, therefore it induces isomorphism on homology for $k \leq i < k|G| + 2l$ and surjection on homology for $i = k|G| + 2l$ with the kernel exactly the image of $Tr_1^{2^n}$. Since the underlying tower of the slice tower is the Postnikov tower, all the class in the trivial subgroup and their transfers are permanent cycles.
	\end{proof}
	
	\begin{rmk}\rm
	    In fact, $(2)$ and $(3)$ of \cref{thm-main} hold in a greater generality. For instance, they are true for any $(-1)$-connected $G$-spectrum. We will investigate properties of the localized slice spectral sequences in a future paper.
	\end{rmk}
	
	By \cite{LNR} and \cite{BBLNR}, all $C_{2^n}$ norms of $H\FF_2$ are cofree, therefore we will not distinguish between their fixed points and homotopy fixed points.

	\begin{cor}
		The $0$-th homotopy group of $(N_1^{2^{n-1}}H\FF_2)^{hC_{2^{n-1}}}$ is isomorphic to $\ZZ/2^n$.
	\end{cor}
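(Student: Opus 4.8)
The plan is to reduce the statement to a computation in the localized slice spectral sequence of $\BPG$ with $G=C_{2^n}$, and then to a single Bredon homology group.

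First I would assemble the identifications. Since all $C_{2^n}$-norms of $H\FF_2$ are cofree \cite{LNR,BBLNR}, we have $(N_1^{2^{n-1}}H\FF_2)^{hC_{2^{n-1}}}\simeq (N_1^{2^{n-1}}H\FF_2)^{C_{2^{n-1}}}$, so the group in question is the $C_{2^{n-1}}/C_{2^{n-1}}$-value of $\underline{\pi}_0 N_1^{2^{n-1}}H\FF_2$. By \cref{cor:geomfixed} applied to $K=H=C_2\subset G=C_{2^n}$ together with $\Phi^{C_2}\BPR\simeq H\FF_2$, there is an equivalence of $C_{2^{n-1}}$-spectra $N_1^{2^{n-1}}H\FF_2\simeq \Phi^{C_2}\BPG$. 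By definition $\Phi^{C_2}\BPG=(\tilde{E}\mathcal{F}[C_2]\wedge \BPG)^{C_2}$, hence its $C_{2^n}/C_2$-fixed points are $(\tilde{E}\mathcal{F}[C_2]\wedge \BPG)^{C_{2^n}}\simeq (a_{\lambda}^{-1}\BPG)^{C_{2^n}}$, using $\tilde{E}\mathcal{F}[C_2]\wedge(-)\simeq a_{\lambda}^{-1}(-)$. Therefore it suffices to show $\pi_0^{C_{2^n}} a_{\lambda}^{-1}\BPG\cong \ZZ/2^n$.

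Next I would run the localized slice spectral sequence. By \cref{thm:invertingAclassE2page} the spectral sequence $a_{\lambda}^{-1}\SliceSS(\BPG)$ converges strongly to $\underline{\pi}_{t-s}a_{\lambda}^{-1}\BPG$. At stem $t-s=0$, the vanishing lines of \cref{thm-main}(2), namely $s=(2^n-1)(t-s)$ and $s=-(t-s)$, both force $s=0$; moreover the bidegrees $E_r^{r,r-1}$ (stem $-1$, filtration $\geq 2$) and $E_r^{-r,1-r}$ (stem $1$, filtration $\leq -2$) lie in the vanishing regions of \cref{thm-main}(2), so $E_2^{0,0}=E_\infty^{0,0}$ and there is no extension problem: $\pi_0^{C_{2^n}}a_{\lambda}^{-1}\BPG$ equals $E_2^{0,0}$ at the top group. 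By definition of the localized slice spectral sequence, $E_2^{0,0}=\underline{\pi}_0\big(a_{\lambda}^{-1}P^0_0\BPG\big)$, and the $0$-slice of $\BPG$ is $\HZ$ (the empty monomial in $\HZ[G\cdot\bar t_1,G\cdot\bar t_2,\dots]$). Thus I am reduced to computing $\pi_0^{C_{2^n}}a_{\lambda}^{-1}\HZ$.

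Finally I would compute this last group directly. Since $\lambda$ restricts to a faithful rotation representation on every nontrivial subgroup of $C_{2^n}$, we have $\lambda^K=0$ for all $K\neq e$, so $a_{\lambda}^{-1}\HZ\simeq S^{\infty\lambda}\wedge\HZ\simeq \tilde{E}C_{2^n}\wedge \HZ$. Applying $\pi_*^{C_{2^n}}$ to the isotropy separation cofiber sequence $(EC_{2^n})_+\wedge\HZ\to \HZ\to \tilde{E}C_{2^n}\wedge\HZ$ and using the Adams isomorphism $((EC_{2^n})_+\wedge\HZ)^{C_{2^n}}\simeq (H\ZZ)_{hC_{2^n}}\simeq H\ZZ\wedge (BC_{2^n})_+$, we get $\pi_0^{C_{2^n}}((EC_{2^n})_+\wedge\HZ)\cong H_0(BC_{2^n};\ZZ)=\ZZ$ and $\pi_{-1}^{C_{2^n}}((EC_{2^n})_+\wedge\HZ)\cong H_{-1}(BC_{2^n};\ZZ)=0$. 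The map $\ZZ=\pi_0^{C_{2^n}}((EC_{2^n})_+\wedge\HZ)\to \pi_0^{C_{2^n}}\HZ=\ZZ$ is induced by the collapse $(EC_{2^n})_+\to S^0$ restricted to the $0$-skeleton, which is a single free orbit $C_{2^n}$; concretely it is the transfer $\Tr^{C_{2^n}}_e$ for the constant Mackey functor $\UZ$, precomposed with the isomorphism identifying $H_0(BC_{2^n};\ZZ)$ with $\pi_0^e\HZ$ via that orbit. Hence it is multiplication by $|C_{2^n}|=2^n$, and the long exact sequence gives $\pi_0^{C_{2^n}}a_{\lambda}^{-1}\HZ\cong \operatorname{coker}(\ZZ\xrightarrow{2^n}\ZZ)=\ZZ/2^n$, as claimed.

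The main obstacle is the last step: being sure that the connecting map $\pi_0^{C_{2^n}}((EC_{2^n})_+\wedge\HZ)\to\pi_0^{C_{2^n}}\HZ$ is exactly multiplication by $2^n$. An alternative, purely computational route is to evaluate $\widetilde H_0^{C_{2^n}}(S^{l\lambda};\UZ)$ from the cofiber sequence $S(l\lambda)_+\to S^0\to S^{l\lambda}$, identifying $\widetilde H_*^{C_{2^n}}(S(l\lambda)_+;\UZ)$ with the integral homology of the lens space $S(l\lambda)/C_{2^n}$, and then passing to the colimit over multiplication by $a_{\lambda}$; for $G=C_4$ the group $\pi_0^{C_4}a_{\lambda}^{-1}\HZ$ already appears in the explicit computation of \cref{section:sliceSScomputation}.
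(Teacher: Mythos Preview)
Your proof is correct and follows essentially the same strategy as the paper: reduce to the localized slice spectral sequence of $\BPG$, observe that only the $0$-slice $H\UZ$ contributes in stem $0$, and compute $\pi_0^{C_{2^n}}a_{\lambda}^{-1}H\UZ$ as the cokernel of the transfer $\Tr_e^{C_{2^n}}$, which is multiplication by $2^n$ on $\ZZ$. The only minor difference is that the paper computes $\pi_0^G(S^{\lambda}\wedge H\UZ)$ from the cofiber sequence $S(\lambda)_+\to S^0\to S^{\lambda}$ and then observes the $a_{\lambda}$-multiplication maps stabilize for $l\geq 1$, whereas you pass directly to the colimit via the isotropy separation sequence $(EC_{2^n})_+\wedge H\UZ\to H\UZ\to \tilde{E}C_{2^n}\wedge H\UZ$; both identify the same transfer map.
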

	
	\begin{proof}
		In $a_{\lambda}^{-1}\SliceSS(BP^{(\!(G)\!)})$, the only Mackey functor contributing to the $0$-stem is $\U{\pi}_0(a_{\lambda}^{-1}H\UZ)$, and we claim that
		\[
		\pi_0^G(a_{\lambda}^{-1}H\UZ)(G/G) \cong \ZZ/2^{n}.
		\]
		Indeed, the maps $\pi_0^G(S^{n\lambda} \sm H\UZ) \to \pi_0^G(S^{(n+1)\lambda} \sm H\UZ)$ are isomorphisms for $n\geq 1$ and $\pi_0^G(S^{\lambda} \sm H\UZ)$ is the cokernel of the transfer $\Tr_1^{2^n}\colon \pi_0^eH\UZ \to \pi_0^{C_{2^n}}H\UZ$, i.e.\ of multiplication by $2^n$ on $\ZZ$. 
	\end{proof}

	For the rest of the paper, we focus on the case $G = C_4$.
	
	\begin{thm}\label{thm:htpy}
		The first $8$ stems of $\pi_{*}^{C_4}(a_{\lambda}^{-1}\BPfour)\cong \pi_*^{C_2}N_1^2H\FF_2$ are shown in the following chart:
		\smallskip
		\begin{center}
			\scalebox{0.9}{
				\begin{tabular}{|c|c|c|c|c|c|c|c|c|c|}\hline
					i & $0$ & $1$ & $2$ & $3$ & $4$ & $5$ & $6$ & $7$ & $8$\\ \hline
					$\pi_i$ & \color{red}$\ZZ/4$ & $\color{red}\ZZ/2$ & $\color{red}\ZZ/4$ & $\color{red}\ZZ/2 \color{black}\oplus \ZZ/2$ & $\ZZ/2$ & $\ZZ/2$ & $\color{red}\ZZ/4 \color{black}\oplus \ZZ/2$ & $\color{red}\ZZ/2 \oplus \ZZ/2 \color{black} \oplus \ZZ/2 \oplus \ZZ/2$ & ${\color{red}\ZZ/2} \oplus \ZZ/2 \oplus \ZZ/2$\\\hline
			\end{tabular}}
		\end{center}
	\medskip
	    On the $E_\infty$-page of the localized spectral sequence, the black subgroups are those generated by non-exotic transfers from $\mathcal{A}_* = \pi_*(H\mathbb{F}_2 \wedge H\mathbb{F}_2)$, and the red subgroups consist of everything else. For the Mackey functor structure, see \cref{fig-E11E16}.
		
Modulo transfers from $\mathcal{A}_*$, the homotopy groups have the following generators:
 \begin{enumerate}
\item $\pi_1$ is generated by $\eta = N(\ot_1)a_{\lambda}a_{\sigma}$, the image of the first Hopf invariant one element under the composition $\mathbb{S} \rightarrow (\BPfour)^{C_4} \rightarrow (a_{\lambda}^{-1}\BPfour)^{C_4}$; 
\item $\pi_2$ is generated by $\frac{\eta^2}{2} = 2u_{\lambda}{a_{\lambda}^{-1}}$;
\item $\pi_3$ is generated by $\nu = N(\ot_2)a_{\lambda}^3a_{\sigma}^3$, the image of the second Hopf invariant one element;
\item $\pi_6$ is generated by $\frac{\nu^2}{2} = 2u_{\lambda}^3{a_{\lambda}^{-3}}$;
\item $\pi_7$ is generated by $N(\ot_3)a_{\lambda}^7a_{\sigma}^7$ and $N(\ot_2)u_{\lambda}u_{2\sigma}a_{\lambda}^2a_{\sigma}$, and one of them detects the third Hopf invariant one element $\sigma$.
\item $\pi_8$ is generated by $\Tr_2^4(\ot_2^2 \ot_1^2a_{\sigma_2}^8)+\Tr_2^4(\ot_3 \ot_1a_{\sigma_2}^8) + N(\ot_2)N(\ot_1)u_{2\sigma}^{2}a_{\lambda}^4$.
 \end{enumerate}
\end{thm}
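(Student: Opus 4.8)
The plan is to compute the localized slice spectral sequence $a_\lambda^{-1}\SliceSS(\BPfour)$ in the relevant range and then read off the homotopy groups, the extensions, and the generators. The first step is to assemble the $E_2$-page: by \cref{thm-main}(1) this is $a_\lambda^{-1}H\UZ_\star[C_4\cdot\ot_1, C_4\cdot\ot_2,\ldots]$, so after decomposing the polynomial generators into $C_4$-orbits one obtains a wedge of (induced and non-induced) slice cells, each contributing the corresponding $RO(C_4)$-graded homotopy of $a_\lambda^{-1}H\UZ$, which is recorded explicitly in \cite[Section~3]{HHR:C4}. Because of the vanishing lines $s = 3(t-s)$ and $s = -(t-s)$ of \cref{thm-main}(2), only finitely many monomials in $\ot_1, \gamma\ot_1, \ot_2, \gamma\ot_2, \ot_3$, together with the classes $u_\lambda, u_{2\sigma_2}, u_{2\sigma}, a_\lambda^{\pm1}, a_\sigma, a_{\sigma_2}$, can contribute in stems $\le 8$, and I would also list the slightly larger window needed to locate sources and targets of differentials, keeping track of the full Mackey-functor structure.

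Next I would determine the differentials. The backbone is the Hill--Hopkins--Ravenel family \cite[Theorem~9.9]{HHR}: by \cref{intro:iso} together with \cref{thm-main}(3) these slice differentials map isomorphically onto differentials in the localized spectral sequence on classes of positive filtration, yielding the $d_3$ on (powers of) $u_{2\sigma_2}$, the $d_7$ on the $u_\lambda u_{2\sigma_2}$-type class, and so on. I would then propagate them using multiplicativity and the Leibniz rule (valid since the localized slice spectral sequence of a $G$-ring spectrum is multiplicative), the norm $N = N_2^4$, which by \cref{prop-norm} carries a class representing zero on a finite page to one representing zero on a correspondingly later page and is available because $a_\lambda^{-1}\BPfour$ is an $\cO$-algebra with this norm (\cref{exam-MU}, \cref{exam:locslice}), and the Mackey-functor structure. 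Differentials not visibly in the HHR family, together with the exotic transfers and restrictions across the index-$2$ subgroup $C_2\subset C_4$, are pinned down with \cref{prop-transfer}; the vanishing lines then force everything else in the range to be a permanent cycle, producing the $E_\infty$-page displayed in \cref{fig-E11E16}.

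It then remains to solve the group extensions and name the generators. The ``black'' summands are the images of non-exotic transfers from $\mathcal{A}_* = \pi_*(H\FF_2 \wedge H\FF_2)$: these originate on the $0$-line, are permanent cycles because the underlying tower is a Postnikov tower, and their identification uses the cofreeness of $N_1^2 H\FF_2$ (via \cite{LNR, BBLNR}), so that $\pi_*^{C_2} N_1^2 H\FF_2$ can be compared with $(\mathcal{A}_*)^{hC_2}$. The ``red'' $\ZZ/4$'s in stems $0,2,3,6,7,8$ are hidden $2$-extensions; I would establish these from the multiplicative structure (e.g.\ $\eta\cdot\eta$ and $\eta\cdot\nu$ landing in the relevant cells) and, where needed, by comparing with the Tate spectral sequence of $N_1^2 H\FF_2$ through \cref{thm:comparison}. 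For the generators: $\eta = N(\ot_1)a_\lambda a_\sigma$ has underlying degree $1$ since $\rho_4 = 1 + \sigma + \lambda$, and one tracks the unit $\mathbb{S}\to(\BPfour)^{C_4}$ through the slice tower, where $N(\ot_1)$ already detects the first Hopf element by \cite{HHR}, to see it is the image of $\eta$; likewise $\nu = N(\ot_2)a_\lambda^3 a_\sigma^3$ and the two $\pi_7$ classes, one of which detects $\sigma$. The identities $\eta^2/2 = 2u_\lambda a_\lambda^{-1}$ and $\nu^2/2 = 2u_\lambda^3 a_\lambda^{-3}$ follow by squaring and applying the gold relation among $a_\lambda, a_\sigma, u_\lambda, u_{2\sigma}$ in $\pi_\star^{C_4}H\UZ$ from \cite[Section~3]{HHR:C4} together with $2$-divisibility of $u_{2\sigma}$-multiples, and the $\pi_8$ generator is read directly off the list of surviving classes.

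The main obstacle is the non-formal part of the second step: the differentials and hidden $2$-extensions that do not descend directly from the HHR family have no multiplicative generator forcing them, so they must be extracted from a combination of the norm technique of \cref{prop-norm}, the exotic-transfer formalism of \cref{prop-transfer}, and the comparison map of \cref{thm:comparison} to the Tate side --- all on top of the genuinely intricate bookkeeping of the $RO(C_4)$-graded $E_2$-page with its many $u$- and $a$-classes and its full Mackey-functor structure.
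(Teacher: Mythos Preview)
Your overall plan---compute the $E_2$-page from \cref{thm-main}, push through the HHR differentials and their consequences via multiplicativity, norms, and the Mackey structure, then resolve extensions---matches the paper's strategy. But several of your specific mechanisms are off.

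First, the list of hidden $\ZZ/4$'s is wrong: stems $3$, $7$, $8$ contain no $\ZZ/4$ (the red summands there are $\ZZ/2$'s), and the $\ZZ/4$ in stem $0$ is not hidden---it is $\pi_0^{C_4}(a_\lambda^{-1}H\UZ)$ on the nose. Only stems $2$ and $6$ involve genuine hidden $2$-extensions.

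Second, your proposed proof of $\eta^2/2 = 2u_\lambda a_\lambda^{-1}$ (and $\nu^2/2$) via ``squaring and applying the gold relation'' does not work. Squaring $\eta$ gives $N(\ot_1)^2 a_\lambda^2 a_\sigma^2$, and the gold relation alone does not exhibit this as twice $2u_\lambda a_\lambda^{-1}$. The paper's argument (Propositions~\ref{prop-ext4} and \ref{prop-ext12}) is more delicate: one must run $d_5$ (resp.\ $d_{13}$) on classes of the form $\frac{u_\lambda^2}{u_{2\sigma}} a_\lambda^{-2} a_\sigma$ (resp.\ $\frac{u_\lambda^4}{u_{2\sigma}} a_\lambda^{-4} a_\sigma$), which live in the part of $\pi_\star^{C_4}(a_\lambda^{-1}H\UZ)$ with \emph{negative} powers of $u_{2\sigma}$, and then invoke \cref{prop-transfer} to produce an exotic restriction of filtration jump $4$ (resp.\ $12$). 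The combination of the exotic restriction and a matching exotic transfer is what assembles the $\ZZ/4$.

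Third, your suggestion to import extensions from the Tate spectral sequence via \cref{thm:comparison} is circular: in this paper the Tate differentials are deduced \emph{from} the localized slice computation, not the other way around.

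Finally, the non-HHR differentials you flag as ``the main obstacle'' each have a specific trick you have not identified: $d_5(u_\lambda^2)$ is forced because $\Res_2^4(u_\lambda^2)=u_{2\sigma_2}^2$ supports a $C_2$-level $d_7$ whose target is not in the image of restriction; $d_{13}(u_\lambda^4 a_\sigma)$ is forced by applying \cref{prop-norm} to the $C_2$-level $d_7(u_{2\sigma_2}^2)$ and then ruling out shorter lengths by multiplying by $u_{2\sigma}^2$; and $d_7(u_\lambda^4)$ is deduced backwards from the existence of $d_{13}(u_\lambda^4 a_\sigma)$. The analysis at bidegree $(8,8)$ also requires careful bookkeeping through $d_3$, $d_5$, $d_7$, $d_{13}$, $d_{15}$ with repeated changes of basis, which your outline does not anticipate.
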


In \cite{Rog:com}, Rognes shows that the unit map $S^0 \rightarrow (N_1^2 H\FF_2)^{hC_2}$ induces a splitting injection on mod $2$ homology as an $\mathcal{A}_*$-comodule thus a splitting injection on the $E_2$-page of the Adams spectral sequence. Therefore, the ring spectrum $(N_1^2 H\FF_2)^{hC_2} \simeq (a_{\lambda}^{-1}\BPfour)^{C_4}$ detects all Hopf invariant one elements. They all restrict to $0$, since the underlying Adams spectral sequence of $H\FF_2 \wedge H\FF_2$ is concentrated in filtration $0$. Therefore, they are detected by red subgroups in the corresponding degree.
	
The proof of \cref{thm:htpy} is by computing $a_\lambda^{-1}\SliceSS(\BPfour)$ and is given in the next section.  The most relevant differentials in the spectral sequence are listed in the following table:
\begin{table}[H]
\begin{tabular}{|l|l|l|}
\hline
Differential & Formula                                                                                                                                                                                                & Proof                                                                                                                                                      \\ \hline
$d_3$        & \begin{tabular}[c]{@{}l@{}}$d_3(u_{2\sigma_2}) = a_{\sigma_2}^3 (\ot_1 + \gamma \ot_1)$\\ $d_3(u_{\lambda}) = \Tr_2^4(a_{\sigma_2}^3\ot_1)$\end{tabular}                & \cref{prop-d3}                                                                                                                 \\ \hline
$d_5$        & $d_5(u_{2\sigma}) = N(\ot_1)a_{\lambda}a_{\sigma}^3$                                                                                                                                                   & \cref{thm-HHRd}                                                                                                                    \\ \hline
$d_5$        & $d_5(u_{\lambda}^2) = N(\ot_1)u_{\lambda}a_{\lambda}^2a_{\sigma}$                                                                                                                                      & \cref{prop-d5}                                                                                                                 \\ \hline
$d_7$        & \begin{tabular}[c]{@{}l@{}}$d_7(u_{2\sigma_2}^2) = a_{\sigma_2}^7(\ot_2 + \ot_1^3 + \gamma \ot_2)$\\ $d_7(2u_{\lambda}^2) = \Tr_2^4(a_{\sigma_2}^7\ot_1^3)$\end{tabular} & \begin{tabular}[c]{@{}l@{}}\cref{thm-C_2}\\ \cref{prop-d7}\end{tabular} \\ \hline
$d_7$        & $d_7(u_{\lambda}^4) = \Tr_2^4(\ot_1^3u_{2\sigma_2}^2a_{\sigma_2}^7)$                                                                                                                                      & \cref{prop-d7-2}                                                                                                               \\ \hline
$d_{13}$     & $d_{13}(u_{\lambda}^4a_{\sigma}) = N(\ot_2 + \ot_1^3 + \gamma(\ot_2))u_{2\sigma}^2a_{\lambda}^7$                                                                                                      & \cref{prop-d13-2}                                                                                                              \\ \hline
$d_{15}$     & $d_{15}(2u_{\lambda}^4) = \Tr_2^4(\ot_3^{C_2}a_{\sigma_2}^{15})$                                                                                                                                        & \cref{prop-d15}                                                                                                                \\ \hline
\end{tabular}
\end{table}

	\section{Computing the localized slice spectral sequences of $BP^{(\!(G)\!)}$} \label{section:sliceSScomputation}

In this section, we compute $a_\lambda^{-1}\SliceSS(\BPfour)$ and prove \cref{thm:htpy}. Our approach is similar to that of \cite{HHR:C4} and \cite{HSWX}.  When going through the computations in this section, the following guiding principles are useful to keep in mind.  We hope these points would serve as a road map that will be helpful to the readers who are new to these types of computations.
\begin{enumerate}
\item The $E_2$-page of the spectral sequence can be obtained by computing the $RO(C_4)$-graded homotopy groups of $a_\lambda^{-1}\HZ$.
\item The $C_2$-level spectral sequence, $a_{\sigma_2}^{-1}\SliceSS(\BPR \wedge \BPR)$, is easy to compute, as it is completely determined by the Hill--Hopkins--Ravenel slice differentials.  
\item In the positive cone part of $a_\lambda^{-1} \SliceSS(\BPfour)$ (which includes the entire integer-graded spectral sequence), the only algebra generators that are not permanent cycles are essentially classes of forms $u_V$ and $u_Va_\sigma$.  Therefore, we only need to focus on finding differentials on these classes, and then use the Leibniz rule.  This is why even though the integer-graded spectral sequence is the computation of interest, we often move to analyze certain classes in $RO(C_4)$-degrees.  
\item Many of the differentials are proven by using the $C_2$-level spectral sequence, and using the restrictions and transfers on the $E_2$-page.  More precisely, if one knows that $d_r(\Res_{C_2}^{C_4}x) = y$, then $x$ must support a differential of length at most $r$.  Similarly, if $d_r(x) = y$, and $\Tr_{C_2}^{C_4}(y)$ is not zero on the $E_2$-page, then it must be killed by a differential of length at most $r$.  
\item The remaining differentials and extension are proven by using the Hill--Hopkins--Ravenel norm and the theory of exotic restrictions and transfers.  
\end{enumerate}
We would like to also remark that the differentials proven in this section determine all the differentials in the integer-graded spectral sequence in our range of interest.  There are other differentials in the $RO(C_4)$-graded page (both in the positive cone and outside the positive cone) that don't influence the integer-graded page of the spectral sequence.    

\subsection{Computing the $E_2$-page}	
	 We will first give a complete algebraic description of the $E_2$-page of $a_{\lambda}^{-1}\SliceSS(\BPfour)$ in terms of generators and relations. To do so, by \cref{thm-main}, we need to describe the $C_2$-homotopy groups $\U{\pi}_{\star} (a_{\sigma_2}^{-1}H\UZ)$ and the $C_4$-homotopy groups $\U{\pi}_{\star}(a_{\lambda}^{-1}H\UZ)$.
	
	\begin{prop}\label{prop-C_2coe}
We have
		\[
		\pi_{\star}^{C_2}(a_{\sigma_2}^{-1}H\UZ) = \FF_2[u_{2\sigma_2},a_{\sigma_2}^{\pm 1}].
		\]
     The Mackey functor structure is determined by the contractibility of the underlying spectrum.
	\end{prop}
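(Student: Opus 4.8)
The plan is to deduce the proposition from the classical description of the $RO(C_2)$-graded homotopy Mackey functor $\U{\pi}_\star H\UZ$, together with the observation that $a_{\sigma_2}$ restricts trivially to underlying spectra. The statement really has three parts: the value of $\pi_\star^{C_2}$ at $C_2/C_2$, the vanishing of $\U{\pi}_\star$ at $C_2/e$, and the triviality of the restriction and transfer maps; the last two are what is meant by ``determined by the contractibility of the underlying spectrum.''

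First I would treat the underlying behaviour. The underlying map of $a_{\sigma_2}\colon S^0\to S^{\sigma_2}$ is the null map $S^0\to S^1$, so $\res_e^{C_2}(a_{\sigma_2}^{-1}H\UZ) \simeq \colim\big(H\ZZ \xrightarrow{0} H\ZZ \xrightarrow{0} \cdots\big) \simeq \ast$. Hence $\U{\pi}_V(a_{\sigma_2}^{-1}H\UZ)(C_2/e) = 0$ for every $V$, each Mackey functor $\U{\pi}_V$ is supported on $C_2/C_2$, and its restriction and transfer maps are forced to vanish. This settles the Mackey-functor part.

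For the value at $C_2/C_2$ I would use that localization at $a_{\sigma_2}$ is a filtered homotopy colimit and $\U{\pi}_\star$ commutes with filtered colimits, so $\U{\pi}_\star(a_{\sigma_2}^{-1}H\UZ) \cong a_{\sigma_2}^{-1}\U{\pi}_\star(H\UZ)$. Now recall, from the explicit cellular chain complexes in \cite[Section~3]{HHR:C4}, that $\pi_\star^{C_2}H\UZ$ is the sum of the positive cone $\ZZ[u_{2\sigma_2}, a_{\sigma_2}]/(2a_{\sigma_2})$, with $a_{\sigma_2}\in\pi_{-\sigma_2}$ and $u_{2\sigma_2}\in\pi_{2-2\sigma_2}$, and a negative cone each of whose elements is annihilated by some power of $a_{\sigma_2}$. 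The negative cone therefore dies under $a_{\sigma_2}^{-1}(-)$, while inverting $a_{\sigma_2}$ on the positive cone forces $2 = 2a_{\sigma_2}\cdot a_{\sigma_2}^{-1} = 0$, giving $a_{\sigma_2}^{-1}\big(\ZZ[u_{2\sigma_2}, a_{\sigma_2}]/(2a_{\sigma_2})\big) = \FF_2[u_{2\sigma_2}, a_{\sigma_2}^{\pm 1}]$. Combining, $\pi_\star^{C_2}(a_{\sigma_2}^{-1}H\UZ) = \FF_2[u_{2\sigma_2}, a_{\sigma_2}^{\pm 1}]$.

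There is essentially no obstacle here; this is a warm-up computation and every step is formal or standard. The only point deserving care is the claim that the negative cone of $\pi_\star^{C_2}H\UZ$ is $a_{\sigma_2}$-power-torsion, which should be read off the cited description rather than merely asserted. If one wishes to avoid the negative cone, an alternative is to observe that $2 = 0$ on $a_{\sigma_2}^{-1}H\UZ$ produces a splitting $a_{\sigma_2}^{-1}H\U{\FF_2} \simeq a_{\sigma_2}^{-1}H\UZ \vee \Sigma\, a_{\sigma_2}^{-1}H\UZ$, and then to use that $\pi_\star^{C_2}(a_{\sigma_2}^{-1}H\U{\FF_2}) = \FF_2[u_{\sigma_2}, a_{\sigma_2}^{\pm 1}]$ is computed directly from the polynomial positive cone of $H\U{\FF_2}$; matching even parts, on which $u_{\sigma_2}^2$ is the image of $u_{2\sigma_2}$, identifies $\pi_\star^{C_2}(a_{\sigma_2}^{-1}H\UZ)$ with $\FF_2[u_{\sigma_2}^2, a_{\sigma_2}^{\pm 1}] = \FF_2[u_{2\sigma_2}, a_{\sigma_2}^{\pm 1}]$.
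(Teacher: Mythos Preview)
Your proof is correct. The paper itself does not give an argument here: it simply says ``This proposition is proved by a standard Tate cohomology computation'' and cites \cite[Section~2.C]{Greenlees-Four}. Your approach---computing $a_{\sigma_2}^{-1}\pi_\star^{C_2}H\UZ$ directly from the known positive/negative cone decomposition of $\pi_\star^{C_2}H\UZ$---is a perfectly good self-contained substitute for that citation, and indeed is essentially what lies behind the reference. Your flagging of the one nontrivial input (that the negative cone is $a_{\sigma_2}$-power-torsion) is appropriate; a clean way to see it without unpacking the full negative cone is to note that every negative-cone class lives in some $\pi_{p+q\sigma_2}$ with $p<0$, and for fixed $p<0$ the groups $\pi_{p+q\sigma_2}^{C_2}H\UZ$ vanish once $q$ is sufficiently negative, so the colimit along $a_{\sigma_2}$-multiplication is zero. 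Your alternative via $H\U{\FF_2}$ is also fine but rests on the same vanishing fact for that spectrum.
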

    This proposition is proved by a standard Tate cohomology computation, see \cite[Section~2.C]{Greenlees-Four} for details.
	
Let $S$ be the subring of
		\[
		R = \ZZ/4[a_{\sigma},u_{2\sigma}^{\pm 1},u_{\lambda}a_{\lambda}^{-1}]/(2a_{\sigma},  u_{\lambda}a_{\lambda}^{-1}a_{\sigma}^2 = 2u_{2\sigma})
		\]
generated by the elements $\{a_{\sigma}, u_{2\sigma}, u_{\lambda}a_{\lambda}^{-1}, 2u_{2\sigma}^k, u_{2\sigma}^ku_{\lambda}a_{\lambda}^{-1}\,|\, k < 0 \}$, and let $M = \ZZ/2[u_{2\sigma}^{\pm 1},u_{\lambda}a_{\lambda}^{-1}, a_{\sigma}^{\pm 1}]/(u_{2\sigma}^{\infty},a_{\sigma}^{\infty})$ be considered as a module over $S$.  Here, $R[x^{\pm 1}]/(x^{\infty})$ is the cokernel of the map $R[x] \rightarrow R[x^{\pm 1}]$.
	
\begin{prop}\label{prop-C_4coe}
We have
		\[
		\pi_{\star}^{C_4}(a_{\lambda}^{-1}H\UZ) = (S \oplus \Sigma^{-1}M)[a_{\lambda}^{\pm 1}],
		\]
		where $S \oplus \Sigma^{-1}M$ is the square-zero extension of $M$ over $S$ of degree $-1$.
		
		The Green functor structure is determined by the following facts:
    \begin{enumerate}
        \item The $C_2$-restriction of $a_{\lambda}^{-1}H\UZ$ is the spectrum $a_{\sigma_2}^{-1}H\UZ$ in \cref{prop-C_2coe}.
        \item The $C_2$-restrictions of the classes $u_{\lambda}$ and $u_{2\sigma}$ are $u_{2\sigma_2}$ and $1$, respectively.
        \item Given $V \in RO(C_4)$, there is an exact sequence (see \cite[Lemma~4.2]{HHR:C4})
            \[
                \pi^{C_2}_{i^*_{C_2}V} X \xrightarrow{\Tr_2^4} \pi^{C_4}_{V} X \xrightarrow{a_{\sigma}} \pi^{C_4}_{V - \sigma} X \xrightarrow{\Res^4_2} \pi^{C_2}_{i^*_{C_2}V - 1} X.
            \]
In other words, the kernel of $a_{\sigma}$-multiplication is the image of the transfer from $C_2$ to $C_4$, and the image of $a_{\sigma}$-multiplication is the kernel of the restriction from $C_4$ to $C_2$.
    \end{enumerate}
	\end{prop}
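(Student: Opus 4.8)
The plan is to obtain the answer by inverting $a_\lambda$ in the (already known) $RO(C_4)$-graded homotopy of $\HZ$, in the same spirit as \cref{prop-C_2coe} and the proof of \cref{thm-main}(3). Since $a_\lambda^{-1}\HZ \simeq S^{\infty\lambda}\wedge\HZ$, we have
\[
\pi_\star^{C_4}(a_\lambda^{-1}\HZ) \cong \colim\bigl(\pi_\star^{C_4}\HZ \xrightarrow{\,a_\lambda\,} \pi_{\star-\lambda}^{C_4}\HZ \xrightarrow{\,a_\lambda\,} \cdots\bigr),
\]
so the result is $a_\lambda^{\pm 1}$-periodic and is therefore determined by its part in $RO(C_4)$-degrees of the form $a+b\sigma$. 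One then reads off this part from the explicit equivariant cellular chain complexes for the spheres $S^{k\rho_{C_4}+\ell\lambda+m\sigma}$ used in \cite[\S3]{HHR:C4} and \cite[\S3]{HSWX} (where the full ring $\pi_\star^{C_4}\HZ$ is computed): passing to the colimit over multiplication by $a_\lambda$ simply amounts to keeping track of which classes survive once $a_\lambda$ is inverted.

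On the positive cone, the classes $a_\sigma^{i}u_{2\sigma}^{j}u_\lambda^{k}$ with $i,j,k\ge 0$ are $a_\lambda$-local, and after inverting $a_\lambda$ each power $u_\lambda^k$ becomes $(u_\lambda a_\lambda^{-1})^k$, a class in degree $2k$. The ``gold relation'' $a_\sigma^{2}u_\lambda = 2\,a_\lambda u_{2\sigma}$ of \cite{HHR} becomes $a_\sigma^{2}(u_\lambda a_\lambda^{-1}) = 2u_{2\sigma}$, and together with $2a_\sigma=0$ these are exactly the relations defining $R$. The subtlety recorded by passing from $R$ to the subring $S$ is that $u_{2\sigma}$ is not invertible before localization, so a negative power $u_{2\sigma}^{-k}$ occurs in $\pi_\star^{C_4}\HZ$ only after multiplying by $2$ or by $u_\lambda$ (iteratively, via the gold relation); these are precisely the extra generators $2u_{2\sigma}^{-k}$ and $(u_\lambda a_\lambda^{-1})u_{2\sigma}^{-k}$ in the statement. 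The remaining classes of $\pi_\star^{C_4}\HZ$ --- the negative cone together with the transfers $\Tr_2^4$ from $C_2$ --- survive $a_\lambda$-inversion in a shifted copy of a torsion module, giving $\Sigma^{-1}M$; that $M$ is a square-zero ideal follows because any product of two such classes lands, after localization, in a bidegree in which $\pi_\star^{C_4}(a_\lambda^{-1}\HZ)$ has already been shown to vanish. As a cross-check on the integral part, the isotropy separation cofiber sequence $(\HZ)_{hC_4}\to H\ZZ\to (a_\lambda^{-1}\HZ)^{C_4}$ together with $H_*(BC_4;\ZZ)$ immediately recovers $\pi_*(a_\lambda^{-1}\HZ)^{C_4}$, matching the integer-graded part of $(S\oplus\Sigma^{-1}M)[a_\lambda^{\pm 1}]$.

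For the Green functor structure: claim (1) is immediate, since $\Res^4_2\lambda = 2\sigma_2$ gives $\Res^4_2(a_\lambda^{-1}\HZ)\simeq a_{2\sigma_2}^{-1}\HZ = a_{\sigma_2}^{-1}\HZ$, which is the $C_2$-spectrum of \cref{prop-C_2coe}; claim (2) follows from $\Res^4_2\lambda = 2\sigma_2$ and $\Res^4_2\sigma = 1$ (the trivial representation, as $\gamma^2$ acts trivially on $\sigma$), so the orientation classes restrict by $u_\lambda\mapsto u_{2\sigma_2}$ and $u_{2\sigma}\mapsto 1$; and claim (3) is the long exact sequence obtained by smashing $a_\lambda^{-1}\HZ$ with the cofiber sequence $C_4/C_2{}_{+}\to S^0\xrightarrow{\,a_\sigma\,}S^\sigma$, i.e.\ \cite[Lemma~4.2]{HHR:C4}. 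The main obstacle is the bookkeeping of the second paragraph --- correctly cutting $R$ down to the subring $S$, identifying the negative-cone-and-transfer part with $\Sigma^{-1}M$, and verifying the square-zero multiplication --- which is exactly the point where the explicit chain complexes of \cite{HHR:C4, HSWX} do the real work.
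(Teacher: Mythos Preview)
The paper does not actually prove this proposition; it simply states that ``The proof of Proposition~\ref{prop-C_4coe} and a more explicit presentation of the Mackey functor are given in \cite[Proposition~6.7]{Zeng:HZ}.'' Your sketch is therefore more than what the paper itself provides, and the overall strategy---compute $\pi_\star^{C_4}H\UZ$ and then pass to the colimit along $a_\lambda$-multiplication---is exactly the right one and is what the cited reference carries out.

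That said, two points deserve comment. First, your citation is slightly off: \cite[\S3]{HHR:C4} only computes the \emph{positive cone} of $\pi_\star^{C_4}H\UZ$ (as the present paper itself notes just after the proposition), not the full $RO(C_4)$-graded ring; the negative-cone part you need for $\Sigma^{-1}M$ is precisely what requires the more extensive computation in \cite{Zeng:HZ}. Second, the identification of the surviving negative-cone and transfer classes with exactly $\Sigma^{-1}M = \Sigma^{-1}\ZZ/2[u_{2\sigma}^{\pm 1},u_\lambda a_\lambda^{-1},a_\sigma^{\pm 1}]/(u_{2\sigma}^\infty,a_\sigma^\infty)$, together with the square-zero claim, is where the real bookkeeping lies, and your paragraph there is more of a plausibility argument than a proof. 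Your verifications of the Green-functor claims (1)--(3) are correct and complete.
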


The proof of Proposition~\ref{prop-C_4coe} and a more explicit presentation of the Mackey functor are given in \cite[Proposition~6.7]{Zeng:HZ}.  Fortunately, in most of the paper we only need the "positive cone" of the coefficient Green functor, that is, the part $\star = a +b\sigma + c\lambda$ for $b \leq 0$. The Green functor structure of this part is computed in \cite[Section~3]{HHR:C4}. However, the other part also plays an important role on the computation, see for example the proofs of \cref{prop-ext4} and \cref{prop-ext12}. 

The relation $u_{\lambda}a_{\lambda}^{-1}a_{\sigma}^2  = 2u_{2\sigma}$ and its integral version $u_{\lambda}a_{\sigma}^2  = 2u_{2\sigma}a_{\lambda}$ are commonly called the \emph{gold relation} (see \cite[Lemma~3.6]{HHR:C4}).

Figure~\ref{fig-C4MF} gives the Lewis diagrams (first introduced in \cite{Lewis:ROG}) we use for $C_4$-Mackey functors, where restrictions $\Res^G_H$ map downwards and transfers $\Tr^G_H$ map upwards. These notations are consistent with \cite[Section~5]{HHR:C4}.
	
	\begin{figure}[H]
		\includegraphics{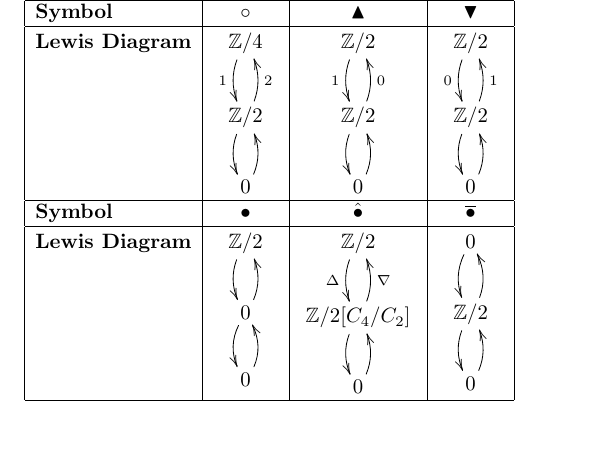}
		\caption{\label{fig-C4MF}Table of $C_4$-Mackey functors}
	\end{figure}

	Figure~\ref{fig-lambda} shows $\U{\pi}_{a+b\sigma}(a_{\lambda}^{-1}H\UZ)$ in the range $-6 \leq a,b \leq 6$.  In the figure, the horizontal coordinate is $a$ and the vertical coordinate is $b$. Vertical lines are $a_{\sigma}$-multiplications, where solid lines are surjections and the dashed lines represent maps of the form $\ZZ/2 \hookrightarrow \ZZ/4$.
	\begin{figure}[H]
		\includegraphics{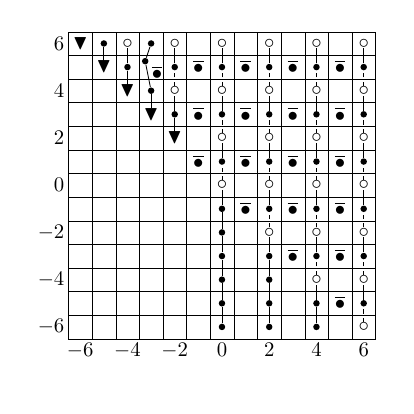}
		\caption{$\U{\pi}_{a+b\sigma}(a_{\lambda}^{-1}H\UZ)$ for $-6 \leq a, b \leq 6$.}\label{fig-lambda}
	\end{figure}

Although we mostly care the most about the $C_4$-equivariant homotopy groups of $a_\lambda^{-1}\BPfour$, there are two advantages for computing $a_\lambda^{-1}\SliceSS(\BPfour)$ as a spectral sequence of Mackey functors:
\begin{enumerate}
\item The Mackey functor structure can transport certain differentials on the $C_2$-level to differentials on the $C_4$-level.
\item The Mackey functor structure and $d_r$-differentials can result in exotic extensions of filtration $r-1$ (see Section \ref{sec-transfer}).
\end{enumerate}

We will see (1) in the computations of $d_3$, $d_7$, and $d_{15}$-differentials below.  (2) will be used to prove certain extensions forming the $(\ZZ/4)$s in \cref{thm:htpy}, see \cref{prop-ext4} and \ref{prop-ext12}.

\begin{nota}\label{nota:transfer}
Let $V \in RO(H)$ be a virtual representation that is in the image of the restriction $i^*_H:RO(G) \rightarrow RO(H)$. Then for any preimage $W$ of $V$, there is a transfer map
\[
    \Tr_H^{G,W} : \pi^H_V X \rightarrow \pi^G_W X,
\]
as a part of the homotopy Mackey functor structure. In our computation we will omit writing $W$ when it is clear from the context what $W$ is.
\end{nota}

\subsection{The $C_2$-spectral sequence}
    We start our computation with the $C_2$-underlying spectral sequence of $a_{\lambda}^{-1}\SliceSS(\BPfour)$.
	
	\begin{thm}\label{thm-C_2}\hfill
		\begin{enumerate}
			\item The underlying $C_2$-spectral sequence of $a_{\lambda}^{-1}\SliceSS(\BPfour)$ is ${a_{\sigma_2}^{-1}\SliceSS(BP_{\RR} \wedge BP_{\RR})}$. Its $E_2$-page is
			\[
			a_{\sigma_2}^{-1}H\UZ_{\star}[\ot_1,\gamma \ot_1,\ot_2,\gamma \ot_2,\cdots].
			\]
			More precisely, the $E_2$-page of the underlying non-equivariant spectral sequence is trivial, and the $E_2$-page of the $C_2$-spectral sequence is
			\[
			\FF_2[u_{2\sigma_2},a_{\sigma_2}^{\pm 1}][\ot_1,\gamma \ot_1, \ot_2, \gamma \ot_2,\cdots].
			\]
				The elements $u_{2\sigma_2}, \ot_i$ and $\gamma\ot_i$ have filtration $0$, while $a_{\sigma_2}$ has filtration $1$.\footnote{We recall the convention here that the filtration of an element in $\pi_V^HP^n_nX$ in the slice spectral sequence for some $X$ is in filtration $n-\dim_{\mathbb{R}}V$. In particular the classes $a_V$ will be always in filtration $\dim_{\mathbb{R}}V$.}
			\item All the differentials in $a_{\sigma_2}^{-1}\SliceSS(\BPR \wedge \BPR)$ are determined by $a_{\sigma_2}$, $\ot_i$ and $\gamma \ot_i$ being permanent cycles, the differentials
			\[
			d_{2^{k+1}-1}(u_{2\sigma_2}^{2^{k-1}}) = a_{\sigma_2}^{2^{k+1}-1} \sum_{i = 0}^{k} \ot_{k-i}^{2^i}\gamma \ot_i, \,\,\,\,\, k \geq 1
			\]
			and the Leibniz formula (for notational convenience, we let $\ot_0 = \gamma \ot_0 = 1$).  The $E_{2^{k+1}}$-page has the form
			\[
			\FF_2[u_{2{\sigma_2}}^{2^k},a_{\sigma_2}^{\pm 1}][\ot_1,\gamma \ot_1,\cdots]/(\ov_1,\ov_2,\cdots,\ov_{k})
			\]
			where $\ov_k = \sum\limits_{i = 0}^{k} \ot_{k-i}^{2^i}\gamma \ot_i$.
			\item The $E_{\infty}$-page of $a_{\sigma_2}^{-1}\SliceSS(\BPR \wedge \BPR)$ is
			\[
			\FF_2[a_{\sigma_2}^{\pm 1}][\ot_1,\gamma \ot_1,\cdots]/(\ov_1,\ov_2,\cdots)
			\]
			In particular, in the integral grading, all the stem-$n$ non-trivial permanent cycles are located in filtration $n$.
		\end{enumerate}
	\end{thm}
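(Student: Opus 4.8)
The plan is to read off the $E_2$-page and the abutment from the structural results already in the paper, and then show that the entire differential pattern is \emph{forced} by sparseness together with the known abutment $\pi_*(H\FF_2\sm H\FF_2)=\mathcal{A}_*$.

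\emph{The $E_2$-page.} Since $\Res_{C_2}^{C_4}\BPfour\simeq \BPR\sm\BPR$ (the norm restricted to the subgroup $C_2$, with $C_4$ abelian so there is no twisting) and $\Res_{C_2}^{C_4}\lambda=2\sigma_2$, restricting the $a_\lambda$-localized slice tower of $\BPfour$ to $C_2$ produces the $a_{\sigma_2}$-localized slice tower of $\BPR\sm\BPR$; that the restriction of the $C_4$-slice tower is the $C_2$-slice tower of $\BPR\sm\BPR$ follows from the explicit description of the slices (each $C_4$-slice cell occurring restricts to a wedge of $C_2$-slice cells of the correct dimension). By the Slice Theorem \cite{HHR} the slice associated graded of $\BPR\sm\BPR$ is $H\UZ[\ot_1,\gamma\ot_1,\dots]$, so \cref{prop-C_2coe} gives the $E_2$-page $\FF_2[u_{2\sigma_2},a_{\sigma_2}^{\pm1}][\ot_i,\gamma\ot_i]$ with the stated filtrations ($a_{\sigma_2}$ in filtration $1$, the remaining generators in filtration $0$). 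The underlying non-equivariant spectral sequence vanishes because $a_\lambda$ restricts non-equivariantly to a nullhomotopic self-map of $S^0$, so $\Res_e a_\lambda^{-1}\BPfour\simeq\ast$.

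\emph{Permanent cycles and abutment.} The class $a_{\sigma_2}$ is a permanent cycle (pulled back from the $C_2$-sphere), and each $\ot_i,\gamma\ot_i$ is a permanent cycle because it lies in the image of the equivariant refinement $S^0[\ot_i,\gamma\ot_i]\to\BPR\sm\BPR$, a map of slice towers. By \cref{thm:invertingAclassE2page} and \cref{prop-main} the spectral sequence converges to $\pi^{C_2}_\star(\widetilde{E}C_2\sm\BPR\sm\BPR)\cong \pi_*\Phi^{C_2}(\BPR\sm\BPR)[a_{\sigma_2}^{\pm1}]$, and monoidality of $\Phi^{C_2}$ together with $\Phi^{C_2}\BPR\simeq H\FF_2$ identifies $\Phi^{C_2}(\BPR\sm\BPR)\simeq H\FF_2\sm H\FF_2$, whose homotopy is the dual Steenrod algebra $\mathcal{A}_*=\FF_2[\xi_1,\xi_2,\dots]$. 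The factor-swap involution $\tau$ (which is the restriction of the $C_4$-generator $\gamma$) is a self-equivalence of $\BPR\sm\BPR$ respecting the slice filtration; it fixes $u_{2\sigma_2}$ and $a_{\sigma_2}$, exchanges $\ot_i\leftrightarrow\gamma\ot_i$, and induces the antipode $\chi$ on $\mathcal{A}_*$. Choosing the $\ot_i$ so that they detect the Milnor generators $\xi_i$, the $\gamma\ot_i$ then detect the conjugates $\bar\xi_i=\chi(\xi_i)$, and consequently $\ov_k=\sum_{i=0}^k\ot_{k-i}^{2^i}\gamma\ot_i$ is a permanent cycle mapping to $\sum_{i=0}^k\xi_{k-i}^{2^i}\bar\xi_i=0$, the Milnor antipode relation.

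\emph{The differentials.} A short bidegree count (using that $d_r$ raises filtration by $r$ and the weight of an $\ot$-monomial by $(r-1)/2$) shows that the only differential $u_{2\sigma_2}^{2^{k-1}}$ can support is $d_{2^{k+1}-1}$, with target of weight $2^k-1$ in filtration $2^{k+1}-1$, and that the spectral sequence is sparse enough that all differentials are then determined by the Leibniz rule. We prove $d_{2^{k+1}-1}(u_{2\sigma_2}^{2^{k-1}})=a_{\sigma_2}^{2^{k+1}-1}\ov_k$ by induction on $k$ (the first step, $d_3(u_{2\sigma_2})=a_{\sigma_2}^3\ov_1$, being identical, and agreeing with transporting the Hill--Hopkins--Ravenel differential $d_3(u_{2\sigma_2})=a_{\sigma_2}^3\ot_1^{C_2}$ for $\BPR$ along a coordinate inclusion and symmetrizing by $\tau$). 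Assuming the formula below $k$, the Leibniz rule gives $E_{2^k}=\FF_2[u_{2\sigma_2}^{2^{k-1}},a_{\sigma_2}^{\pm1}][\ot_i,\gamma\ot_i]/(\ov_1,\dots,\ov_{k-1})$, a localized polynomial algebra whose edge map to $\mathcal{A}_*$ is, in each weight, surjective with kernel generated by the $\ov_j$ with $j\ge k$. Now: if $d_{2^{k+1}-1}(u_{2\sigma_2}^{2^{k-1}})$ vanished, then $u_{2\sigma_2}^{2^{k-1}}$ and all the weight-$(2^k-1)$ classes it would otherwise kill (in particular $\ov_k$) would survive, forcing $E_\infty$ to be strictly larger than $\mathcal{A}_*[a_{\sigma_2}^{\pm1}]$ in the relevant degree; moreover $\ov_k$ is a permanent cycle detecting $0$, hence must become a boundary, and by sparseness the only differential that can hit $a_{\sigma_2}^{2^{k+1}-1}\ov_k$ is exactly $d_{2^{k+1}-1}(u_{2\sigma_2}^{2^{k-1}})$. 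Since the $\tau$-invariant weight-$(2^k-1)$ part of $E_{2^k}$ in that bidegree is spanned by classes detecting nonzero elements of $\mathcal{A}_*$ together with $\ov_k$, and only $\ov_k$ may die, the differential is $a_{\sigma_2}^{2^{k+1}-1}\ov_k$ (up to the usual indeterminacy in the choice of lift, which does not affect later pages). The Leibniz rule then yields $E_{2^{k+1}}=\FF_2[u_{2\sigma_2}^{2^k},a_{\sigma_2}^{\pm1}][\ot_i,\gamma\ot_i]/(\ov_1,\dots,\ov_k)$, and in the colimit $E_\infty=\FF_2[a_{\sigma_2}^{\pm1}][\ot_i,\gamma\ot_i]/(\ov_1,\ov_2,\dots)\cong\mathcal{A}_*[a_{\sigma_2}^{\pm1}]$ (solve the relations for the $\gamma\ot_i$), matching the abutment. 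In integral degree $n$ the surviving classes are then precisely $a_{\sigma_2}^n$ times the weight-$n$ monomials in the $\ot_i$, all in filtration $n$, giving the final assertion.

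\emph{Main obstacle.} The delicate point is pinning down the exact target $\ov_k$ — not just that \emph{some} $\tau$-symmetric weight-$(2^k-1)$ class is hit — which requires combining the sparseness bookkeeping, the inductive description of $E_{2^k}$ as a polynomial algebra, and the Milnor antipode relation $\Phi^{C_2}(\ov_k)=0$ to see that $\ov_k$ is precisely the class forced to die; the more routine (but fiddly) ingredients are the degree estimates ruling out all other differential lengths and the verification that the restriction of the localized $C_4$-slice tower, and the involution $\tau$, are compatible with the $C_2$-slice filtration.
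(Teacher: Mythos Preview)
Your argument is correct and takes a genuinely different route from the paper's. The paper imports the differential directly from $\SliceSS(\BPR)$ via the left unit map $\BPR \to \BPR\sm\BPR$: the Hill--Hopkins--Ravenel slice differential theorem gives $d_{2^{k+1}-1}(u_{2\sigma_2}^{2^{k-1}}) = \ov_k^{C_2}a_{\sigma_2}^{2^{k+1}-1}$ in $\SliceSS(\BPR)$, and the formula of Beaudry--Hill--Shi--Zeng expresses the image of $\ov_k^{C_2}$ under the left unit as $\sum_i \ot_{k-i}^{2^i}\gamma\ot_i$ modulo $(2,\ov_1,\dots,\ov_{k-1})$; nondegeneracy on each page follows because $(\ov_1,\ov_2,\dots)$ is a regular sequence in $\FF_2[\ot_i,\gamma\ot_i]$. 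Your route instead computes the abutment first as $\mathcal{A}_*$, uses $\Phi^{C_2}(\ot_i)=\xi_i$ (this is \cref{prop:ti} later in the paper, whose proof is independent of the present theorem) together with the Milnor antipode relation to see that $\ov_k$ detects $0$ and hence must be a boundary, and then forces the differential by a sparseness/dimension count. The paper's approach is more portable---it requires no knowledge of the abutment and transports verbatim to other norms and quotients---while yours avoids the external input from \cite{BHSZ} at the cost of a forward reference to \cref{prop:ti}.

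Two small remarks on the write-up. First, the $\tau$-invariance is unnecessary: on $E_{2^k}$ the source bidegree contains only the single class $u_{2\sigma_2}^{2^{k-1}}$, the target bidegree is concentrated in the top filtration of its stem, and the quotient map to $E_\infty\cong(\mathcal{A}_*)_{2^k-1}$ is exactly $P(\ot_i,\gamma\ot_i)\mapsto P(\xi_i,\zeta_i)$ by multiplicativity; since the kernel of that map in weight $2^k-1$ is precisely $\FF_2\{\ov_k\}$, the image of the (nonzero, rank-one) differential is $\ov_k$ on the nose, not merely ``up to indeterminacy.'' Second, the claim that $u_{2\sigma_2}^{2^{k-1}}$ can only support a $d_{2^{k+1}-1}$ is only true on $E_{2^k}$ (on $E_2$ it could support any $d_r$ with $r\equiv 3\pmod 4$ and $r\le 2^{k+1}-1$), so it genuinely requires the inductive hypothesis---which you do invoke, but the earlier phrasing ``a short bidegree count shows the only differential $u_{2\sigma_2}^{2^{k-1}}$ can support is $d_{2^{k+1}-1}$'' should be read accordingly.
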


	\begin{proof}
		For $(1)$, note that since $i^*_{C_2}\BPfour = BP_{\RR} \wedge BP_{\RR}$, the $C_2$-underlying slice spectral sequence of $\SliceSS(\BPfour)$ is $\SliceSS(\BPR \wedge \BPR)$.  Moreover, $i^*_{C_2}a_{\lambda} = a_{\sigma_2}^2$.  Therefore inverting $a_{\lambda}$ in the $C_4$-spectral sequence inverts $a_{\sigma_2}$ in the underlying $C_2$-spectral sequence.
		
		For $(2)$, we use the Hill--Hopkins--Ravenel slice differential theorem \cite[Theorem~9.9]{HHR} and the formula in \cite[Theorem~1.1]{BHSZ} that expresses the $\bar{v}_i$-generators in terms of the $\ot_i$-generators (our $\ov_i$ and $\ot_i$ are $\ot_i^{C_2}$ and $\ot_i^{C_4}$ respectively in \cite{BHSZ}). The Hill--Hopkins--Ravenel slice differential theorem states that in the slice spectral sequence of $BP_{\RR}$, there are differentials
		\[
		d_{2^{k+1}-1}(u_{2\sigma_2}^{2^{k-1}}) = \ov_k a_{\sigma_2}^{2^{k+1}-1}, \,\,\,\,\, k \geq 1.
		\]
		The formula in \cite[Theorem~3.1]{BHSZ} shows that under the left unit map $BP_{\RR} \rightarrow BP_{\RR} \wedge BP_{\RR}$,
		\[
		\ov_k = \sum\limits_{i = 0}^k \ot_{k-i}^{2^i} \gamma \ot_i \textrm{ mod $(2,\ov_1,\cdots,\ov_{k-1})$}.
		\]
		The left unit map induces a map
		$$a_{\sigma_2}^{-1}\SliceSS(\BPR) \longrightarrow a_{\sigma_2}^{-1}\SliceSS(\BPR \wedge \BPR)$$
		of spectral sequences.  We will use naturality and induction to obtain the differentials and the description of the $E_{2^{k+1}}$-page.
		
To start the induction process, note that the description of the $E_2$-page is already given in (1). Now assume that we have obtained a description of the $E_{2^k}$-page.  For degree reasons, the next potential differential is of length exactly ${2^{k+1}-1}$. The differential formula for $a_{\sigma_2}^{-1}\SliceSS(\BPR)$ above shows that for any polynomial $P \in \FF_2 [\ot_1,\gamma \ot_1,\cdots]/(\ov_1,\ov_2,\cdots,\ov_{k-1})$ and $l$ an odd number, we have the differential
		\[
		d_{2^{k+1}-1}(Pu_{2\sigma_2}^{2^k l}) = P\ov_k u_{2\sigma_2}^{2^{k-1} l}a_{\sigma_2}^{2^{k+1}-1}
		\]
in $a_{\sigma_2}^{-1}\SliceSS(\BPR \wedge \BPR)$. The source and the target of this differential are always non-zero on the $E_{2^{k}}$-page because the sequence $(\ov_1,\ov_2,\cdots)$ is a regular sequence in the polynomial ring $\FF_2[\ot_1,\gamma \ot_1,\cdots]$.  Taking the quotient of the kernel and cokernel of this differential, we see that the $E_{2^{k+1}}$-page has the above description.
		
		(3) is a direct consequence of (2) by letting $k \rightarrow \infty$. See \cref{fig-C_2} for the integral $E_2$ and $E_{\infty}$-pages of this spectral sequence.
	\end{proof}

\begin{figure}
\begin{center}
    \includegraphics[scale = 0.7]{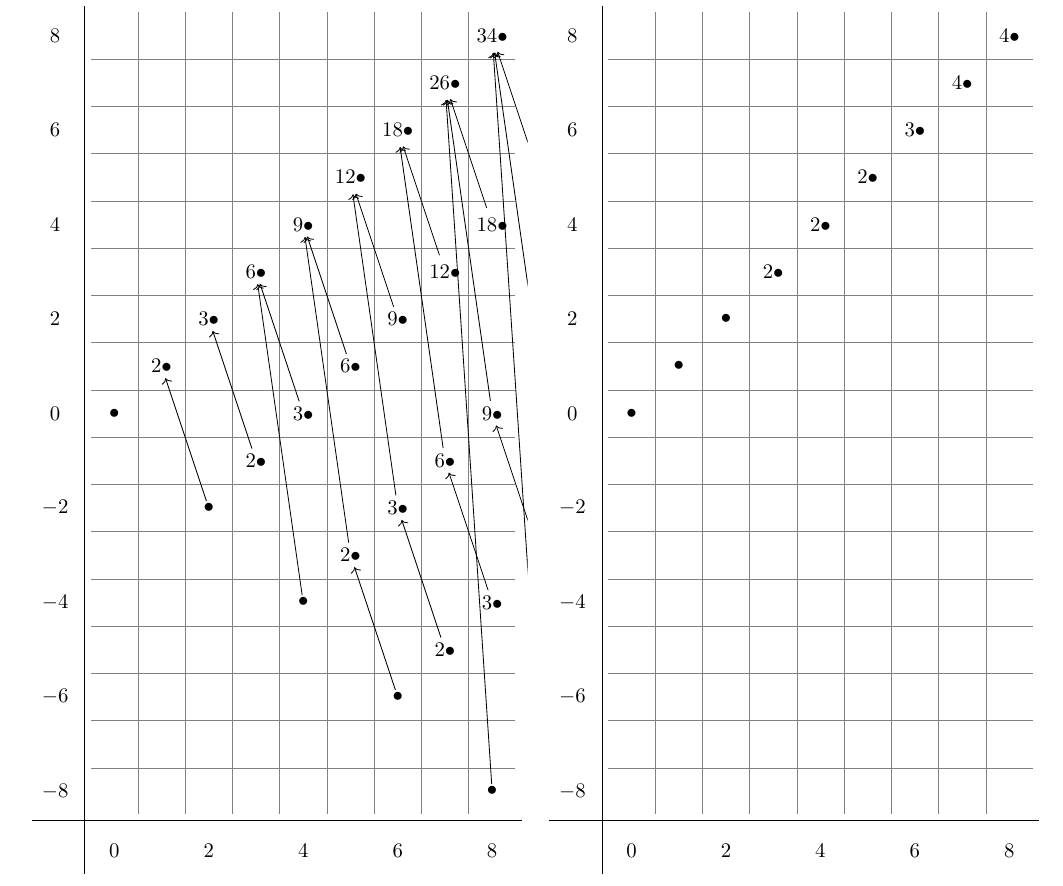}
\end{center}
    \caption{\label{fig-C_2}The integral $E_2$- and $E_{\infty}$-pages of $a_{\sigma_2}^{-1}\SliceSS(BP_{\RR} \wedge BP_{\RR})$}
\end{figure}

\begin{rmk}\rm
		In \cref{prop:ti,} we show that the $C_2$-geometric fixed points of the $\ot_i$ and $\gamma \ot_i$ generators are the $\xi_i$ and $\zeta_i$ generators in the mod $2$ dual Steenrod algebra $\mathcal{A}_*$. Therefore, the formula 
		\[
		\ov_k = \sum\limits_{i = 0}^k \ot_{k-i}^{2^i} \gamma \ot_i \textrm{ mod $(2,\ov_1,\cdots,\ov_{k-1})$}
		\]
		reduces to Milnor's conjugation formula $0 = \sum\limits_{i = 0}^k \xi_{k-i}^{2^i} \zeta_i$ in $\mathcal{A}_*$.
\end{rmk}

\subsection{The $C_4$-spectral sequence: $d_3$, $d_5$ and $d_7$-differentials}
	The rest of this section is dedicated to computing the first $8$ stems of the $C_4$-Mackey functor homotopy groups of $a_{\lambda}^{-1}\BPfour$.  The result is stated in \cref{thm:htpy}. By Section \ref{sec-norm}, we are free to use the norm structure from $C_2$ to $C_4$ in the localized slice spectral sequence.
	
	As a consequence of the slice theorem \cite[Theorem~6.1]{HHR}, the $0$-th slice of $MU^{(\!(G)\!)}$ is $H\UZ$ and $\U{\pi}_0MU^{(\!(G)\!)} \cong \UZ$. Therefore, every Mackey functor in the (localized) slice spectral sequence  and the homotopy of any $MU^{(\!(G)\!)}$-module is a module over $\UZ$. By \cite[Theorem~16.5]{Thevenaz-Webb}, we have the following proposition.
	\begin{prop}\label{prop-cohomological}
	    Let $K \subset H \subset G$, and $x$ be an element in the $G/H$-level of a Mackey functor either in the (localized) slice spectral sequence or the homotopy of a $MU^{(\!(G)\!)}$-module, then 
	    \[
	    \Tr_K^H(\Res_K^H(x)) = [H:K]x.
	    \]
	\end{prop}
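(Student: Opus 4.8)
The plan is to deduce this from the characterization of cohomological Mackey functors, so that the only real content is to identify all the Mackey functors in sight as modules over the constant Green functor $\UZ$. Recall from \cite[Theorem~16.5]{Thevenaz-Webb} that any module $M$ over $\UZ$ is \emph{cohomological}, i.e.\ $\Tr_K^H\Res_K^H(x)=[H:K]\,x$ for every $x\in M(G/H)$ and every $K\subset H\subset G$. So it suffices to put a $\UZ$-module structure on each of the two families of Mackey functors appearing in the statement.

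First I would treat the homotopy of an $\MUG$-module $Y$: the $RO(G)$-graded Mackey functor $\U{\pi}_{\star}Y$ is a module over the Green functor $\U{\pi}_{\star}\MUG$, hence in particular over its degree-zero part $\U{\pi}_0\MUG$. By the Slice Theorem \cite[Theorem~6.1]{HHR} the zero-slice of $\MUG$ is $\HZ$, so $\U{\pi}_0\MUG\cong\UZ$, and $\U{\pi}_{\star}Y$ is a $\UZ$-module. (Via the multiplicative map $\MUG\to\BPG$ this covers $\BPG$ and its modules as well.)

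Next I would handle the pages of the (localized) slice spectral sequence of such a $Y$. Both $\SliceSS(-)$ and $a_{\lambda}^{-1}\SliceSS(-)$ are multiplicative and turn module spectra into module spectral sequences over $\SliceSS(\MUG)$, respectively $a_{\lambda}^{-1}\SliceSS(\MUG)$ — see \cref{subsec:sliceSS} and the discussion after \cref{thm:invertingAclassE2page}, based on \cite{Ullman:Thesis}. Hence the $E_2$-page of $\SliceSS(Y)$ is a module over that of $\SliceSS(\MUG)$, in particular over its $(0,0)$-entry $\U{\pi}_0\MUG\cong\UZ$; and the same holds after inverting $a_{\lambda}$. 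Since the unit class $1\in\U{\pi}_0\MUG$ is a permanent cycle, this $\UZ$-action persists through every page $E_r$ and to $E_{\infty}$. (Equivalently: every $E_r$ is a subquotient of $E_2$ in the abelian category of Mackey functors, and $\UZ$-modules are closed under subobjects and quotients.) By the Th\'evenaz--Webb characterization, each such Mackey functor is cohomological, which is precisely the claim for spectral sequence elements.

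There is no serious obstruction here; the one point that deserves care is confirming that the $\UZ$-module structure genuinely survives to all higher pages of the (localized) slice spectral sequence, and this is exactly what the multiplicativity statements recalled above — together with the permanence of the unit class — supply. Granting this, \cref{prop-cohomological} follows at once from \cite[Theorem~16.5]{Thevenaz-Webb}.
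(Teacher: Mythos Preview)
Your proof is correct and follows essentially the same approach as the paper: both argue that every Mackey functor in question is a $\UZ$-module (via the identification $\U{\pi}_0\MUG\cong\UZ$ from the Slice Theorem) and then invoke \cite[Theorem~16.5]{Thevenaz-Webb}. The paper compresses this into two sentences preceding the proposition, while you supply more detail on why the $\UZ$-module structure persists to higher pages of the spectral sequence.
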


	Before getting to the page-by-page computation, we note that all the differentials on the classes $u_{2\sigma}^{2^k}$ for $k \geq 0$ are already known by the work of Hill--Hopkins--Ravenel. Their theorem is originally formulated for the slice spectral sequence for $MU^{(\!(C_4)\!)}$ and the exact same statement and proof carries over to $\SliceSS(\BPfour)$ and $a_{\lambda}^{-1}\SliceSS(\BPfour)$. 
	
\begin{thm}[{\cite[Theorem~9.9]{HHR}}]\label{thm-HHRd}
	For $k \geq 0$ and $i < 2^{k+3} - 3$, $d_i(u_{2\sigma}^{2^k}) = 0$ and
	\[
	d_{2^{k+3} - 3}(u_{2\sigma}^{2^k}) = N(\ot_{k+1})a_{\lambda}^{2^{k+1}-1}a_{\sigma}^{2^{k+2}-1}.
	\]
\end{thm}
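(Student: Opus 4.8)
The statement is \cite[Theorem~9.9]{HHR}, established there for the slice spectral sequence of $MU^{(\!(C_4)\!)}$. The plan is to transport the result in two steps: first along the multiplicative map $MU^{(\!(C_4)\!)}\to\BPfour$ into $\SliceSS(\BPfour)$, and then along the localization map into $a_\lambda^{-1}\SliceSS(\BPfour)$. Neither step requires redoing the substantive part of HHR's argument.

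For the first step: the multiplicative $C_4$-map $MU^{(\!(C_4)\!)}\to\BPfour$ of \cref{subsec:sliceSS} (the norm of the Quillen idempotent) is compatible with slice towers, since the equivariant refinements $S^0[C_4\cdot\ovr_i]\to MU^{(\!(C_4)\!)}$ and $S^0[C_4\cdot\ot_i]\to\BPfour$ are compatible and both realize the slice filtration by the Slice Theorem; hence it induces a map of multiplicative slice spectral sequences. The classes $u_{2\sigma}$, $a_\lambda$, $a_\sigma$ all live in the $\HZ$-part of the $E_2$-page (every slice is an $\HZ$-module) and map to the same-named classes, while $\ovr_{2^{k+1}-1}$ maps to $\ot_{k+1}$ modulo decomposables, so $N(\ovr_{2^{k+1}-1})\mapsto N(\ot_{k+1})$ modulo decomposables. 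The decomposable corrections lie in the same slice filtration and do not affect the leading differential, so HHR's differential and the vanishing of the earlier differentials carry over verbatim to $\SliceSS(\BPfour)$ with $\ovr\rightsquigarrow\ot$.

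For the second step: as recalled after \cref{thm:invertingAclassE2page}, the localization map $\SliceSS(\BPfour)\to a_\lambda^{-1}\SliceSS(\BPfour)$ is a map of multiplicative spectral sequences. The source class $u_{2\sigma}^{2^k}$ has filtration $0$ and is visibly not a transfer from the trivial subgroup, while the target $N(\ot_{k+1})\,a_\lambda^{2^{k+1}-1}a_\sigma^{2^{k+2}-1}$ has filtration $2^{k+3}-3>0$; by \cref{thm-main}(3) the localization is an isomorphism in positive filtration and is surjective in filtration $0$ with kernel exactly the transfers from $e$. Hence both classes are carried to the corresponding named classes in the localized spectral sequence, and the differential $d_{2^{k+3}-3}(u_{2\sigma}^{2^k})=N(\ot_{k+1})a_\lambda^{2^{k+1}-1}a_\sigma^{2^{k+2}-1}$, together with $d_i(u_{2\sigma}^{2^k})=0$ for $i<2^{k+3}-3$, follows.

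Finally, a word on a self-contained derivation inside the present framework and on where the difficulty sits. The $C_2$-input is \cref{thm-C_2}(2), which gives $d_{2^{k+1}-1}(u_{2\sigma_2}^{2^{k-1}})=\ov_k a_{\sigma_2}^{2^{k+1}-1}$ on the underlying $C_2$-spectral sequence; but $u_{2\sigma}$ restricts trivially to $C_2$ ($\Res^{C_4}_{C_2}u_{2\sigma}=1$), so restriction alone yields no information about $d_*(u_{2\sigma}^{2^k})$. One must instead combine (i) the norm structure on the localized slice tower from \cref{exam:locslice} together with \cref{prop-norm}, which manufactures $C_4$-level boundaries from $C_2$-level boundaries; (ii) the gold relation $u_\lambda a_\sigma^2=2u_{2\sigma}a_\lambda$, the facts $\Res^{C_4}_{C_2}u_\lambda=u_{2\sigma_2}$ and $\Res^{C_4}_{C_2}a_\sigma=0$, and the $d_3$-differential on $u_\lambda$ (\cref{prop-d3}), to shuttle differential information between $u_\lambda$ and $u_{2\sigma}$; and (iii) a dimension count in the $RO(C_4)$-graded $E_r$-page, using that the $\ov_k$ form a regular sequence (as in the proof of \cref{thm-C_2}), to show that the displayed class is the unique nonzero possibility. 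The bookkeeping is organized by induction on $k$, in the style of HHR's method of twisted monoid rings. The genuine obstacle is step (iii): pinning the $C_4$-differential down on the nose — that it is nonzero and equals precisely $N(\ot_{k+1})a_\lambda^{2^{k+1}-1}a_\sigma^{2^{k+2}-1}$ — rather than merely showing that something of that filtration is hit. This is exactly the content of HHR's Theorem~9.9, which is why invoking it is the efficient route.
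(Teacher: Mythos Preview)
The paper does not give a proof of this theorem; it simply cites \cite[Theorem~9.9]{HHR} and remarks that ``the exact same statement and proof carries over to $\SliceSS(\BPfour)$ and $a_{\lambda}^{-1}\SliceSS(\BPfour)$.'' Your two-step transport (along $MU^{(\!(C_4)\!)}\to\BPfour$ and then along the $a_\lambda$-localization) is one way to make that remark precise, and Step~2 is entirely correct via \cref{thm-main}(3).

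Step~1 is where you should be more careful. HHR's Theorem~9.9 for $MU^{(\!(C_4)\!)}$ is stated in terms of their own generators, and asserting that the target maps to exactly $N(\ot_{k+1})a_\lambda^{2^{k+1}-1}a_\sigma^{2^{k+2}-1}$ rather than to this plus decomposable norm terms requires an argument you do not give. Your phrase ``decomposable corrections lie in the same slice filtration and do not affect the leading differential'' is not quite right: decomposables in the same bidegree would alter the stated target. The cleaner reading of the paper's remark is not naturality along the map, but rather that HHR's \emph{proof method}---comparison with the $a_\sigma$-localized spectral sequence, which is completely determined by $\Phi^{C_4}\BPfour\simeq H\FF_2$---applies verbatim to $\BPfour$, where the $\ot_i$ are \emph{defined} so that the formula comes out as stated (cf.\ \cite{BHSZ}). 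This bypasses the need to track generators across the map.

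Your third paragraph correctly identifies that the hard content lies in HHR, but the sketch there (norms, gold relation, regular sequence) is not how HHR's proof goes; their argument is the Vanishing Theorem plus comparison with $a_\sigma^{-1}\SliceSS$, not an induction of the type you describe. That paragraph can be dropped.
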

		
	\begin{figure}
		\includegraphics[scale = 0.5,page = 2]{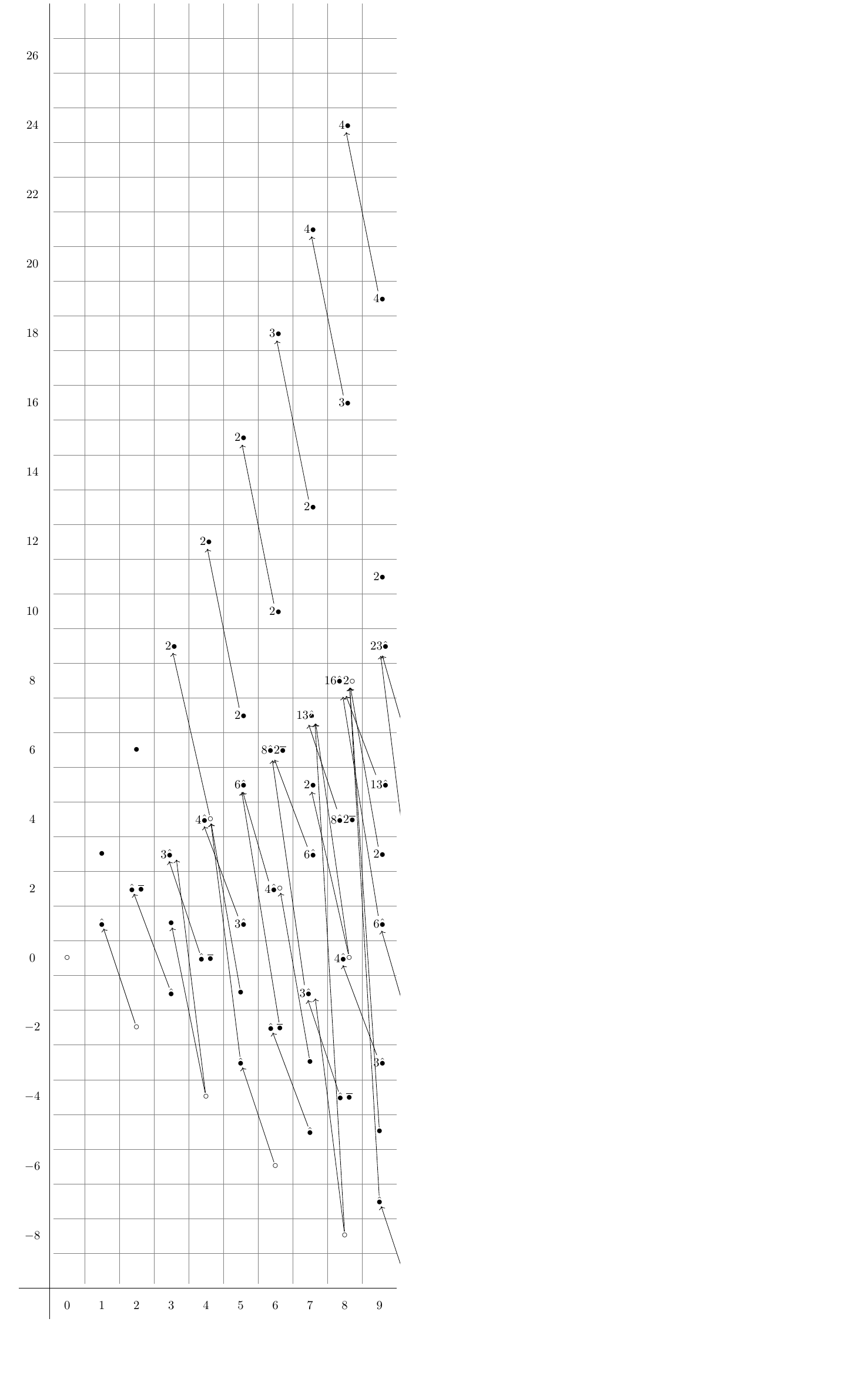}
        \caption{\label{fig-E2E4}Left: $d_3$-differentials in $a_{\lambda}^{-1}\SliceSS(\BPfour)$.\\Right: $d_5$- and $d_7$-differentials in $a_{\lambda}^{-1}\SliceSS(\BPfour)$.}
	\end{figure}
	
	Now we start the page-by-page computation.  First, note that for degree reasons all the differential lengths will be odd. 
	
	\begin{prop}\label{prop-d3}
		\[
		d_{3}(u_{\lambda}) = \Tr_2^4(\ot_1 a_{\sigma_2}^3)
		\]
	\end{prop}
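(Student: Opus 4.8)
The plan is to pin down $d_3(u_\lambda)$ by restricting to the underlying $C_2$-spectral sequence, where the corresponding differential is the Hill--Hopkins--Ravenel slice differential, and then to lift the answer back along the restriction map. First note that $d_2(u_\lambda) = 0$ since, as observed just above, all differential lengths in $a_\lambda^{-1}\SliceSS(\BPfour)$ are odd; hence $u_\lambda$ survives to $E_3$, and its target lands at $C_4$-level filtration $3$ in $RO(C_4)$-degree $1-\lambda$ (which is the lift of $\deg(\ot_1 a_{\sigma_2}^3) = \rho_2 - 3\sigma_2 = 1-2\sigma_2$ implicit in \cref{nota:transfer}).

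Next I would compute the restriction of both candidate sides of the equation. By \cref{prop-C_4coe}(2), $\Res^4_2(u_\lambda) = u_{2\sigma_2}$, and by \cref{thm-C_2}(2) in the case $k=1$ --- obtained by pulling the Hill--Hopkins--Ravenel differential $d_3(u_{2\sigma_2}) = \ov_1 a_{\sigma_2}^3$ back along the left unit $a_{\sigma_2}^{-1}\SliceSS(\BPR) \to a_{\sigma_2}^{-1}\SliceSS(\BPR\wedge\BPR)$ --- we have
\[
d_3(u_{2\sigma_2}) = a_{\sigma_2}^3(\ot_1 + \gamma\ot_1).
\]
Since differentials commute with restriction, $\Res^4_2(d_3(u_\lambda)) = a_{\sigma_2}^3(\ot_1+\gamma\ot_1)$; in particular $d_3(u_\lambda)\neq 0$. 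On the other hand the double coset formula for $C_2 \subset C_4$ gives $\Res^4_2\Tr^4_2(x) = x + \gamma x$, so
\[
\Res^4_2\Tr^4_2(\ot_1 a_{\sigma_2}^3) = \ot_1 a_{\sigma_2}^3 + (\gamma\ot_1)\,(\gamma a_{\sigma_2})^3 = a_{\sigma_2}^3(\ot_1 + \gamma\ot_1),
\]
where I used that $\gamma$ fixes $a_{\sigma_2}$ up to sign and that $a_{\sigma_2}^3$ is $2$-torsion, so $(\gamma a_{\sigma_2})^3 = a_{\sigma_2}^3$. Thus $\Res^4_2(d_3(u_\lambda)) = \Res^4_2\Tr^4_2(\ot_1 a_{\sigma_2}^3)$.

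Finally I would remove the ambiguity in $\ker(\Res^4_2)$. By the exact sequence of \cref{prop-C_4coe}(3), the kernel of $\Res^4_2$ in this bidegree is $\im(a_\sigma)$, and since $\im(\Tr^4_2) = \ker(a_\sigma)$ the element $a_\sigma$ annihilates every transfer from $C_2$. Using the explicit description of $\U{\pi}_\star(a_\lambda^{-1}H\UZ)$ in \cref{prop-C_4coe} together with the polynomial generators $G\cdot\ot_i$, one checks that every class of filtration $2$ in $RO(C_4)$-degree $1-\lambda+\sigma$ at the $C_4/C_4$-level is a transfer from $C_2$; hence $\im(a_\sigma) = 0$ in the target bidegree, $\Res^4_2$ is injective there, and $d_3(u_\lambda) = \Tr^4_2(\ot_1 a_{\sigma_2}^3)$. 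The main obstacle is precisely this last step: identifying the group $E_3^{3,V}(C_4/C_4)$ with $V$ in degree $1-\lambda$ finely enough to see that the (forced, nonzero) restriction determines the class uniquely; everything else is formal naturality of differentials, the double coset formula, and the already-established $C_2$-computation.
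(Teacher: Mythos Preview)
Your proof is correct and follows essentially the same approach as the paper: restrict $u_\lambda$ to $u_{2\sigma_2}$, invoke the $C_2$-differential $d_3(u_{2\sigma_2}) = (\ot_1+\gamma\ot_1)a_{\sigma_2}^3$ from \cref{thm-C_2}, and use naturality to force a $d_3$ on $u_\lambda$ whose target is pinned down by its restriction. The paper simply asserts that $\Tr_2^4(\ot_1 a_{\sigma_2}^3)$ is the unique class in the correct $RO(C_4)$-degree restricting to $(\ot_1+\gamma\ot_1)a_{\sigma_2}^3$, whereas you unpack this via the exact sequence of \cref{prop-C_4coe}(3) to see that $\ker(\Res^4_2)=\im(a_\sigma)$ vanishes in the target bidegree; this is a welcome elaboration of the same step rather than a different argument.
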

	
	\begin{proof}
		By \cref{thm-C_2}, the restriction $\Res^4_2(u_{\lambda}) = u_{2\sigma_2}$ supports the differential
		\[
		d_3(u_{2\sigma_2}) = (\ot_1 + \gamma \ot_1)a_{\sigma_2}^3
		\]
		in the $C_2$-spectral sequence. By naturality and degree reasons, the class $u_{\lambda}$ must also support a $d_3$-differential in the $C_4$-spectral sequence whose target restricts to the class $(\ot_1 + \gamma \ot_1)a_{\sigma_2}^3$. The only class that restricts to $(\ot_1 + \gamma \ot_1)a_{\sigma_2}^3$ with $RO(C_4)$-degree $1-\lambda$ is $\Tr_2^4(\ot_1 a_{\sigma_2}^3)$.
	\end{proof}
	
	In \cref{fig-E2E4}, this proposition gives all $d_3$ coming out of $\circ$, namely $u_{\lambda}a_{\lambda}^{-1}$ at $(2,-2)$, $N(\ot_1)^2u_{\lambda}u_{2\sigma}a_{\lambda}$ at $(6,2)$ and $u_{\lambda}^3 a_{\lambda}^{-3}$ at $(6,-6)$.
	
	\begin{cor}\label{cor-d3}
	    Let $P$ be a polynomial of $\ot_i$, $\gamma \ot_i$, $a_{\sigma_2}$, then
	    \[
	    d_3(u_{\lambda}^{2k+1}Tr_2^{4,V}(P)) =u_{\lambda}^{2k} \Tr_2^{4,V  - \lambda + 1}(P(\ot_1 +\gamma \ot_1)a_{\sigma_2}^3)
	    \]
	    for all $k \geq 0$ and any $V \in RO(C_4)$ restricting to the $RO(C_2)$-degree of $P$.
	\end{cor}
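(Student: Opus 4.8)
The plan is to bootstrap from \cref{prop-d3}, which states $d_3(u_\lambda)=\Tr_2^4(\ot_1 a_{\sigma_2}^3)$, exploiting that $a_\lambda^{-1}\SliceSS(\BPfour)$ is a multiplicative spectral sequence of Mackey functors. The three formal inputs I would invoke are: the Leibniz rule; the fact that $d_r$ commutes with the transfer $\Tr_2^4$ (the spectral sequence is one of Mackey functors, \cref{thm-main}), so that transfers of permanent cycles are permanent cycles; and Frobenius reciprocity $x\cdot\Tr_2^4(y)=\Tr_2^4(\Res_2^4(x)\cdot y)$ in the homotopy Green functor. Since every slice of $\BPfour$ sits in even degree we have $E_2=E_3$ here, so $u_\lambda^{2k}$ and $\Tr_2^{4,V}(P)$ are honest classes on $E_3$.

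First I would check that both $u_\lambda^{2k}$ and $\Tr_2^{4,V}(P)$ are $d_3$-cycles. For the former, Leibniz gives $d_3(u_\lambda^2)=2u_\lambda\, d_3(u_\lambda)=2u_\lambda\Tr_2^4(\ot_1 a_{\sigma_2}^3)$, and Frobenius reciprocity together with $\Res_2^4 u_\lambda=u_{2\sigma_2}$ (\cref{prop-C_4coe}(2)) rewrites this as $\Tr_2^4(2u_{2\sigma_2}\ot_1 a_{\sigma_2}^3)$; as the $C_2$-page is the $\FF_2$-algebra $\FF_2[u_{2\sigma_2},a_{\sigma_2}^{\pm1}][\ot_1,\gamma\ot_1,\dots]$ (\cref{thm-C_2}(1)), we have $2=0$ and so $d_3(u_\lambda^2)=0$; hence $d_3(u_\lambda^{2k})=k\,u_\lambda^{2(k-1)}d_3(u_\lambda^2)=0$ for all $k$. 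For the latter, $\ot_i$, $\gamma\ot_i$ and $a_{\sigma_2}$ are permanent cycles in the $C_2$-spectral sequence (\cref{thm-C_2}(1)), hence so is the polynomial $P$, and transferring a permanent cycle yields a permanent cycle, so $\Tr_2^{4,V}(P)$ is a $d_3$-cycle.

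Next, writing $u_\lambda^{2k+1}\Tr_2^{4,V}(P)=u_\lambda\cdot\bigl(u_\lambda^{2k}\Tr_2^{4,V}(P)\bigr)$ and applying Leibniz, the term in which $d_3$ hits the second factor vanishes by the previous paragraph, so $d_3\bigl(u_\lambda^{2k+1}\Tr_2^{4,V}(P)\bigr)=\Tr_2^4(\ot_1 a_{\sigma_2}^3)\cdot u_\lambda^{2k}\cdot\Tr_2^{4,V}(P)$. I would then move the permanent cycle $u_\lambda^{2k}$ to the outside and apply Frobenius reciprocity to the product of the two transfers, rewriting $\Tr_2^4(\ot_1 a_{\sigma_2}^3)\cdot\Tr_2^{4,V}(P)=\Tr_2^4\bigl(P\cdot\Res_2^4\Tr_2^4(\ot_1 a_{\sigma_2}^3)\bigr)$; the double-coset formula $\Res_2^4\Tr_2^4(z)=z+\gamma z$ for the index-$2$ inclusion $C_2\subset C_4$, together with $\gamma a_{\sigma_2}=a_{\sigma_2}$ (the abelian group $C_4$ acts on $\sigma_2$ by trivial conjugation), turns this into $\Tr_2^4\bigl(P(\ot_1+\gamma\ot_1)a_{\sigma_2}^3\bigr)$. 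This yields $d_3\bigl(u_\lambda^{2k+1}\Tr_2^{4,V}(P)\bigr)=\Tr_2^4\bigl(P(\ot_1+\gamma\ot_1)a_{\sigma_2}^3\bigr)\,u_\lambda^{2k}$, and a bookkeeping of $RO(C_4)$-degrees (using $|u_\lambda|=2-\lambda$, $|a_{\sigma_2}|=-\sigma_2$, $|\ot_1|=\rho_2$, $\Res_2^4\lambda=2\sigma_2$) places the transfer in the grading recorded in the statement.

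The step I expect to carry the real content is the vanishing in the second paragraph — not the algebraic manipulations, which are purely formal, but the two facts that license dropping the Leibniz error terms: the mod-$2$ structure of the $C_2$-coefficient ring (so that $2u_{2\sigma_2}\ot_1 a_{\sigma_2}^3=0$) and the assertion that the localized slice spectral sequence really is a spectral sequence of Mackey functors (so that $d_r\circ\Tr_2^4=\Tr_2^4\circ d_r$, hence transfers of permanent cycles are permanent cycles). A mild side point to watch is that $\Tr_2^{4,W}(-)$ only makes sense when $W$ restricts to the $RO(C_2)$-degree of its argument; for $P$ a polynomial in $\ot_i$, $\gamma\ot_i$, $a_{\sigma_2}$ and $V$ restricting to $|P|$, the transfers appearing all admit the required $RO(C_4)$-lifts, so this causes no trouble. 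The whole argument is a small instance of the interaction between honest transfers and differentials set up in \cref{sec-transfer}, combined with the multiplicative formalism of \cref{sec-norm}.
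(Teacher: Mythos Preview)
Your proof is correct and follows essentially the same approach as the paper's one-line proof, which simply cites \cref{prop-d3}, the Frobenius relation, and the Leibniz rule; you have carefully spelled out the details these three ingredients entail, including the verification that $u_\lambda^{2k}$ and $\Tr_2^{4,V}(P)$ are $d_3$-cycles.
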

	
	\begin{proof}
	    This is a direct consequence of \cref{prop-d3}, the Frobenius relation \cite[Definition~2.3]{HHR:C4} and the Leibniz rule.
	\end{proof}
	
	As displayed in \cref{fig-E2E4}, this corollary gives all other $d_3$-differentials. We now explain them in detail.
	
	In terms of Mackey functors, the $d_3$-differentials give the following exact sequences:
	\begin{align*}
	0 \rightarrow \bullet \rightarrow & \circ \xrightarrow{d_3} \hat{\bullet} \rightarrow \Ov{\bullet} \rightarrow 0\\
	0 \rightarrow & \hat{\bullet} \xrightarrow{d_3} \hat{\bullet} \rightarrow 0\\
	0 \rightarrow & \Ov{\bullet} \xrightarrow{d_3} \hat{\bullet} \rightarrow \JJ \rightarrow 0.
	\end{align*}
	
	Here are examples of $d_3$-differentials corresponding to each exact sequence above:
	\begin{align*}
	d_{3}(u_{\lambda}) & = \Tr_2^4(\ot_1 a_{\sigma_2}^3)\\
	d_{3}(\Tr_2^4(\ot_1 a_{\sigma_2})u_{\lambda}) &= \Tr_2^4(\ot_1 (\ot_1 + \gamma \ot_1) a_{\sigma_2}^4) 
	\\
	d_{3}(u_{2\sigma_2} a_{\sigma_2}) & = (\ot_1 + \gamma\ot_1) a_{\sigma_2}^4.
	\end{align*}
	
	Note that the last differential is a $C_2$-differential, but it has an effect on the $C_4$-level Mackey functor structure. By results in Section \ref{sec-transfer}, the $d_3$-differentials also give certain exotic restrictions of filtration jump at most $2$ (that is, the image of the restriction is of filtration at most 2 higher than the source). For example, consider the element $N(\ot_1)u_{\lambda}a_{\sigma}$ at $(3,1)$.  This class is a $d_3$-cycle. By Proposition~\ref{prop-d3}, the class $N(\ot_1)u_{\lambda}$ supports the $d_3$-differential
	$$d_3(N(\ot_1)u_{\lambda})=\Tr_2^4(\ot_1^2 \gamma \ot_1 a_{\sigma_2}^3).$$ 
	By \cref{prop-transfer}, the class $\ot_1^2 \gamma \ot_1 a_{\sigma_2}^3$ receives an exotic restriction of filtration jump at most $2$ in integral degree, and the only possible source is $N(\ot_1)u_{\lambda}a_{\sigma}$. The same argument applies to all $2$-torsions classes with $(t-s, s)$-bidegrees $(3+4i+4j,1+4i-4j)$ for $i,j \geq 0$. The exotic restrictions are represented by the vertical green dashed lines in \cref{fig-E2E4}.   
	
	\begin{rmk}\label{rmk-exotic} \rm
		These exotic restrictions are the first family of examples of an interesting phenomenon in the $RO(G)$-graded spectral sequence of Mackey functors. Exotic restrictions and transfers can imply nontrivial abelian group extensions. By \cref{prop-cohomological}, the transfer of a restriction of a class must be the multiple of this class by the index of the subgroup. Therefore, As Mackey functors, these extensions are of the form
		\[
		0 \rightarrow \bullet \rightarrow \circ \rightarrow \JJ \rightarrow 0,
		\]
		which represents a nontrivial extension
		\[
		0 \rightarrow \ZZ/2 \rightarrow \ZZ/4 \rightarrow \ZZ/2 \rightarrow 0
		\]
		if one evaluates the exact sequence of Mackey functors at $C_4/C_4$. Notice that in the category of Mackey functors, there are essentially two nontrivial extensions between $\bullet$ and $\JJJ$, but only the one above fits into \cref{prop-cohomological}.
		
		For readers who are familiar with Lubin--Tate $E$-theories and topological modular forms, the family of 2-extensions above is a generalization  of the type of 2-extension between the class $\nu$ at $(3,1)$ and the class $2\nu$ at $(3,3)$ in the homotopy fixed points spectral sequences of $E_2^{hC_4}$ and $TMF_0(5)$ (see \cite{BBHS} and \cite{Behrens-Ormsby}).
	\end{rmk}
	
	In summary, the $d_3$-differentials can be described as follows: 
	\begin{enumerate}
		\item On $C_2$-level, it is the first differential in \cref{thm-C_2}.
		\item The Green functor structure of the spectral sequence gives $d_3$-differentials on the $C_4$-level, by \cref{prop-d3} and \cref{cor-d3}. After these $d_3$-differentials, there is no room for further $d_3$-differentials. 
		
		\item Every $d_3$-differential of the form $\Ov{\bullet} \rightarrow \hat{\bullet}$ gives an extension of filtration $2$ by the above remark.
	\end{enumerate}

	Now we will prove the $d_5$-differentials. There are two different types of $d_5$-differentials. The first type is given by \cref{thm-HHRd}:
	\[
	d_5(u_{2\sigma}) = N(\ot_1)a_{\lambda}a_{\sigma}^3.
	\]
	Since $N(\ot_1)$ and $a_{\lambda}$ are both permanent cycles, on the integral page for our range, it gives the following $d_5$-differential at $(4,4)$:
	\[
	d_5(N(\ot_1)^2u_{2\sigma}a_{\lambda}^2) = N(\ot_1)^2a_{\lambda}^3a_{\sigma}^3, 
	\]
	and it repeats by multiplying by $N(\ot_1)a_{\lambda}a_{\sigma}$. In Figure~\ref{fig-E2E4}, these are the $d_5$-differentials with sources on or above the line of slope $1$. 
	
	The second type of $d_5$-differentials is given by the following proposition. 
	\begin{prop}\label{prop-d5}
		\begin{align*}
		d_5(u_{\lambda}^2) & = N(\ot_1)u_{\lambda}a_{\lambda}^2a_{\sigma},\\
		d_5(u_{\lambda}^2a_{\sigma}) & = 2N(\ot_1)u_{2\sigma}a_{\lambda}^3.
		\end{align*}
	\end{prop}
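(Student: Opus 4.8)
The plan is to prove the first differential, $d_5(u_\lambda^2) = N(\ot_1)u_\lambda a_\lambda^2 a_\sigma$, and then to read off the second one from the gold relation. First I would check that $u_\lambda^2$ (and hence $u_\lambda^2 a_\sigma$) survives to the $E_5$-page: since $d_3(u_\lambda) = \Tr_2^4(\ot_1 a_{\sigma_2}^3)$ by \cref{prop-d3}, the Leibniz rule together with the Frobenius relation gives $d_3(u_\lambda^2) = 2u_\lambda\Tr_2^4(\ot_1 a_{\sigma_2}^3) = 2\Tr_2^4(u_{2\sigma_2}\ot_1 a_{\sigma_2}^3)$ (using $\Res_2^4 u_\lambda = u_{2\sigma_2}$), and this vanishes because the $C_2$-restriction of $a_\lambda^{-1}H\UZ$ is $a_{\sigma_2}^{-1}H\UZ$, whose homotopy is $2$-torsion by \cref{prop-C_2coe}, so that any transfer up from it is $2$-torsion as well. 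There is no other possible $d_3$ out of $u_\lambda^2$, so it is a $d_3$-cycle, and so is $u_\lambda^2 a_\sigma$. Granting the first differential, the second then follows by the Leibniz rule: $d_5(u_\lambda^2 a_\sigma) = a_\sigma\, d_5(u_\lambda^2) = N(\ot_1)u_\lambda a_\lambda^2 a_\sigma^2 = N(\ot_1)a_\lambda^2\,(u_\lambda a_\sigma^2) = 2N(\ot_1)u_{2\sigma}a_\lambda^3$, where the last step is the gold relation $u_\lambda a_\sigma^2 = 2u_{2\sigma}a_\lambda$.

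For the first differential itself, the cleanest route is to import it from the un-localized slice spectral sequence of $\BPfour$. The class $u_\lambda^2$ has filtration $0$ and lifts tautologically to $\SliceSS(\BPfour)$, while the proposed target $N(\ot_1)u_\lambda a_\lambda^2 a_\sigma$ has strictly positive filtration; hence by \cref{thm-main}(3) the localizing map $\SliceSS(\BPfour)\to a_\lambda^{-1}\SliceSS(\BPfour)$ is an isomorphism in the bidegree of the target, and it is enough to prove the corresponding $d_5$ upstairs, which is one of the Hill--Hopkins--Ravenel-type slice differentials obtained by the methods of \cite[Section~9]{HHR} and \cite[Section~4]{HHR:C4}. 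Alternatively one can stay inside the localized spectral sequence and argue with the norm structure of \cref{exam:locslice}: a degree count identifies $u_\lambda^2$ with $N_2^4(u_{2\sigma_2})\cdot u_{2\sigma}$ up to a unit, the $C_2$-differential $d_3(u_{2\sigma_2}) = \ov_1 a_{\sigma_2}^3$ propagates under $N_2^4$ to a $d_5$ on $N_2^4(u_{2\sigma_2})$ (the same norm-of-a-differential mechanism underlying \cref{thm-HHRd}), and combining this with $d_5(u_{2\sigma}) = N(\ot_1)a_\lambda a_\sigma^3$ and the multiplicativity of norms (\cref{prop-norm} and the surrounding discussion) pins down $d_5(u_\lambda^2)$.

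The main obstacle is not the \emph{existence} of a nontrivial $d_5$ on $u_\lambda^2$ but the \emph{identification of its target}. The vanishing line of \cref{thm-main}(2) places $u_\lambda^2 a_\lambda^{-2}$ exactly on the boundary of the allowed region, so on filtration grounds alone it is permitted to be a permanent cycle; ruling this out, and then locating the target in the correct summand of the (delicate) $E_5$-page while keeping track of the filtration-$0$ kernel of the localization map, is precisely where extra input — the comparison with $\SliceSS(\BPfour)$, or the norm of the $C_2$-differential $d_3(u_{2\sigma_2}) = \ov_1 a_{\sigma_2}^3$ — is indispensable.
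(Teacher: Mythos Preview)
Your derivation of the second differential from the first via $a_\sigma$-multiplication and the gold relation is exactly what the paper does, and your check that $u_\lambda^2$ is a $d_3$-cycle is fine. The problem is the first differential itself: neither of your two routes is actually a proof.

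The ``import from $\SliceSS(\BPfour)$'' route just relocates the problem. You are right that \cref{thm-main}(3) lets you transport such a $d_5$ from the un-localized spectral sequence, but you then assert that $d_5(u_\lambda^2)$ is ``one of the HHR-type slice differentials obtained by the methods of \cite[Section~9]{HHR} and \cite[Section~4]{HHR:C4}'' without saying which result or argument gives it. The HHR slice differential theorem concerns $u_{2\sigma}^{2^k}$, not $u_\lambda^{2^k}$, and the relevant $u_\lambda$-differentials in \cite{HHR:C4} are established for specific quotients, not for $\BPfour$ directly. So this is a deferral, not a proof.

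The norm route has two genuine gaps. First, the identification $u_\lambda^2 = N_2^4(u_{2\sigma_2})\cdot u_{2\sigma}$ ``up to a unit'' is asserted from a degree count; degrees do match, but equality (even up to a unit) in $\pi_\star^{C_4}(a_\lambda^{-1}H\UZ)$ needs an argument. Second, and more seriously, \cref{prop-norm} only tells you that if $u_{2\sigma_2}$ dies by $E_4$ then $N_2^4(u_{2\sigma_2})$ dies by $E_6$; it does not give you a formula for $d_5(N_2^4(u_{2\sigma_2}))$. Computing the norm of the \emph{target} $\ov_1 a_{\sigma_2}^3 = (\ot_1+\gamma\ot_1)a_{\sigma_2}^3$ is not straightforward either, since the norm is multiplicative but not additive: $N(\ot_1+\gamma\ot_1) = (\ot_1+\gamma\ot_1)(\gamma\ot_1-\ot_1)$, which is not $N(\ot_1)$.

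The paper's argument avoids all of this. One restricts to $C_2$: $\Res_2^4(u_\lambda^2) = u_{2\sigma_2}^2$ supports the $d_7$-differential $d_7(u_{2\sigma_2}^2) = (\ot_2+\ot_1^3+\gamma\ot_2)a_{\sigma_2}^7$ from \cref{thm-C_2}. By naturality $u_\lambda^2$ must support some $d_r$ with $r\le 7$; for degree reasons $r\in\{5,7\}$. If $r=7$, the target would have to restrict to $(\ot_2+\ot_1^3+\gamma\ot_2)a_{\sigma_2}^7$, but that class is not in the image of $\Res_2^4$ (after the $d_3$'s). Hence $r=5$, and the only possible target in that bidegree is $N(\ot_1)u_\lambda a_\lambda^2 a_\sigma$. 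This is both shorter and complete; I would replace your two sketches with this restriction argument.
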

	\begin{proof}
	    The restriction $\Res^4_2(u_{\lambda}^2) = u_{2\sigma_2}^2$ supports the $d_7$-differential
	    \[
	    d_7(u_{2\sigma_2}^2) = (\ot_2 +\gamma \ot_2 + \ot_1^3 )a_{\sigma_2}^7
	    \]
	    by \cref{thm-C_2}. By naturality, $u_{\lambda}^2$ must support a differential of length at most 7.  For degree reasons, the length of this differential can only be 5 or 7.  If the length of this differential is 7, the target must restrict to the class $(\ot_2 +\gamma \ot_2 +\ot_1^3 )a_{\sigma_2}^7$.  However, this class is not in the image of the restriction map $\Res^4_2$. Therefore, $u_{\lambda}^2$ must support a $d_5$-differential.  The only possible target of this $d_5$-differential is $N(\ot_1)u_{\lambda}a_{\lambda}^2a_{\sigma}$.  This proves the first $d_5$-differential
	    
	    Multiplying with $a_{\sigma}$ on both sides of the first $d_5$-differential gives
	    \[
	    d_5(u_{\lambda}^2a_\sigma) = N(\ot_1)u_{\lambda}a_{\lambda}^2a_{\sigma}^2.
	    \]
	    Applying the gold relation $u_\lambda a_{\sigma}^2 = 2u_{2\sigma}a_\lambda$ gives the second $d_5$-differential.
	\end{proof}
	
	In \cref{fig-E2E4}, the $d_5$-differentials in Proposition~\ref{prop-d5} can be seen on the following classes: 
	\begin{enumerate}
	\item $u_{\lambda}^2a_{\lambda}^{-2}$ at $(4,-4)$, 
	\item $N(\ot_1)u_{\lambda}^2a_{\lambda}^{-1}a_\sigma$ at $(5,-1)$, 
	\item $N(\ot_1)^2u_{\lambda}^2u_{2\sigma}$ at $(8,0)$, 
	\item $N(\ot_1)^3u_{\lambda}^2u_{2\sigma}a_\lambda a_\sigma$ and $N(\ot_2)u_{\lambda}^2u_{2\sigma}a_\lambda a_\sigma$ at $(9,3)$. 
	\end{enumerate}
	
	\text{}\begin{rmk}\rm
		Although $u_{\lambda}^2$ and $u_{\lambda}^2a_{\sigma}$ support differentials of the same length, this is not true in general. For example, we will see soon that $u_{\lambda}^4$ supports a $d_7$-differential, while $u_{\lambda}^4a_{\sigma}$ supports a $d_{13}$-differential.
	\end{rmk}
	
	\begin{cor}
		\[
		d_5(u_{\lambda}^3a_{\sigma}) = 2N(\ot_1)u_{\lambda}u_{2\sigma}a_{\lambda}^3.
		\]
	\end{cor}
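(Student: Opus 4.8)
The plan is to derive this from \cref{prop-d5} together with the gold relation $u_\lambda a_\sigma^2 = 2u_{2\sigma}a_\lambda$, exactly in the manner that the second differential of \cref{prop-d5} was derived from the first. The formal computation is immediate: multiplying $d_5(u_\lambda^2) = N(\ot_1)u_\lambda a_\lambda^2 a_\sigma$ by $u_\lambda a_\sigma$ gives $d_5(u_\lambda^3 a_\sigma) = N(\ot_1)u_\lambda^2 a_\lambda^2 a_\sigma^2$, and rewriting the factor $u_\lambda^2 a_\sigma^2 = u_\lambda\cdot(u_\lambda a_\sigma^2)$ in the target by the gold relation produces $N(\ot_1)u_\lambda a_\lambda^2\cdot 2u_{2\sigma}a_\lambda = 2N(\ot_1)u_\lambda u_{2\sigma}a_\lambda^3$. (Equivalently, one formally multiplies the second differential $d_5(u_\lambda^2 a_\sigma) = 2N(\ot_1)u_{2\sigma}a_\lambda^3$ of \cref{prop-d5} by $u_\lambda$.)

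The point that must be handled with care — and which I expect to be the crux of a rigorous argument — is that $u_\lambda$, and hence $u_\lambda a_\sigma$, is not a $d_5$-cycle: by \cref{prop-d3} it already supports a $d_3$, namely $d_3(u_\lambda) = \Tr_2^4(\ot_1 a_{\sigma_2}^3)$. So the Leibniz rule cannot literally be applied to the product $u_\lambda^3 a_\sigma = (u_\lambda a_\sigma)\cdot u_\lambda^2$. Instead I would use the honest identity $a_\sigma\cdot(u_\lambda^3 a_\sigma) = u_\lambda^2\cdot(u_\lambda a_\sigma^2) = 2u_\lambda^2 u_{2\sigma}a_\lambda$, which holds on every page by the gold relation. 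The right-hand class survives to $E_5$: $u_\lambda^2$ is a $d_3$-cycle since $d_3(u_\lambda^2) = 2u_\lambda d_3(u_\lambda)$ vanishes ($d_3(u_\lambda)$, being a transfer, is $2$-torsion), and $u_{2\sigma}$ survives to $E_5$ by \cref{thm-HHRd}. Applying $d_5$ to $2u_\lambda^2 u_{2\sigma}a_\lambda$ via the Leibniz rule, feeding in $d_5(u_\lambda^2) = N(\ot_1)u_\lambda a_\lambda^2 a_\sigma$ from \cref{prop-d5} and $d_5(u_{2\sigma}) = N(\ot_1)a_\lambda a_\sigma^3$ from \cref{thm-HHRd}, and simplifying the resulting $a_\sigma$-divisible expression, yields a relation of the shape $a_\sigma\cdot d_5(u_\lambda^3 a_\sigma) = a_\sigma\cdot\bigl(2N(\ot_1)u_\lambda u_{2\sigma}a_\lambda^3\bigr)$.

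Finally one must cancel the factor $a_\sigma$, and this is the remaining obstacle. By the exact sequence of \cref{prop-C_4coe}(3), multiplication by $a_\sigma$ has kernel equal to the image of $\Tr_2^4$, so the relation above determines $d_5(u_\lambda^3 a_\sigma)$ only up to a transfer class from $C_2$. I would eliminate this ambiguity by inspecting the target bidegree of the $d_5$ on the $E_5$-page directly — comparing with the underlying $C_2$-computation of \cref{thm-C_2} via $\Res_2^4$ and the fact that $2N(\ot_1)u_\lambda u_{2\sigma}a_\lambda^3 = a_\sigma\cdot N(\ot_1)u_\lambda^2 a_\lambda^2 a_\sigma$ lies in the kernel of $\Res_2^4$ — to check that no nonzero transfer class is available there. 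With that, the formula $d_5(u_\lambda^3 a_\sigma) = 2N(\ot_1)u_\lambda u_{2\sigma}a_\lambda^3$ follows.
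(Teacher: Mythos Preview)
Your proposal has a genuine gap. The inference ``$u_\lambda$ supports a $d_3$, hence $u_\lambda a_\sigma$ is not a $d_5$-cycle'' is wrong: since $d_3(u_\lambda) = \Tr_2^4(\ot_1 a_{\sigma_2}^3)$ is a transfer class, it is annihilated by $a_\sigma$, so $u_\lambda a_\sigma$ \emph{is} a $d_3$-cycle. The paper then checks that $u_\lambda a_\sigma$ is also a $d_5$-cycle (the only possible target $N(\ot_1)a_\lambda^2 a_\sigma^2$ is excluded because multiplying by $a_\sigma$ would contradict \cref{thm-HHRd}), and applies the Leibniz rule directly to $u_\lambda^3 a_\sigma = (u_\lambda a_\sigma)\cdot u_\lambda^2$.

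Your workaround---multiply by $a_\sigma$, compute, then cancel---fails for a more serious reason than transfer ambiguity. When you compute $d_5(2u_\lambda^2 u_{2\sigma}a_\lambda)$ by Leibniz, every term picks up a factor of $a_\sigma$ from either $d_5(u_\lambda^2)$ or $d_5(u_{2\sigma})$, and since $2a_\sigma = 0$ the entire expression vanishes. So you obtain only $a_\sigma \cdot d_5(u_\lambda^3 a_\sigma) = 0$, which says nothing beyond ``$d_5(u_\lambda^3 a_\sigma)$ is a transfer class.'' But the desired target $2N(\ot_1)u_\lambda u_{2\sigma}a_\lambda^3$ \emph{is} a nonzero transfer class (indeed $2x = \Tr_2^4\Res_2^4(x)$ by \cref{prop-cohomological}), so your final step of ``checking that no nonzero transfer class is available there'' cannot succeed: the answer you want is precisely such a class. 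The $a_\sigma$-multiplication approach carries no information here.
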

	
	\begin{proof}
		First, we will show that $u_{\lambda}a_{\sigma}$ is a nontrivial permanent cycle. Since the target of the $d_3$-differential on $u_\lambda$ is a transfer class, it is killed by $a_{\sigma}$, and therefore $u_\lambda a_\sigma$ is a $d_3$-cycle. The only potential non-trivial differential that $u_\lambda a_\sigma$ can support is the $d_5$-differential 
		$$d_5(u_\lambda a_\sigma) =  N(\ot_1)a_{\lambda}^2a_{\sigma}^2.$$
		If this differential happens, then multiplying $a_\sigma$ on both sides and using the gold relation will produce the differential 
		$$d_5(2u_{2\sigma}a_{\lambda}) = N(\ot_1)a_{\lambda}^2a_{\sigma}^3.$$
		This is a contradiction to \cref{thm-HHRd}. 
		
		Applying the Leibniz rule on the first $d_5$-differential in Proposition~\ref{prop-d5} with the class $u_\lambda a_\sigma$ produces the $d_5$-differential 
		\[
		d_5(u_{\lambda}^3a_{\sigma}) = u_{\lambda}a_{\sigma}d_5(u_{\lambda}^2) = N(\ot_1)u_{\lambda}^2a_{\lambda}^2a_{\sigma}^2 = 2N(\ot_1)u_{\lambda}u_{2\sigma}a_{\lambda}^3.\qedhere
		\]
	\end{proof}

	In \cref{fig-E2E4}, this $d_5$-differential implies the $d_5$-differential on the class $N(\ot_1)u_{\lambda}^3 a_{\lambda}^{-2}a_\sigma$ at $(7,-3)$. Notice that the class $N(\ot_1)u_{\lambda}u_{2\sigma}a_{\lambda}^2$ supports a $d_3$-differential and the class $2N(\ot_1)u_{\lambda}u_{2\sigma}a_{\lambda}^2$ is killed by a $d_5$-differential. In the integral grading, this happens to the $\ZZ/4$ in $(6,2)$.

	There are extensions of filtration jump 4 induced by the $d_5$-differentials. 
	
	\begin{prop}\label{prop-ext4}
		There is an exotic transfer of filtration jump $4$ from $(2,2)$ to $(2,6)$:
		\[
		\Tr_2^4(\ot_1^2a_{\sigma_2}^2) = N(\ot_1)^2a_{\lambda}^2a_{\sigma}^2.
		\]
		There is an exotic restriction of filtration jump $4$, from $(2,-2)$ to $(2,2)$:
		\[
		\Res_2^4\left(2u_{\lambda}a_{\lambda}^{-1}\right) = \ot_1^2a_{\sigma_2}^2.
		\]
	\end{prop}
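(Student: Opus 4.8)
The plan is to obtain both statements from the exotic‑operation formalism of \cref{sec-transfer}, fed by the $d_5$‑differentials of \cref{thm-HHRd,prop-d5} and by the cohomological identity $\Tr_2^4\Res_2^4 = [C_4:C_2] = 2$ of \cref{prop-cohomological}, with the bookkeeping carried out against the explicit coefficient Green functors of \cref{prop-C_2coe,prop-C_4coe} (negative cones included) and the $C_2$‑page of \cref{thm-C_2}.

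\emph{Exotic transfer.} First produce a differential hitting $a_\sigma\cdot N(\ot_1)^2 a_\lambda^2 a_\sigma^2 = N(\ot_1)^2 a_\lambda^2 a_\sigma^3$. Multiplying the Hill--Hopkins--Ravenel differential $d_5(u_{2\sigma}) = N(\ot_1)a_\lambda a_\sigma^3$ of \cref{thm-HHRd} by the permanent cycle $N(\ot_1)a_\lambda$ and applying the Leibniz rule gives $d_5\bigl(N(\ot_1)a_\lambda u_{2\sigma}\bigr) = N(\ot_1)^2 a_\lambda^2 a_\sigma^3$, which is legitimate once one checks that $N(\ot_1)a_\lambda u_{2\sigma}$ survives to $E_5$ --- and it does, since $N(\ot_1)$ and $a_\lambda$ are permanent cycles while $u_{2\sigma}$ supports no differential before $E_5$ by \cref{thm-HHRd}. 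Hence $a_\sigma$ annihilates $N(\ot_1)^2 a_\lambda^2 a_\sigma^2$ on $E_6$, while $N(\ot_1)^2 a_\lambda^2 a_\sigma^2$ itself is a nonzero permanent‑cycle class on $E_6$ (an inspection of the chart shows it is not the target of a $d_3$ or $d_5$). Applying \cref{prop-transfer}(i) with $r = 5$ shows $N(\ot_1)^2 a_\lambda^2 a_\sigma^2$ is an exotic transfer of filtration jump at most $4$; its source lies at the $C_4/C_2$‑level in stem $2$, filtration $6-4=2$, and by \cref{thm-C_2,prop-C_2coe} the only nonzero class there is $\ot_1^2 a_{\sigma_2}^2$. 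Checking, against the $E_2$‑page (and using that the naive transfer class $\Tr_2^4(\ot_1^2 a_{\sigma_2}^2)$ at filtration $2$ is itself hit by a class in the negative cone of \cref{prop-C_4coe}), that all transfers of smaller jump out of $\ot_1^2 a_{\sigma_2}^2$ vanish pins the jump at exactly $4$, giving $\Tr_2^4(\ot_1^2 a_{\sigma_2}^2) = N(\ot_1)^2 a_\lambda^2 a_\sigma^2$.

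\emph{Exotic restriction.} The $\ZZ/4$ in $\pi_2^{C_4}(a_\lambda^{-1}\BPfour)$ is generated by a class $g$ detected in filtration $-2$ by $2u_\lambda a_\lambda^{-1}$: this is the survivor, on $E_4$, of the $\ZZ/4$ in the $E_2$‑coefficient ring after the differential $d_3(u_\lambda a_\lambda^{-1}) = \Tr_2^4(\ot_1 a_{\sigma_2}^3)a_\lambda^{-1}$ of \cref{prop-d3} (which kills $u_\lambda a_\lambda^{-1}$ but not $2u_\lambda a_\lambda^{-1}$), and it is a permanent cycle for degree reasons. Since $\Res_2^4$ of $a_\lambda^{-1}\BPfour$ has $a_{\sigma_2}$ inverted, \cref{thm-C_2,prop-C_2coe} identify $\pi_2^{C_2}$ with $\ZZ/2$, generated by $\ot_1^2 a_{\sigma_2}^2$ in filtration $2$. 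Now suppose $\Res_2^4(g)=0$; then $\Tr_2^4\Res_2^4(g)=0$, contradicting $\Tr_2^4\Res_2^4(g)=2g\neq 0$ (\cref{prop-cohomological}). Therefore $\Res_2^4(g)=\ot_1^2 a_{\sigma_2}^2$. As $\Res_2^4$ sends $2u_\lambda a_\lambda^{-1}$ to $2u_{2\sigma_2}a_{\sigma_2}^{-2}=0$ already on the $E_2$‑page, this restriction is detected only in filtration $2$, i.e.\ it is an exotic restriction of filtration jump $2-(-2)=4$, namely $\Res_2^4(2u_\lambda a_\lambda^{-1}) = \ot_1^2 a_{\sigma_2}^2$. (Alternatively, this part can be run directly through \cref{prop-transfer}(ii) once the naive transfer of $\ot_1^2 a_{\sigma_2}^2$ is seen to vanish on the relevant finite page.)

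I expect the main obstacle to be not conceptual but the bookkeeping: tracking precisely which finite page each identity holds on, confirming the survival (or the vanishing) of the several auxiliary classes --- $N(\ot_1)a_\lambda u_{2\sigma}$ to $E_5$, $N(\ot_1)^2 a_\lambda^2 a_\sigma^2$ to $E_6$, the naive transfer $\Tr_2^4(\ot_1^2 a_{\sigma_2}^2)$ being hit, and $2u_\lambda a_\lambda^{-1}$ surviving --- and ruling out exotic operations of smaller filtration jump, all of which genuinely uses the negative‑cone part of the coefficient Green functor of \cref{prop-C_4coe} together with the gold relation.
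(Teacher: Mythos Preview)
Your argument for the exotic transfer is essentially the paper's: both produce $d_5\bigl(N(\ot_1)u_{2\sigma}a_\lambda\bigr)=N(\ot_1)^2a_\lambda^2a_\sigma^3$ from \cref{thm-HHRd} and then invoke \cref{prop-transfer}(i), identifying the unique possible source as $\ot_1^2a_{\sigma_2}^2$.

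Your argument for the exotic restriction, however, is circular. You assert that $2g\neq 0$ for the class $g$ detected by $2u_\lambda a_\lambda^{-1}$, citing \cref{prop-cohomological}; but that proposition only says $\Tr_2^4\Res_2^4=2$, not that $2g\neq 0$. On the $E_\infty$-page in stem $2$ at the $C_4$-level there are two $\ZZ/2$'s (filtrations $-2$ and $6$), so a priori $\pi_2^{C_4}$ could be $\ZZ/2\oplus\ZZ/2$ with $2g=0$ and $\Res_2^4(g)=0$; nothing you have written rules this out. The nontrivial extension to $\ZZ/4$ is precisely the content of the exotic restriction together with the exotic transfer (see the remark following the proposition in the paper), so you cannot assume it.

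The alternative you mention parenthetically is in fact what the paper does, and it is where the real work lies. One must show that the \emph{naive} transfer $\Tr_2^4(\ot_1^2a_{\sigma_2}^2)$ vanishes on $E_6$, and this requires producing a $d_5$ hitting it. The paper does this by writing $2u_\lambda a_\lambda^{-1}=\bigl(\tfrac{u_\lambda^2}{u_{2\sigma}}a_\lambda^{-2}a_\sigma\bigr)\cdot a_\sigma$ via the gold relation, then showing
\[
d_5\!\left(\frac{u_\lambda^2}{u_{2\sigma}}a_\lambda^{-2}a_\sigma\right)=2N(\ot_1)a_\lambda=\Tr_2^4\bigl(\ot_1\gamma\ot_1 a_{\sigma_2}^2\bigr)=\Tr_2^4(\ot_1^2a_{\sigma_2}^2),
\]
the last equality using that $\ot_1=\gamma\ot_1$ after the $C_2$-level $d_3$'s. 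This is a genuine use of the negative cone of \cref{prop-C_4coe} (the class $u_\lambda^2/u_{2\sigma}$), and once established, \cref{prop-transfer}(ii) gives the exotic restriction with $2u_\lambda a_\lambda^{-1}$ as the only possible source. Your proposal acknowledges the negative cone will be needed but does not carry out this step; without it, the restriction claim is not proved.
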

	
	\begin{proof}
	    We use \cref{prop-transfer} to prove both extensions.
	    
		For the first claim, note that $d_5(N(\ot_1)u_{2\sigma}a_{\lambda})=N(\ot_1)^2a_{\lambda}^2a_{\sigma}^3$, and $N(\ot_1)^2a_{\lambda}^2a_{\sigma}^2$ is a nontrivial $d_5$-cycle. Therefore, $N(\ot_1)^2a_{\lambda}^2a_{\sigma}^2$ is the target of an exotic transfer of filtration jump $4$ in $E_6$, and the only possible source is $\ot_1^2a_{\sigma_2}^2$.
		
		For the second claim, first note that by \cref{prop-C_4coe} (also see \cref{fig-lambda}) and the gold relation,
		\[
		2u_{\lambda}a_{\lambda}^{-1} = \left(\frac{u_{\lambda}^2}{u_{2\sigma}}a_{\lambda}^{-2}a_{\sigma}\right)a_{\sigma}.
		\]
		We have the $d_5$-differential
		\[
		d_5\left(\frac{u_{\lambda}^2}{u_{2\sigma}}a_{\lambda}^{-2}a_{\sigma}\right) = \Tr_2^4(\ot_1^2a_{\sigma_2}^2).
		\]
		To prove this differential, consider the class $\frac{u_{\lambda}^2}{u_{2\sigma}}a_{\lambda}^{-2}$.  This class supports a $d_5$-differential because after multiplying it by $u_{2\sigma}^2 a_{\lambda}^2$ (which is a $d_5$-cycle), the class $u_{\lambda}^2u_{2\sigma}$ supports the $d_5$-differential 
		$$d_5(u_{\lambda}^2u_{2\sigma}) = N(\ot_1)u_{\lambda}u_{2\sigma}a_{\lambda}^2a_{\sigma}$$
		by Proposition~\ref{prop-d5}.  Therefore
		\[
		d_5\left(\frac{u_{\lambda}^2}{u_{2\sigma}}a_{\lambda}^{-2}\right) = N(\ot_1)\frac{u_{\lambda}}{u_{2\sigma}}a_{\sigma}.
		\]
		Multiplying both sides by $a_{\sigma}$, we have
		\[
		d_5\left(\frac{u_{\lambda}^2}{u_{2\sigma}}a_{\lambda}^{-2}a_{\sigma}\right) = N(\ot_1)\frac{u_{\lambda}}{u_{2\sigma}}a_\sigma^2 = 2N(\ot_1)a_{\lambda} = \Tr_2^4(\Res_2^4(N(\ot_1)a_{\lambda}))=\Tr_2^4(\ot_1 \gamma\ot_1 a_{\sigma_2}^2) = \Tr_2^4(\ot_1^2 a_{\sigma_2}^2)
		\]
		The last equation holds because by \cref{thm-C_2}, $\ot_1 = \gamma \ot_1$ after the $d_3$-differentials in the $C_2$-spectral sequence.
		
		Therefore, $\ot_1^2 a_{\sigma_2}^2$ must receive an exotic restriction of filtration jump $4$ in the integral degree, and the only source of the restriction is $2u_{\lambda}a_{\lambda}^{-1}$.
	\end{proof}
	
	In Figure~\ref{fig-E11E16}, the exotic restrictions and transfers are the green and blue dashed lines, respectively.
	
	\begin{rmk}\rm
		Similar to Remark \ref{rmk-exotic}, the exotic restrictions and transfers also give extensions of abelian groups on the $C_4$-level. The situation is more subtle here because each individual exotic extension doesn't involve non-trivial extensions of abelian groups at any level. When we combine the two extensions together, however, we obtain an abelian group extension of filtration $8$ from $(2,-2)$ to $(2,6)$:
		\[
		0 \rightarrow \ZZ/2 \rightarrow \ZZ/4 \rightarrow \ZZ/2 \rightarrow 0,
		\]
		and $2(2u_{\lambda}a_{\lambda}^{-1}) = N(\ot_1)^2a_{\lambda}^2a_{\sigma}^2$ in homotopy. This extension is similar to the extension in the 22-stem of $E_2^{hC_4}$ and $TMF_0(5)$. (See \cite[Figure~10]{BBHS} and \cite[Section~2]{Behrens-Ormsby}).
	\end{rmk}
	
	We will now prove the $d_7$-differentials. While we state them in some $RO(C_4)$-graded page first, we recommend that the reader multiplies with appropriate powers of $a_{\lambda}$ whenever possible to visualize the arguments in \cref{fig-E2E4}.
	
	\begin{prop}\label{prop-d7}
	We have the following $d_7$-differentials
		\begin{eqnarray*}
		d_7(2u_{\lambda}^2) &=& \Tr_2^{4, 3-2\lambda}(a_{\sigma_2}^7\ot_1^3),\\
		d_7(2u_{\lambda}^2u_{2\sigma}) &=& \Tr_2^{4, 5-2\lambda-2\sigma}(a_{\sigma_2}^7\ot_1^3),
		\end{eqnarray*}
		(see Notation~\ref{nota:transfer} for the transfer notations).
	\end{prop}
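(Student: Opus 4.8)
The plan is to obtain both differentials by transferring the $C_2$-level differential of \cref{thm-C_2}\,(2) along $\Tr_2^4$, using that $a_{\lambda}^{-1}\SliceSS(\BPfour)$ is a spectral sequence of Mackey functors, so that $\Tr_2^4$ commutes with every $d_r$. Recall from \cref{thm-C_2} that on the $C_2$-page one has $d_7(u_{2\sigma_2}^2) = a_{\sigma_2}^7(\ot_2 + \ot_1^3 + \gamma\ot_2)$, and from \cref{prop-C_4coe} that $\Res_2^4 u_{\lambda} = u_{2\sigma_2}$ and $\Res_2^4 u_{2\sigma} = 1$, so $\Res_2^4(u_{\lambda}^2) = \Res_2^4(u_{\lambda}^2 u_{2\sigma}) = u_{2\sigma_2}^2$. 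By the cohomological relation \cref{prop-cohomological} applied to $C_2\subset C_4$,
\[
\Tr_2^{4,\,4-2\lambda}(u_{2\sigma_2}^2) = 2u_{\lambda}^2 \qquad\text{and}\qquad \Tr_2^{4,\,6-2\sigma-2\lambda}(u_{2\sigma_2}^2) = 2u_{\lambda}^2 u_{2\sigma},
\]
the two transfers differing only in the choice of preimage in $RO(C_4)$ of the $RO(C_2)$-degree of $u_{2\sigma_2}^2$ (cf.\ \cref{nota:transfer}).

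First I would check that $2u_{\lambda}^2$ and $2u_{\lambda}^2 u_{2\sigma}$ survive to $E_7$. By \cref{prop-d3} and Frobenius reciprocity, $d_3(u_{\lambda}^2) = 2u_{\lambda}\,\Tr_2^4(\ot_1 a_{\sigma_2}^3) = \Tr_2^4\bigl(\Res_2^4(2u_{\lambda})\cdot \ot_1 a_{\sigma_2}^3\bigr) = 0$, since $\Res_2^4(2u_{\lambda}) = 2u_{2\sigma_2} = 0$ on the $C_2$-page. By \cref{prop-d5}, $d_5(u_{\lambda}^2) = N(\ot_1) u_{\lambda} a_{\lambda}^2 a_{\sigma}$, hence $d_5(2u_{\lambda}^2) = N(\ot_1) a_{\lambda}^2 u_{\lambda}\,(2a_{\sigma}) = 0$ by the relation $2a_{\sigma} = 0$ in the coefficient Green functor. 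Since $d_3(u_{2\sigma}) = 0$ and $d_5(u_{2\sigma}) = N(\ot_1) a_{\lambda} a_{\sigma}^3$ by \cref{thm-HHRd}, the Leibniz rule gives $d_3(2u_{\lambda}^2 u_{2\sigma}) = 0$ and $d_5(2u_{\lambda}^2 u_{2\sigma}) = 2u_{\lambda}^2\cdot N(\ot_1) a_{\lambda} a_{\sigma}^3 = N(\ot_1) a_{\lambda} u_{\lambda}^2 a_{\sigma}^2\,(2a_{\sigma}) = 0$; all shorter differentials vanish for degree reasons, so both classes are present on $E_7$.

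Then, since $u_{2\sigma_2}^2$ also survives to $E_7$ (it supports the $d_7$ above) and $\Tr_2^4$ commutes with $d_7$, I would conclude
\[
d_7(2u_{\lambda}^2) = \Tr_2^{4,\,3-2\lambda}\bigl(a_{\sigma_2}^7(\ot_2 + \ot_1^3 + \gamma\ot_2)\bigr), \quad d_7(2u_{\lambda}^2 u_{2\sigma}) = \Tr_2^{4,\,5-2\sigma-2\lambda}\bigl(a_{\sigma_2}^7(\ot_2 + \ot_1^3 + \gamma\ot_2)\bigr),
\]
the target $RO(C_4)$-degrees $3-2\lambda$ and $5-2\sigma-2\lambda$ being forced by the degrees of the sources and the effect of $d_7$ on the grading. (Alternatively the second follows from the first, since $\Res_2^4 u_{2\sigma} = 1$ gives $u_{2\sigma}\cdot\Tr_2^4(-) = \Tr_2^4(-)$ by Frobenius.) Finally, the transfer is invariant under the Weyl action and $\gamma a_{\sigma_2} = a_{\sigma_2}$, so $\Tr_2^4(a_{\sigma_2}^7 \gamma\ot_2) = \Tr_2^4\bigl(\gamma(a_{\sigma_2}^7 \ot_2)\bigr) = \Tr_2^4(a_{\sigma_2}^7 \ot_2)$; since $a_{\sigma_2}^7\ot_2$ is $2$-torsion, $\Tr_2^4(a_{\sigma_2}^7\ot_2) + \Tr_2^4(a_{\sigma_2}^7\gamma\ot_2) = 2\Tr_2^4(a_{\sigma_2}^7\ot_2) = 0$, and both right-hand sides collapse to $\Tr_2^4(a_{\sigma_2}^7\ot_1^3)$.

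The main obstacle is the bookkeeping combined in the second and third paragraphs rather than anything conceptual: one must verify that $2u_{\lambda}^2$ and $2u_{\lambda}^2 u_{2\sigma}$ are genuine nonzero classes on $E_7$ that support no shorter differential — where the identity $2a_{\sigma} = 0$ is precisely what kills the candidate $d_5$'s — and one must track that $\Tr_2^{4,W}$ depends on the chosen preimage $W$, so that the $RO(C_4)$-degrees of the two differential targets are pinned down correctly. Neither point is deep, but both are needed for the formulas to be unambiguous.
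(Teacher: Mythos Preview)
Your proof is correct and follows essentially the same approach as the paper: both use the $C_2$-level $d_7$-differential $d_7(u_{2\sigma_2}^2) = (\ot_2+\ot_1^3+\gamma\ot_2)a_{\sigma_2}^7$ from \cref{thm-C_2}, apply naturality of $\Tr_2^4$, and simplify the target via $\Tr_2^4(a_{\sigma_2}^7\ot_2)+\Tr_2^4(a_{\sigma_2}^7\gamma\ot_2)=0$. Your explicit verification that $2u_{\lambda}^2$ and $2u_{\lambda}^2u_{2\sigma}$ survive to $E_7$ (using $2a_{\sigma}=0$ to kill the candidate $d_5$'s) is what the paper compresses into the phrase ``for degree reasons, it must be the $d_7$-differential with source $2u_{\lambda}^2$.''
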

	\begin{proof}
	We will prove the first differential.  The second differential is proven by the exact same method.  On the $C_2$-level, we have the $d_7$-differential
	$$d_7(u_{2\sigma_2}^2) = (\ot_2 + \ot_1^3 + \gamma\ot_2)a_{\sigma_2}^7$$
	by \cref{thm-C_2}. Taking transfer on the target and using naturality, the class 
	$$\Tr_2^{4, 3-2\lambda}(a_{\sigma_2}^7(\ot_2 + \ot_1^3 + \gamma\ot_2)) = \Tr_2^{4, 3-2\lambda}(a_{\sigma_2}^7\ot_1^3)$$ 
	must be killed by a differential of length at most 7. For degree reasons, it must be the $d_7$-differential with source $2u_{\lambda}^2$.
	\end{proof}

    \begin{rmk}\rm
        These differentials can also be proved by combining \cref{prop:w-op} and  \cref{rmk-exotic}. One sets the exotic $w$-operation to be multiplication by $2$, and \cref{rmk-exotic} shows that the exotic restriction gives such an exotic multiplication.
    \end{rmk}
	
	In \cref{fig-E2E4}, The $d_7$-differentials in Proposition~\ref{prop-d7} and the underlying $C_2$-level $d_7$-differentials in \cref{thm-C_2} are supported by the classes at $(4+i,-4+i)$ for $i \geq 0$.
	
	\begin{prop}\label{prop-d7-2}
		\[
		d_7(u_{\lambda}^4) = u_{\lambda}^2\Tr_2^4(\ot_1^3a_{\sigma_2}^7).
		\]
	\end{prop}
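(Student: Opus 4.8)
The plan follows the pattern of \cref{prop-d5} and \cref{prop-d7}: first show $u_\lambda^4$ survives to the $E_7$-page, then show it cannot be a permanent cycle, and finally pin down the differential by restriction to $C_2$ together with comparison to \cref{prop-d7}.

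For survival to $E_7$ I would use the Leibniz rule. By \cref{prop-d3}, $d_3(u_\lambda)=\Tr_2^4(\ot_1 a_{\sigma_2}^3)$, and since any transfer from $C_2$ is $2$-torsion, $d_3(u_\lambda^4)=4u_\lambda^3\Tr_2^4(\ot_1 a_{\sigma_2}^3)=0$ and likewise $d_3(u_\lambda^2)=0$; hence $u_\lambda^2$ is a class on the $E_5$-page and
\[
d_5(u_\lambda^4)=d_5\bigl((u_\lambda^2)^2\bigr)=2u_\lambda^2\,d_5(u_\lambda^2)=2N(\ot_1)u_\lambda^3 a_\lambda^2 a_\sigma=0
\]
because $2a_\sigma=0$ in the coefficients (\cref{prop-C_4coe}). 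As $u_\lambda^4$ has filtration $0$ it cannot be hit by any differential, so it survives to $E_7$.

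Next I would argue $u_\lambda^4$ is not a permanent cycle: it (equivalently $(u_\lambda a_\lambda^{-1})^4$, of additive order $4$) generates a $\ZZ/4$ on $E_7$ in its bidegree, while $\pi^{C_4}_{8-4\lambda}(a_\lambda^{-1}\BPfour)\cong\pi_8^{C_2}(N_1^2 H\FF_2)$ contains no element of order $4$ (cf. the chart of \cref{thm:htpy}); one can also cap the length of the differential from above using that $u_{2\sigma}^2\,N_2^4(u_{2\sigma_2}^2)=u_\lambda^4$ and the norm formalism of \cref{sec-norm} (\cref{prop-norm}), which forces $u_\lambda^4$ to die by the $E_{14}$-page. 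By the previous paragraph the differential it supports has length $\geq 7$. Restricting to $C_2$, $\Res_2^4(u_\lambda^4)=u_{2\sigma_2}^4=(u_{2\sigma_2}^2)^2$ is a $d_7$-cycle (indeed a $d_{14}$-cycle) in $a_{\sigma_2}^{-1}\SliceSS(\BPR\wedge\BPR)$ by \cref{thm-C_2}, so the target restricts to $0$ and must be a transfer class. Comparing with $d_7(2u_\lambda^2)=\Tr_2^{4}(a_{\sigma_2}^7\ot_1^3)$ from \cref{prop-d7} and using the Frobenius/projection formula with $\Res_2^4 u_\lambda=u_{2\sigma_2}$ to rewrite
\[
u_\lambda^2\Tr_2^4(\ot_1^3 a_{\sigma_2}^7)=\Tr_2^4(u_{2\sigma_2}^2\ot_1^3 a_{\sigma_2}^7),
\]
a count of the transfer classes available in stem $-1$, filtration $7$ on $E_7$ (using the $E_7$-page assembled in the preceding subsections and the vanishing lines of \cref{thm-main}) should leave this as the only possibility, giving the stated $d_7$.

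The main obstacle is this last step: confirming nonvanishing and, above all, ruling out a \emph{longer} differential ($d_9$, $d_{11}$, or $d_{13}$) onto a different transfer target. The norm bound handles lengths $\geq 15$, and restriction forces any target to be a transfer class, but the truncation to exactly $d_7$ and the identification of the precise transfer class both rely on the explicit description of the $E_7$-page rather than on any purely formal input; I expect that bookkeeping, together with checking that multiplication by $u_{2\sigma}^2$ does not shorten the relevant differential, to be where the real work lies.
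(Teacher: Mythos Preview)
Your argument for survival to $E_7$ is correct, and so is the observation that the target restricts to zero (hence is a transfer) together with the Frobenius rewriting. The gap is in bounding the length of the differential from above.

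First, your appeal to the chart of \cref{thm:htpy} is circular: that theorem is the end result of the very computation you are contributing to, so you cannot invoke the structure of $\pi_8$ here.

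Second, your norm argument does not deliver what you claim. Even granting the identity $u_{2\sigma}^2\,N_2^4(u_{2\sigma_2}^2)=u_\lambda^4$ on the $E_2$-page (which itself requires justification in the coefficient ring of \cref{prop-C_4coe}), \cref{prop-norm} only tells you that $N_2^4(u_{2\sigma_2}^2)$ represents zero on $E_{14}$. It does not follow that the product with $u_{2\sigma}^2$ represents zero on $E_{14}$: the factor $u_{2\sigma}^2$ itself supports a $d_{13}$, so if $N_2^4(u_{2\sigma_2}^2)$ also survives to $E_{13}$ the two Leibniz terms could cancel; and if $N_2^4(u_{2\sigma_2}^2)$ is \emph{hit} by some $d_r$, that says nothing about the product. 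So you have not actually ruled out $u_\lambda^4$ surviving past $E_{13}$, which is precisely the step you flag as ``where the real work lies''.

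The paper handles the upper bound differently and more cleanly. It forward-references \cref{prop-d13-2}, which shows that $u_\lambda^4 a_\sigma$ supports a nontrivial $d_{13}$; that proposition is proved by applying \cref{prop-norm} to the \emph{target} $\ov_2\,a_{\sigma_2}^7$ of the $C_2$-level $d_7$ rather than to its source, and the paper notes explicitly that it can already be established at this point, independently of \cref{prop-d7-2}. Since $a_\sigma$ is a permanent cycle, Leibniz then forces $u_\lambda^4$ to support a differential of length at most $13$. Combined with your $E_7$-survival and a direct check that nothing survives in the target bidegrees for $d_9$, $d_{11}$, $d_{13}$ after the earlier differentials, only the stated $d_7$ remains.
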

	
	\begin{proof}
	We will prove in Proposition~\ref{prop-d13-2} that there is a nontrivial $d_{13}$-differential on the class $u_\lambda^4a_\sigma$ (we can already prove it at this point, but for organization reasons we prove it later).  This implies that the class $u_{\lambda}^4$ must support a differential of length at most 13. For degree reasons, the claimed $d_7$-differential is the only possibility. 
	\end{proof}

	In \cref{fig-E2E4}, the $d_7$-differential in Proposition~\ref{prop-d7-2} gives the $d_7$-differential supported by the class $u_{\lambda}^4a_{\lambda}^{-4}$ at $(8,-8)$.

	\subsection{The $C_4$-spectral sequence: higher differentials and extensions}
    \begin{figure}
        \includegraphics[scale = 0.5, page = 3]{BPC4.pdf}
        \caption{\label{fig-E11E16}Left: $d_{13}$- and $d_{15}$-differentials in $a_{\lambda}^{-1}\SliceSS(\BPfour)$.\\ Right: $E_{\infty}$-page of $a_{\lambda}^{-1}\SliceSS(\BPfour)$ with all extensions.}
	\end{figure}

    We will now prove the higher differentials in our range (see Figure~\ref{fig-E11E16}). The next possible differential is a $d_{13}$-differential from \cref{thm-HHRd}:
    \[
    d_{13}(u_{2\sigma}^2) = N(\ot_2)a_{\lambda}^{3}a_{\sigma}^{7}.
    \]
    However, we won't see this differential in Figure~\ref{fig-E11E16}.  This is because its first appearance in the integer graded spectral sequence is on the class $(10, 14)$, which is outside of our range. Note also that even though some classes at $(8,8)$ contain $u_{2\sigma}^2$, they don't support $d_{13}$-differentials. We will give a detailed discussion of the classes at $(8,8)$ in Section~\ref{sec-88}.

	\begin{prop}\label{prop-d13}
		\[
		d_{13}(u_{\lambda}^4u_{2\sigma}) = N(\ot_2)u_{\lambda}u_{2\sigma}^2a_{\lambda}^6a_{\sigma}
		\]
	\end{prop}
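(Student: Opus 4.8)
The plan is to propagate the Hill--Hopkins--Ravenel differential $d_{13}(u_{2\sigma}^2) = N(\ot_2)a_\lambda^3 a_\sigma^7$ of \cref{thm-HHRd} (and/or the differential $d_{13}(u_\lambda^4 a_\sigma) = N(\ot_2+\ot_1^3+\gamma\ot_2)u_{2\sigma}^2 a_\lambda^7$ of \cref{prop-d13-2}) along the multiplicative structure of the localized slice spectral sequence, using the gold relation $u_\lambda a_\sigma^2 = 2 u_{2\sigma}a_\lambda$, the relation $2a_\sigma = 0$, the Frobenius relation, and the fact (from the corollary following \cref{prop-d5}) that $u_\lambda a_\sigma$ and $a_\lambda^{\pm 1}$ are permanent cycles.

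I would first check that $u_\lambda^4 u_{2\sigma}$ survives to $E_{13}$. It is a $d_3$-cycle because $d_3(u_\lambda^2) = 0$ by the Frobenius relation (the target of $d_3(u_\lambda)$ being a transfer) and $d_3(u_{2\sigma}) = 0$. It is a $d_5$-cycle because $d_5(u_\lambda^4 u_{2\sigma}) = u_\lambda^4 d_5(u_{2\sigma}) = N(\ot_1)u_\lambda^4 a_\lambda a_\sigma^3$, and the gold relation rewrites $u_\lambda^4 a_\lambda a_\sigma^3 = 2 u_\lambda^3 u_{2\sigma}a_\lambda^2 a_\sigma$, which is zero since $2a_\sigma = 0$; this vanishing already holds in the coefficient ring, so the product $u_\lambda^4 u_{2\sigma}$ has a clean lift past $E_5$ even though $u_{2\sigma}$ does not. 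A lift computation then shows that the only potential $d_7$, inherited from $d_7(u_\lambda^4) = u_\lambda^2\Tr_2^4(\ot_1^3 a_{\sigma_2}^7)$ of \cref{prop-d7-2}, is killed on $E_7$ --- the natural reduction via the Frobenius relation and the vanishing of odd powers of $u_{2\sigma_2}$ on the $C_2$-page after $d_3$ (together with $\gamma\ot_1 \equiv \ot_1$) should suffice. As there are no $d_9$ or $d_{11}$ in this range, $u_\lambda^4 u_{2\sigma}$ reaches $E_{13}$.

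I would then pin down the target. Since $\Res_2^4(u_\lambda^4 u_{2\sigma}) = u_{2\sigma_2}^4$ supports $d_{15}$ rather than $d_{13}$, naturality forces $d_{13}(u_\lambda^4 u_{2\sigma}) \in \ker\Res_2^4 = \operatorname{im}(a_\sigma)$ by the long exact sequence of \cref{prop-C_4coe}(3), so the target is divisible by $a_\sigma$. Multiplying $d_{13}(u_{2\sigma}^2) = N(\ot_2)a_\lambda^3 a_\sigma^7$ (resp.\ $d_{13}(u_\lambda^4 a_\sigma)$) by an appropriate permanent cycle or unit --- using that $u_{2\sigma}$ is invertible in the localized coefficient ring while being careful about page-dependence of the factors --- and simplifying sources and targets via the gold relation ($u_\lambda^5 a_\lambda^{-1}a_\sigma^2 = 2u_\lambda^4 u_{2\sigma}$, $u_\lambda^3 a_\sigma^6 = 0$, $u_\lambda^4 a_\sigma^7 = 0$), I would isolate the target as a multiple of $N(\ot_2)u_\lambda u_{2\sigma}^2 a_\lambda^6 a_\sigma$, invoking the identity $\ot_1^3 \equiv \ot_2 + \gamma\ot_2$ holding after the $C_2$-differentials and expanding the non-additive norm $N_2^4(\ot_2 + (\ot_1^3 + \gamma\ot_2))$ via the Tambara-functor formula; the cross- and transfer-terms should all vanish on $E_{13}$ after multiplication by $a_\sigma$.

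The hard part will be exactly this last identification. The $RO(C_4)$-graded Leibniz rule must be applied with care because $u_{2\sigma}$ is not a permanent cycle, so one must track explicit lifts of $u_\lambda^4 u_{2\sigma}$ through the $E_5$- and $E_7$-stages; and because the norm is not additive one must control all correction terms and then show that the residual class $N(\ot_2)u_\lambda u_{2\sigma}^2 a_\lambda^6 a_\sigma$ is genuinely nonzero on $E_{13}$. For this nonvanishing I would fall back on \cref{prop-transfer}, exhibiting the target as a class forced to receive an exotic transfer, or on the comparison with the Tate spectral sequence of $N_1^2 H\FF_2$ from \cref{thm:comparison}.
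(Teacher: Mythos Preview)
Your approach via Leibniz propagation is genuinely different from the paper's, but there is a real gap at the $d_7$ step. You claim that the potential $d_7$-target, which after Frobenius is $\Tr_2^4(\ot_1^3 u_{2\sigma_2}^2 a_{\sigma_2}^7)$, ``is killed on $E_7$'' by ``the vanishing of odd powers of $u_{2\sigma_2}$.'' But the power of $u_{2\sigma_2}$ here is even, and this class is in fact \emph{nonzero} on $E_7$: the paper shows it survives to support a nontrivial $d_{11}$ (established via an exotic transfer of filtration jump $4$ coming from $d_5(N(\ot_1)^3 u_{2\sigma}^3 a_\lambda^7)$). So your justification for $d_7(u_\lambda^4 u_{2\sigma})=0$ fails. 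More fundamentally, you cannot invoke the Leibniz rule on $E_7$ in the form $d_7(u_\lambda^4 u_{2\sigma}) = d_7(u_\lambda^4)\cdot u_{2\sigma}$ at all, since $u_{2\sigma}$ already dies on $E_5$; the ``lift computation'' you allude to would have to circumvent this, and you have not indicated how.

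The paper bypasses all of this. It argues instead that $\Res_2^4(u_\lambda^4 u_{2\sigma}) = u_{2\sigma_2}^4$ supports a $d_{15}$ whose target is not in the image of restriction, so $u_\lambda^4 u_{2\sigma}$ must support a differential of length strictly less than $15$. It then enumerates, by degree, the only possible targets: one class in filtration $7$ and two in filtration $13$. The filtration-$7$ class is eliminated precisely because it supports a $d_{11}$ (hence cannot be hit by any shorter differential), and one of the filtration-$13$ classes, $N(\ot_1)^3 u_\lambda u_{2\sigma}^2 a_\lambda^6 a_\sigma$, is already killed by a $d_5$. This leaves $N(\ot_2) u_\lambda u_{2\sigma}^2 a_\lambda^6 a_\sigma$ as the unique surviving target. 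No Leibniz bookkeeping through dead factors, no expansion of the non-additive norm, and no nonvanishing check on the target are needed.
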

	
	\begin{proof}
		On the $C_2$-level, the restriction $\Res^4_2(u_{\lambda}^4u_{2\sigma}) = u_{2\sigma_2}^4$ supports a $d_{15}$-differential hitting the class $\ov_3a_{\sigma_2}^{15} =(\ot_3 + \ot_2^2\ot_1 + \ot_1^4 \gamma\ot_2 + \gamma\ot_3)a_{\sigma_2}^{15}$.  Since this class is not in the image of the restriction after the $d_3$-differentials, by naturality the class $u_{\lambda}^4u_{2\sigma}$ must support a differential of length shorter than 15. After computing the first few pages, we see that for degree reasons the potential targets are the following classes: 
		\begin{enumerate}
		\item $\Tr_2^4((\ot_2  + \ot_1^3 + \gamma\ot_2)u_{2\sigma_2}^2a_{\sigma_2}^7)$ in filtration $7$; 
		\item $N(\ot_1)^3u_{\lambda}u_{2\sigma}^2a_{\lambda}^6a_{\sigma}$ in filtration 13;
		\item $N(\ot_2)u_{\lambda}u_{2\sigma}^2a_{\lambda}^6a_{\sigma}$ in filtration $13$.
		\end{enumerate}
		
		We will first prove that the class $\Tr_2^4((\ot_2  + \ot_1^3 + \gamma\ot_2)u_{2\sigma_2}^2a_{\sigma_2}^7)$ supports the $d_{11}$-differential
		\[
		d_{11}(\Tr_2^4((\ot_2  + \ot_1^3 + \gamma\ot_2)u_{2\sigma_2}^2a_{\sigma_2}^7)) = N(\ot_1)^4u_{2\sigma}^2a_{\lambda}^8a_\sigma^2.
		\]
		To prove this, first note that 
		\[
		\Tr_2^4((\ot_2  + \ot_1^3 + \gamma\ot_2)u_{2\sigma_2}^2a_{\sigma_2}^7) = \Tr_2^4(\ot_1^3u_{2\sigma_2}^2a_{\sigma_2}^7) 
		\]
		since the class $(\ot_2 + \gamma \ot_2)a_{\sigma_2}$ transfers to $0$ in the homotopy.
		On the $C_2$-level, we have the $d_7$-differential
		\[
		d_7(\ot_1^3u_{2\sigma_2}^2a_{\sigma_2}^7) = \ot_1^3(\ot_2 + \ot_1^3 + \gamma \ot_2)a_{\sigma_2}^{14}.
		\]
		The transfer of the target, $\Tr_2^4(\ot_1^3(\ot_2 + \ot_1^3 + \gamma \ot_2)a_{\sigma_2}^{14}) = \Tr_2^4(\ot_1^6a_{\sigma_2}^{14})$, is zero.  This is because after the $C_2$-level $d_3$-differentials, the class $\ot_1^6 a_{\sigma_2}^{14}$ is identified with the class $\ot_1^3 \gamma \ot_1^3 a_{\sigma_2}^{14}$, which transfers to 0. We will show that the class $\ot_1^6a_{\sigma_2}^{14}$ actually supports an exotic transfer of filtration jump 4. Let $x = N(\ot_1)^3a_{\lambda}^7u_{2\sigma}^3$. We have the $d_5$-differential from \cref{thm-HHRd}
		\[
		d_5(x) = N(\ot_1)^4u_{2\sigma}^2a_{\lambda}^8a_{\sigma}^3.
		\]
		By \cref{prop-transfer}, $N(\ot_1)^4u_{2\sigma}^2a_{\lambda}^8a_{\sigma}^2$ receives an exotic transfer of jump $4$, and the only possible source is $\ot_1^6 a_{\sigma}^{14}$. Applying \cref{prop:w-op} to this exotic transfer and the $C_2$-level $d_7$, we prove the claimed $d_{11}$.
		
		The class $N(\ot_1)^3u_{\lambda}u_{2\sigma}^2a_{\lambda}^6a_{\sigma}$ in filtration $13$ is killed by a $d_5$-differential from \cref{prop-d5}:
		\[
		N(\ot_1)^3u_{\lambda}u_{2\sigma}^2a_{\lambda}^6a_{\sigma} = d_5(N(\ot_1)^2 u_{\lambda}^2u_{2\sigma}^2a_{\lambda}^4).
		\]
		It follows that the class $N(\ot_2)u_{\lambda}u_{2\sigma}^2a_{\lambda}^6a_{\sigma}$ is the only possible target.
	\end{proof}	

    \begin{rmk}\rm
    The class $u_{\lambda}^4u_{2\sigma}$ is a permanent cycle in the homotopy fixed points spectral sequence of $E_2^{hC_4}$ (see \cite[Proposition~5.23]{BBHS}) because $N(\ot_2)$ is zero there.
    \end{rmk}

	Although this $d_{13}$ doesn't imply any differentials in our range, it is used in proving extensions.
	\begin{prop}\label{prop-ext12}
	\begin{enumerate}
		\item There is an exotic transfer in stem $6$ of filtration $12$,
		\[
		\Tr_2^4(\ot_2 \gamma\ot_2a_{\sigma_2}^6) = N(\ot_2)^2a_{\lambda}^6a_{\sigma}^6.
		\]
		\item There is an exotic restriction in stem $6$ of filtration $12$,
		\[
		\Res_2^4(2u_{\lambda}^3a_{\lambda}^{-3}) = \ot_2 \gamma\ot_2a_{\sigma_2}^6.
		\]
		\end{enumerate}
	\end{prop}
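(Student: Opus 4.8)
The plan is to prove both parts by the mechanism used in the proof of \cref{prop-ext4}: feed the Hill--Hopkins--Ravenel differential $d_{13}(u_{2\sigma}^2) = N(\ot_2)a_{\lambda}^{3}a_{\sigma}^{7}$ of \cref{thm-HHRd} into \cref{prop-transfer}. For part (1) I would apply \cref{prop-transfer}(i) to $y := N(\ot_2)^2 a_{\lambda}^6 a_{\sigma}^6$, which lies in bidegree $(t-s,s) = (6,18)$. It is a nontrivial permanent cycle, being a product of the permanent cycles $N(\ot_2)$, $a_{\lambda}$, $a_{\sigma}$ and being nonzero on $E_\infty$ by \cref{fig-E11E16}. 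Since $u_{2\sigma}^2$ supports no differential shorter than $d_{13}$ (again \cref{thm-HHRd}), the class $N(\ot_2)a_{\lambda}^3 u_{2\sigma}^2$ survives to $E_{13}$ and is nonzero there (otherwise $y$ itself would already vanish); multiplying the $d_{13}$ above by the permanent cycle $N(\ot_2)a_{\lambda}^3$ gives
\[
d_{13}\bigl(N(\ot_2)a_{\lambda}^3 u_{2\sigma}^2\bigr) = N(\ot_2)^2 a_{\lambda}^6 a_{\sigma}^7 = a_{\sigma}\cdot y.
\]
Hence $a_{\sigma}y = 0$ on $E_{14}$, so \cref{prop-transfer}(i) shows that $y$ receives an exotic transfer of filtration jump at most $12$, whose source lies in bidegree $(6,6)$ on the $C_2$-level.

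The remaining task for part (1) is to identify that source as $\Tr_2^4(\ot_2\gamma\ot_2 a_{\sigma_2}^6)$. By \cref{thm-C_2}, the classes surviving to $E_{14}$ in bidegree $(6,6)$ on the $C_2$-level are, modulo the relations $\ov_1,\ov_2$, spanned by $\ot_1^6 a_{\sigma_2}^6$, $\ot_1^3\ot_2 a_{\sigma_2}^6$ and $\ot_2^2 a_{\sigma_2}^6$, and in this range $\ot_2\gamma\ot_2 a_{\sigma_2}^6 = (\ot_2^2+\ot_1^3\ot_2)a_{\sigma_2}^6$. A source of an exotic transfer must have vanishing honest transfer on $E_{14}$; since $\Res_2^4(N(\ot_2)a_{\lambda}^3) = \ot_2\gamma\ot_2 a_{\sigma_2}^6$ we get $\Tr_2^4(\ot_2\gamma\ot_2 a_{\sigma_2}^6) = 2N(\ot_2)a_{\lambda}^3$ from \cref{prop-cohomological}, and a short computation with the residual Weyl action together with the coefficient Green functor of \cref{prop-C_4coe} isolates this as the unique combination of the three classes whose transfer dies on $E_{14}$. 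In particular the jump is exactly $12$.

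For part (2) I would follow the template of \cref{prop-ext4}(2) and use \cref{prop-transfer}(ii). Using the gold relation $u_{\lambda}a_{\sigma}^2 = 2u_{2\sigma}a_{\lambda}$, one rewrites $2u_{\lambda}^3 a_{\lambda}^{-3} = a_{\sigma}\cdot\bigl(\tfrac{u_{\lambda}^4}{u_{2\sigma}}a_{\lambda}^{-4}a_{\sigma}\bigr)$, and combining \cref{prop-d13}, \cref{thm-HHRd} and the Leibniz rule (the terms involving $u_{\lambda}^4 a_{\sigma}^7$ drop out since $(u_{\lambda}a_{\sigma}^2)^3 = 8u_{2\sigma}^3 a_{\lambda}^3 = 0$) one obtains
\[
d_{13}\Bigl(\tfrac{u_{\lambda}^4}{u_{2\sigma}}a_{\lambda}^{-4}a_{\sigma}\Bigr) = 2N(\ot_2)u_{2\sigma}a_{\lambda}^3 = \Tr_2^4(\ot_2\gamma\ot_2 a_{\sigma_2}^6),
\]
after checking as usual that this is the first nontrivial differential on that class. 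It follows that $\Tr_2^4(\ot_2\gamma\ot_2 a_{\sigma_2}^6) = 0$ on $E_{14}$, so by \cref{prop-transfer}(ii) the class $\ot_2\gamma\ot_2 a_{\sigma_2}^6$ receives an exotic restriction of jump at most $12$ from a class in bidegree $(6,-6)$ on the $C_4$-level; since $u_{\lambda}^3 a_{\lambda}^{-3}$ supports a $d_3$ (as noted after \cref{prop-d3}), the only candidate is $2u_{\lambda}^3 a_{\lambda}^{-3}$, whose honest restriction $2u_{2\sigma_2}^3 a_{\sigma_2}^{-6}$ vanishes in the $C_2$-coefficient ring $\FF_2[u_{2\sigma_2},a_{\sigma_2}^{\pm1}]$ of \cref{prop-C_2coe}. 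This identifies the source and shows the jump is exactly $12$; combined with part (1) it yields the hidden extension $2(2u_{\lambda}^3 a_{\lambda}^{-3}) = N(\ot_2)^2 a_{\lambda}^6 a_{\sigma}^6$ in $\pi_6$, parallel to the remark after \cref{prop-ext4}.

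The main obstacle is the bookkeeping inside the identification steps: one must confirm, on the precise finite pages, that every honest transfer or restriction and every lower-jump exotic operation of the classes involved vanishes, and one must separate the several candidate sources in bidegrees $(6,6)$ and $(6,-6)$ from one another. This rests on the $C_2$-computation \cref{thm-C_2}, the Frobenius and projection formulas, the gold relation, and the full Green functor structure of \cref{prop-C_4coe}; each ingredient is routine, but assembling them without error is where the work lies.
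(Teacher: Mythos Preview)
Your proposal is correct and follows essentially the same approach as the paper: both parts feed the $d_{13}$-differentials from \cref{thm-HHRd} and \cref{prop-d13} into \cref{prop-transfer}, using the gold relation to rewrite $2u_{\lambda}^3a_{\lambda}^{-3}$ as an $a_{\sigma}$-multiple of $\tfrac{u_{\lambda}^4}{u_{2\sigma}}a_{\lambda}^{-4}a_{\sigma}$. The paper is considerably terser---for part (1) it gives a single sentence and does not spell out the identification of the source class at $(6,6)$---so your added bookkeeping (isolating $\ot_2\gamma\ot_2 a_{\sigma_2}^6$ among the candidate $C_2$-classes, and checking that the Leibniz cross-term vanishes) supplies detail the paper leaves implicit.
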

	
	\begin{proof}
	    The proof is similar to that of \cref{prop-ext4}. The exotic transfer comes from applying \cref{prop-transfer} to the $d_{13}$-differential
		\[
		d_{13}\left(N(\ot_2)u_{2\sigma}^2a_{\lambda}^3\right) = N(\ot_2)^2a_{\lambda}^6a_{\sigma}^7
		\]
		in \cref{thm-HHRd}.
		
		For the exotic restriction, first note that $2u_{\lambda}^3a_{\lambda}^{-3} = \left(\frac{u_{\lambda}^4}{u_{2\sigma}}a_{\lambda}^{-4}a_{\sigma}\right)a_{\sigma}$ by the gold relation.  We will prove that the class $\frac{u_{\lambda}^4}{u_{2\sigma}}a_{\lambda}^{-4}a_{\sigma}$ supports a $d_{13}$-differential.  To do so, we multiply this class by $u_{2\sigma}^2a_{\lambda}^4$. After multiplying the differential in Proposition~\ref{prop-d13} by $a_\sigma$, we have
		\[
		d_{13}(u_{\lambda}^4 u_{2\sigma}a_{\sigma}) = 2N(\ot_2)u_{2\sigma}^3a_{\lambda}^7.
		\]
		As by the gold relation $u_{\lambda}^2$ kills $d_{13}(u_{2\sigma}^2a_{\lambda}^4)$, we can use the Leibniz rule to obtain the $d_{13}$-differential 
		\[d_{13}\left(\frac{u_{\lambda}^4}{u_{2\sigma}}a_{\lambda}^{-4}a_{\sigma}\right) = 2N(\ot_2)u_{2\sigma}a_\lambda^3.\]
		
		On the $E_2$-page, $2N(\ot_2)u_{2\sigma} a_{\lambda}^3 = Tr_2^4(\ot_2 \gamma\ot_2a_{\sigma_2}^6)$. By \cref{prop-transfer}, $\ot_2 \gamma\ot_2a_{\sigma_2}^6$ must receive an exotic restrion of filtration jump $12$, and the only possible source is $2u_{\lambda}^3a_{\lambda}^{-3}$ (see \cref{fig-E11E16}).
	\end{proof}
	
	In \cref{fig-E11E16}, they are the exotic restriction from the class $(6,-6)$ to $(6,6)$ and the exotic transfer from $(6,6)$ to $(6,18)$. Since these extensions involve elements containing $\ot_2$, we expect similar extensions in the homotopy fixed points spectral sequence of $E_4^{hC_4}$ by \cite[Theorem~1.1]{BHSZ}. 
	
    \begin{prop}\label{prop-d13-2}
	\[
	d_{13}(u_{\lambda}^4a_{\sigma}) = N(\ot_2 + \ot_1^2\gamma \ot_1 + \gamma\ot_2)u_{2\sigma}^2a_{\lambda}^7.
	\]
    \end{prop}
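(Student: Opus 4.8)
The plan is to prove this differential in three moves: show that $u_{\lambda}^{4}a_{\sigma}$ survives to $E_{13}$, check that $N(\ov_2)u_{2\sigma}^{2}a_{\lambda}^{7}$ (writing $\ov_2=\ot_2+\ot_1^2\gamma\ot_1+\gamma\ot_2$) is a nonzero class on $E_{13}$, and then establish the $d_{13}$ connecting them. The feature that makes this harder than the other differentials in this section is that $\Res_2^4 a_{\sigma}=0$, so naturality along the restriction $C_4\to C_2$ carries no information; the key will be the norm structure on the localized slice spectral sequence from \cref{exam:locslice} together with the gold relation.

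For the first two moves: $d_3(u_{\lambda})=\Tr_2^4(\ot_1 a_{\sigma_2}^3)$ of \cref{prop-d3} is a transfer, hence killed by $a_{\sigma}$ by \cref{prop-C_4coe}(3), so the Leibniz rule gives $d_3(u_{\lambda}^{4}a_{\sigma})=u_{\lambda}^{3}a_{\sigma}\,d_3(u_{\lambda})=0$. By \cref{prop-d5}, $d_5(u_{\lambda}^{4})=2u_{\lambda}^{2}\,d_5(u_{\lambda}^{2})=2N(\ot_1)u_{\lambda}^{3}a_{\lambda}^{2}a_{\sigma}=0$ since $2a_{\sigma}=0$, hence $d_5(u_{\lambda}^{4}a_{\sigma})=0$. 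For $d_7$, $d_9$ and $d_{11}$, inspection of \cref{fig-E2E4} shows that after the already-established differentials the only classes of the correct bidegree to receive a differential from $u_{\lambda}^{4}a_{\sigma}$ are transfers, which are again annihilated by $a_{\sigma}$; so $u_{\lambda}^{4}a_{\sigma}$ survives to $E_{13}$. On the target side, $N(\ov_2)$ is a permanent cycle (it is the norm of a polynomial in the permanent cycles $\ot_i,\gamma\ot_i$), $a_{\lambda}$ is a permanent cycle, and $u_{2\sigma}^{2}$ is a $d_r$-cycle for $r<13$ by \cref{thm-HHRd}; the product $N(\ov_2)u_{2\sigma}^{2}a_{\lambda}^{7}$ is nonzero on $E_2$, and the same bidegree bookkeeping shows that no differential of length $<13$ kills it.

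For the differential itself — the main obstacle — the idea is to propagate the Hill–Hopkins–Ravenel $C_2$-level differential $d_7(u_{2\sigma_2}^{2})=\ov_2 a_{\sigma_2}^{7}$ of \cref{thm-C_2} to the $C_4$-level via the norm $N_2^4$. One uses the norm–restriction formula $\Res_2^4 N_2^4(u_{2\sigma_2})=u_{2\sigma_2}^{2}=\Res_2^4 u_{\lambda}^{2}$ together with the explicit description of $\pi_{\star}^{C_4}(a_{\lambda}^{-1}H\UZ)$ in \cref{prop-C_4coe} to express powers of $N_2^4(u_{2\sigma_2})$ in terms of the $u_{\lambda}$-, $u_{2\sigma}$- and $a_{\sigma}$-classes, the relation $N_2^4(a_{\sigma_2})=a_{\lambda}$, and the gold relation $u_{\lambda}a_{\sigma}^{2}=2u_{2\sigma}a_{\lambda}$ to account for the extra $a_{\sigma}$ in the source. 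Carrying the $C_2$-differential across these identifications produces a differential out of $u_{\lambda}^{4}a_{\sigma}$ whose target is $u_{2\sigma}^{2}\cdot N_2^4(\ov_2 a_{\sigma_2}^{7})=N(\ov_2)u_{2\sigma}^{2}a_{\lambda}^{7}$, and its length must be exactly $13$ because all shorter differentials out of $u_{\lambda}^{4}a_{\sigma}$ vanish by the previous paragraph while the target lies $13$ filtrations higher. I expect the genuinely delicate step to be the $\ZZ/4$-arithmetic and gold-relation bookkeeping needed to pin down precisely which class on $E_{13}$ the normed differential produces; should that direct computation prove awkward, a possible alternative is to argue that $N(\ov_2)u_{2\sigma}^{2}a_{\lambda}^{7}$ cannot be a permanent cycle — its image under the comparison map of \cref{thm:comparison} to $\SliceSS^{C_2}(N_1^2 H\FF_2)$ vanishes, since $\ov_2$ maps to the Milnor conjugation relation in $\mathcal{A}_*$ — so that, by the bidegree count above, $u_{\lambda}^{4}a_{\sigma}$ is the only class on $E_{13}$ available to kill it.
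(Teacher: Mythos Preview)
Your overall strategy is right --- the norm structure is indeed the key --- but the third move contains a genuine gap. \cref{prop-norm} does not say that the norm of a differential is a differential; it says only that if a class represents zero on $E_{r+2}(G/H)$ then its norm represents zero on $E_{rh+2}(G/G)$. So there is no mechanism for ``carrying the $C_2$-differential across'' to obtain a differential whose \emph{source} is (some identification of) $N_2^4(u_{2\sigma_2}^2)$ and whose \emph{target} is $N_2^4(\ov_2 a_{\sigma_2}^7)$. Moreover, the class $N_2^4(u_{2\sigma_2}^2)$ lives in degree $4+4\sigma-4\lambda$ with filtration $0$, not in the degree of $u_\lambda^4a_\sigma$; the gold relation alone will not convert one into the other, and the promised ``$\ZZ/4$-arithmetic and gold-relation bookkeeping'' is where the argument actually breaks.

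The paper applies \cref{prop-norm} to the \emph{target} $\ov_2 a_{\sigma_2}^7$ instead. Since this class is zero on $E_8$ of the $C_2$-spectral sequence, its norm $N(\ov_2)a_\lambda^7$ must be zero on $E_{14}$, hence is hit by some $d_r$ with $r\le 13$. A restriction argument bounds $r\ge 7$. The crucial step you are missing is then a \emph{multiplication trick}: if $r<13$, multiply the purported differential $d_r(x)=N(\ov_2)a_\lambda^7$ by $u_{2\sigma}^2$ (a $d_r$-cycle for $r<13$ by \cref{thm-HHRd}) to get $d_r(xu_{2\sigma}^2)=N(\ov_2)u_{2\sigma}^2a_\lambda^7$; but for degree reasons nothing below filtration $13$ can hit this class, a contradiction. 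Hence $r=13$, and the only possible source in that bidegree is $u_\lambda^4a_\sigma$. This replaces your attempt to identify the normed source; once you know the target must die by exactly a $d_{13}$, the source is forced. Your alternative via \cref{thm:comparison} would at best show that $N(\ov_2)u_{2\sigma}^2a_\lambda^7$ cannot survive to a nonzero homotopy class, which still leaves the length undetermined; the $u_{2\sigma}^2$-multiplication argument is what pins it down.
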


    \begin{proof}
    Consider the $C_2$-differential
    $$d_7(u_{2\sigma_2}^2) = \ot_2^{C_2}a_{\sigma_2}^7.$$
    Applying \cref{prop-norm} to its target, we see that its norm $N(\ot_2 + \ot_1^2 \gamma \ot_1 + \gamma(\ot_2))a_{\lambda}^7$ must be killed by a differential of length $13$ or shorter. Since the restriction of this element is killed by $d_7$, it must be killed by a differential of length between $7$ and $13$. Since $u_{2\sigma}^2$ supports a $d_{13}$, if $d_{r}(x) = N(\ot_2 + \ot_1^2 \gamma \ot_1 + \gamma(\ot_2))a_{\lambda}^7$ happens for $r < 13$, one can multiply both sides by $u_{2\sigma}^2$. However, for degree reasons $ N(\ot_2 + \ot_1^2\gamma \ot_1 + \gamma\ot_2)u_{2\sigma}^2a_{\lambda}^7$ cannot be hit by a differential shorter than a $d_{13}$. Thus this element and hence also $N(\ot_2 + \ot_1^2 \gamma \ot_1 + \gamma(\ot_2))a_{\lambda}^7$ must be hit by a $d_{13}$ and the only possible source is $u_{\lambda}^4a_{\sigma}$.
    \end{proof}

	On the integer graded page, this contributes to the $d_{13}$-differential supported by the class $N(\ot_1)u_\lambda^4a_\lambda^{-3}a_\sigma$ at $(9,-5)$.
	
	The last differential in our range is a $d_{15}$-differential. 
	
	\begin{prop}\label{prop-d15}
	We have the $d_{15}$-differential
		\[
		d_{15}(2u_{\lambda}^4) = \Tr_2^4(\ot_3^{C_2}a_{\sigma_2}^{15}).
		\]
	\end{prop}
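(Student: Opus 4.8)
The plan is to obtain this as the $k=3$ instance of the mechanism already used for \cref{prop-d7} and \cref{prop-d7-2}: transport the relevant $C_2$-level differential up to the $C_4$-level along the transfer. The $C_2$-input is supplied by \cref{thm-C_2}(2) in the case $k=3$, namely
\[
d_{15}\bigl(u_{2\sigma_2}^4\bigr)\;=\;\ot_3^{C_2}\,a_{\sigma_2}^{15}\;=\;\Bigl(\textstyle\sum_{i=0}^{3}\ot_{3-i}^{2^i}\gamma\ot_i\Bigr)a_{\sigma_2}^{15},
\]
together with the fact, also from that theorem, that $u_{2\sigma_2}^4=u_{2\sigma_2}^{2^{2}}$ is a nonzero class surviving to $E_{15}$ whose first nontrivial differential is the $d_{15}$ above.

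First I would use that $\Res_2^4(u_\lambda^4)=u_{2\sigma_2}^4$ by \cref{prop-C_4coe}(2), and that every Mackey functor occurring in the localized slice spectral sequence is a module over $\UZ$ and hence cohomological, so that \cref{prop-cohomological} gives
\[
\Tr_2^4\bigl(u_{2\sigma_2}^4\bigr)\;=\;\Tr_2^4\Res_2^4\bigl(u_\lambda^4\bigr)\;=\;2u_\lambda^4
\]
on every page on which $u_{2\sigma_2}^4$ is a nonzero class. Since the localized slice spectral sequence is a spectral sequence of Mackey functors (\cref{thm:invertingAclassE2page}), each transfer $\Tr_2^4\colon E_r^{*,*}(C_4/C_2)\to E_r^{*,*}(C_4/C_4)$ commutes with $d_r$. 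Combining these facts: for $r<15$ one gets $d_r(2u_\lambda^4)=\Tr_2^4\bigl(d_r(u_{2\sigma_2}^4)\bigr)=0$, so $2u_\lambda^4$ survives to $E_{15}$, and then
\[
d_{15}\bigl(2u_\lambda^4\bigr)\;=\;\Tr_2^4\bigl(d_{15}(u_{2\sigma_2}^4)\bigr)\;=\;\Tr_2^4\bigl(\ot_3^{C_2}\,a_{\sigma_2}^{15}\bigr),
\]
which is the assertion. As in the proof of \cref{prop-d7}, one may simplify the target modulo classes killed by the transfer: twice a transfer of a $2$-torsion class vanishes, so $\Tr_2^4(\ot_3+\gamma\ot_3)=0$ and hence $\Tr_2^4(\ot_3^{C_2}a_{\sigma_2}^{15})=\Tr_2^4\bigl((\ot_2^2\gamma\ot_1+\ot_1^4\gamma\ot_2)a_{\sigma_2}^{15}\bigr)$.

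The part the transfer argument does not settle, and which I expect to be the main obstacle, is nontriviality on $E_{15}$: one must check that $2u_\lambda^4$ is not itself hit by a $d_r$ with $r<15$, and dually that $\Tr_2^4(\ot_3^{C_2}a_{\sigma_2}^{15})$ is a nonzero class on $E_{15}$ rather than the target of an earlier differential. I expect both to follow by degree bookkeeping against the $E_r$-pages already computed in \cref{thm-C_2} and in the earlier parts of this section (compare \cref{fig-E11E16}); for instance $\Res_2^4\Tr_2^4(\ot_3^{C_2}a_{\sigma_2}^{15})=(\ot_3^{C_2}+\gamma\ot_3^{C_2})a_{\sigma_2}^{15}$ is nonzero already on $E_2$, which controls the target, and the source lies in a degree where all earlier differentials have been accounted for. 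Granting this, the existence and length of the $d_{15}$ are forced by the transfer computation above.
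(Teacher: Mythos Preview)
Your argument is correct and follows essentially the same route as the paper: both use the $C_2$-level differential $d_{15}(u_{2\sigma_2}^4)=\ot_3^{C_2}a_{\sigma_2}^{15}$ from \cref{thm-C_2}, the identification $\Tr_2^4(u_{2\sigma_2}^4)=2u_\lambda^4$, and naturality of the transfer with respect to differentials. The paper phrases it target-first (the transfer of the target must die by length $\le 15$, and by degree reasons the only source is $2u_\lambda^4$), while you phrase it source-first, but the content is identical; your explicit acknowledgment of the residual nontriviality checks is, if anything, more careful than the paper's ``by naturality and degree reasons''.
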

	\begin{proof}
	In the the $C_2$-spectral sequence, we have the $d_{15}$-differential 
	\[d_{15}(u_{2\sigma_2}^4)= \ot_3^{C_2}a_{\sigma_2}^{15}.\]
	Applying the transfer shows that the class $\Tr_2^4(\ot_3^{C_2}a_{\sigma_2}^{15})$ must be killed by a differential of length at most 15.  By naturality and degree reasons, the only possible source is the class $2u_{\lambda}^4 = \Tr_2^4(u_{2\sigma_2}^4)$. 
	\end{proof}
	
	In \cref{fig-E11E16}, this contributes to the $d_{15}$-differential supported by the class $2u_{\lambda}^4a_{\lambda}^{-4}$ at $(8,-8)$ (the $d_{15}$-differential supported by the class at $(9,-7)$ is a $C_2$-level differential). 
	
	These are all the differentials and extensions in the first $8$ stems. Now we will discuss in detail the generators and relations in degree $(8,8)$ after each differential in order to illustrate the technical aspect of tracking differentials in the localized slice spectral sequences.
	
	\subsection{The classes at $(8,8)$}\label{sec-88} Since our discussion here focuses on a single degree, we will omit the powers of $a_V$ and $u_V$ classes on each monomial, except in formulas of differentials. That is, we omit $u_{2\sigma}^2a_{\lambda}^4$ on $C_4$-classes and $a_{\sigma_2}^8$ on $C_2$-classes.
	
	On the $E_3$-page, there are 2 $\circ$ and $16$ $\hat{\bullet}$.  The 2 $\circ$ are $N(\ot_1)^4$ and $N(\ot_2)N(\ot_1)$.  The 16 $\hat{\bullet}$ are 
	\begin{enumerate}
	    \item $\Tr_2^4(\ot_1^8)$, $\Tr_2^4(\ot_1^7 \gamma \ot_1)$, $\Tr_2^4(\ot_1^6 \gamma \ot_1^2)$, $\Tr_2^4(\ot_1^5 \gamma \ot_1^3)$;
	    \item $\Tr_2^4(\ot_2 \ot_1^5)$, $\Tr_2^4(\ot_2\ot_1^4 \gamma \ot_1)$, $\Tr_2^4(\ot_2\ot_1^3 \gamma \ot_1^2)$, $\Tr_2^4(\ot_2\ot_1^2 \gamma \ot_1^3)$, $\Tr_2^4(\ot_2\ot_1 \gamma \ot_1^4)$, $\Tr_2^4(\ot_2\gamma \ot_1^5)$;
	    \item $\Tr_2^4(\ot_2^2\ot_1^2)$, $\Tr_2^4(\ot_2^2\ot_1 \gamma \ot_1)$, $\Tr_2^4(\ot_2^2 \gamma \ot_1^2)$;
	    \item $\Tr_2^4(\ot_2 \gamma \ot_2 \ot_1^2)$; 
	    \item $\Tr_2^4(\ot_3 \ot_1)$, $\Tr_2^4(\ot_3 \gamma \ot_1)$.
	\end{enumerate}
	
	At the $C_2$-level, the $d_3$-differentials identifies $\ot_1$ with $\gamma \ot_1$.  At the $C_4$-level, the effect of the $d_3$-differentials are as follows: 
	\begin{enumerate}
	    \item All the classes in (1) are identified with $2 N(\ot_1)^4$;
	    \item all the classes in (2) are identified to be the same;
	    \item all the classes in (3) are identified to be the same; 
	    \item the class $\Tr_2^4(\ot_2 \gamma \ot_2 \ot_1^2)$ is identified with $2N(\ot_2)N(\ot_1)$;
	    \item all the classes in (5) are identified to be the same. 
	\end{enumerate}
	Therefore after the $d_3$-differentials, there are $2$ $\circ$, generated by $N(\ot_1)^4$ and $N(\ot_2)N(\ot_1)$, and $3$ $\hat{\bullet}$, generated by $\Tr_2^4(\ot_2 \ot_1^5)$, $\Tr_2^4(\ot_2^2\ot_1^2)$, and $\Tr_2^4(\ot_3 \ot_1)$. 
	
	On the $E_5$-page, by Proposition~\ref{prop-d5}, we have the following two $d_5$-differentials: 
	\begin{eqnarray*}
	d_5(N(\ot_1)^3u_\lambda^2 u_{2\sigma}a_\lambda a_\sigma) &=& 2 N(\ot_1)^4 u_{2\sigma}^2 a_\lambda^4, \\
	d_5(N(\ot_2)u_\lambda^2 u_{2\sigma}a_\lambda a_\sigma) &=& 2 N(\ot_2) N(\ot_1) u_{2\sigma}^2 a_\lambda^4. 
	\end{eqnarray*}
	It follows that after the $d_5$-differentials, the 2 $\circ$ become 2 $\JJJ$, with the same generators. In total, there are 2 $\JJJ$ and 3 $\hat{\bullet}$ at $(8,8)$ after the $d_5$-differentials (with the same generator as before). 
	
	Now we will discuss the $d_7$-differentials.  At $(9,1)$, there are two classes on the $E_7$-page: a $\hat{\bullet}$ generated by $\Tr_2^4(\ot_2 \ot_1^2)$ and a $\Ov{\bullet}$ generated by $\ot_1^5$ (it only exists on the $C_2$-level).  Since $\ov_2 = \ot_2 + \ot_1^3 + \gamma\ot_2$, the $d_7$-differential on the class $\Tr_2^4(\ot_2 \ot_1^2)$ hits the class 
	\[\Tr_2^4(\ot_2 \ot_1^2 (\ot_2 + \ot_1^3 + \gamma \ot_2)) = \Tr_2^4(\ot_2^2 \ot_1^2) + \Tr_2^4(\ot_2 \ot_1^5) + \Tr_2^4(\ot_2 \gamma \ot_2 \ot_1^2) = \Tr_2^4(\ot_2^2 \ot_1^2) + \Tr_2^4(\ot_2 \ot_1^5).\]
	In other words, it identifies the classes $\Tr_2^4(\ot_2^2 \ot_1^2)$ and $\Tr_2^4(\ot_2 \ot_1^5)$.
	
	The $d_7$-differential on the class $\ot_1^5$ hits the class 
	\begin{eqnarray*}
	\ot_1^5(\ot_2 + \gamma \ot_2 + \ot_1^3) &=& \ot_2 \ot_1^5 + \gamma \ot_2 \ot_1^5 + \ot_1^8 \\
	&=& \Res_2^4(\Tr_2^4(\ot_2 \ot_1^5)) + \Res_2^4(N(\ot_1)^4) \\
	&=& \Res_2^4(\Tr_2^4(\ot_2^2 \ot_1^2)) + \Res_2^4(N(\ot_1)^4).
	\end{eqnarray*}
	As Mackey functors, we have
	\[
	\hat{\bullet} \overline{\bullet} \xrightarrow{d_7} 2\JJJ 3\hat{\bullet} \twoheadrightarrow \bullet \JJJ 2\hat{\bullet}.
	\]
	In the quotient we need to choose our generators carefully: The $\bullet$ is generated by $N(\ot_1)^4 + \Tr_2^4(\ot_2^2\ot_1^2)$, because the image of $\Ov{\bullet}$ identifies the restriction of $N(\ot_1)^4$ with the restriction of $\Tr_2^4(\ot_2^2 \ot_1^2)$. Therefore their sum is the unique element in $C_4$-level that has trivial restriction. The $\JJJ$ is generated by $N(\ot_2)N(\ot_1)$, as it still has nontrivial restriction. The two $\hat{\bullet}$ are generated by $\Tr_2^4(\ot_2^2 \ot_1^2)$ and $\Tr_2^4(\ot_3 \ot_1)$. 
	
	The next differential is a $d_{13}$-differential supported by the class $N(\ot_1)u_{\lambda}^4a_{\lambda}^{-3}a_\sigma$ at $(9,-5)$.  By Proposition~\ref{prop-d13-2}, the target of this differential is the class $N(\ot_1)N(\ot_2 + \ot_1^3 + \gamma \ot_2)u_{2\sigma}^2a_{\lambda}^4$.  The restriction of this class is 
	\[\ot_1 \gamma \ot_1(\ot_2 + \ot_1^3 + \gamma \ot_2)(\gamma \ot_2 + \gamma \ot_1^3 - \ot_2),\]
	which, after the $d_3$-differentials, is 
	\[\ot_2^2 \ot_1^2 + \gamma \ot_2^2 \ot_1^2 + \ot_1^8 = \Res_2^4(\Tr_2^4(\ot_2^2 \ot_1^2)) + \Res_2^4(N(\ot_1)^4).\]
	As we have discussed above, this class is killed by the $d_7$-differentials supported by the class $\ot_1^5$. It follows that the target of the $d_{13}$-differential is the generator of $\bullet$, the unique nontrivial element that restricts to 0. 
	
	There is another possible $d_{13}$-differential supported by some classes at $(8,8)$ that is induced by the differential
	\[
	d_{13}(u_{2\sigma}^2) = N(\ot_2)a_{\lambda}^3a_{\sigma}^7.
	\]
	However, in $(8,8)$ every monomial containing $u_{2\sigma}^2$ also contains $N(\ot_1)$. By \cite[Corollary~9.13]{HHR},
	\[
	d_{13}(N(\ot_1)u_{2\sigma}^2) = N(\ot_1)N(\ot_2)a_{\lambda}^3a_{\sigma}^7 = d_{5}(N(\ot_2)u_{2\sigma}a_{\lambda}^2a_{\sigma}^4).
	\]
	This makes all elements containing $u_{2\sigma}^2$ in $(8,8)$ $d_{13}$-cycles.
	
	In summary, after the $d_{13}$-differentials, we have two $\hat{\bullet}$, generated by $\Tr_2^4(\ot_2^2 \ot_1^2)$ and $\Tr_2^4(\ot_3 \ot_1)$, and $\JJJ$, generated by $N(\ot_2)N(\ot_1)$. 
	
	Our final differential is a $d_{15}$-differential on the $C_2$-level supported by the class at $(9,-7)$: 
	\begin{eqnarray*}
	d_{15}(\ot_1u_{2\sigma_2}^4a_{\sigma_2}^{-7}) &=& \ot_1(\ot_3 + \ot_2^2 \ot_1 + \gamma\ot_2\ot_1^4 + \gamma\ot_3)a_{\sigma_2}^8 \\
	&=& (\ot_3 \ot_1 + \gamma \ot_3 \ot_1)a_{\sigma_2}^8 + (\ot_2^2 \ot_1^2 + \gamma \ot_2 \ot_1^5) a_{\sigma_2}^8 \\
	&=& \Tr_2^4(\ot_3 \ot_1) + (\ot_2^2 \ot_1^2 \gamma \ot_2^2 \ot_1^2 + \ot_2 \gamma \ot_2 \ot_1^2) a_{\sigma_2}^8 \\ 
	&=& \Tr_2^4(\ot_3 \ot_1) + \Tr_2^4(\ot_2^2 \ot_1^2) + \Tr_2^4 \Res_2^4 (N(\ot_2)N(\ot_1)).
	\end{eqnarray*}
	
	The map in Mackey functors is 
	\[
	\overline{\bullet} \xrightarrow{d_{15}} \JJJ 2 \hat{\bullet }\twoheadrightarrow \bullet 2 \hat{\bullet}.
	\]
	On the $E_\infty$-page, $(8,8)$ is given by $\bullet 2 \hat{\bullet}$. The generators for the two $\hat{\bullet}$ are $\Tr_2^4(\ot_2^2 \ot_1^2)$ and $\Tr_2^4(\ot_3 \ot_1)$.  The generator for $\bullet$ is $\Tr_2^4(\ot_2^2 \ot_1^2)+\Tr_2^4(\ot_3 \ot_1) + N(\ot_2)N(\ot_1)$. 
	
	\subsection{A family of permanent cycles}
	We will now present families of nontrivial permanent cycles in $a_{\lambda}^{-1}\SliceSS(\BPfour)$.  These families will be used in the proof of \cref{thm:TateDiff}.
	
		\begin{lem}\label{lem:sigmalambdacovering}
		In $\pi_\bigstar^{C_4}a_{\sigma}^{-1}\mathbb{S}$, the element $a_{\lambda}$ is invertible.
	\end{lem}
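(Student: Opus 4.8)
The plan is to observe that, for $G=C_4$, inverting $a_\sigma$ has the effect of passing to the geometric fixed points $\Phi^{C_4}$, under which $a_\lambda$ becomes the Euler class of $\lambda^{C_4}$, and $\lambda^{C_4}=0$. Concretely: by \cref{exam:main} we have $a_\sigma^{-1}\mathbb{S}\simeq S^{\infty\sigma}$, and $S^{\infty\sigma}$ is a model for $\widetilde{E}\mathcal{F}[C_4]=\widetilde{E}\mathcal{P}$ (both are characterized as having $C_4$-fixed points $S^0$ and contractible proper fixed points; here $\mathcal{P}=\{e,C_2\}$ is the family of proper subgroups, cf.\ \cref{sec:GeometricFixedPoints}). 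Unwinding the definition of $a_\lambda$-local spectra from \cref{sec:MultLoc}, the assertion to be proved is equivalent to the statement that the canonical map $a_\lambda\wedge\mathrm{id}\colon \widetilde{E}\mathcal{P}\to S^\lambda\wedge\widetilde{E}\mathcal{P}$ is an equivalence of $C_4$-spectra.

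I would verify this by testing on geometric fixed points $\Phi^H$ for $H\subseteq C_4$, which detect equivalences of $C_4$-spectra. For $H=e$ and $H=C_2$ both sides vanish: $\Phi^H(\widetilde{E}\mathcal{P})\simeq\ast$ since $H\in\mathcal{P}$, and this is preserved after smashing with the finite representation sphere $S^\lambda$, so the map is an equivalence there for trivial reasons. For $H=C_4$, idempotence of $\widetilde{E}\mathcal{P}=\widetilde{E}\mathcal{F}[C_4]$ under smashing gives a natural identification $\Phi^{C_4}(\widetilde{E}\mathcal{P}\wedge(-))\simeq\Phi^{C_4}(-)$, under which $a_\lambda\wedge\mathrm{id}$ becomes $\Phi^{C_4}(a_\lambda)\colon \Phi^{C_4}\mathbb{S}\to\Phi^{C_4}S^\lambda$. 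Since $\Phi^{C_4}$ is symmetric monoidal and carries the Euler class $a_\lambda$ to the Euler class $a_{\lambda^{C_4}}$, and $\lambda^{C_4}=0$ because $\lambda$ is the faithful rotation representation of $C_4$, this map is $a_0=\mathrm{id}_{S^0}$, hence an equivalence. This completes the verification, so $a_\lambda$ is invertible on $\pi_\bigstar^{C_4}a_\sigma^{-1}\mathbb{S}$.

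I do not anticipate a real obstacle; the only point that needs a little care is the bookkeeping translating "$a_\lambda$ invertible on $\pi_\bigstar^{C_4}a_\sigma^{-1}\mathbb{S}$" into "$\widetilde{E}\mathcal{P}\wedge a_\lambda$ is an equivalence", which is immediate from the definitions in \cref{sec:MultLoc}. As an independent check one can argue entirely in terms of families: writing $\mathcal{F}_V=\{H\subseteq C_4 : V^H\neq 0\}$ one has $S^{\infty V}\simeq\widetilde{E}\mathcal{F}_V$ and $\widetilde{E}\mathcal{F}\wedge\widetilde{E}\mathcal{F}'\simeq\widetilde{E}(\mathcal{F}\cup\mathcal{F}')$, and since $\mathcal{F}_\lambda=\{e\}$ is contained in $\mathcal{F}_\sigma=\{e,C_2\}$, the localization map $a_\sigma^{-1}\mathbb{S}\simeq\widetilde{E}\mathcal{F}_\sigma\to\widetilde{E}(\mathcal{F}_\sigma\cup\mathcal{F}_\lambda)\simeq\widetilde{E}\mathcal{F}_\sigma\simeq a_\lambda^{-1}a_\sigma^{-1}\mathbb{S}$ is the identity, which is exactly the claim.
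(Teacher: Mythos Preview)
Your proof is correct, but it takes a genuinely different route from the paper's. The paper gives a one-line geometric argument: there is a $C_4$-equivariant map $\theta\colon S^\lambda \to S^{2\sigma}$ (the $2$-fold branched cover, collapsing the rotation-by-$\pi/2$ action to rotation-by-$\pi$) with $\theta\circ a_\lambda = a_\sigma^2$; since $a_\sigma^2$ is a unit after inverting $a_\sigma$, so is $a_\lambda$. Your approach instead identifies $a_\sigma^{-1}\mathbb{S}\simeq \widetilde{E}\mathcal{P}$ and checks that smashing with $a_\lambda$ is an equivalence on all geometric fixed points, or equivalently observes the containment of families $\mathcal{F}_\lambda\subset\mathcal{F}_\sigma$. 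The paper's argument is shorter and produces an explicit inverse (namely $\theta\cdot a_\sigma^{-2}$), which can be useful for tracking elements; your family-containment argument is more conceptual and immediately generalizes to any pair of representations $V,W$ with $\{H:V^H\neq 0\}\subset\{H:W^H\neq 0\}$.
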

	
	\begin{proof}
		We have the following commutative diagram of pointed $C_4$-spaces
		\[
		\xymatrix{
			S^0 \ar[r]^{a_{\lambda}} \ar[rd]_{a_{\sigma}^2} & S^{\lambda} \ar[d]^{\theta} \\
			& S^{2\sigma}
		}
		\]
		where $\theta$ is the $C_4$-equivariant $2$-folded branched cover.  Since $\theta a_\lambda = a_\sigma^2$ is invertible, $a_\lambda$ is invertible. 
	\end{proof}
	
	\begin{prop}\label{prop-cycle}
		In $\pi_{\star}^{C_4}a_{\lambda}^{-1}\BPfour$, the classes $N(\ot_k)a_{\sigma}^i$ for $k > 0$ and $0 \leq i < 2^{k+1}-1$ are non-zero.
	\end{prop}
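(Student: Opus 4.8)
The plan is to show that $N(\ot_k)a_\sigma^i$ is a permanent cycle in $a_\lambda^{-1}\SliceSS(\BPfour)$ which is never a boundary, so that it detects a nonzero element of $\pi_\star^{C_4}a_\lambda^{-1}\BPfour$. That $N(\ot_k)a_\sigma^i$ is a permanent cycle is immediate: $a_\sigma$ is a permanent cycle (it is detected by the image of $a_\sigma\in\pi_{-\sigma}^{C_4}\mathbb{S}$), while $N(\ot_k)=N_2^4(\ot_k)$ is the norm of the permanent cycle $\ot_k$ from \cref{thm-C_2}, hence a permanent cycle for the norm structure on the localized slice spectral sequence (\cref{exam:locslice}), and the spectral sequence is multiplicative. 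So the only thing left to prove is that this class does not die, i.e.\ that it is not hit by a differential.

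For that, the first input is the Hill--Hopkins--Ravenel differential \cref{thm-HHRd}, which, applied to the index $k-1$, reads
\[
d_{2^{k+2}-3}(u_{2\sigma}^{2^{k-1}}) = N(\ot_k)a_\lambda^{2^k-1}a_\sigma^{2^{k+1}-1},
\]
all shorter differentials out of $u_{2\sigma}^{2^{k-1}}$ vanishing. In particular the target is a nonzero class on the $E_{2^{k+2}-3}$-page, hence is not a boundary on any page $E_r$ with $r\le 2^{k+2}-4$; and since $a_\lambda$ acts invertibly in $a_\lambda^{-1}\SliceSS(\BPfour)$, the same is true for $N(\ot_k)a_\sigma^{2^{k+1}-1}$. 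Now if $N(\ot_k)a_\sigma^i=d_r(y)$ on some page with $r\le 2^{k+2}-4$, then, multiplying by the permanent cycle $a_\sigma^{2^{k+1}-1-i}$ (nonzero because $i\le 2^{k+1}-2$), we would get $N(\ot_k)a_\sigma^{2^{k+1}-1}=d_r(a_\sigma^{2^{k+1}-1-i}y)$, exhibiting that class as a boundary on $E_r$, a contradiction. Hence $N(\ot_k)a_\sigma^i$ is not a boundary on any page of length at most $2^{k+2}-4$.

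It remains to bound the length of any differential that could hit $N(\ot_k)a_\sigma^i$, which is where the vanishing lines of \cref{thm-main} come in. The class $N(\ot_k)a_\sigma^i$ sits in stem $2^k-1$ and filtration $i$, so a $d_r$ landing on it originates in stem $2^k$ and filtration $i-r$; the lower vanishing line $s=-(t-s)$ of $a_\lambda^{-1}\SliceSS(\BPfour)$ forces $i-r\ge -2^k$, i.e.\ $r\le i+2^k\le 3\cdot 2^k-2<2^{k+2}-3$ for all $i<2^{k+1}-1$ and $k\ge 1$. Thus every differential that could hit $N(\ot_k)a_\sigma^i$ has length at most $2^{k+2}-4$, precisely the range in which we just showed the class is not a boundary. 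Therefore $N(\ot_k)a_\sigma^i$ survives to $E_\infty$ nontrivially, which proves the proposition. The one point requiring care is the numerology: one must verify that the bound $i+2^k$ coming from the vanishing line is strictly smaller than the Hill--Hopkins--Ravenel differential length $2^{k+2}-3$ for every $k\ge 1$ and every $i<2^{k+1}-1$ — which it is — so this is bookkeeping rather than a genuine obstacle; the genuine content is just the interplay between \cref{thm-HHRd} and the vanishing line of \cref{thm-main}.
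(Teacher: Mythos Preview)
There is a genuine gap in your vanishing-line step. The class $N(\ot_k)a_\sigma^i$ does \emph{not} sit in stem $2^k-1$: its $RO(C_4)$-degree is $(2^k-1)\rho_4-i\sigma$, of real dimension $4(2^k-1)-i$, and it lies in the $4(2^k-1)$-slice, so its Adams position is $(t-s,s)=\big(4(2^k-1)-i,\,i\big)$. Moreover, the vanishing lines of \cref{thm-main}(2) are stated only for the \emph{integral} $E_2$-page; the lower one (``no negative slices'') does extend verbatim to arbitrary $RO(G)$-degree, but with the correct stem it yields only
\[
r \;\le\; \big(4(2^k-1)-i+1\big)+i \;=\; 2^{k+2}-3,
\]
not the bound $r\le i+2^k$ that you claim. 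Your $a_\sigma$-multiplication trick correctly rules out $r\le 2^{k+2}-4$, and slice vanishing rules out $r>2^{k+2}-3$; but the borderline case $r=2^{k+2}-3$ is left open, and it is not vacuous --- the $0$-slice $a_\lambda^{-1}H\UZ$ does contribute classes in the relevant bidegree (e.g.\ $u_\lambda a_\sigma a_\lambda^{-1}$ when $k=1$, $i=2$).

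The paper closes precisely this gap via the comparison map $a_\lambda^{-1}\SliceSS(\BPfour)\to a_\sigma^{-1}\SliceSS(\BPfour)$ of \cref{lem:sigmalambdacovering}. In the target spectral sequence the Hill--Hopkins--Ravenel differentials determine everything, and there $N(\ot_k)a_\sigma^i$ survives to $E_{2^{k+2}-3}$ and is killed by $u_{2\sigma}^{2^{k-1}}a_\lambda^{-(2^k-1)}a_\sigma^{-(2^{k+1}-1)+i}$. For $i<2^{k+1}-1$ the exponent of $a_\sigma$ here is negative, so by \cref{prop-C_4coe} this source is not in the image from the $a_\lambda^{-1}$-localized spectral sequence; naturality then forces any killing differential in $a_\lambda^{-1}\SliceSS(\BPfour)$ to have length $>2^{k+2}-3$, which slice vanishing excludes. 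So the missing case is handled by a comparison of spectral sequences rather than a pure vanishing-line bound --- it is not, as you write, mere bookkeeping.
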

	
	\begin{proof}		
	By \cref{lem:sigmalambdacovering} we have a map of spectral sequences
		\[
		a_{\lambda}^{-1}\SliceSS(\BPfour) \longrightarrow a_{\sigma}^{-1}\SliceSS(\BPfour).
		\]
		Notice that in $a_{\sigma}^{-1}\SliceSS(\BPfour)$, the differentials in \cref{thm-HHRd} completely determine the spectral sequence (See \cite[Remark~9.11]{HHR}). In particular, we have the following differentials in $a_{\sigma}^{-1}\SliceSS(\BPfour)$:
		\[
		d_{2^{k+2}-3}(u_{2\sigma}^{2^{k-1}}a_{\lambda}^{-(2^k-1)}a_{\sigma}^{-(2^{k+1}-1)+i}) = N(\ot_k)a_{\sigma}^{i}.
		\]
		On $E_{2^{k+2}-3}$-page, this is the only differential happens in this degree.
		
		By \cref{prop-C_4coe} and the gold relation, the class $u_{2\sigma}^{2^{k-1}}a_{\sigma}^{-(2^{k+1}-1)+i}$ is in the image of
		\[
		\pi_{\star}^{C_4}a_{\lambda}^{-1}H\UZ \rightarrow \pi_{\star}^{C_4}a_{\sigma}^{-1}H\UZ
		\]
		only when $a_{\sigma}$ has a non-negative power, i.e. $i \geq 2^{k+1}-1$. Therefore by naturality, if the class $N(\ot_k)a_{\sigma}^i$, $0 \leq i < 2^{k+1}-1$ is killed in $a_{\lambda}^{-1}\SliceSS(\BPfour)$, the differential killing it must be of length longer than $2^{k+2}-3$. However, by \cref{prop-C_4coe} and \cref{thm-main}, the potential source of such a differential must be trivial in the $E_2$-page. Therefore the classes  $N(\ot_k)a_{\sigma}^i$ for $k>0$ and $0 \leq i \leq 2^{k+1}-1$ are nontrivial permanent cycles.
	\end{proof}
	
	\begin{rmk}\rm
	After inverting $a_{\lambda}$, the element $N(\ot_k)a_{\sigma}^{2^{k+1}-1}$ is zero by \cref{thm-HHRd}.
	\end{rmk}

\section{The Tate spectral sequence of $N_1^2H\FF_2$}\label{sec-tate}
The goal of this section is to advance our knowledge of the Tate spectral sequence of $N_1^2H\FF_2$. We compute it in a range and also give all differentials originating from the first diagonal of slope $-1$. Our main method is comparison with the localized slice spectral sequence, a method we describe first.
	
There is a canonical map $\SliceSS(X) \rightarrow \HFPSS(X)$ which is an isomorphism on the underlying level \cite{Ullman:Thesis}. When $X = N_1^2 H\FF_2 \simeq (\BPfour)^{\Phi C_2}$, combining with \cref{thm:comparison}, we obtain the following comparison map of spectral sequences
\[
a_{\lambda}^{-1}\SliceSS(\BPfour) \rightarrow P^{*}_{4/2}\D \SliceSS(N_1^2H\FF_2) \rightarrow P^{*}_{4/2}\D\HFPSS(N_1^2 H\FF_2),
\]
where we use $P^*_{4/2}$ as a short-hand for the pullback functor $P^*_{C_4/C_2}$ from \cref{sec:GeometricFixedPoints}. Both maps of spectral sequences are compatible with the norm $N_2^4 = N_{C_2}^{C_4}$ by \cref{exam:slicehfpss} and \cref{prop:compare_norm}. 
On the $C_2$-level the composition sends permanent cycles from $\SliceSS(\BPfour)$ to their $C_2$-geometric fixed points. 

We localize the map 
\[
a_{\lambda}^{-1}\SliceSS(\BPfour) \rightarrow  P^{*}_{4/2}\D \HFPSS(N_1^2 H\FF_2)
\]
further at $a_{\sigma}$. Since $\sigma$ is the pullback of the sign representation on $C_4/C_2$, 
\[
a_{\sigma}^{-1}P^{*}_{4/2}\D \HFPSS(N_1^2 H\FF_2) \simeq  P^{*}_{4/2}a_{\sigma}^{-1}\D \HFPSS(N_1^2 H\FF_2).
\]
Notice that localizing at $a_{\sigma}$ in $C_4/C_2$-spectra is exactly smashing with $\tilde{E}\mathcal{F}[C_4/C_2]$, which turns the homotopy fixed points into the Tate fixed points. Therefore
\[
P^{*}_{4/2}a_{\sigma}^{-1}\D \HFPSS(N_1^2 H\FF_2) \simeq P^{*}_{4/2}\D \TateSS(N_1^2 H\FF_2).
\]

The above argument, along with \cref{lem:sigmalambdacovering}, gives the following comparison square, which is central to our computation in this section.
\begin{equation}\label{eq:comparison}
    \begin{tikzcd}
a_{\lambda}^{-1}\SliceSS(\BPfour) \arrow[rr] \arrow[dd, "a_{\sigma}^{-1}(-)"] &  & P^*_{4/2}\D \HFPSS(N_1^2 H\FF_2) \arrow[dd, "a_{\sigma}^{-1}(-)"] \\
                                                                             &  &                                                                \\
a_{\sigma}^{-1}\SliceSS(\BPfour) \arrow[rr]                                   &  & P^*_{4/2}\D \TateSS(N_1^2 H\FF_2)                                
\end{tikzcd}
\end{equation}

\begin{prop}
    In the comparison square, both horizontal maps converge to isomorphisms in homotopy groups.
\end{prop}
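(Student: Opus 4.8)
The plan is to establish both statements at once, deducing the bottom isomorphism from the top one by inverting $a_\sigma$. Since each row of \eqref{eq:comparison} is a map of spectral sequences induced by a map of towers, ``converging to an isomorphism'' means that the induced map on abutments, i.e.\ on $\pi_\star^{C_4}$ of the inverse limits of the towers, is an isomorphism; so it suffices to identify those limit spectra and verify that the comparison map between them is an equivalence.

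For the top row, I would factor it as $a_{\lambda}^{-1}\SliceSS(\BPfour) \to P^*_{4/2}\D\,\SliceSS(N_1^2H\FF_2) \to P^*_{4/2}\D\,\HFPSS(N_1^2H\FF_2)$. The first arrow is the map of \cref{thm:comparison}, applied with $X = \BPfour$ and $Y = \Phi^{C_2}\BPfour \simeq N_1^2H\FF_2$ (the latter by \cref{cor:geomfixed} together with $\Phi^{C_2}\BPR \simeq H\FF_2$); by that theorem it converges to the $C_4$-equivalence $\tilde{E}C_4 \wedge \BPfour \simeq P^*_{4/2}N_1^2H\FF_2$ of \cref{cor:main}. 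The second arrow is $P^*_{4/2}\D$ applied to the canonical comparison from the slice tower of $N_1^2H\FF_2$ to its homotopy fixed point tower (\cref{exam:hfpss}, \cref{exam:slicehfpss}); on inverse limits this becomes $P^*_{4/2}$ applied to the map $N_1^2H\FF_2 \to (N_1^2H\FF_2)^{EC_{2+}}$, which is an equivalence of $C_2$-spectra since $N_1^2H\FF_2$ is cofree — the Segal conjecture in the form of \cite{LNR} (see also \cite{NikolausScholze, BBLNR}). Here I use that the essential image of the fully faithful functor $P^*_{4/2}$ is closed under inverse limits, so that $P^*_{4/2}$ commutes with the limits computing the abutments. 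Composing, the top row converges to an equivalence of $C_4$-spectra, hence to an isomorphism on $\pi_\star^{C_4}$ (which via \cref{lem:homotopypullback} can also be read off on the $C_4/C_2$-equivariant homotopy of $N_1^2H\FF_2$).

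For the bottom row, the two vertical arrows in \eqref{eq:comparison} exhibit it as the $a_\sigma$-localization of the top row, and I would check the identifications making this precise. On the left, $\tilde{E}\mathcal{F}[C_4] \wedge \tilde{E}\mathcal{F}[C_2] \simeq \tilde{E}\mathcal{F}[C_4]$, equivalently $a_\sigma^{-1}a_\lambda^{-1}\mathbb{S} \simeq a_\sigma^{-1}\mathbb{S}$, which is \cref{lem:sigmalambdacovering}; hence inverting $a_\sigma$ in $a_{\lambda}^{-1}\SliceSS(\BPfour)$ returns $a_{\sigma}^{-1}\SliceSS(\BPfour)$, whose abutment is $\pi_\star^{C_4}a_\sigma^{-1}\BPfour$ by the $\tilde{E}\mathcal{F}[C_4]$-case of \cref{thm:invertingAclassE2page}. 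On the right, $\sigma$ is pulled back from $C_4/C_2$ and $P^*_{4/2}$ is symmetric monoidal with $P^*_{4/2}(a_\sigma) = a_\sigma$, so $a_\sigma^{-1}(-)$ commutes with $P^*_{4/2}$; it commutes with the regrading $\D$; and for the $C_2$-spectrum $N_1^2H\FF_2$ the $a_\sigma$-localization of the homotopy fixed point tower is the Tate tower, giving $a_\sigma^{-1}\HFPSS(N_1^2H\FF_2) = \TateSS(N_1^2H\FF_2)$. Thus the bottom row is $a_\sigma^{-1}$ of the top row; since $a_\sigma$-localization is smashing with the filtered homotopy colimit $S^{\infty\sigma}$ it preserves equivalences, so the bottom row again converges to an equivalence of $C_4$-spectra, and in particular to an isomorphism in homotopy groups.

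The one point I expect to require real care — and the main potential obstacle — is the compatibility of $a_\sigma$-localization with the pullback $P^*_{4/2}$, the doubling $\D$, and the inverse limits defining the abutments, i.e.\ verifying that \eqref{eq:comparison} really is obtained by applying $a_\sigma^{-1}(-)$ to the top row and that each localized spectral sequence converges to the expected localized homotopy Mackey functor. Everything else is an assembly of \cref{thm:comparison} and the cofreeness of $N_1^2H\FF_2$.
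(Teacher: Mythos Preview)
Your proposal is correct and follows essentially the same route as the paper: factor the top row through $P^*_{4/2}\D\,\SliceSS(N_1^2H\FF_2)$, invoke \cref{thm:comparison} for the first factor and cofreeness of $N_1^2H\FF_2$ (Segal conjecture) for the second, then obtain the bottom row as the $a_\sigma$-localization of the top. The paper's proof is terser and does not spell out the compatibility checks you flag as the main care point, but the argument is the same.
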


\begin{proof}
    The top horizontal map is the composition
    \[
  a_{\lambda}^{-1}\SliceSS(\BPfour) \rightarrow P^{*}_{4/2}\D \SliceSS(N_1^2H\FF_2) \rightarrow P^{*}_{4/2}\D \HFPSS(N_1^2 H\FF_2).
    \]
    The first map converges to an isomorphism by \cref{thm:comparison}. By the Segal conjecture, $N_1^2 H\FF_2$ is cofree, that is, the map $N_1^2 H\FF_2 \rightarrow F(EC_{2+},N_1^2 H\FF_2)$ is an equivalence. By construction, the second map of the spectral sequences converges to this map on homotopy.
        
    The bottom map is the $a_{\sigma}$-localization of the top map, thus also converges to an isomorphism.
\end{proof}

The bottom map in the comparison square is particularly interesting: We completely understand $a_{\sigma}^{-1}\SliceSS(\BPfour)$, which is determined by the fact that it computes $\pi_* H\FF_2$. All differentials are derived from the slice differential theorem \cite[Theorem~9.9, Remark~9.11]{HHR}. On the other hand, the Tate spectral sequence of $N_1^2 H\FF_2$ is very mysterious: its $E_2$-page is determined by the Tate cohomology $\hat{H}^{*}(C_2;\mathcal{A}_*)$, for which we do not know a closed formula yet. Nevertheless, the Segal conjecture shows that the Tate spectral sequence converges to $\pi_*H\FF_2$, meaning almost everything kills each other by differentials. Using \cref{thm:comparison}, we can apply our understanding of the slice spectral sequence to understand partially how differentials work in the Tate spectral sequence.

\cref{fig:comparison} consists of the integral $E_2$-pages of the four spectral sequences in the comparison square. Red elements in the homotopy fixed points and the Tate spectral sequences are those in the image of the horizontal maps. We prove these claims in \cref{cor:image}.
\afterpage{
\begin{figure}[p]
    \centering
    \includegraphics[scale = 0.37]{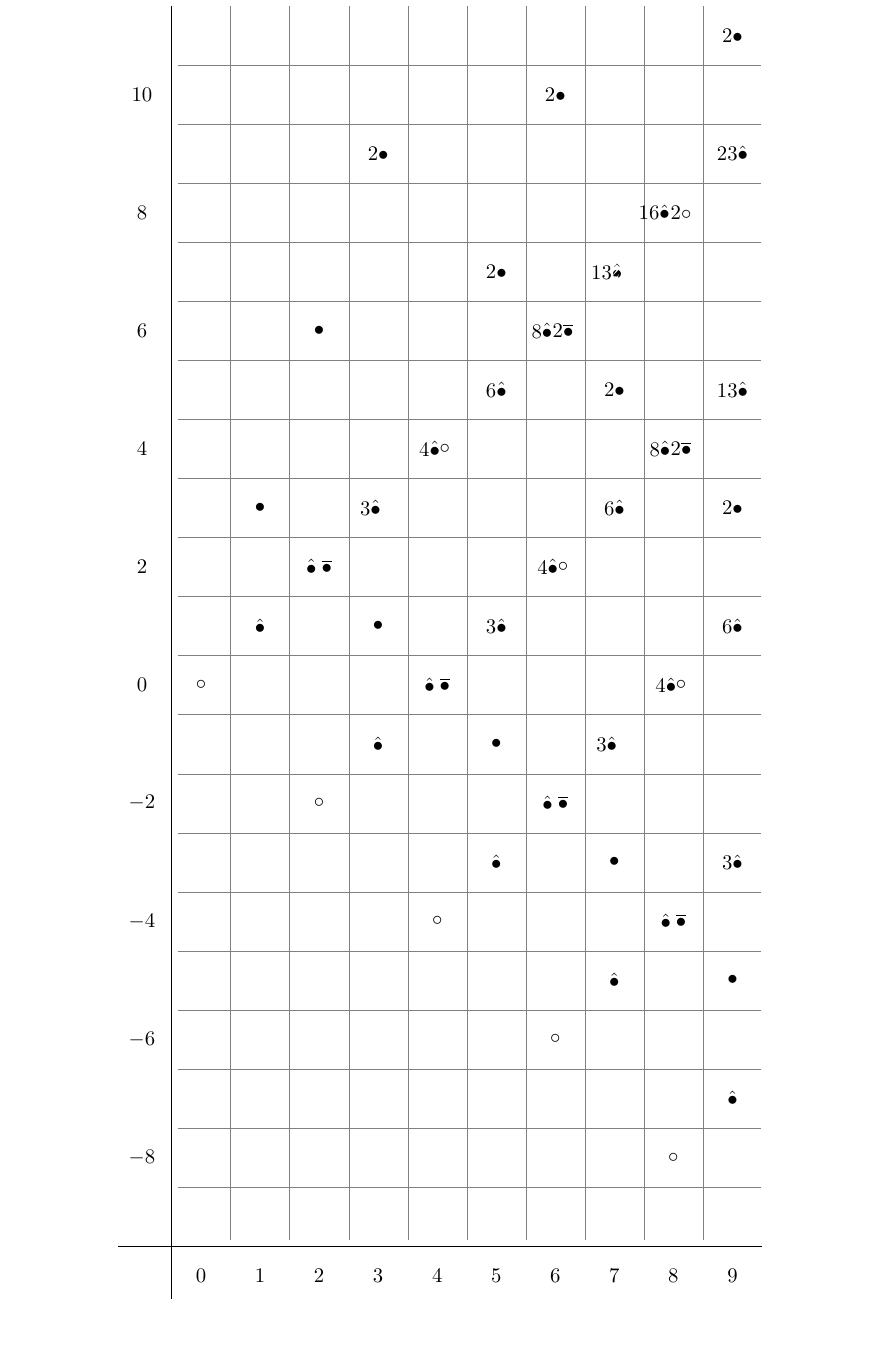}
    \includegraphics[scale = 0.37]{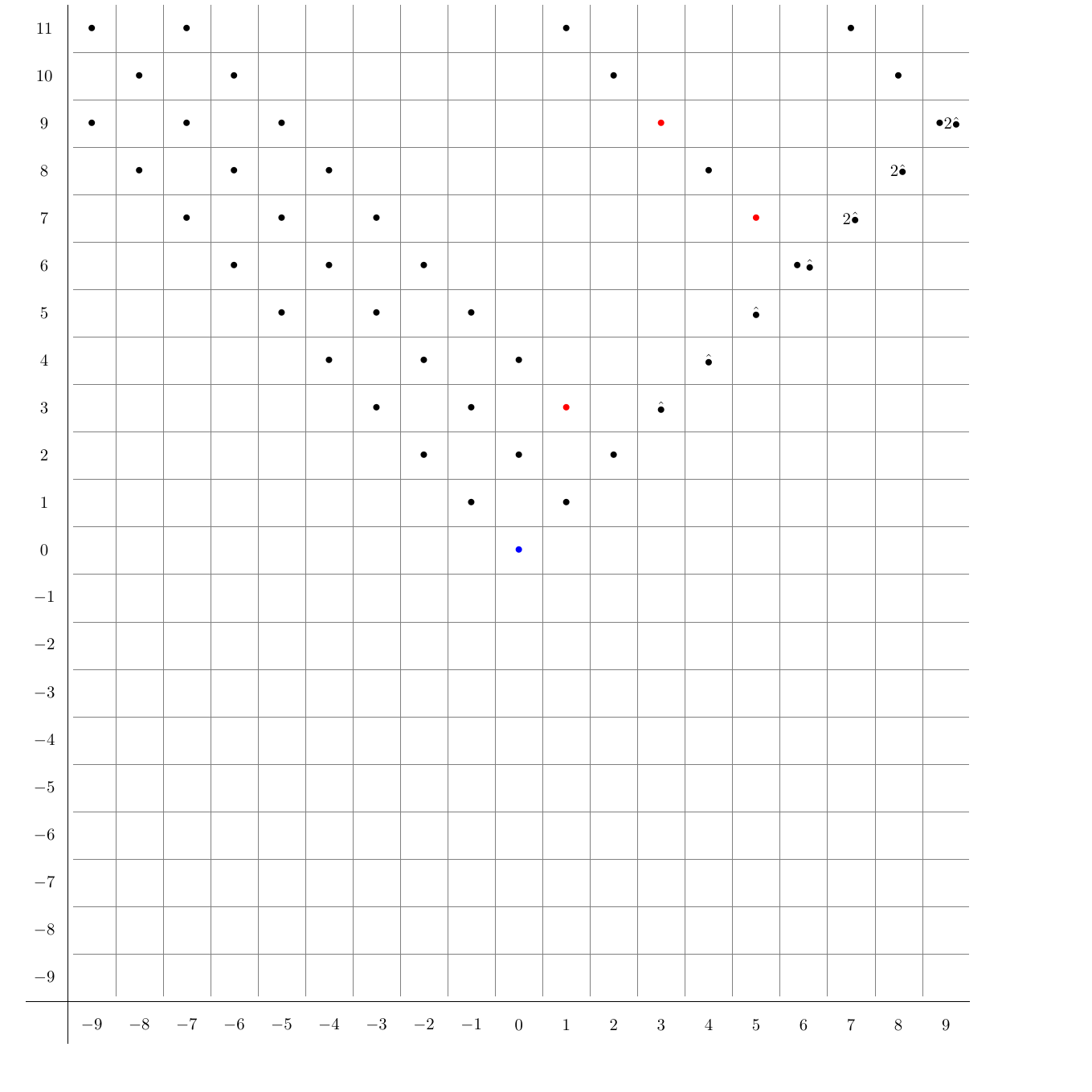}\\
    \includegraphics[scale = 0.37]{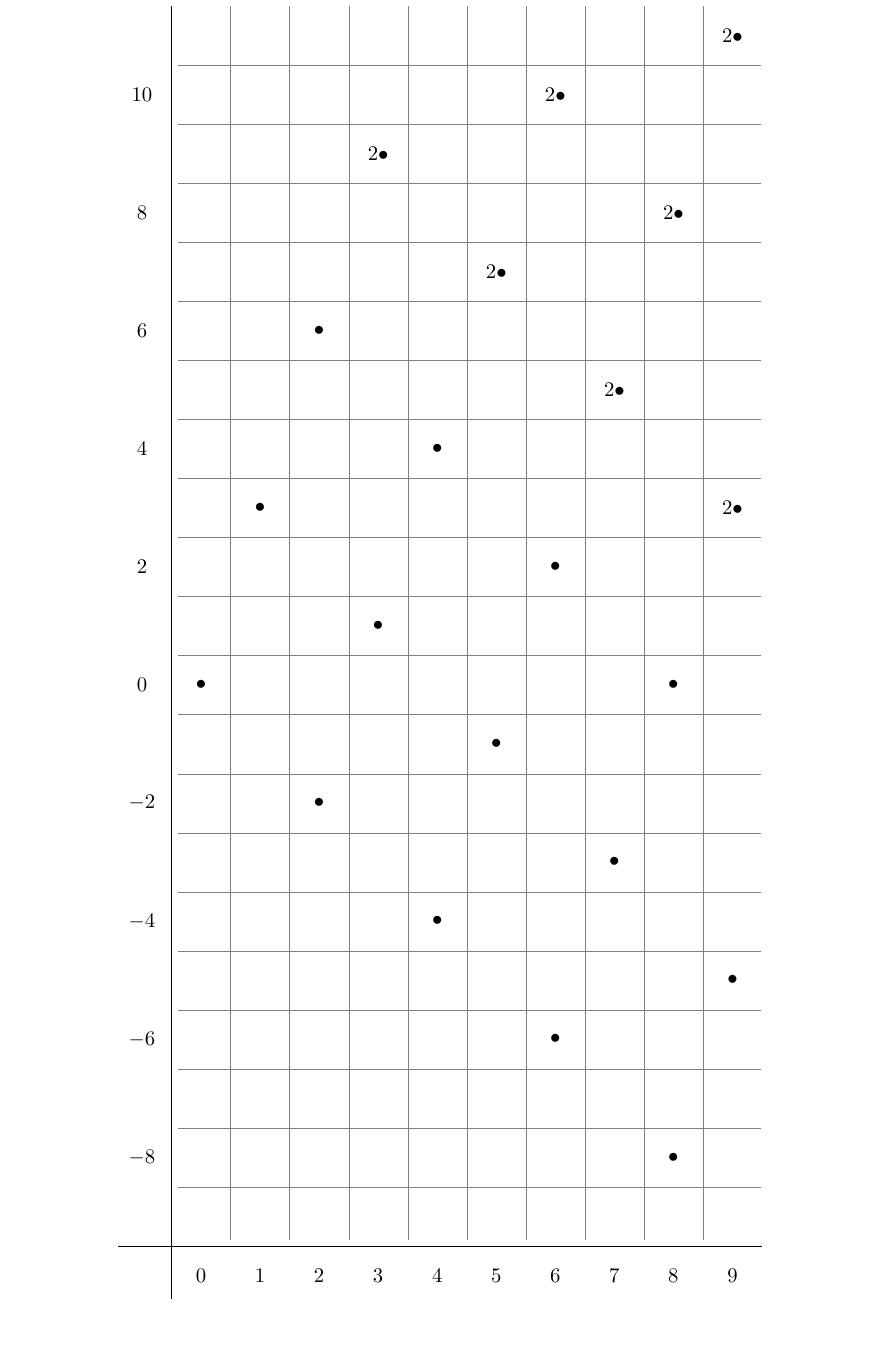}
    \includegraphics[scale = 0.37]{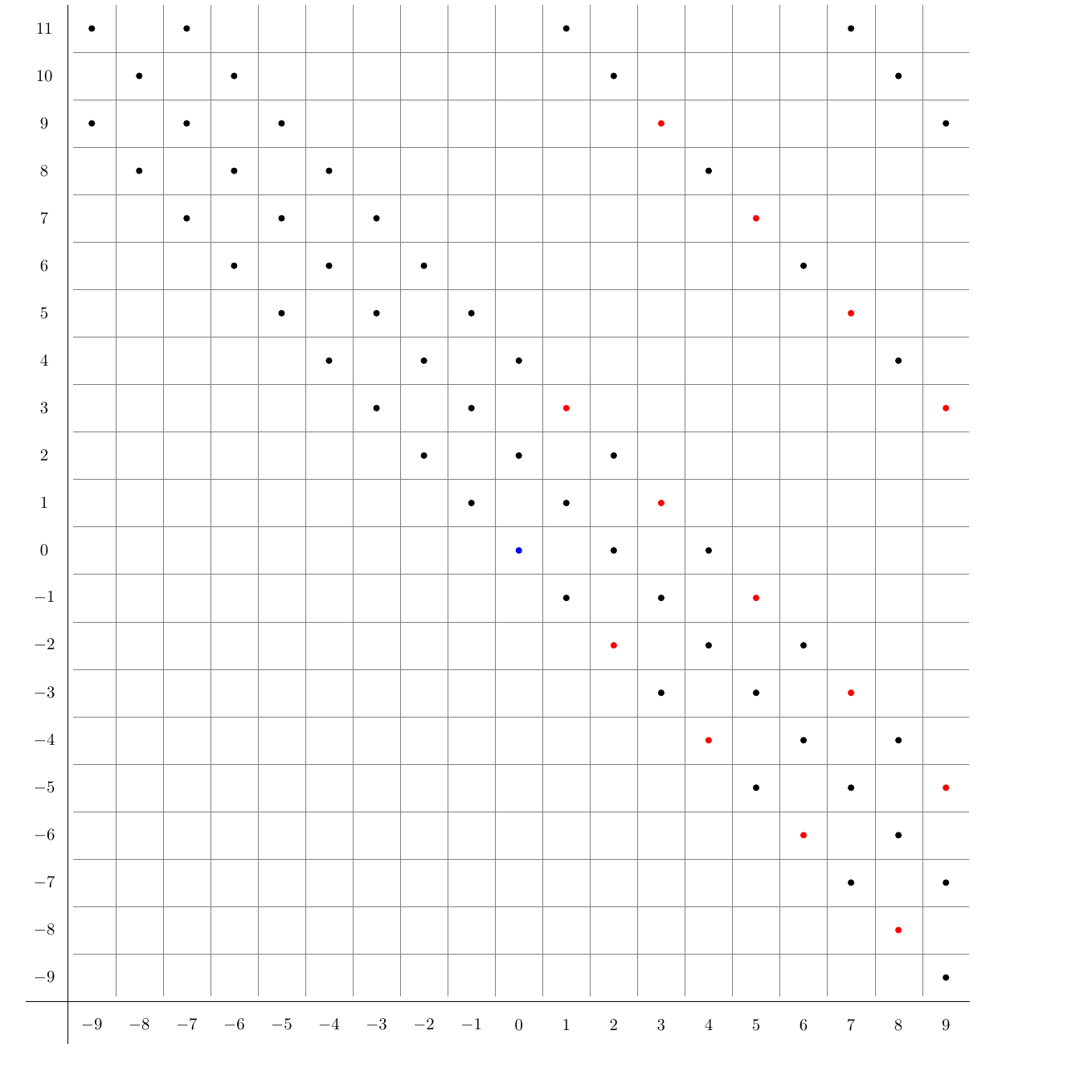}
    \caption{The comparison square (\ref{eq:comparison}).}
    \label{fig:comparison}
\end{figure}}

Using the comparison square, we establish an infinite family of differential in the Tate spectral sequence. We also compute all differentials in the Tate spectral sequence in the same range we computed $a_{\lambda}^{-1}\SliceSS(\BPfour)$ in \cref{section:sliceSScomputation}. Specifically, we show all differentials hitting elements from stem $0$ to $8$ which map non-trivially into the homotopy orbit spectral sequence. In the (doubled) Tate spectral sequence of \cref{fig:comparison}, they are elements below slope $1$ from stem $0$ to $8$.
 
Because of the comparison square, we make our statements and arguments entirely in the spectral sequences $P^*_{4/2}\D \HFPSS$ and $P^*_{4/2}\D \TateSS$. The translation back to the $C_2$-homotopy fixed points and the Tate spectral sequence is straightforward. As a reference, $\TateSS(N_1^2 H\FF_2)$ with known differentials is shown as \cref{fig:Tate}.

To start the computation, we want to understand how the maps of the comparison square behave on the $E_2$-page. By \cref{thm:comparison}, they are determined by the $C_2$-geometric fixed points of elements in $\pi_{\star}^{C_2}\BPfour$.

	\begin{prop}\label{prop:ti}
        Under the equivalence $\Phi^{C_2} \BPfour \simeq H\FF_2 \wedge H\FF_2$, the $C_2$-geometric fixed points of $\ot_i$ and $\gamma \ot_i$ in $\pi_{(2^i - 1)\rho_2}^{C_2} \BPfour$ are $\xi_i$ and $\zeta_i$, the Milnor generators and their conjugates in the dual Steenrod algebra, respectively.
	\end{prop}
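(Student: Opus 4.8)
The plan is to identify the $C_2$-geometric fixed points of the $\ot_i$ and $\gamma\ot_i$ generators by tracking them through the construction of $\BPfour$ as a norm, using naturality of geometric fixed points and the known behavior of geometric fixed points on $\BPR$. First I would recall that, by construction (see \cref{subsec:sliceSS} and formula (1.3) in \cite{BHSZ}), the classes $\ot_i \in \pi^{C_2}_{(2^i-1)\rho_2}\BPfour$ arise from the generators $\ot_i^{C_2} \in \pi^{C_2}_{(2^i-1)\rho_2}\BPR$ via the unit map $\BPR \to i^*_{C_2}\BPfour = \BPR \wedge \BPR$ induced from the left inclusion $\BPR \to N_{C_2}^{C_4}\BPR$ restricted to $C_2$; similarly $\gamma\ot_i$ comes from the right inclusion (i.e. the $\gamma$-twisted left unit). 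Since $\Phi^{C_2}$ is symmetric monoidal, $\Phi^{C_2}(\BPR \wedge \BPR) \simeq \Phi^{C_2}\BPR \wedge \Phi^{C_2}\BPR \simeq H\FF_2 \wedge H\FF_2$, and under this identification the two unit maps $\BPR \to \BPR \wedge \BPR$ become the two unit maps $H\FF_2 \to H\FF_2 \wedge H\FF_2$.

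The key step is therefore to compute $\Phi^{C_2}(\ot_i^{C_2})$ for the single spectrum $\BPR$, i.e.\ to show that the $C_2$-geometric fixed point of $\ot_i^{C_2} \in \pi^{C_2}_{(2^i-1)\rho_2}\BPR$ is the Milnor generator $\xi_i \in \pi_{2^i-1}(H\FF_2\wedge H\FF_2) = \cA_*$. This is essentially a known computation: the slice associated graded of $\BPR$ has $C_2$-geometric fixed points $\Phi^{C_2}(H\UZ[\ot_1^{C_2},\ot_2^{C_2},\dots]) \simeq H\FF_2[\Phi^{C_2}\ot_1^{C_2}, \Phi^{C_2}\ot_2^{C_2},\dots]$, which must coincide with the associated graded of $\cA_*$ under the homotopy fixed point/slice filtration since $\Phi^{C_2}\BPR \simeq H\FF_2$ has trivial geometric fixed point slice spectral sequence while $\BPR$ does not — but the cleanest route is to use the comparison map $\SliceSS(\BPR) \to \HFPSS(\BPR)$ together with the identification $\Phi^{C_2}\BPR \simeq H\FF_2$ and the fact, already recorded in the excerpt's Remark following \cref{thm-C_2}, that the slice differential formula $\ov_k = \sum_{i=0}^k \ot_{k-i}^{2^i}\gamma\ot_i \bmod (2,\ov_1,\dots,\ov_{k-1})$ must reduce, upon applying $\Phi^{C_2}$, to Milnor's conjugation relation $0 = \sum_{i=0}^k \xi_{k-i}^{2^i}\zeta_i$ in $\cA_*$. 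This forces $\Phi^{C_2}(\ot_i) = \xi_i$ and $\Phi^{C_2}(\gamma\ot_i) = \zeta_i$ inductively once one knows $\Phi^{C_2}(\ot_1^{C_2}) = \xi_1$, which follows because $\ot_1^{C_2} = \ovr_1^{C_2}$ (the $\BPR$-analogue of the HHR generator $\ovr_1$, whose geometric fixed point is well-known to be $\xi_1$, the bottom class of $\Phi^{C_2}\MUR \simeq MO$ restricted to $\BPR$, or equivalently the generator of $\pi_1(H\FF_2 \wedge H\FF_2)$).

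The detailed order of steps: (1) reduce via symmetric monoidality of $\Phi^{C_2}$ and \cref{thm:introthm1}/\cref{cor:main} to the statement for $\BPR$ and its two unit maps; (2) establish $\Phi^{C_2}(\ot_1^{C_2}) = \xi_1$ directly by a degree count ($\pi_1(\cA_*) = \FF_2$) plus nontriviality (the class $\ot_1^{C_2}a_{\sigma_2}$ is a nonzero permanent cycle detecting $\eta$ in $\Phi^{C_2}$, cf.\ \cref{prop-cycle}); (3) run the induction using the slice differential relation and Milnor's formula to pin down $\Phi^{C_2}(\ot_i^{C_2}) = \xi_i$ for all $i$; (4) conclude $\Phi^{C_2}(\gamma\ot_i) = \zeta_i$ by applying $\gamma$ (which acts on $\cA_*$ by the conjugation $\chi$) or symmetrically from the right unit map. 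I expect the main obstacle to be step (3): carefully checking that the geometric fixed points functor takes the \emph{slice} differentials of $\BPR$ to the relations among the $\xi_i$ — one has to argue that the filtration degree collapses correctly, i.e.\ that $\Phi^{C_2}$ applied to the $d_{2^{k+1}-1}$-differential $d(u_{2\sigma_2}^{2^{k-1}}) = \ov_k a_{\sigma_2}^{2^{k+1}-1}$ indeed produces the relation $\Phi^{C_2}(\ov_k) = 0$ in $\cA_*$ rather than something in higher filtration, which requires knowing that $\Phi^{C_2}(a_{\sigma_2})$ is a unit (indeed $\Phi^{C_2}$ inverts $a_{\sigma_2}$, by the very definition $\Phi^{C_2}X = (a_{\sigma_2}^{-1}X)^{C_2}$ from \cref{sec:GeometricFixedPoints} and \cref{exam:main}). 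Once that identification of filtrations is in place, the rest is formal bookkeeping against Milnor's relations.
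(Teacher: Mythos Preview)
Your proposal has a fundamental confusion between two different families of generators, and this confusion causes the inductive argument in step (3) to fail.

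First, the claim in step (1) that the $\ot_i \in \pi^{C_2}_{(2^i-1)\rho_2}\BPfour$ ``arise from the generators $\ot_i^{C_2} \in \pi^{C_2}_{(2^i-1)\rho_2}\BPR$ via the unit map'' is not correct. The $\ot_i^{C_2}$ are the $\ov_i$, and under the left unit $\BPR \to \BPR \wedge \BPR$ the class $\ov_k$ goes to $\sum_i \ot_{k-i}^{2^i}\gamma\ot_i$, \emph{not} to $\ot_k$. (This is exactly the equivariant analogue of the fact that in $BP_*BP$ the $t_i$ are not the left-unit images of the $v_i$.) Consequently, the sentence ``compute $\Phi^{C_2}(\ot_i^{C_2})$ for the single spectrum $\BPR$'' does not typecheck: $\Phi^{C_2}\BPR \simeq H\FF_2$, so $\Phi^{C_2}(\ov_i)$ lands in $\pi_{2^i-1}H\FF_2 = 0$, not in $\mathcal{A}_*$.

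Second, even after correcting this, the inductive scheme in step (3) does not determine $\Phi^{C_2}(\ot_i)$. Writing $y_i = \Phi^{C_2}(\ot_i)$ and using that the $\gamma$-action becomes the antipode $\chi$, applying $\Phi^{C_2}$ to $\ov_k = \sum \ot_{k-i}^{2^i}\gamma\ot_i$ gives only $0 = \sum y_{k-i}^{2^i}\chi(y_i)$. For $k=2$ this reads $y_2 + \chi(y_2) = \xi_1^3$, which is satisfied by both $y_2 = \xi_2$ and $y_2 = \xi_2 + \xi_1^3$. The Milnor relations simply do not pin down the $\xi_i$ among all sequences satisfying them; the remark after \cref{thm-C_2} is a \emph{consequence} of the proposition, not an ingredient for proving it.

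The paper's proof works differently and avoids this indeterminacy: it uses the actual \emph{definition} of the $\ot_i$ as the coefficients expressing the right orientation $\tilde{x}_2$ as a formal power series in the left orientation $\tilde{x}_1$ over the formal group law $F_2$. Applying $\Phi^{C_2}$ turns $F_2$ into the additive formal group law and the orientations into the two $MO$-orientation classes in $(H\FF_2\wedge H\FF_2)^*(\RR P^\infty)$; the proof then reduces to the classical lemma $x_2 = \sum_i \xi_i x_1^{2^i}$, which is precisely the defining property of the $\xi_i$ via the Kronecker pairing with $Sq^I$. The formal group law characterization is what supplies the missing rigidity.
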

	\begin{proof}
	    We will show $\Phi^{C_2}(\ot_i) = \xi_i$; the formula $\Phi^{C_2}(\gamma \ot_i) = \zeta_i$ follows from the fact that the residue $C_4/C_2$-action on $\Phi^{C_2} BP^{((C_4))}$ becomes the conjugate action on the dual Steenrod algebra.
	    
	    Let $e: S^0 \rightarrow \BPR$ be the unit map and $F_1$ and $F_2$ be the formal groups laws on $\pi_{*\rho_2}^{C_2}BP_{\RR} \wedge BP_{\RR}$ induced by the map 
	    \[
	    BP_{\RR} \wedge S^0 \xrightarrow{id \wedge e} BP_{\RR} \wedge BP_{\RR}
	    \]
	    and
	    \[
	    S^0 \wedge BP_{\RR} \xrightarrow{e \wedge id} BP_{\RR} \wedge BP_{\RR}
	    \]
	    respectively, and let $\tilde{x}_1$, $\tilde{x}_2$ be the corresponding power series generators. As in \cref{sec:sliceSSBackground}, the elements $\ot_i$ are defined as
	    \[
	    \tilde{x}_2 = {\sum_{i = 0}^{\infty}} {}^{{}^{\scriptstyle F_2}}\, \ot_i \tilde{x}_1^{2^i}.
	    \]
	    Taking $\Phi^{C_2}$ maps $\tilde{x}_1$ and $\tilde{x}_2$ to the two $MO$-orientation on $H\FF_2 \wedge H\FF_2$. The following lemma completes the proof.
	\end{proof}
	
	\begin{lem}
	    Let $x_1, x_2 \in (H\FF_2 \wedge H\FF_2)^1(\RR P^{\infty})$ be the $MO$-orientations corresponding to the maps
	    \[
	    H\FF_2 \wedge S^0 \xrightarrow{id \wedge \mu} H\FF_2 \wedge H\FF_2
	    \]
	    and
	    \[
	    S^0 \wedge H\FF_2 \xrightarrow{\mu \wedge id} H\FF_2 \wedge H\FF_2
	    \]
	    respectively. Then we have
	    \[
	    x_2 = \sum_{i = 0}^{\infty}\xi_i x_1^{2^i}.
	    \]
	\end{lem}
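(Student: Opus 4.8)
The plan is to recognize this as essentially the universal computation of the coaction of the dual Steenrod algebra on $H\FF_2$ in terms of the two $MO$-orientations coming from the two unit maps into $H\FF_2 \wedge H\FF_2$. First I would recall that $(H\FF_2)_*(\RR P^\infty) = (H\FF_2)^*(\RR P^\infty)^\vee$ and that $MO$ is complex-orientable in the generalized sense — being even and with $\pi_* MO = \FF_2[\text{generators}]$ — so that an $MO$-orientation of $H\FF_2$ determines a generator $x \in (H\FF_2)^1(\RR P^\infty) \cong \FF_2[[x]]$. The two maps $\mathrm{id}\wedge\mu$ and $\mu\wedge\mathrm{id}$ give two such generators $x_1, x_2$ of $(H\FF_2\wedge H\FF_2)^1(\RR P^\infty)$; since both become the standard generator after applying either of the two augmentations $H\FF_2 \wedge H\FF_2 \to H\FF_2$, each $x_i$ is a power series in the other with leading term the identity, and the change-of-orientation formula is precisely a statement about $\pi_*(H\FF_2 \wedge H\FF_2) = \mathcal{A}_*$.

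The key computational step is to identify $x_2$ in terms of $x_1$. I would use the standard fact (going back to Milnor, or as presented in Ravenel's green book) that the right unit applied to the orientation class of $H^*(\RR P^\infty; \FF_2)$ expands as $\eta_R(x) = \sum_{i\ge 0} \xi_i x^{2^i}$ in $(H\FF_2 \wedge H\FF_2)^*(\RR P^\infty)$, where the $\xi_i \in \mathcal{A}_*$ are the Milnor generators (with $\xi_0 = 1$). This is exactly the dual of the formula that the total Steenrod square acts on $x \in H^1(\RR P^\infty)$ by $\mathrm{Sq}(x) = x + x^2 = \sum \mathrm{Sq}^i x$, and the $\xi_i$ are defined precisely so that $\langle \xi_i, \mathrm{Sq}^{2^{i-1}}\cdots\mathrm{Sq}^2\mathrm{Sq}^1\rang$ pattern holds; concretely one takes $\mathcal{A}_* = \FF_2[\xi_1, \xi_2, \ldots]$ with the canonical generator $\xi_i$ dual to $\mathrm{Sq}^{2^{i-1}}\cdots\mathrm{Sq}^1$, and the coaction $\psi\colon H_*(\RR P^\infty) \to \mathcal{A}_* \otimes H_*(\RR P^\infty)$ is $\psi(b_n) = \sum \xi_j^{2^?}\otimes \cdots$ — dualizing gives the displayed formula for $x_2$. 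So the proof reduces to: (i) note that an $MO$-orientation of $H\FF_2$ restricted to $\RR P^\infty$ recovers the standard mod-$2$ orientation class (the nonzero element of $H^1(\RR P^\infty;\FF_2)$), because $MO$ is the initial $E_\infty$-ring with an $\RR$-orientation and $H\FF_2$ receives its orientation compatibly; (ii) invoke Milnor's computation of $\eta_R$ on this class.

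The main obstacle — really the only subtle point — is bookkeeping about which of the two tensor factors plays the role of "the Hopf algebroid coefficients" versus "the module": I want to make sure that with the conventions set up in the preceding lemma (where $x_1$ comes from smashing the unit into the \emph{right} factor via $\mathrm{id}\wedge\mu$ and $x_2$ from the \emph{left} via $\mu\wedge\mathrm{id}$), the coefficients appearing are the Milnor generators $\xi_i$ themselves and not their conjugates $\zeta_i$. I would resolve this by tracking the convention through the analogous statement for $\BPR\wedge\BPR$ used in \cref{prop:ti}: there $\ot_i$ (which geometric-fixed-points to $\xi_i$) is defined by $\tilde x_2 = \sum^{F_2} \ot_i \tilde x_1^{2^i}$ with $\tilde x_2$ the \emph{left} generator, so after applying $\Phi^{C_2}$ the additive formal group law over $\FF_2$ collapses the formal sum to an ordinary sum and we get exactly $x_2 = \sum \xi_i x_1^{2^i}$, matching the claimed formula and the convention in the preceding lemma. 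Thus the lemma is just the geometric-fixed-points shadow of the defining relation for the $\ot_i$, and no further work is needed beyond citing Milnor's formula; I would write it this way to keep the argument short and self-consistent with \cref{prop:ti}.
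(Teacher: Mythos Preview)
Your approach is correct in content but differs from the paper's. You recognize the statement as Milnor's classical right-unit formula and propose to cite it, whereas the paper gives a short direct proof: it writes $x_2 = \sum_j a_j x_1^{j+1}$, observes that the power series is an automorphism of the additive formal group law over $\FF_2$ (so only $a_{2^i-1}$ can be nonzero), and then introduces operations $\theta_I$ obtained by applying $Sq^I$ to the \emph{right} smash factor. Computing $\theta_I(x_2) = x_2^{2^n}$ for $I = (2^{n-1},\dots,2,1)$ and $\theta_I(x_1) = 0$ for nontrivial $I$, together with $\theta_I(\xi) = \langle \xi, Sq^I\rangle$, pins down $a_{2^n-1}$ as the element dual to $Sq^{2^{n-1}}\cdots Sq^1$, i.e.\ $\xi_n$. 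This is self-contained and, crucially, makes the $\xi$-versus-$\zeta$ convention unambiguous without reference to anything else.

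Your proof has one genuine soft spot: you resolve the $\xi_i$/$\zeta_i$ convention by appealing to consistency with \cref{prop:ti}, but this lemma is precisely what is being invoked to \emph{prove} \cref{prop:ti}. The definition of $\ot_i$ fixes which of $\tilde x_1, \tilde x_2$ is which, but it does not by itself tell you that $\Phi^{C_2}(\ot_i)$ is $\xi_i$ rather than $\zeta_i$; that is the content being established. To make your argument non-circular you would need to pin down the convention directly from Milnor's definition (e.g.\ by citing a specific source where the right-unit formula is stated with the same left/right conventions as here), which is exactly the bookkeeping the paper's direct computation sidesteps.
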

	\begin{proof}
	    Identify $(H\FF_2 \wedge H\FF_2)^*(\RR P^{\infty})$ with $\mathcal{A}_*[\![ x_1 ]\!]$, and write $x_2 = \sum\limits_{j = 0}^{\infty}a_j x_1^{j+1}$. We will show that $a_{2^i - 1} = \xi_i$ and all other $a_j$'s are $0$. First, since the power series $\sum\limits_{j = 0}^{\infty}a_j x_1^{j+1}$ is an automorphism of the additive formal group law in an $\FF_2$-algebra, we must have $a_0 = 1$, and $a_j = 0$ for $j \neq 2^i - 1$.
	    
	    Let $I$ be an admissible sequence and define
	    \[
	    \theta_I: (H\FF_2 \wedge H\FF_2)^*(\RR P^{\infty}) \rightarrow (H\FF_2 \wedge H\FF_2)^{* + |I|}(\RR P^{\infty})
	    \]
	    to be the composition
	    \[
	    \RR P^{\infty} \rightarrow H\FF_2 \wedge H\FF_2 \xrightarrow{id \wedge Sq^I} \Sigma^{|I|}H\FF_2 \wedge H\FF_2.
	    \]
	    One can verify directly that $\theta_I$ has the following properties:
	    \begin{itemize}
	        \item $\theta_I(x_2) = x_2^{2^n}$ if $I = (2^{n-1},2^{n-2},...,2,1)$, and $\theta_I(x_2) = 0$ otherwise.
	        \item $\theta_I(x_1) = x_1$ if $I = (0)$ and $\theta_I(x_1) = 0$ otherwise.
	        \item On homotopy, the map $H\FF_2 \wedge H\FF_2 \xrightarrow{id \wedge Sq^I} \Sigma^{|I|}H\FF_2 \wedge H\FF_2$ induces the map 
	        \[\theta_I: \mathcal{A}_* \longrightarrow \mathcal{A}_{*-|I|}.\] 
	        For any $\xi \in \mathcal{A}_*$, $\theta_I(\xi)$ is the cap product  \[\mathcal{A}_* \xrightarrow{\Delta} \mathcal{A}_* \otimes \mathcal{A}_* \xrightarrow{id \otimes \langle -, Sq^I\rangle} \mathcal{A}_*\]
	        between $\xi$ and $Sq^I$.  In the case when $|\xi| = |I|$, $\theta_I(\xi) = \langle \xi, Sq^I \rangle$, the pairing between the Steenrod algebra and its dual.
	        \item When $I = (i)$, $\theta_I$ satisfies the Cartan formula:
	        \[
	        \theta_i(ab) = \sum_{j = 0}^i \theta_{i-j}(a)\theta_j(b).
	        \]
	    \end{itemize}
	    Now, let $I = (2^{n-1},2^{n-2},...,2,1)$, and apply $\theta_I$ to $x_2 = \sum\limits_{i = 0}^{\infty}a_{2^i - 1} x_1^{2^i}$. The left hand side becomes
	    \[
	    x_2^{2^n} = \left(\sum\limits_{i = 0}^{\infty}a_{2^i - 1} x_1^{2^i} \right)^{2^n} = \sum\limits_{i = 0}^{\infty}a_{2^i - 1}^{2^n} x_1^{2^{n + i}}, 
	    \]
	    and the right hand side becomes 
	    \[
	    \theta_I\left(\sum\limits_{i = 0}^{\infty}a_{2^i - 1} x_1^{2^i}\right) = \sum\limits_{j = 0}^{\infty}\theta_I(a_{2^i - 1}) x_1^{2^i}.
	    \]
	    Comparing the coefficient of $x_1^{2^n}$ in both expressions, we see that $1 = \theta_I(a_{2^n-1}) = \langle a_{2^n - 1},Sq^I \rangle$.  Now, if $I$ is any other admissible sequence with $|I| = 2^{n} - 1$, then $\theta_I(x_2) = 0$ and thus $0 = \theta_I(a_{2^n-1})=\langle a_{2^n - 1}, Sq^I \rangle$. This is exactly the definition of $\xi_n$, see \cite[Chapter~6, Proposition~1]{MosherTangora}.
	\end{proof}

We pause here to clarify notations in $P^*_{4/2}\D \HFPSS(N_1^2 H\FF_2)$ and $P_{4/2}^*\D \TateSS(N_1^2 H\FF_2)$. The $C_2$-level of this spectral sequence is the spectral sequence of the doubled Postnikov tower of $H\FF_2 \wedge H\FF_2$, treated as a $C_2$-equivariant spectral sequence whose underlying level is trivial (and thus $a_{\sigma_2}$ acts invertibly). Therefore, given an element $x \in \mathcal{A}_*$, there are elements in different $RO(G)$-grading differing by powers of $a_{\sigma_2}$ that deserve the name $x$. We name the corresponding element in the integral grading by $x$, and name all others by $a_{\sigma_2}^{i}x$ for some $i \in \ZZ$. Notice that in this way, $\xi_n$ has stem and filtration $2^n - 1$ since we are working in the doubled spectral sequence. Under this notation, the map
\[
a_{\lambda}^{-1}\SliceSS(\BPfour) \rightarrow P^*_{4/2}\D \HFPSS(N_1^2 H\FF_2)
\]
sends $\ot_i$ to $a_{\sigma_2}^{-(2^i-1)}\xi_i$ (on the $C_2$-level), as follows from \cref{thm:comparison} and \cref{prop:ti}: since the target spectral sequence collapses on $C_2$-level, the image of $\ot_i$ is determined by its $RO(C_2)$-degree and its image under $\Phi^{C_2}$.

In the $C_4$-level, we need to be extra careful. By taking $N=N_2^4$ on $\ot_i \mapsto a_{\sigma}^{-(2^i-1)}\xi_i$, we see that 
\[
N(\ot_i) \mapsto a_{\lambda}^{-(2^i-1)}N(\xi_i),
\]
where $N(\xi_i)$ is in $RO(C_4)$-degree $(2^i-1)(1+\sigma)$ and filtration $2(2^i - 1)$. The complication comes from the fact that there are other generators of Tate cohomology than $N(\xi_i)$. For example, the element $\xi_1$ is invariant under the conjugate action and thus gives a generator of $\hat{H}^0(C_2;(\mathcal{A}_*)_1)$. For such generators in degree $i$ of $\mathcal{A}_*$, we will use the notation $b_i$, and define that they are in the integral grading. For example, the generator of $\hat{H}^0(C_2;(\mathcal{A}_*)_1)$ is named $b_1$, and has bidegree $(1,1)$ in the double of the homotopy fixed points and the Tate spectral sequence. Since the square of $b_1$ restricts to $\xi_1^2 = \xi_1\zeta_1$, we have (for degree reasons) a multiplicative relation
\[
b_1^2 = N(\xi_1)u_{\sigma},
\]
where $u_{\sigma}$ is a generator of the Tate cohomology of trivial module
\[
\hat{H}^{\star}(C_2;\FF_2) \cong \FF_2[a_{\sigma}^{\pm},u_{\sigma}^{\pm}].
\]
The generator $u_{\sigma}$ has degree $1-\sigma$ and $a_{\sigma}$ has degree $-\sigma$. The classical integral graded Tate cohomology 
\[
\hat{H}^{*}(C_2;\FF_2) \cong \FF_2 [x^{\pm}]
\]
with degree $1$ generator $x$ is related to the $RO(C_2)$-graded cohomology via $x = u_{\sigma}a_{\sigma}^{-1}$.	Since the sign representation on $C_4/C_2$ pulls back to the sign representation on $C_4$, we use the same notations $u_{\sigma}$ and $a_{\sigma}$ in the pullback of the homotopy fixed points and the Tate spectral sequence. 

In summary, the $C_2$-level of $P^*_{4/2}\D \HFPSS(N_1^2 H\FF_2)$ has the form
\[
\FF_2[a_{\sigma_2}^{\pm}][\xi_1,\xi_2,\cdots]
\]
where $\xi_i$ has both degree and filtration $2^i - 1$.
The $C_4$-level of $P^*_{4/2}\D \HFPSS(N_1^2 H\FF_2)$ has the form
\[
H^0(C_2;\mathcal{A}_*)[a_{\lambda}^{\pm},a_{\sigma},u_{\sigma}^{\pm}]/(Tr(x)a_{\sigma}, \forall x \in \mathcal{A}_*),
\]
where an element in $H^0(C_2;(\mathcal{A}_*)_i)$ has both degree and filtration $i$ if it doesn't restricts to elements of the form $\xi_k\zeta_k$; if this happens, the integral graded element is named by $N(\xi_k)u_{\sigma}^{2^i - 1}$. The class $u_{\sigma}$ is of stem $1-\sigma$ and of filtration $0$, while $a_{\sigma}$ is of stem $-\sigma$ and filtration $1$. 

The $C_4$-level of $P^*_{4/2}\D \TateSS(N_1^2 H\FF_2)$ has the form 
\[
\hat{H}^0(C_2;\mathcal{A}_*)[a_{\lambda}^{\pm},a_{\sigma}^{\pm},u_{\sigma}^{\pm}],
\]
with names of elements in $\hat{H}^0$ from the image of the surjective map $H^0(C_2;\mathcal{A}_*) \rightarrow \hat{H}^0(C_2;\mathcal{A}_*)$.

\begin{prop}\label{prop:power}
    In the Tate cohomology $\hat{H}^0(C_2;\mathcal{A}_*)$ the following elements are nontrivial:
\[
\xi_1\zeta_1, \xi_2\zeta_2,(\xi_2\zeta_2)^2,\xi_3\zeta_3,(\xi_3\zeta_3)^2, (\xi_3\zeta_3)^3, (\xi_i\zeta_i)^k
\]
for $i \geq 4$ and $k \leq 4$.
\end{prop}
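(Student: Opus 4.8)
The plan is to identify each of these classes with the image of an explicit permanent cycle of $a_{\lambda}^{-1}\SliceSS(\BPfour)$ under the comparison square \cref{eq:comparison}, and then to deduce non-vanishing on the $E_2$-page of $P^{*}_{4/2}\D\TateSS(N_1^2 H\FF_2)$ by transporting differentials from the completely understood spectral sequence $a_{\sigma}^{-1}\SliceSS(\BPfour)$.

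First I would set up the translation. By \cref{prop:ti} and \cref{thm:comparison} the comparison map sends $\ot_i$ to $a_{\sigma_2}^{-(2^i-1)}\xi_i$ on the $C_2$-level; since it is compatible with norms from $C_2$ to $C_4$ (\cref{exam:slicehfpss}, \cref{prop:compare_norm}, \cref{exam:locslice}), applying $N=N_2^4$ shows that it sends $N(\ot_i)$ to $a_{\lambda}^{-(2^i-1)}N(\xi_i)$, and hence $N(\ot_i)^k$ to an invertible multiple of $N(\xi_i)^k$, which represents $(\xi_i\zeta_i)^k$ in $\hat H^0(C_2;\mathcal{A}_*)$ up to a transfer from $C_2$ (and such transfers are exactly what is quotiented out in $\hat H^0$). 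As $N(\ot_i)$ is a norm of the permanent cycle $\ot_i$ it is itself a permanent cycle of $a_{\lambda}^{-1}\SliceSS(\BPfour)$, so $N(\ot_i)^k$ is too; moreover it is nonzero, since restricting to $C_2$ and applying $\Phi^{C_2}$ sends it to $(\xi_i\zeta_i)^k\neq 0$ in $\mathcal{A}_*=\pi_*(H\FF_2\wedge H\FF_2)$. Thus it suffices to show that, after localizing further at $a_{\sigma}$ and passing to $E_2$-pages, the image of $N(\ot_i)^k$ in the relevant bidegree (multiplied by an invertible power of $a_{\lambda}$, $a_{\sigma}$, $u_{\sigma}$) is nonzero.

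Next I would use the Hill--Hopkins--Ravenel differentials. The bottom map of \cref{eq:comparison} converges to an isomorphism, and its source $a_{\sigma}^{-1}\SliceSS(\BPfour)$ is completely determined by the slice differential theorem \cite[Theorem~9.9, Remark~9.11]{HHR}. From the differential $d_{2^{i+2}-3}(u_{2\sigma}^{2^{i-1}}a_{\lambda}^{-(2^i-1)}a_{\sigma}^{-(2^{i+1}-1)}) = N(\ot_i)$ appearing in the proof of \cref{prop-cycle}, multiplying by the class $N(\ot_i)^{k-1}$, which is a cycle on page $2^{i+2}-3$, and applying the Leibniz rule yields $d_{2^{i+2}-3}\bigl(u_{2\sigma}^{2^{i-1}}N(\ot_i)^{k-1}a_{\lambda}^{\bullet}a_{\sigma}^{\bullet}\bigr) = N(\ot_i)^{k}a_{\lambda}^{\bullet}a_{\sigma}^{\bullet}$, provided the source survives to page $2^{i+2}-3$. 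Transporting this along the comparison map — which carries $u_{2\sigma}$ into the invertible class $u_{\sigma}^{2}$ and $a_{\lambda}$ into the invertible class $a_{\lambda}$ (\cref{lem:sigmalambdacovering}) — it becomes $d_{2^{i+2}-3}(\mathrm{unit}) = (\xi_i\zeta_i)^{k}\cdot\mathrm{unit}$ in $P^{*}_{4/2}\D\TateSS(N_1^2 H\FF_2)$. Since the source is a unit it is nonzero on every page, so as long as this transported differential is not already trivial — i.e.\ as long as $(\xi_i\zeta_i)^{k}$ has not been killed by a shorter Tate differential — the class $(\xi_i\zeta_i)^{k}$ is nonzero on page $2^{i+2}-3$, hence on $E_2$.

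The crux, and the main obstacle, is the survival and interference bookkeeping that pins down the exact bound $k\le\min(i,4)$. One must verify (a) that the source $u_{2\sigma}^{2^{i-1}}N(\ot_i)^{k-1}a_{\lambda}^{\bullet}a_{\sigma}^{\bullet}$ is not killed on an earlier page of $a_{\sigma}^{-1}\SliceSS(\BPfour)$ by a Leibniz consequence of the differentials on $u_{2\sigma}^{2^{j}}$ with $j>i$, and (b) that no shorter differential of $P^{*}_{4/2}\D\TateSS(N_1^2 H\FF_2)$ hits $(\xi_i\zeta_i)^{k}$ — again by comparison with $a_{\sigma}^{-1}\SliceSS(\BPfour)$, where every differential in the relevant bidegree is accounted for by \cite[Theorem~9.9, Remark~9.11]{HHR}, together with the fact that the comparison converges to an isomorphism. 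I would run this as an induction on $k$: the base cases $k=1$ follow from \cref{prop-cycle}, the small cases can be read directly off the charts \cref{fig:comparison} and \cref{fig:Tate}, and the general step is a length-versus-filtration count using the vanishing line $s\le 3(t-s)$ of \cref{thm-main}(2) and the exact-sequence description of $a_{\sigma}$-multiplication in \cref{prop-C_4coe}(3).
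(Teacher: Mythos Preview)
Your approach has a genuine gap: you are trying to deduce an \emph{algebraic} statement about the $E_2$-page (namely, non-vanishing in $\hat H^0(C_2;\mathcal{A}_*)$) from spectral-sequence differentials and comparison maps, but the comparison can never certify non-vanishing on $E_2$. Concretely, suppose for contradiction that $(\xi_i\zeta_i)^k=0$ in $\hat H^0(C_2;\mathcal{A}_*)$. Then the image of $N(\ot_i)^k$ under the comparison map on $E_2$-pages is simply $0$, and the transported differential $d_{2^{i+2}-3}(\text{image of source})=0$ is perfectly consistent; there is no contradiction to extract. Your claim that ``the source is a unit'' is also false for $k>1$: the source $u_{2\sigma}^{2^{i-1}}N(\ot_i)^{k-1}$ maps to an invertible multiple of $N(\xi_i)^{k-1}$, which is not a unit in $\hat H^0(C_2;\mathcal{A}_*)$ --- by induction you would only know it is \emph{nonzero}, and a nonzero element can map to zero or support a different differential. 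The convergence of the bottom map to an isomorphism on homotopy groups controls $E_\infty$, not $E_2$.

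The paper takes a completely different, elementary route: a direct combinatorial argument in $\mathcal{A}_*=\FF_2[\xi_1,\xi_2,\dots]$ using Milnor's conjugation formula. One observes that $(\xi_i\zeta_i)^k$ lies in the image of the transfer $x\mapsto x+\gamma x$ only if its leading term $\xi_i^{2k}$ (with respect to monomial degree) appears in the transfer of some monomial of smaller monomial degree and the same topological degree. Assigning bidegree $(a,b)$ with $a=\text{(topological degree)}+b$ and $b=\text{(monomial degree)}$, each $\xi_j$ has bidegree $(2^j,1)$, so any monomial in bidegree $(a,b)$ must have $\mathrm{Bin}(a)\le b$ (where $\mathrm{Bin}$ counts binary $1$s). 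One then checks, case by case, that there is no candidate monomial below $\xi_i^{2k}$ for the listed values of $(i,k)$; for instance, $(\xi_2\zeta_2)^2$ has leading term $\xi_2^4$ in bidegree $(16,4)$, and the only lower slot $(14,2)$ is empty since $\mathrm{Bin}(14)=3>2$. This purely algebraic argument is what is needed; the role of the proposition in the paper is precisely to serve as input for \cref{cor:image} so that the comparison map can then be used to prove Tate differentials, not the other way around.
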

The proof is purely combinatorical and is irrelevant to other parts of the paper. It uses computations and ideas from \cite{Crossley-Whitehouse}.
\begin{proof}
    We argue by monomial degrees in $\mathcal{A}_* = \FF_2[\xi_1,\xi_2,\cdots]$ and the Milnor conjugate formula
    \[
        \zeta_i = \sum_{j=0}^{i-1} \xi_{i-j}^{2^j}\zeta_j.
    \]
    The conjugate formula tells us that the transfer of a monomial (i.e.\ the sum of the monomial and its conjugate) in the $\xi_i$ can only increase its monomial degree. It also tells us that the monomial with minimal monomial degree in $(\xi_i\zeta_i)^k$ is $\xi_i^{2k}$. Therefore, $(\xi_i\zeta_i)^k$ being in the image of transfer can only happen when $\xi_i^{2k}$ appears in the transfer of a monomial, which has smaller monomial degree and the same topological degree.
    
    To streamline the computation, we define that a monomial $P$ has bidegree $(a,b)$ if $P$ has monomial degree $b$ and topological degree $a - b$. In this way, $\xi_i$ has bidegree $(2^i,1)$. To find monomials which have smaller monomial degree and the same topological degree, we can look at the binary expansion of $a$. Let $t$ be a positive integer, and $\mathrm{Bin}(t)$ be the number of $1$s in the binary expansion of $t$. If a monomial has bidegree $(a,b)$, then both $a$ must be even and $Bin(a) \leq b$. Since $\xi_i$ has bidegree $(2^i,1)$, any monomial whose transfer contains $\xi_i^{2k}$ must be in bidegree $(a,b)$ where $a - b = 2k(2^i -1)$.
    
    We will only check the highest power of $\xi_i \zeta_i$ listed in the statement of the proposition, since if $(\xi_i \zeta_i)^k$ is nontrivial in Tate cohomology, then $(\xi_i \zeta_i)^j$ for $j \leq k$ are all nontrivial.
    
    The class of $\xi_1\zeta_1$ is obviously nontrivial in Tate cohomology, so we start our argument with $(\xi_2\zeta_2)^2$. Writing it as a polynomial in the $\xi_i$, the leading term is $\xi_2^4$, which has bidegree $(16,4)$. We only need to check if there is a nontrivial monomial in bidegree $(14,2)$. Since $\mathrm{Bin}(14)  = 3 >  2$, there is no monomial in this degree. Therefore $(\xi_2\zeta_2)^2$ is nontrivial in the Tate cohomology.
    
    Next we consider $(\xi_3\zeta_3)^3$. The leading term is $\xi_3^6$, which has bidegree $(48,6)$. In $(46,4)$ there is only one monomial $\xi_5\xi_3\xi_2\xi_1$. It is direct to check that
    \[
    \xi_5\xi_3\xi_2\xi_1 + \zeta_5\zeta_3\zeta_2\zeta_1 \neq (\xi_3\zeta_3)^3.
    \]
    In $(42,2)$, there is no monomial since $Bin(42) = 3 > 2$.
    
    For $(\xi_i\zeta_i)^4$ where $i > 3$, a similar argument applies. When $i = 4$ there is a monomial in $(126,6)$, namely $\xi_6\xi_5\xi_4\xi_3\xi_2\xi_1$, but it cannot transfer to $(\xi_4\zeta_4)^4$. And there is no monomial with smaller monomial degree with the same topological degree. When $i > 4$ there is simply no suitable monomial below $(2^{i+3},8)$ since $\mathrm{Bin}(2^{i+3} - t)$ for $t = 2,4,6$ are all greater than $8-t$.
\end{proof}

The proof can certainly be generalized. For example, $(\xi_i\zeta_i)^8$ are nontrivial in the Tate cohomology for $i > 11$. However, what we proved is sufficient for our computation. 

Recall that the element $u_{2\sigma}$ in $\pi^{C_4}_{2-2\sigma}H\UZ$ maps to $u_{\sigma}^2$ in Tate cohomology.

\begin{cor}\label{cor:image}
    Under the map
    \[
    a_{\sigma}^{-1}\SliceSS(\BPfour) \rightarrow P^*_{4/2}\D \TateSS(N_1^2 H\FF_2),
    \]
    the classes 
    \[
    N(\ot_i)^j a_{\lambda}^ka_{\sigma}^lu_{2\sigma}^m
    \]
    map to
    \[
    N(\xi_i)^j a_{\lambda}^{k - (2^i - 1)}a_{\sigma}^lu_{\sigma}^{2m},
    \]
    for $j \geq 0$ and $k,l,m \in \ZZ$.
    The image is nontrivial if and only if $(\xi_i\zeta_i)^j$ represents a nontrivial element in $\hat{H}^0(C_2;\mathcal{A}_*)$.
\end{cor}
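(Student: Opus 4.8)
The plan is to derive both halves of the statement formally from results already in hand: the multiplicativity and $N_2^4$-compatibility of the comparison map, the identification of $C_2$-geometric fixed points in \cref{prop:ti}, and the combinatorial input of \cref{prop:power}. First I would compute the image of the multiplicative generators. The bottom map of the comparison square \eqref{eq:comparison} is the $a_{\sigma}$-localization of the top map, hence a map of multiplicative spectral sequences that is compatible with the norm $N_2^4$ by \cref{exam:slicehfpss} and \cref{prop:compare_norm}. On the $C_2$-level the target collapses, so $\ot_i$ maps to the unique class in its $RO(C_2)$-degree with the prescribed $C_2$-geometric fixed points, namely $a_{\sigma_2}^{-(2^i-1)}\xi_i$ by \cref{prop:ti} and \cref{thm:comparison}; applying $N_2^4$ and using norm-compatibility gives $N(\ot_i)\mapsto a_{\lambda}^{-(2^i-1)}N(\xi_i)$. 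The Euler classes $a_{\lambda}$ and $a_{\sigma}$ are sent to the Euler classes of the same name (they are natural, and $\sigma$ is pulled back from $C_4/C_2$), and $u_{2\sigma}\mapsto u_{\sigma}^2$ as recorded before the statement. Multiplicativity then produces the claimed image, after matching $RO(C_4)$-degrees to pin down the exponent of $a_{\lambda}$.

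Second I would decide nontriviality on the $E_2$-page. By \cref{lem:sigmalambdacovering} the class $a_{\lambda}$ is already invertible after inverting $a_{\sigma}$, so the $C_4$-level of the $E_2$-page of $P^*_{4/2}\D\TateSS(N_1^2H\FF_2)$ is the Laurent ring $\hat{H}^0(C_2;\mathcal{A}_*)[a_{\lambda}^{\pm},a_{\sigma}^{\pm},u_{\sigma}^{\pm}]$ described above. Since $a_{\lambda}$, $a_{\sigma}$, $u_{\sigma}$ are units there, the image monomial vanishes if and only if $N(\xi_i)^j=0$ in $\hat{H}^0(C_2;\mathcal{A}_*)$. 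With elements of $\hat{H}^0$ named by representatives lifted along the surjection $H^0(C_2;\mathcal{A}_*)\twoheadrightarrow\hat{H}^0(C_2;\mathcal{A}_*)$, the class $N(\xi_i)$ is the image of $\xi_i\zeta_i$: it restricts along $C_4\to C_2$ to $\xi_i\cdot\gamma(\xi_i)=\xi_i\zeta_i$, since the residue $C_4/C_2$-action is conjugation on $\mathcal{A}_*$. Hence $N(\xi_i)^j$ is the image of $(\xi_i\zeta_i)^j$, and the image monomial is nontrivial precisely when $(\xi_i\zeta_i)^j$ is nonzero in $\hat{H}^0(C_2;\mathcal{A}_*)$ — which is exactly what \cref{prop:power} records in the relevant range.

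The step I expect to require the most care is the bookkeeping around the two systems of names — integral versus $RO(C_4)$-graded, and in particular the identification of $N(\xi_i)$, a priori defined as a norm in the Tate-cohomology Green functor, with the class of $\xi_i\zeta_i$ in $\hat{H}^0(C_2;\mathcal{A}_*)$ itself rather than merely on the underlying $C_2$-level. Once that identification is in place everything is formal, as the genuinely combinatorial content has already been isolated in \cref{prop:power}.
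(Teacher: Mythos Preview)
Your proposal is correct and matches the paper's approach: the paper gives no explicit proof, treating the corollary as immediate from the discussion preceding it (the identification $N(\ot_i)\mapsto a_{\lambda}^{-(2^i-1)}N(\xi_i)$, the fact that $u_{2\sigma}\mapsto u_{\sigma}^2$, and the description of the target $E_2$-page as $\hat{H}^0(C_2;\mathcal{A}_*)[a_{\lambda}^{\pm},a_{\sigma}^{\pm},u_{\sigma}^{\pm}]$), and you have correctly unpacked exactly this reasoning. One small remark: your closing appeal to \cref{prop:power} is not needed for the ``if and only if'' criterion itself---that proposition supplies specific instances where $(\xi_i\zeta_i)^j$ is nonzero, but the criterion in the corollary is the abstract equivalence you already established in your second paragraph; also note that strict multiplicativity gives exponent $k - j(2^i-1)$ on $a_{\lambda}$, so the stated exponent $k-(2^i-1)$ in the paper is presumably a typo (or implicitly the case $j=1$).
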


The slice differentials in \cite{HHR} completely describe $a_{\sigma}^{-1}\SliceSS(\BPfour)$. By understanding its image in $P^*_{4/2}\D \TateSS(N_1^2H\FF_2)$, we can deduce many differentials in the Tate spectral sequence. We prove all differentials in their most natural $RO(C_4)$-degree. They can be translated into the integral degree by invertible $a_{\lambda}$ and $a_{\sigma}$ multiplications. \cref{fig:DTate} presents $P^*_{4/2}\D \TateSS(N_1^2H\FF_2)$ with differentials proved below. For reference, \cref{fig:Tate} presents the original Tate spectral sequence $\TateSS(N_1^2H\FF_2)$ with the same differentials.

\begin{figure}
    \centering
    \includegraphics[scale = 0.5]{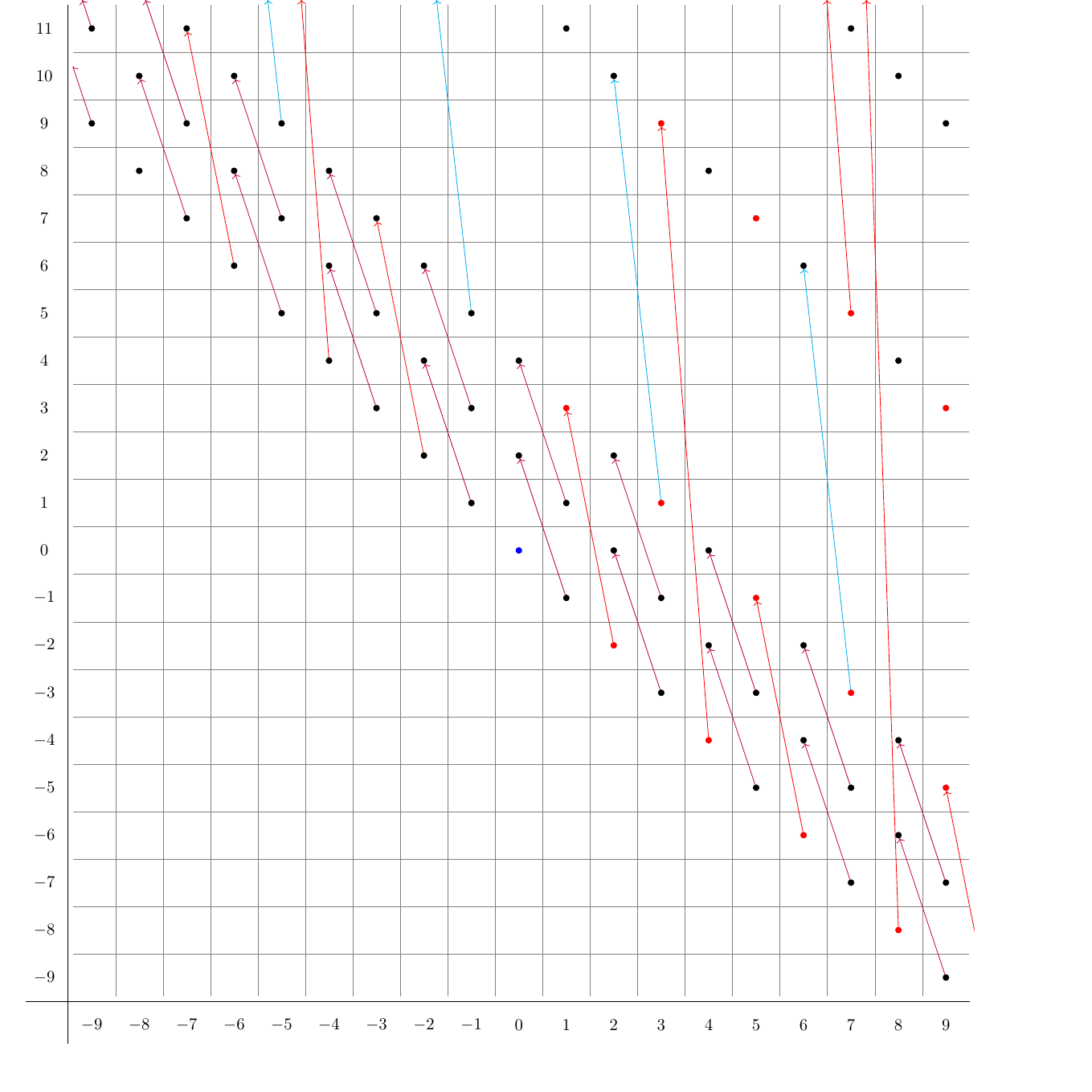}
    \caption{$P^*_{4/2}\D \TateSS(N_1^2H\FF_2)$ with differentials.}
    \label{fig:DTate}
\end{figure}

\begin{figure}
    \centering
    \includegraphics[scale = 0.5]{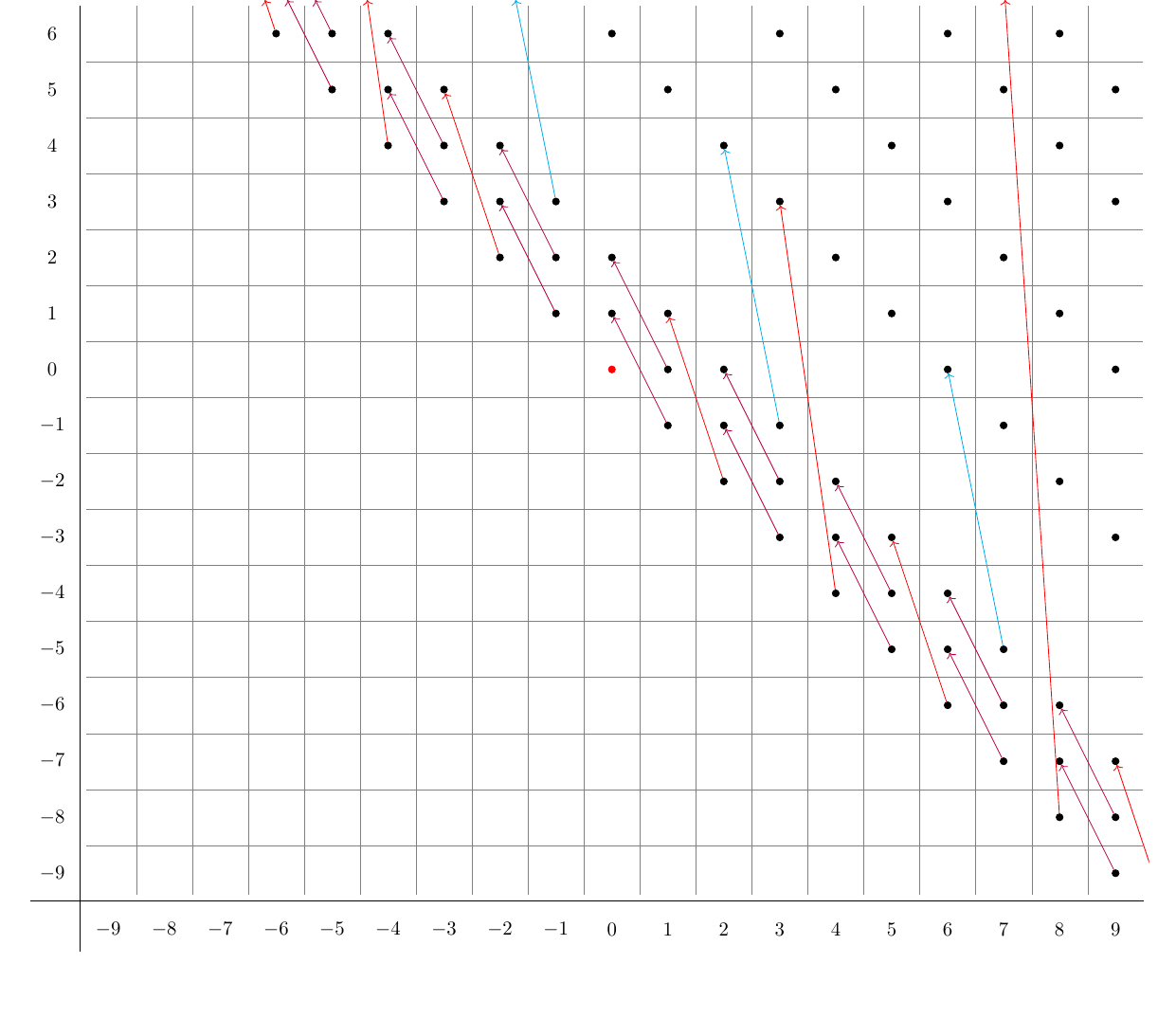}
    \caption{$\TateSS(N_1^2H\FF_2)$ with differentials.}
    \label{fig:Tate}
\end{figure}

\begin{thm}\label{thm:TateDiff}
    In $P^*_{4/2}\D \TateSS(N_1^2 H\FF_2)$, we have differentials
    \[
        d_{2^{k+2}-3}(u_{\sigma}^{2^k}) = N(\xi_k)a_{\sigma}^{2^{k+1} - 1}.
    \]
    for all $k \geq 1$.
\end{thm}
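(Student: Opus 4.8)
The plan is to obtain each differential by pushing the corresponding Hill--Hopkins--Ravenel slice differential along the bottom edge of the comparison square \eqref{eq:comparison}. Recall from \cref{thm-HHRd} that in $\SliceSS(\BPfour)$, and hence also in its $a_{\sigma}$-localization $a_{\sigma}^{-1}\SliceSS(\BPfour)$ (in which $a_{\lambda}$ acts invertibly by \cref{lem:sigmalambdacovering}), one has $d_i(u_{2\sigma}^{2^j})=0$ for $i<2^{j+3}-3$ together with
\[
d_{2^{j+3}-3}\bigl(u_{2\sigma}^{2^j}\bigr)=N(\ot_{j+1})\,a_{\lambda}^{2^{j+1}-1}a_{\sigma}^{2^{j+2}-1}.
\]
Reindexing with $k=j+1\geq 1$, this says that $u_{2\sigma}^{2^{k-1}}$ is a $d_i$-cycle for all $i<2^{k+2}-3$ and supports the differential $d_{2^{k+2}-3}(u_{2\sigma}^{2^{k-1}})=N(\ot_k)\,a_{\lambda}^{2^k-1}a_{\sigma}^{2^{k+1}-1}$.

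First I would invoke that the bottom horizontal arrow $a_{\sigma}^{-1}\SliceSS(\BPfour)\to P^*_{4/2}\D\TateSS(N_1^2H\FF_2)$ is a morphism of multiplicative spectral sequences, so it commutes with every $d_r$, and that on $E_2$ it carries $u_{2\sigma}$ to $u_{\sigma}^2$ and $N(\ot_k)\,a_{\lambda}^{2^k-1}a_{\sigma}^{2^{k+1}-1}$ to $N(\xi_k)\,a_{\sigma}^{2^{k+1}-1}$ by \cref{cor:image}. Applying the morphism to the vanishing statements shows inductively that the image $u_{\sigma}^{2^k}=(u_{\sigma}^2)^{2^{k-1}}$ is a $d_i$-cycle for all $i<2^{k+2}-3$, hence survives to the $E_{2^{k+2}-3}$-page of $P^*_{4/2}\D\TateSS(N_1^2H\FF_2)$; applying it to the differential itself then yields $d_{2^{k+2}-3}(u_{\sigma}^{2^k})=N(\xi_k)\,a_{\sigma}^{2^{k+1}-1}$, which is the asserted formula.

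To check that this differential is non-vacuous --- and in particular that the page on which it occurs is exactly $2^{k+2}-3$ and not earlier --- I would note that by the last clause of \cref{cor:image} the target $N(\xi_k)\,a_{\sigma}^{2^{k+1}-1}$ is nonzero precisely because $\xi_k\zeta_k=(\xi_k\zeta_k)^1$ represents a nonzero class in $\hat H^0(C_2;\mathcal A_*)$, which holds for every $k\geq 1$ by \cref{prop:power}. Consequently $u_{\sigma}^{2^k}$ is itself nonzero on $E_{2^{k+2}-3}$ and supports precisely the stated differential.

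I do not anticipate a serious obstacle internal to this argument: the substantive work has already been carried out in constructing the comparison square, identifying the induced map on $E_2$-terms (\cref{prop:ti}, \cref{cor:image}), and establishing the non-vanishing input \cref{prop:power}. The points that do require care are the index bookkeeping between \cref{thm-HHRd} and the statement, the doubling $u_{2\sigma}\mapsto u_{\sigma}^2$ (so that the relevant generator is $u_{\sigma}^{2^k}$ rather than $u_{\sigma}^{2^{k-1}}$), and confirming the exact page-length $2^{k+2}-3$, which as above follows from the corresponding vanishing of shorter differentials on $u_{2\sigma}^{2^{k-1}}$ in the source spectral sequence.
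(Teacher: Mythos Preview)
Your overall strategy---push the Hill--Hopkins--Ravenel differential along the bottom edge of the comparison square---is exactly the paper's, and your bookkeeping with the indices and with $u_{2\sigma}\mapsto u_{\sigma}^2$ is correct. However, there is a genuine gap in the final step.

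Naturality of the map of spectral sequences only gives you the equality
\[
d_{2^{k+2}-3}\bigl([u_{\sigma}^{2^k}]\bigr)=\bigl[N(\xi_k)a_{\sigma}^{2^{k+1}-1}\bigr]
\]
on the $E_{2^{k+2}-3}$-page of the target, and both classes in brackets could be zero there. You observe correctly that the target is nonzero on $E_2$ (via \cref{prop:power} and \cref{cor:image}), but this does not prevent $N(\xi_k)a_{\sigma}^{2^{k+1}-1}$ from being the target of a shorter differential $d_l$ in $P^*_{4/2}\D\TateSS(N_1^2H\FF_2)$ coming from some class not in the image of the comparison map. Concretely, the column of $u_{\sigma}^{2^k}$ (stem $2^k(1-\sigma)$) contains elements at positive filtration such as $b_1 u_{\sigma}^{2^k-1}a_{\sigma}$, so there is no sparseness argument available. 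Your sentence ``Consequently $u_{\sigma}^{2^k}$ is itself nonzero on $E_{2^{k+2}-3}$'' conflates nonvanishing on $E_2$ with nonvanishing on a later page.

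The paper closes exactly this gap. It argues that if some shorter $d_l$ with $l<2^{k+2}-3$ killed $N(\xi_k)a_{\sigma}^{2^{k+1}-1}$, then because of the filtration condition characterizing the image of $P^*_{4/2}\D\HFPSS \to P^*_{4/2}\D\TateSS$, one could divide by $a_{\sigma}$ and lift to obtain $d_l(xa_{\sigma}^{-1})=N(\xi_k)a_{\sigma}^{2^{k+1}-2}$ in the homotopy fixed point spectral sequence. This would force $N(\xi_k)a_{\sigma}^{2^{k+1}-2}=0$ in $\pi_{\star}^{C_2}N_1^2H\FF_2$, contradicting \cref{prop-cycle} (together with \cref{prop:ti}). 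This is the missing idea: a lower bound on the length of the differential killing the target, established by comparison with the homotopy fixed point spectral sequence and the permanent-cycle result \cref{prop-cycle}.
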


\begin{proof}
    By \cref{prop:power}, both the source and the target are in the image of
    \[
    a_{\sigma}^{-1}\SliceSS(\BPfour) \rightarrow P^*_{4/2}\D \TateSS(N_1^2 H\FF_2).
    \]
    There is no element below $u_{\sigma}^{2^k}$ in the $E_2$-page, so $N(\xi_k)a_{\sigma}^{2^{k+1} - 1}$, which is in stem $2^k - 1 - 2^k \sigma$ and filtration $2^{k+2} - 3$, can only be killed by a differential of length at most $2^{k+2} - 3$. Therefore we only need to show that $N(\xi_k)a_{\sigma}^{2^{k+1} - 1}$ is not killed by a differential of length less than $2^{k+2} - 3$.  To prove this, we show that if such a shorter differential happens, it implies $N(\xi_k)a_{\sigma}^{2^{k+1} - 2} = 0$ in $\pi^{C_2}_{\star}N_1^2 H\FF_2$. Combining with \cref{prop:ti}, it contradicts \cref{prop-cycle}.
    
    Assume that there is a differential $d_l(x) = N(\xi_k)a_{\sigma}^{2^{k+1} - 1}$ in $P^*_{4/2}\D \TateSS(N_1^2 H\FF_2)$ for $l < 2^{k+2} - 3$.
    To show $N(\xi_k)a_{\sigma}^{2^{k+1} - 2} = 0$ in $\pi^{C_2}_{\star}N_1^2 H\FF_2$, we work with the map
    \[
    P^*_{4/2}\D \HFPSS(N_1^2 H\FF_2) \rightarrow P^*_{4/2}\D \TateSS(N_1^2 H\FF_2) 
    \]
    in $RO(C_4)$-grading. By writing an arbitrary element $y$ of stem $m+n\sigma$ in $P^*_{4/2}\D \TateSS(N_1^2 H\FF_2)$ in the form $zu_{\sigma}^fa_{\sigma}^g$ with $z \in H^0(C_2, \pi_{|z|}N_1^2 H\FF_2)$, we see that $y$ is in the image from the spectral sequence $P^*_{4/2}\D \HFPSS(N_1^2 H\FF_2)$ if and only if $g\geq 0$. This happens if and only if the filtration $|z|+g$ is at least $m+n = (|z|+f) +(-f-g) = |z|-g$.  
    
    The class $x$ is of stem $2^k-2^k\sigma$ and filtration $2^{k+2}-3-l$; the filtration is at least $2$ since $l$ must be odd because of the doubling operator $\D$. Thus $xa_{\sigma}^{-1}$ is still in the image from $P^*_{4/2}\D \HFPSS(N_1^2 H\FF_2)$ and likewise is $N(\xi_k)a_{\sigma}^{2^{k+1} - 1}a_{\sigma}^{-1} = N(\xi_k)a_{\sigma}^{2^{k+1} - 2}$. Thus, we have $d_l(xa_{\sigma}^{-1}) = N(\xi_k)a_{\sigma}^{2^{k+1} - 2}$ in $P^*_{4/2}\D \HFPSS(N_1^2 H\FF_2)$, unless the target is killed by a shorter differential. In any case, $N(\xi_k)a_{\sigma}^{2^{k+1} - 2} = 0$ in $\pi^{C_2}_{\star}N_1^2 H\FF_2$.
    \end{proof}

The exact same argument gives the following differentials.

\begin{cor}
    In $P^*_{4/2} \D \TateSS(N_1^2 H\FF_2)$, we have differentials:
    \[
    d_{2^{k+2}-3}(N(\xi_k)^j u_{\sigma}^{2^k}) = N(\xi_k)^{j+1} a_{\sigma}^{2^{k+1}-1}.
    \]
\end{cor}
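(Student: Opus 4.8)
The plan is to run the proof of \cref{thm:TateDiff} essentially verbatim, carrying the extra factor $N(\xi_k)^j$ along. First I would record, as in that proof, that both the source $N(\xi_k)^j u_\sigma^{2^k}$ and the prospective target $N(\xi_k)^{j+1}a_\sigma^{2^{k+1}-1}$ lie in the image of the comparison map $a_\sigma^{-1}\SliceSS(\BPfour)\to P^*_{4/2}\D\TateSS(N_1^2 H\FF_2)$: by \cref{cor:image} they are the images of $N(\ot_k)^j u_{2\sigma}^{2^{k-1}}a_\lambda^{j(2^k-1)}$ and $N(\ot_k)^{j+1}a_\lambda^{(j+1)(2^k-1)}a_\sigma^{2^{k+1}-1}$, and by \cref{prop:power} these are nonzero in the stated range of $j$ (i.e.\ whenever $(\xi_k\zeta_k)^j$ and $(\xi_k\zeta_k)^{j+1}$ are nontrivial in $\hat{H}^0(C_2;\mathcal{A}_*)$). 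I would state the corollary with this range made explicit.

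Next I would produce a differential of length at most $2^{k+2}-3$ hitting the target. On the slice side this comes from multiplying the Hill--Hopkins--Ravenel differential $d_{2^{k+2}-3}(u_{2\sigma}^{2^{k-1}})=N(\ot_k)a_\lambda^{2^k-1}a_\sigma^{2^{k+1}-1}$ of \cref{thm-HHRd} by the permanent cycle $N(\ot_k)^j$ (using that $N(\ot_k)$, $a_\lambda$, $a_\sigma$ are permanent cycles and the Leibniz/Frobenius formalism of \cref{sec-norm}); pushing forward along \cref{cor:image} shows $N(\xi_k)^{j+1}a_\sigma^{2^{k+1}-1}$ is killed in $P^*_{4/2}\D\TateSS$ by a differential of length $\le 2^{k+2}-3$. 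Equivalently one may note that $N(\xi_k)$ is itself a permanent cycle in the Tate spectral sequence, being the image of the permanent cycle $N(\ot_k)a_\lambda^{-(2^k-1)}$, and apply the Leibniz rule to the differential of \cref{thm:TateDiff} directly; either way, what remains is to upgrade ``length $\le 2^{k+2}-3$'' to ``length exactly $2^{k+2}-3$''.

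This last point is the only place where real work is needed, and it is handled exactly as in \cref{thm:TateDiff}: a hypothetical $d_l(x)=N(\xi_k)^{j+1}a_\sigma^{2^{k+1}-1}$ with $l<2^{k+2}-3$ has $x$ of filtration $\ge 2$ (since $l$ is odd, because of the doubling operator $\D$), so both $xa_\sigma^{-1}$ and $N(\xi_k)^{j+1}a_\sigma^{2^{k+1}-2}$ lift to $P^*_{4/2}\D\HFPSS(N_1^2 H\FF_2)$, forcing $N(\xi_k)^{j+1}a_\sigma^{2^{k+1}-2}=0$ in $\pi_\star^{C_2}N_1^2 H\FF_2$. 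The desired contradiction requires the power analogue of \cref{prop-cycle}, namely that $N(\ot_k)^{j+1}a_\sigma^{2^{k+1}-2}$ is a nonzero permanent cycle in $a_\lambda^{-1}\SliceSS(\BPfour)$; I would prove this by the same two steps used for \cref{prop-cycle}. In $a_\sigma^{-1}\SliceSS(\BPfour)$ (which computes $\pi_* H\FF_2$, and whose differentials are fully determined by \cite[Remark~9.11]{HHR}) the differential of length $2^{k+2}-3$ from the previous paragraph is the unique one touching its degree, so $N(\ot_k)^{j+1}a_\sigma^{2^{k+1}-2}$ survives there; then naturality along the map $a_\lambda^{-1}\SliceSS(\BPfour)\to a_\sigma^{-1}\SliceSS(\BPfour)$ of \cref{lem:sigmalambdacovering}, together with the vanishing lines of \cref{thm-main}(2), shows no differential in $a_\lambda^{-1}\SliceSS(\BPfour)$ can hit it, so it is nonzero there as well; finally its image under $\Phi^{C_2}$ is $N(\xi_k)^{j+1}a_\sigma^{2^{k+1}-2}$ by \cref{prop:ti}. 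Thus the main obstacle is this power version of \cref{prop-cycle}; everything else is a mechanical transcription of the proof of \cref{thm:TateDiff}.
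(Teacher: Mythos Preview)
Your approach is essentially the paper's: the paper just writes ``The exact same argument gives the following differentials,'' and you have unpacked this by rerunning the proof of \cref{thm:TateDiff} with $N(\xi_k)^j$ carried along. That is fine, and your identification of the needed ``power analogue of \cref{prop-cycle}'' is correct for that route.

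One point of over-complication: your Leibniz alternative is actually a complete proof on its own, and does \emph{not} require any ``upgrade'' step. Once you know $N(\xi_k)$ is a permanent cycle in $P^*_{4/2}\D\TateSS$ (being the image of the permanent cycle $N(\ot_k)a_\lambda^{-(2^k-1)}$), the Leibniz rule applied to the differential of \cref{thm:TateDiff} gives directly
\[
d_{2^{k+2}-3}\bigl(N(\xi_k)^j u_\sigma^{2^k}\bigr)=N(\xi_k)^j\cdot d_{2^{k+2}-3}(u_\sigma^{2^k})=N(\xi_k)^{j+1}a_\sigma^{2^{k+1}-1},
\]
with the correct length, since both factors are $d_i$-cycles for $i<2^{k+2}-3$. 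There is nothing ``$\leq$'' to upgrade. The paper explicitly allows the target to vanish (``Notice that $N(\xi_k)^{j+1}$ can be zero in the $E_2$-page''), so the corollary is not claiming nontriviality; your power version of \cref{prop-cycle} would only be needed if you wanted to assert the differential is nonzero, which the statement does not do. So while your longer route works, you have already written down the short one and then talked yourself out of it.
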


 These differentials and their propagation are the red differentials in \cref{fig:Tate}. Notice that $ N(\xi_k)^{j+1}$ can be zero in the $E_2$-page of the Tate spectral sequence. For example, $N(\xi_1)^2 = 0$ in the Tate cohomology, since 
\[
\xi_1^2 \zeta_1^2 = \xi_1^4 = \xi_1(\xi_2 + \zeta_2) = Tr(\xi_1\xi_2)
\]
As a result, the target of $d_5(N(\xi_1)u_{\sigma}^2)$ predicted by the corollary is zero. Instead, $N(\xi_1)u_{\sigma}^2$ supports a nontrivial $d_9$, see \cref{prop:exotic_tate_d}

As an interesting consequence, we can bound the length of differentials on elements of the first diagonal in the Tate spectral sequence of $N_1^2 R$ for a large family of ring spectra.

\begin{cor}\label{cor:bound}
    Let $R$ be a non-equivariant $(-1)$-connected homotopy ring spectrum with $\pi_0(X) \cong \ZZ_S$ being a localization of $\ZZ$ such that $\frac12\notin \ZZ_S$. Let $v \in \hat{H}^{2}(C_2;\pi_0N_1^2 X)\cong \hat{H}^{2}(C_2;\ZZ_S)$ be the generator of the Tate cohomology. Then $v^{2^k}$ supports a non-trivial differential of length $l_k$ with 
        \[
        \rho(2^{k+1}) \leq l_k \leq 2^{k+2}-1.
        \]
    Here $\rho(n)$ is the Radon-Hurwitz number (with $\rho(n)-1$ the maximal number of independent vector fields on $S^{n-1}$): for $n= k2^{4b+c}$ with $k$ odd and $0\leq c < 4$, it is defined as $\rho(n) = 8b+2^c$.\footnote{Beware that the lower bound is based on a classical result for which we don't know of a published reference.}
\end{cor}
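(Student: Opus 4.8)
The plan is to read off the two bounds from two different naturality comparisons: the upper bound from the map to $\TateSS(N_1^2H\FF_2)$, whose relevant differentials are pinned down in \cref{thm:TateDiff}, and the lower bound from the map out of $\TateSS(N_1^2\mathbb{S})=\TateSS(\mathbb{S}_{C_2})$, where the answer is governed by the classical theory of vector fields on spheres. Throughout I write $R$ for the ring spectrum (the ``$X$'' in the displayed statement appears to be a typo for $R$), and I use that $R$ being $(-1)$-connected forces $R\wedge R$, and hence the underlying spectrum of $N_1^2R$, to be $(-1)$-connected, with $\pi_0N_1^2R\cong\ZZ_S$ carrying the trivial $C_2$-action. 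I would first note that the Tate $E_2$-page $\hat H^s(C_2;\pi_tN_1^2R)$ vanishes for $t<0$; since a $d_r$ landing on the line $t=0$ would emanate from $\hat H^{\bullet}(C_2;\pi_{1-r}N_1^2R)$ with $r\geq 2$, no differential can hit the class $v^{2^k}\in\hat H^{2^{k+1}}(C_2;\ZZ_S)$ (which is nonzero exactly because $\tfrac12\notin\ZZ_S$). Moreover $v^{2^k}$ cannot be a permanent cycle: by the generalized $C_2$-Segal conjecture recalled in the introduction, the abutment $(N_1^2R)^{tC_2}$ is $2$-complete and equivalent to $R^{\wedge}_2$, which is connective, so its $(-2^{k+1})$-stem is zero. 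Hence $v^{2^k}$ supports a differential and $l_k$ is well defined; this is the input common to both bounds.

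For the upper bound I would use the maps of homotopy ring spectra $R\to\tau_{\leq 0}R=H\ZZ_S\to H\FF_2$. Applying $N_1^2(-)$ and passing to Tate constructions yields a map of multiplicative spectral sequences $\TateSS(N_1^2R)\to\TateSS(N_1^2H\FF_2)$ under which $v^{2^k}$ goes to the image of $v^{2^k}$ along $\hat H^*(C_2;\ZZ_S)\to\hat H^*(C_2;\FF_2)\hookrightarrow\hat H^*(C_2;\mathcal{A}_*)$, i.e.\ to the class $x^{2^{k+1}}$, where $x$ is the degree-one generator of the split summand $\hat H^*(C_2;\FF_2)$. By \cref{thm:TateDiff}, transported out of the doubled pulled-back form $P^*_{4/2}\D\TateSS(N_1^2H\FF_2)$ and applied at index $k+1$, the class $x^{2^{k+1}}$ supports a differential of length $2^{(k+1)+1}-1=2^{k+2}-1$, so in any case $x^{2^{k+1}}$ does not survive past page $2^{k+2}-1$. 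By naturality $v^{2^k}$ does not survive past that page either, and since it can only support (never receive) a differential, $l_k\leq 2^{k+2}-1$. The ``off by one'' that produces the bound $2^{k+2}-1$ rather than $2^{k+1}-1$ is precisely that $v$ maps to $x^2$, so that $v^{2^k}$ sees the $(k+1)$-st of the Hill--Hopkins--Ravenel-type Tate differentials.

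For the lower bound I would instead use the unit $\mathbb{S}\to R$, which gives $\mathbb{S}_{C_2}=N_1^2\mathbb{S}\to N_1^2R$ and exhibits $N_1^2R$ as a module over $\mathbb{S}_{C_2}$, hence a pairing of spectral sequences $\TateSS(\mathbb{S}_{C_2})\otimes\TateSS(N_1^2R)\to\TateSS(N_1^2R)$. On $\hat H^*(C_2;\pi_0^s)=\hat H^*(C_2;\ZZ)\to\hat H^*(C_2;\ZZ_S)$ the class $v^{2^k}$ in the target is the product $v^{2^k}\cdot 1$ of $v^{2^k}\in\TateSS(\mathbb{S}_{C_2})$ with the unit $1\in\TateSS(N_1^2R)$. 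Since $1$ is a permanent cycle, Leibniz gives $d_l(v^{2^k}\cdot 1)=d_l(v^{2^k})\cdot 1$; so if $v^{2^k}$ is a $d_l$-cycle in $\TateSS(\mathbb{S}_{C_2})$ for every $l<\rho(2^{k+1})$, the same holds in $\TateSS(N_1^2R)$ and $l_k\geq\rho(2^{k+1})$. This reduces the lower bound to the purely non-equivariant assertion that $v^{2^k}$ survives to $E_{\rho(2^{k+1})}$ in the Tate spectral sequence of $\mathbb{S}^{tC_2}$: via Lin's theorem this identifies the line-$t{=}0$ classes $v^m$ with the attaching maps of the stunted projective spectra $\RR P^{\,2m+\bullet}_{\,2m}$ (equivalently, these differentials are read off the stable EHP spectral sequence), and the Hurwitz--Radon--Eckmann construction of $\rho(n)-1$ everywhere-independent vector fields on $S^{n-1}$ makes the relevant stunted projective spectra reducible through a range, killing those attaching maps and so forcing the differential on $v^{2^k}$ to have length at least $\rho(2^{k+1})$.

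The main obstacle is exactly this last non-equivariant ingredient: one must make precise the dictionary between the line-$t{=}0$ differentials in $\TateSS(\mathbb{S}^{tC_2})$ and the stable EHP spectral sequence / the attaching maps of stunted projective spectra, and then feed in the explicit (constructive) vector-field existence to bound their length. As the footnote to the statement warns, the literature treats the companion ``non-existence'' direction (Adams) carefully, but the ``existence'' statement in exactly the form needed here appears to be folklore; phrasing it cleanly, and in a form robust enough to survive the module pairing above, is the delicate step. Everything else — the two naturality arguments, the identification of the images of $v^{2^k}$, and the connectivity bookkeeping that rules out incoming differentials and permanent cycles — is routine once \cref{thm:TateDiff} and the comparison square of Section~\ref{sec-tate} are in hand.
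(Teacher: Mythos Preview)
Your proposal is correct and follows essentially the same approach as the paper: both use the zig-zag $\mathbb{S}\to R\to H\FF_2$, reading the upper bound from the map to $\TateSS(N_1^2H\FF_2)$ via \cref{thm:TateDiff} (applied at index $k+1$, exactly as you note) and the lower bound from the map out of $\TateSS(\mathbb{S}_{C_2})$ via the stable EHP/vector-field input. Your write-up is more detailed than the paper's---you explicitly justify that $v^{2^k}$ can neither receive a differential nor be a permanent cycle, and you phrase the lower bound through the module pairing rather than a bare naturality argument---but these are expository refinements rather than a different route.
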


\begin{proof}
    Consider the sequence of non-equivariant ring maps
    \[
    S^0 \rightarrow R \rightarrow H\FF_2,
    \]
    where the last map is the composition of the $0$-Postnikov section and the mod $2$ map. 
    
    Take the norm $N_1^2$ and take the Tate spectral sequence, we have
    \[
    \TateSS(S^0) \rightarrow \TateSS(N_1^2 R) \rightarrow \TateSS(N_1^2 H\FF_2).
    \]
    Note that $\pi_0N_1^2X \cong \ZZ_S \otimes \ZZ_S\cong \ZZ_S$. Since $v$ maps to $u_{\sigma}^2a_{\sigma}^{-2}$ in $\TateSS(N_1^2 H\FF_2)$, \cref{thm:TateDiff} gives the upper bound. The lower bound is given by the corresponding differential in $\TateSS(S^0)$. Positive powers of $v$ lies in the homotopy orbit spectral sequence part of $\TateSS(S^0)$, and the homotopy orbit spectral sequence can be identified with the Atiyah-Hirzebruch spectral sequence of $\RR P^{\infty}$ with homology theory $\pi_*$. This spectral sequence is the stabilization of the EHP spectral sequence. The element corresponding to $v^{2^k}$ in the EHP spectral sequence supports differentials related to the vector fields of sphere problems: it supports a differential of length $\rho(2^{k+1})$, with target in the image of $J$. (See \cite[Lectures 20 and 21]{MillerVectorFields}.)
\end{proof}

Now we discuss the rest of differentials in the range we are concerned with, which are differentials hitting elements in Figure \ref{fig:Tate} below slope $1$ and in stem $0$ to $8$. In this range, the ring $\hat{H}^0(C_2;\mathcal{A}_*)$ is presented by the following generators:
\begin{align*}
	b_1 &  \textrm{ restricts to } \xi_1 \\
	N(\xi_1)u_{\sigma} & \textrm{ restricts to } \xi_1\zeta_1\\
    N(\xi_2)u_{\sigma}^3 & \textrm{ restricts to }\xi_2\zeta_2\\
\end{align*}
with relations:
\begin{align*}
    b_1^2 + N(\xi_1)u_{\sigma} & = 0\\
    b_1^3 &= 0\\
    b_1 N(\xi_2)u_{\sigma}^3 & = 0\\
\end{align*}

The following differentials present the only remaining differentials in this range.

\begin{prop}\label{prop:exotic_tate_d}
    In $P^*_{4/2}\D \TateSS(N_1^2 H\FF_2)$, we have differentials:
    \[
        d_3(u_{\sigma}) = b_1u_{\sigma}^{-1}a_{\sigma}^2.
    \]
    \[
        d_3(b_1) = N(\xi_1) u_{\sigma}^{-1}a_{\sigma}^2.
    \]
    \[
        d_9(N(\xi_1)u_{\sigma}^2) = N(\xi_2)u_{\sigma}^{-1}a_{\sigma}^5.
    \]
\end{prop}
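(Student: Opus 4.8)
The plan is to run all three differentials through the comparison square~\eqref{eq:comparison}. The key input is convergence: $P^*_{4/2}\D\TateSS(N_1^2H\FF_2)$ converges to $\pi^{C_4}_\star(a_\sigma^{-1}\BPfour)$, which by \cref{cor:main} and $\Phi^{C_4}\BPfour\simeq H\FF_2$ vanishes in every $RO(C_4)$-degree $V$ with $\dim V^{C_4}\neq 0$ (equivalently, $\TateSS(N_1^2H\FF_2)$ converges to $\pi_*H\FF_2$). The three differentials are exactly the ones below slope $1$ remaining in the range after \cref{thm:TateDiff} and its corollary; none of them is a direct pushforward of a slice differential (for the $d_3$'s the classes $u_\sigma$ and $b_1$ are not in the image of the bottom map by \cref{cor:image}, and for the $d_9$ the corresponding slice class already dies on $E_5$), so I will argue directly from convergence, a bidegree count on the $E_2$-page, and multiplicativity, using throughout the presentation of $\hat H^0(C_2;\mathcal{A}_*)$ in the range recorded above — in particular $b_1^2=N(\xi_1)u_\sigma$ and $b_1^3=0$.

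For the two $d_3$'s I first establish $d_3(u_\sigma)=b_1u_\sigma^{-1}a_\sigma^2$. Since $\dim(1-\sigma)^{C_4}=1\neq 0$, the class $u_\sigma$ cannot be a permanent cycle, and a bidegree check shows it is not hit in the range, so it supports a differential. Were this not a $d_3$, then $u_\sigma$ would survive to $E_5$ and the Leibniz rule would give $d_5(u_\sigma^2)=2u_\sigma d_5(u_\sigma)=0$, contradicting $d_5(u_\sigma^2)=N(\xi_1)a_\sigma^3$ (\cref{thm:TateDiff} with $k=1$, equivalently the image of $d_5(u_{2\sigma})=N(\ot_1)a_\lambda a_\sigma^3$ of \cref{thm-HHRd}), whose target is nonzero on $E_5$ because $\xi_1\zeta_1=\xi_1^2\neq 0$ in $\hat H^0$. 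Hence $d_3(u_\sigma)\neq 0$, and the only class in degree $-\sigma$ and filtration $3$ in the range is $b_1u_\sigma^{-1}a_\sigma^2$. Then $d_3(b_1)=N(\xi_1)u_\sigma^{-1}a_\sigma^2$ follows formally: $d_3(u_\sigma)$ is a $d_3$-cycle, so expanding $0=d_3(b_1\cdot u_\sigma^{-1}a_\sigma^2)$ via the Leibniz rule — using $\FF_2$-coefficients and $d_3(u_\sigma^{-1})=u_\sigma^{-2}d_3(u_\sigma)$ — yields $d_3(b_1)\,u_\sigma^{-1}a_\sigma^2=b_1^2u_\sigma^{-3}a_\sigma^4=N(\xi_1)u_\sigma^{-2}a_\sigma^4$, and multiplying by the unit $u_\sigma a_\sigma^{-2}$ finishes it.

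For the $d_9$ I consider $N(\xi_1)u_\sigma^2$. It restricts on the collapsed $C_2$-level to $\xi_1^2$, a nonzero permanent cycle, so it cannot be the target of any differential; and $\dim(3-\sigma)^{C_4}=3\neq 0$, so by convergence it must support one. A bidegree count shows that in the range there is no class in degree $2-\sigma$ at filtration $5$, $7$, or $9$ — so $d_3$, $d_5$, $d_7$ all vanish on $N(\xi_1)u_\sigma^2$ and it survives to $E_9$ — while at filtration $11$ the unique class in degree $2-\sigma$ is $N(\xi_2)u_\sigma^{-1}a_\sigma^5$, which is nonzero on $E_2$ by \cref{prop:power}. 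Hence $d_9(N(\xi_1)u_\sigma^2)=N(\xi_2)u_\sigma^{-1}a_\sigma^5$.

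The hard part is the bidegree bookkeeping common to all three arguments: one must check, using the explicit $E_2$-page in the range together with the nonvanishing statements from \cref{prop:power}, \cref{prop-cycle} and \cref{cor:image}, that the displayed targets are the only candidates in their bidegrees and that no shorter differential is available. Once that is in hand the rest is formal — convergence forces the sources to die, and the collapsed $C_2$-level together with the Leibniz rule determines which differential carries each one.
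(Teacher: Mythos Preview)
Your proof is correct, and the first $d_3$ argument coincides with the paper's. For the other two differentials you take a genuinely different route.

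For $d_3(b_1)$, the paper argues via \cref{thm:htpy}: since $\pi_2\cong\ZZ/4$ and $N(\xi_1)u_\sigma$ is the only class in stem $2$ that can restrict nontrivially, it is a permanent cycle in the homotopy fixed point spectral sequence and hence must be killed in the Tate spectral sequence, forcing $d_3$ from $b_1u_\sigma^2a_\sigma^{-2}$. Your Leibniz argument from $d_3(u_\sigma)$ together with $b_1^2=N(\xi_1)u_\sigma$ is cleaner and entirely self-contained; it never touches the slice computation of \cref{section:sliceSScomputation}. For the $d_9$, the paper again invokes \cref{thm:htpy} to see that $N(\xi_2)u_\sigma^3$ survives in the homotopy fixed point spectral sequence, so it must be hit in the Tate spectral sequence, and the only possible source gives the $d_9$. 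You instead argue on the dual side: $N(\xi_1)u_\sigma^2$ cannot be a target (its restriction $\xi_1^2$ survives on the collapsed $C_2$-level), it must die by convergence, and $\hat{H}^0(C_2;(\mathcal{A}_*)_n)=0$ for $n=3,4,5$ rules out shorter differentials. The trade-off is that your approach avoids the rather long computation culminating in \cref{thm:htpy}, at the cost of the explicit Tate cohomology checks you flag at the end; to make the $d_9$ airtight you should also note that $N(\xi_2)u_\sigma^{-1}a_\sigma^5$ survives to $E_9$ (it is neither a source nor a target of $d_3,d_5,d_7$, as follows from the same vanishing of $\hat{H}^0$ in low degrees together with the relation $b_1\cdot N(\xi_2)u_\sigma^3=0$).
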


\begin{proof}
    For the first differential, since $u_{\sigma}^2$ supports a $d_5$ by \cref{thm:TateDiff}, $u_{\sigma}$ must support a shorter differential, and the $d_3$ is the only possibility.
    
    For the second differential, consider the class $N(\xi_1)u_{\sigma}$, which is the class in $(2,2)$ in \cref{fig:Tate}. Its preimage in the homotopy fixed points spectral sequence is the only class in stem $2$ that can support a nontrivial restriction. By \cref{thm:htpy}, $\pi_2$ of the homotopy fixed points is $\ZZ/4$, therefore its generator must support a nontrivial restriction (\cref{prop-cohomological}). Thus $N(\xi_1)u_{\sigma}$ doesn't support a differential in the homotopy fixed points spectral sequence, and it must be killed by a differential in the Tate spectral sequence. The only possible source is $b_1u_{\sigma}^2a_{\sigma}^{-2}$ and the differential is a $d_3$. Since $u_{\sigma}^2$ supports a $d_5$, we obtain $d_3(b_1) = N(\xi_1) u_{\sigma}^{-1}a_{\sigma}^2$ by multiplication by $u_{\sigma}^{-2}a_{\sigma}^2$.
    
    For the last differential, we only need to show that the class at $(6,6)$ in \cref{fig:Tate}, which has the name $N(\xi_2)u_{\sigma}^3$, is a cycle. By the same argument as above, its preimage is the only class in stem $6$ that can support a restriction, and this indeed happens by \cref{thm:htpy}. In the Tate spectral sequence, the only possible differential killing it has the form
    \[
        d_9(N(\xi_1)u_{\sigma}^6a_{\sigma}^{-5}) = N(\xi_2)u_{\sigma}^3.
    \]
    Multiplying both sides by $u_{\sigma}^{-4}a_{\sigma}^5$, we obtain the last differential.
\end{proof}

In \cref{fig:Tate}, the first two differentials and their propagation are colored purple. The last differential is colored blue. 

The computation of the Tate spectral sequence is largely limited by the complexity of the Tate cohomology $\hat{H}^*(C_2;\mathcal{A}_*)$. A better understanding of the Tate cohomology shall allow us to compute most differentials in the Tate spectral sequence via comparison to the localized slice spectral sequence, but can also feed back to the computation of the slice spectral sequence of $\BPfour$.
	
	\bibliography{math}{}

\newcommand{\etalchar}[1]{$^{#1}$}
\begin{thebibliography}{BBLNR14}

\bibitem[AB18]{GradedTambara}
Vigleik Angeltveit and Anna~Marie Bohmann.
\newblock Graded {T}ambara functors.
\newblock {\em J. Pure Appl. Algebra}, 222(12):4126--4150, 2018.

\bibitem[AGM85]{AdamsGunawardenaMiller}
J.~F. Adams, J.~H. Gunawardena, and H.~Miller.
\newblock The {S}egal conjecture for elementary abelian {$p$}-groups.
\newblock {\em Topology}, 24(4):435--460, 1985.

\bibitem[AK15]{AbramKriz}
William~C. Abram and Igor Kriz.
\newblock The equivariant complex cobordism ring of a finite abelian group.
\newblock {\em Math. Res. Lett.}, 22(6):1573--1588, 2015.

\bibitem[AMGR17]{AyalaMazel-GeeRozenblyum}
David Ayala, Aaron Mazel-Gee, and Nick Rozenblyum.
\newblock {A naive approach to genuine $G$-spectra and cyclotomic spectra}.
\newblock {\em arXiv preprint arXiv:1710.06416}, 2017.

\bibitem[Ara79]{Araki:BPR}
Sh\^{o}r\^{o} Araki.
\newblock Orientations in {$\tau $}-cohomology theories.
\newblock {\em Japan. J. Math. (N.S.)}, 5(2):403--430, 1979.

\bibitem[BBHS20]{BBHS}
Agn\`es Beaudry, Irina Bobkova, Michael Hill, and Vesna Stojanoska.
\newblock Invertible {$K(2)$}-local {$E$}-modules in {$C_4$}-spectra.
\newblock {\em Algebr. Geom. Topol.}, 20(7):3423--3503, 2020.

\bibitem[BBLNR14]{BBLNR}
Marcel B\"{o}kstedt, Robert~R. Bruner, Sverre Lun{\o}e-Nielsen, and John
  Rognes.
\newblock On cyclic fixed points of spectra.
\newblock {\em Math. Z.}, 276(1-2):81--91, 2014.

\bibitem[BH15]{BlumbergHill}
Andrew~J. Blumberg and Michael~A. Hill.
\newblock Operadic multiplications in equivariant spectra, norms, and
  transfers.
\newblock {\em Adv. Math.}, 285:658--708, 2015.

\bibitem[BH18]{Blumberg-Hill:incomplete}
Andrew~J. Blumberg and Michael~A. Hill.
\newblock Incomplete {T}ambara functors.
\newblock {\em Algebr. Geom. Topol.}, 18(2):723--766, 2018.

\bibitem[BHL{\etalchar{+}}21]{BHLSZ}
Agn\`es Beaudry, Michael Hill, Tyler Lawson, XiaoLin~Danny Shi, and Mingcong
  Zeng.
\newblock Quotient rings of {$H\Bbb F_2\wedge H\Bbb F_2$}.
\newblock {\em Trans. Amer. Math. Soc.}, 374(12):8949--8988, 2021.

\bibitem[BHSZ21]{BHSZ}
Agn\`es Beaudry, Michael~A. Hill, XiaoLin~Danny Shi, and Mingcong Zeng.
\newblock Models of {L}ubin-{T}ate spectra via {R}eal bordism theory.
\newblock {\em Adv. Math.}, 392:Paper No. 108020, 58, 2021.

\bibitem[BO16]{Behrens-Ormsby}
Mark Behrens and Kyle Ormsby.
\newblock On the homotopy of {$Q(3)$} and {$Q(5)$} at the prime 2.
\newblock {\em Algebr. Geom. Topol.}, 16(5):2459--2534, 2016.

\bibitem[Boa99]{Board:SS}
J.~Michael Boardman.
\newblock Conditionally convergent spectral sequences.
\newblock In {\em Homotopy invariant algebraic structures ({B}altimore, {MD},
  1998)}, volume 239 of {\em Contemp. Math.}, pages 49--84. Amer. Math. Soc.,
  Providence, RI, 1999.

\bibitem[B{\"o}h19]{Boehme}
Benjamin B{\"o}hme.
\newblock Multiplicativity of the idempotent splittings of the {B}urnside ring
  and the {$G$}-sphere spectrum.
\newblock {\em Adv. Math.}, 347:904--939, 2019.

\bibitem[Bru]{Bruner:Tate}
Robert~R. Bruner.
\newblock Tate cohomology of the anto-involution of the {S}teenrod algebra.
\newblock \url{http://www.rrb.wayne.edu/papers/tate.pdf}.

\bibitem[Car84]{Carlsson:Segal}
Gunnar Carlsson.
\newblock Equivariant stable homotopy and {S}egal's {B}urnside ring conjecture.
\newblock {\em Ann. of Math. (2)}, 120(2):189--224, 1984.

\bibitem[Car22]{Carrick}
Christian Carrick.
\newblock Cofreeness in real bordism theory and the {S}egal conjecture.
\newblock {\em Proc. Amer. Math. Soc.}, 150(7):3161--3175, 2022.

\bibitem[CW00]{Crossley-Whitehouse}
M.D. Crossley and Sarah Whitehouse.
\newblock On conjugation invariants in the dual {S}teenrod algebra.
\newblock {\em Proc. Amer. Math. Soc.}, 128(9):2809--2818, 2000.

\bibitem[Fuj76]{Fujii}
Michikazu Fujii.
\newblock Cobordism theory with reality.
\newblock {\em Math. J. Okayama Univ.}, 18(2):171--188, 1975/76.

\bibitem[Gla15]{Glasman}
Saul Glasman.
\newblock {Stratified categories, geometric fixed points and a generalized
  Arone-Ching theorem}.
\newblock {\em arXiv preprint arXiv:1507.01976}, 2015.

\bibitem[GM17]{GreenleesMeier}
J.~P.~C. Greenlees and Lennart Meier.
\newblock Gorenstein duality for real spectra.
\newblock {\em Algebr. Geom. Topol.}, 17(6):3547--3619, 2017.

\bibitem[GM20a]{GepnerMeier}
David Gepner and Lennart Meier.
\newblock On equivariant topological modular forms.
\newblock {\em arXiv preprint arXiv:2004.10254}, 2020.

\bibitem[GM20b]{Guillou-May}
Bertrand~J. Guillou and J.~Peter May.
\newblock Enriched model categories and presheaf categories.
\newblock {\em New York J. Math.}, 26:37--91, 2020.

\bibitem[Gre18]{Greenlees-Four}
J.~P.~C. Greenlees.
\newblock Four approaches to cohomology theories with reality.
\newblock In {\em An alpine bouquet of algebraic topology}, volume 708 of {\em
  Contemp. Math.}, pages 139--156. Amer. Math. Soc., Providence, RI, 2018.

\bibitem[GW18]{GW18}
Javier~J. Guti\'{e}rrez and David White.
\newblock Encoding equivariant commutativity via operads.
\newblock {\em Algebr. Geom. Topol.}, 18(5):2919--2962, 2018.

\bibitem[HH14]{HillHopkins}
M.~A. Hill and M.~J. Hopkins.
\newblock Equivariant multiplicative closure.
\newblock In {\em Algebraic topology: applications and new directions}, volume
  620 of {\em Contemp. Math.}, pages 183--199. Amer. Math. Soc., Providence,
  RI, 2014.

\bibitem[HHR16]{HHR}
M.~A. Hill, M.~J. Hopkins, and D.~C. Ravenel.
\newblock On the nonexistence of elements of {K}ervaire invariant one.
\newblock {\em Ann. of Math. (2)}, 184(1):1--262, 2016.

\bibitem[HHR17]{HHR:C4}
Michael~A. Hill, Michael~J. Hopkins, and Douglas~C. Ravenel.
\newblock The slice spectral sequence for the {$C_4$} analog of real
  {$K$}-theory.
\newblock {\em Forum Math.}, 29(2):383--447, 2017.

\bibitem[HHR21]{HHR:ESHT}
Michael~A. Hill, Michael~J. Hopkins, and Douglas~C. Ravenel.
\newblock {\em Equivariant stable homotopy theory and the {K}ervaire invariant
  problem}, volume~40 of {\em New Mathematical Monographs}.
\newblock Cambridge University Press, Cambridge, 2021.

\bibitem[Hil12]{Hill:Slice}
Michael~A. Hill.
\newblock The equivariant slice filtration: a primer.
\newblock {\em Homology Homotopy Appl.}, 14(2):143--166, 2012.

\bibitem[Hil15]{Hill:eta}
Michael~A. Hill.
\newblock On the fate of $\eta^3$ in high analogues of real bordism.
\newblock {\em arXiv preprint arXiv:1507.08083}, 2015.

\bibitem[HK01]{HuKriz1}
Po~Hu and Igor Kriz.
\newblock Real-oriented homotopy theory and an analogue of the
  {A}dams-{N}ovikov spectral sequence.
\newblock {\em Topology}, 40(2):317--399, 2001.

\bibitem[HS20]{HahnShi}
Jeremy Hahn and XiaoLin~Danny Shi.
\newblock Real orientations of {L}ubin--{T}ate spectra.
\newblock {\em Invent. math.}, 221:731--776, 2020.

\bibitem[HSWX18]{HSWX}
Michael~A Hill, XiaoLin~Danny Shi, Guozhen Wang, and Zhouli Xu.
\newblock {The slice spectral sequence of a $C_4$-equivariant height-4
  Lubin-Tate theory}.
\newblock {\em Memoir of the American Mathematical Society, to appear}, 2018.

\bibitem[HW21]{HahnWilsonSegal}
Jeremy Hahn and Dylan Wilson.
\newblock Real topological {H}ochschild homology and the {S}egal conjecture.
\newblock {\em Adv. Math.}, 387:Paper No. 107839, 17, 2021.

\bibitem[Lan68]{Landweber:MU}
Peter~S. Landweber.
\newblock Conjugations on complex manifolds and equivariant homotopy of {$MU$}.
\newblock {\em Bull. Amer. Math. Soc.}, 74:271--274, 1968.

\bibitem[Lew88]{Lewis:ROG}
L.~Gaunce Lewis, Jr.
\newblock The {$R{\rm O}(G)$}-graded equivariant ordinary cohomology of complex
  projective spaces with linear {${\bf Z}/p$} actions.
\newblock In {\em Algebraic topology and transformation groups ({G}\"ottingen,
  1987)}, volume 1361 of {\em Lecture Notes in Math.}, pages 53--122. Springer,
  Berlin, 1988.

\bibitem[Lin80]{Lin}
Wen~Hsiung Lin.
\newblock On conjectures of {M}ahowald, {S}egal and {S}ullivan.
\newblock {\em Math. Proc. Cambridge Philos. Soc.}, 87(3):449--458, 1980.

\bibitem[LMSM86]{LMayS}
L.~G. Lewis, Jr., J.~P. May, M.~Steinberger, and J.~E. McClure.
\newblock {\em Equivariant stable homotopy theory}, volume 1213 of {\em Lecture
  Notes in Mathematics}.
\newblock Springer-Verlag, Berlin, 1986.
\newblock With contributions by J. E. McClure.

\bibitem[LNR12]{LNR}
Sverre Lun{\o}e-Nielsen and John Rognes.
\newblock The topological {S}inger construction.
\newblock {\em Doc. Math.}, 17:861--909, 2012.

\bibitem[LSWX19]{LSWX}
Guchuan Li, XiaoLin~Danny Shi, Guozhen Wang, and Zhouli Xu.
\newblock Hurewicz images of real bordism theory and real {J}ohnson-{W}ilson
  theories.
\newblock {\em Adv. Math.}, 342:67--115, 2019.

\bibitem[Lur09]{HTT}
Jacob Lurie.
\newblock {\em Higher topos theory}, volume 170 of {\em Annals of Mathematics
  Studies}.
\newblock Princeton University Press, Princeton, NJ, 2009.

\bibitem[Lur17]{HigherAlgebra}
Jacob Lurie.
\newblock Higher algebra.
\newblock \url{https://www.math.ias.edu/~lurie/papers/HA.pdf}, 2017.

\bibitem[McC96]{McClure}
J.~E. McClure.
\newblock {$E_\infty$}-ring structures for {T}ate spectra.
\newblock {\em Proc. Amer. Math. Soc.}, 124(6):1917--1922, 1996.

\bibitem[Mil89]{MillerVectorFields}
Haynes Miller.
\newblock Vector fields on spheres, etc.
\newblock
  \url{https://chromotopy.org/latex/misc/haynes-notes/haynes-notes.pdf}, 1989.

\bibitem[MM82]{MayMcClure}
J.~P. May and J.~E. McClure.
\newblock A reduction of the {S}egal conjecture.
\newblock In {\em Current trends in algebraic topology, {P}art 2 ({L}ondon,
  {O}nt., 1981)}, volume~2 of {\em CMS Conf. Proc.}, pages 209--222. Amer.
  Math. Soc., Providence, R.I., 1982.

\bibitem[MM02]{MandellMay}
M.~A. Mandell and J.~P. May.
\newblock Equivariant orthogonal spectra and {$S$}-modules.
\newblock {\em Mem. Amer. Math. Soc.}, 159(755):x+108, 2002.

\bibitem[MT68]{MosherTangora}
Robert~E. Mosher and Martin~C. Tangora.
\newblock {\em Cohomology operations and applications in homotopy theory}.
\newblock Harper \& Row, Publishers, New York-London, 1968.

\bibitem[NS18]{NikolausScholze}
Thomas Nikolaus and Peter Scholze.
\newblock On topological cyclic homology.
\newblock {\em Acta Math.}, 221(2):203--409, 2018.

\bibitem[Rav84]{Rav:Cyclic}
Douglas~C. Ravenel.
\newblock The {Segal} conjecture for cyclic groups and its consequences.
\newblock {\em American Journal of Mathematics}, 106:415--446, 1984.

\bibitem[Rob15]{RobaloBridge}
Marco Robalo.
\newblock {$K$}-theory and the bridge from motives to noncommutative motives.
\newblock {\em Adv. Math.}, 269:399--550, 2015.

\bibitem[Rog]{Rog:com}
John Rognes.
\newblock Private communication.
\newblock 2014.

\bibitem[Sch14]{Schwede:ESHT}
Stefan Schwede.
\newblock Lectures on equivariant stable homotopy theory.
\newblock \url{http://www.math.uni-bonn.de/people/schwede/}, 2014.

\bibitem[TW95]{Thevenaz-Webb}
Jacques Th{\'e}venaz and Peter Webb.
\newblock The structure of {M}ackey functors.
\newblock {\em Trans. Amer. Math. Soc.}, 347(6):1865--1961, 1995.

\bibitem[Ull13]{Ullman:Thesis}
John~Richard Ullman.
\newblock On the regular slice spectral sequence.
\newblock \url{https://dspace.mit.edu/handle/1721.1/83701}, 2013.

\bibitem[Yua23]{Yuan:Frobenius}
Allen Yuan.
\newblock Integral models for spaces via the higher frobenius.
\newblock {\em Journal of the American Mathematical Society}, 36(1):107--175,
  2023.

\bibitem[Zen]{Zeng:HZ}
Mingcong Zeng.
\newblock {Equivariant Eilenberg-Mac Lane spectra in cyclic $p$-groups}.
\newblock \url{https://arxiv.org/abs/1710.01769}.

\end{thebibliography}
	\bibliographystyle{alpha}
\end{document}